\documentclass[11pt,reqno]{amsart}
\usepackage[T1]{fontenc}
\usepackage{lmodern}
\usepackage{microtype}
\usepackage[a4paper,margin=29mm,headheight=14pt]{geometry}
\usepackage{amsmath,amssymb,amsthm,mathtools,booktabs,array}
\usepackage[hidelinks,bookmarksnumbered,pdfencoding=auto]{hyperref}
\usepackage{enumitem}
\setlist[enumerate]{label=\textup{(\roman*)},leftmargin=2.2em,itemsep=3pt,topsep=5pt}
\setlist[itemize]{leftmargin=2em,itemsep=2pt,topsep=4pt}
\numberwithin{equation}{section}
\newtheorem{theorem}{Theorem}[section]
\newtheorem{lemma}[theorem]{Lemma}
\newtheorem{proposition}[theorem]{Proposition}
\newtheorem{corollary}[theorem]{Corollary}
\theoremstyle{definition}
\newtheorem{definition}[theorem]{Definition}
\newtheorem{question}[theorem]{Question}
\theoremstyle{remark}
\newtheorem{remark}[theorem]{Remark}
\newtheorem{example}[theorem]{Example}
\newcommand{\Zm}{\mathbb Z_m}
\newcommand{\Z}{\mathbb Z}
\newcommand{\one}{\mathbf 1}
\newcommand{\id}{\operatorname{id}}
\newcommand{\cyc}{\operatorname{cyc}}
\newcommand{\Ret}{\operatorname{Ret}}
\newcommand{\sgn}{\operatorname{sgn}}
\newcommand{\Sym}{\operatorname{Sym}}
\newcommand{\Cay}{\operatorname{Cay}}
\newcommand{\cA}{\mathcal A}
\newcommand{\cB}{\mathcal B}
\newcommand{\cO}{\Omega}
\newcommand{\ep}{\varepsilon}
\newcommand{\tr}{\mathsf T}
\newcommand{\arxivref}[1]{\href{https://arxiv.org/abs/#1}{arXiv:#1}}
\newcommand{\doiref}[1]{\href{https://doi.org/#1}{doi:#1}}
\allowdisplaybreaks[2]
\title[Hamilton decompositions of equal-side directed tori]{Hamilton decompositions of equal-side directed tori}
\author{SangHyun Park}
\date{September 17, 2026}
\subjclass[2020]{05C20, 05C45, 05C70}
\keywords{Hamilton decomposition, directed torus, Cartesian product, cyclic lift, permutation factorization}
\hypersetup{pdftitle={Hamilton decompositions of equal-side directed tori},pdfauthor={SangHyun Park},pdfsubject={All moduli, integer incidence selection, and marked cyclic lifts}}

\begin{document}
\begin{abstract}
Let $D_d(m)$ be the Cartesian product of $d$ positively oriented directed
cycles of length $m$. We prove that $D_d(m)$ decomposes into $d$ directed
Hamilton cycles for every $m\ge3$ and $d\ge2$. The construction proceeds
by splitting coordinate directions in directed multitori. An integer
selection theorem supplies unit voltages compatible with the prescribed
arc multiplicities; at even modulus, the decisive condition is the
parity of each incidence component. For even $m$ and odd $d\ge7$,
we satisfy this condition with a factorization having one additional
cycle. A relative lifting theorem preserves the first-return data of a
three-colour recolouring through successive coordinate splits, after
which the recolouring removes the additional cycle on a set whose size
is independent of the dimension. Explicit constructions complete the
low-dimensional cases. The theorem yields Hamilton decompositions of
Cartesian products of equal-order, equal-degree Hamilton-decomposable
digraphs, of abelian Cayley digraphs whose generators partition into
module bases, and of a family of height-stretched directed tori.
\end{abstract}

\maketitle

\section{Introduction}\label{sec:introduction}
For integers $m\ge2$ and $d\ge2$, let $\Zm=\Z/m\Z$ and write
\[
 D_d(m)=\Cay\bigl(\Zm^d,\{e_1,\ldots,e_d\}\bigr)
       =\vec C_m^{\,\square d},
\]
where $e_1,\ldots,e_d$ are the standard coordinate vectors. We call
$D_d(m)$ the \emph{positive equal-side directed torus}. A Hamilton
decomposition is a partition of its arc set into directed spanning
cycles. Since every vertex has indegree and outdegree $d$, such a
partition has exactly $d$ members. The coordinate directions give a
natural factorization, each factor consisting of $m^{d-1}$ cycles of
length $m$. The problem is to replace these factors by spanning cycles
using the same positive coordinate arcs.

\begin{theorem}\label{thm:main}
For every $m\ge3$ and every $d\ge2$, the digraph $D_d(m)$ has a decomposition
into $d$ directed Hamilton cycles.
\end{theorem}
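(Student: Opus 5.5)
The proof will go by induction on $d$, splitting one coordinate direction at a time in a directed multitorus, as the abstract suggests. We work with \emph{directed multitori}: Cayley digraphs $\Cay(\Zm^k,S)$ in which a generator may occur with multiplicity. A factorization into $d$ spanning 1-factors, the $i$th using prescribed arc multiplicities, is then encoded by a permutation-valued labelling together with a voltage assignment. The basic move replaces one coordinate direction $e_j$ of multiplicity $2$ by two genuine coordinates $e_j,e_{j+1}$. In this move each colour class lifts along a cyclic $m$-fold cover, and a Hamilton cycle of the base lifts to a Hamilton cycle exactly when its total voltage is a unit mod $m$ (cyclic lift lemma).

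The plan has four steps.

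\begin{enumerate}
\item Start from a base object in low dimension, for example $\vec C_m$ with multiplicities summing to $d$. Take a factorization into $d$ cycles whose arc multiplicities match the target.
\item Perform $d-1$ successive splits. At each split we need unit voltages compatible with the prescribed multiplicities. This is an integer selection problem: choose, for each colour, how many of its arcs use the new direction. The count must lie between $0$ and the multiplicity, the column sums must match, and every colour's total must be a unit. For odd $m$, the number-theoretic constraints are mild (we avoid the prime divisors of $m$) and a flow/Hall-type argument gives the selection. For even $m$, units are odd, so the constraint reduces to a parity condition on each connected component of the colour/direction incidence graph. The parity requirement is that the sum of the relevant multiplicities in each component be compatible with every colour total being odd.
\item For odd $m$, or even $m$ with even $d$, verify that a suitable starting factorization satisfies these conditions at every stage. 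This finishes those cases.
\item For even $m$ and odd $d$ the parity condition is obstructed, since a count of the form $d\cdot(\text{odd})$ has the wrong parity. The fix is to start with a factorization having one extra cycle, so that some colour is two cycles rather than one. Carry this through all splits via a relative lifting theorem: the first-return map of a designated three-colour region is preserved by each split.
\end{enumerate}

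At the end, a local three-colour recolouring merges the two cycles of the defective colour into one without breaking the other two Hamilton cycles. This is checked on a window whose size is independent of $d$, because the first-return data there are the same as in a fixed small dimension. This should work for $d\ge7$, where there is enough room for the gadget. The remaining cases, even $m$ with $d\in\{3,5\}$, are handled by explicit constructions, given as voltage tables verified directly. Low $m$ such as $m=4$ may need separate tables. Case $d=2$ is classical: for $\vec C_m\square\vec C_m$, use the diagonal-shift decomposition.

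The main obstacle is the odd-$d$, even-$m$ case. The hard part is proving that the relative lifting preserves the first-return permutation on the recolouring window through every split, so that a single fixed-size recolouring works uniformly in $d$. I would first try a window of the form $\{0,1\}^3\times\{0\}^{d-3}$. I would choose voltages to be trivial outside the first three coordinates, so that the return map there is literally the $d=3$ one with a shift.
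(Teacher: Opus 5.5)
Your outline follows the paper's architecture: splitting from the one-coordinate multitorus, unit voltages via cyclic lifts, componentwise parity at even $m$, a factorization with one extra cycle for odd $d$ carried by relative lifting to a final three-colour recolouring, and separate constructions for $d=2,3,5$. However, the step you call the crux is not supplied, and the mechanism you suggest for it would fail. Vanishing voltages on the window together with unit totals do not preserve the first-return permutation $\Ret_U$. The order in which the lifted cycle revisits $U\times\{0\}$ is governed by the partial voltages along the entire return paths. Example~\ref{ex:spread} is a counterexample: a $12$-cycle with marks $0,4,8$, unit voltages at $1,5,9,10,11$, and $m=4$ has zero voltage on $U$ and unit total, yet returns in the order $0,8,4$. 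The missing idea is to confine all nonzero voltage of each distinguished cycle meeting $U$ to one open $U$-gap (Theorem~\ref{thm:onegap}), and to make this repeatable. The enlarged gap contains every new nonzero fibre point, so it suffices that the next selection never picks a distinguished colour on the old zero section. That exclusion must be built into the selection theorem itself. It is compatible with exact row quotas only under a hypothesis such as at most one distinguished colour per direction at every source. You also need a first split whose distinguished voltages avoid $U$; in the paper this is why the recolouring must agree with the initial factorization at four anchor sources, one on each distinguished cycle. Finally, you need the recolouring itself. Nothing in your plan shows that a window as small as $\{0,1\}^3$ carries a valid Hamilton recolouring, whereas the paper's uses $m^3-m^2-4m+7$ sources and leaves even an $O(m^2)$ bound open.

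The selection steps are likewise only asserted, and at even $m$ the inheritance is a real issue rather than a routine verification. You must first prove that componentwise parity is sufficient under exact per-source quotas. The paper does this with a parity join on a spanning tree, pairing at blocks, an Euler orientation with divergences $\pm1$, and one exceptional row per oriented pair. You must then show the parity condition survives the split. Each old block becomes $m$ fibre blocks, with supports $J_{B,t}$ in one child direction and $A_B\setminus J_{B,t}$ in the other. If these $m$ copies fall into several components, a component can contain an odd number of cycle columns, and the next split is impossible. The paper forces both families $\{J_{B,t}\}_{t}$ and $\{A_B\setminus J_{B,t}\}_{t}$ to be intersection-connected; counting the odd degrees of cycle columns then gives evenness (Lemma~\ref{lem:inherit}). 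The same componentwise evenness must also be checked for the initial factorization, with its $d+1$ cycle columns, in each of its three directions. For odd $m$, a flow or Hall argument only realizes prescribed column counts. The real content is choosing unit targets compatible with the quotas, which the paper does by a three-row integer selection with deviations in $\{\pm1,\pm2\}$, using the fact that each block consists of $m$ identical rows. That cycle-consistent block structure is the invariant your induction must carry, and your plan does not name it.
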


The theorem holds in every dimension at each fixed cycle length
$m\ge3$, including composite lengths. Forgetting the arc orientations
gives a Hamilton decomposition of $C_m^{\square d}$ in which every cycle
respects the prescribed positive orientation. The lower bound on $m$ is sharp for a theorem covering every
dimension: $D_3(2)$ has no Hamilton decomposition~\cite{AS82binary}, whereas
the square $D_2(2)$ is included in Proposition~\ref{prop:square}.
The twelve directed Hamilton cycles of $D_3(2)$ contain no
arc-disjoint triple.

\subsection{Cartesian products and changes of basis}
The modulus becomes the common order of the factors when their Hamilton
cycles are combined in a Cartesian product.

\begin{corollary}[Equal-order Cartesian products]\label{cor:cartesian-products}
Let $d\ge2$, $n\ge3$, and $r\ge1$. Suppose each finite loopless digraph
$G_1,\ldots,G_d$ has $n$ vertices and a decomposition into $r$ directed
Hamilton cycles. Then $G_1\square\cdots\square G_d$ has a decomposition
into $dr$ directed Hamilton cycles. In particular, every Cartesian power
of a Hamilton-decomposable digraph on at least three vertices is
Hamilton-decomposable.
\end{corollary}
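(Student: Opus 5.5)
The plan is to reduce Corollary~\ref{cor:cartesian-products} to Theorem~\ref{thm:main} with $m=n$. We fix, for each factor $G_i$, a decomposition into $r$ directed Hamilton cycles $H_{i,1},\ldots,H_{i,r}$. For each $i$ and $j$ we choose a bijection $\phi_{i,j}\colon V(G_i)\to\Z_n$ along $H_{i,j}$, so that every arc of $H_{i,j}$ goes from $\phi_{i,j}^{-1}(t)$ to $\phi_{i,j}^{-1}(t+1)$. The arc set of $G=G_1\square\cdots\square G_d$ then splits into $r$ classes indexed by $j\in\{1,\ldots,r\}$. Class $j$ consists of the arcs of $G$ whose changing coordinate $i$ moves along an arc of $H_{i,j}$. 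Since the $H_{i,j}$ with $j$ fixed are edge-disjoint across different $i$ and span each factor, class $j$ is exactly the spanning subdigraph $H_{1,j}\square\cdots\square H_{d,j}$. Because $\{H_{i,j}\}_j$ partitions $A(G_i)$ for each $i$, these $r$ subdigraphs partition $A(G)$.

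Next, each class is identified with $D_d(n)$. The map
\[
\Phi_j(v_1,\ldots,v_d)=\bigl(\phi_{1,j}(v_1),\ldots,\phi_{d,j}(v_d)\bigr)
\]
is a bijection $V(G)\to\Z_n^d$. It sends an arc of $H_{1,j}\square\cdots\square H_{d,j}$ in coordinate $i$ to an arc $x\to x+e_i$. Conversely, every such arc arises exactly once, because $H_{i,j}$ is a Hamilton cycle, which is itself isomorphic to $\vec C_n$. So $\Phi_j$ is a digraph isomorphism from class $j$ onto $D_d(n)=\vec C_n^{\,\square d}$. Since $n\ge3$ and $d\ge2$, Theorem~\ref{thm:main} gives a decomposition of $D_d(n)$ into $d$ directed Hamilton cycles. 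Pulling these back by $\Phi_j^{-1}$ gives $d$ arc-disjoint Hamilton cycles of $G$ covering class $j$, and these cycles span all of $V(G)$. Taking the union over $j$ yields $dr$ Hamilton cycles partitioning $A(G)$.

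The only point needing care, and the main potential obstacle, is that the classes are well defined and disjoint. Every arc of $G$ changes exactly one coordinate $i$ and lies over a unique arc of $G_i$. The looplessness hypothesis guarantees that no arc of $G$ is stationary in all coordinates, and the product has no parallel-arc ambiguity beyond that in the factors, since multiplicities are carried coordinatewise. This makes the partition exact, and each arc of $G$ lies in exactly one class. The ``in particular'' statement is the special case $G_1=\cdots=G_d$. A Hamilton-decomposable digraph is regular with in- and outdegree $r$ equal to its number of Hamilton cycles, so the hypotheses hold automatically once $|V|\ge3$.
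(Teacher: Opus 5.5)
Your proposal is correct and is essentially the paper's argument: the paper proves the implication (i)$\Rightarrow$(ii) of Proposition~\ref{prop:universal-transfer} by exactly this partition of the product into the spanning subdigraphs $C_{1,j}\square\cdots\square C_{d,j}\cong D_d(n)$, and then applies Theorem~\ref{thm:main} with $m=n$. The only addition needed for the ``in particular'' clause is the first Cartesian power $d=1$, which your $d\ge2$ reduction does not cover; the paper notes that this case is just the given decomposition.
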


For fixed $n$ and $d$, this assertion for all $r$ and all such factors is
equivalent to the single torus instance $D_d(n)$
(Proposition~\ref{prop:universal-transfer}). Indeed, choosing the same
Hamilton-factor index in each coordinate partitions the product into
spanning copies of $D_d(n)$. The all-moduli theorem consequently gives a
product theorem at every common order $n\ge3$. Equality of orders is a
substantive hypothesis in the directed setting: $\vec C_3\square\vec C_4$
has no Hamilton cycle.

An automorphism of the module $(\Z/m\Z)^d$ transports the standard
coordinate directions to any module basis. A partition of a connection
multiset into $k$ bases therefore gives $kd$ Hamilton factors
(Corollary~\ref{cor:basis-partition}). At a prime power, Edmonds' matroid
partition theorem yields a criterion in terms of ranks over the residue
field, including the moduli $2^a$ for $a\ge2$. For a general modulus the
same partition must work at all prime divisors, a compatibility condition
illustrated by Example~\ref{ex:crt-bases}. A further construction preserves
the return permutations while increasing the number of height layers;
it gives the family on $\Z_{am}\times\Z_m^{d-1}$ in
Corollary~\ref{cor:skew-height}.

\subsection{Previous work}
For the classical theory of Hamilton cycles in Cayley graphs and
digraphs, see Curran and Gallian~\cite{CG96}. In products of directed
cycles, Trotter and Erd\H{o}s~\cite{TE78} characterized Hamiltonicity
in two factors, and Curran and Witte~\cite{CW85} proved that every
product of three or more nontrivial directed cycles is Hamiltonian. A full
Hamilton decomposition imposes a stronger condition already in two
factors. Keating's criterion~\cite{Keating85}, recalled together with the
Hamiltonicity criterion in \cite[Theorems~2.1 and~2.3]{DMM22}, gives the
example $\vec C_3\square\vec C_6$: it has a Hamilton cycle but no Hamilton
decomposition. We return to these arithmetic distinctions in
Example~\ref{ex:two-factor-boundary}.

For undirected cycle products, Hamilton decomposition is classical
\cite{AS82}; see also \cite[Theorem~2.5]{DMM22}. Stong developed related
theorems for Cartesian products of undirected graphs~\cite{Stong91} and
for symmetric directed products~\cite{Stong06}, where each edge is
replaced by its two orientations. Theorem~\ref{thm:main} keeps just the
positive coordinate arcs. Bogdanowicz~\cite{Bog17,Bog20} studied
equal-length cycle decompositions and the construction of individual
Hamilton cycles, while Meng and Huang~\cite{MH97} established sufficient
conditions for Hamiltonicity and Hamilton decomposition in finite
abelian Cayley digraphs, with necessity in the $2$-regular case.

For arc-disjoint Hamilton paths, Darijani,
Miraftab and Witte Morris~\cite{DMM22} constructed two such paths in
products of two directed cycles, in products of at least four directed
cycles, and in many three-factor cases. The remaining two-generated
abelian and three-factor path cases are treated in~\cite{ParkPaths26}.
These results permit unequal lengths; the equal-side hypothesis here
allows all arcs to be partitioned into closed spanning factors.
Three-dimensional cycle constructions are described in~\cite{Park3}
and in Knuth's account~\cite{Knuth26}, with the associated work of
Aquino-Michaels archived in~\cite{Aquino26}. Our use of dimension three
requires a decomposition agreeing with a specified factorization at
four vertices, so that its recolouring can be preserved under lifting.

An earlier version of this paper~\cite{ParkVtwo} established the
odd-modulus theorem. The second proof~\cite{ParkShort} introduced
repeated incidence selection and coordinate splitting; Section~\ref{sec:odd}
strengthens its selection argument to a modulus-independent integer
theorem. The present paper supersedes~\cite{ParkVtwo} by extending the
result to every $m\ge3$. The even-modulus proof combines componentwise
incidence parity with a relative lift that preserves the effect of a
recolouring. Its three-dimensional construction in
Section~\ref{sec:anchored} proves the existence result of~\cite{Park3}
with four prescribed agreements needed for iteration.

The graphs have order $m^d$ and degree $d$, so their sparsity persists
as the cycle length increases. The construction complements the
robust-expansion theory of K\"uhn and Osthus~\cite{KO13}, whose
Hamilton decomposition theorem applies to sufficiently large regular
robust outexpanders of degree linear in their order.

\subsection{Coordinate splitting and incidence parity}
The common operation acts on a directed multitorus
$T_m(a_1,\ldots,a_n)$, with $a_i$ labelled copies of direction $i$ and
$\sum_i a_i=d$. Splitting one multiplicity $a\ge2$ into
$\lceil a/2\rceil$ and $\lfloor a/2\rfloor$ raises the number of coordinates
by one while retaining the $d$ colours. In suitable coordinates the
successor of colour $c$ becomes
\[
 \widehat S_c(x,t)=(S_cx,t+\delta_c(x)),\qquad \delta_c(x)\in\{0,1\}.
\]
The number of selected colours at each source determines the two new
arc multiplicities. On an old cycle $C$, the total voltage
$\sum_{x\in C}\delta_c(x)$ determines the number of lifted cycles; a unit
modulo $m$ gives one cycle on $C\times\Zm$. The latter principle is the
usual cyclic-voltage argument, closely related to the Factor Group
Lemma~\cite[Lemma~9.11]{CW85}. The selection problem is to obtain unit
totals for every colour-cycle while respecting the multiplicities at
every source.

For odd $m$, this selection has a short integer solution. On an incidence
system with column set $\cO$ and supports $A_B$, choose three subsets
$J_B^0,J_B^1,J_B^*$ of size $\lfloor|A_B|/2\rfloor$ and put
\[
 D_\lambda=\sum_B\bigl(1_{\lambda\in J_B^0}
                 +1_{\lambda\in J_B^1}-2\,1_{\lambda\in J_B^*}\bigr).
\]
Theorem~\ref{thm:integer-selector} gives
$D_\lambda\in\{-2,-1,1,2\}$ for every column. Using the first two subsets
on two rows and the third on the remaining $m-2$ rows makes each column
total congruent to $D_\lambda$ modulo $m$. All four values are units when
$m$ is odd, and $d-1$ successive splits of $T_m(d)$ give $D_d(m)$.

At even modulus every unit is odd. Since each block contributes an even
number of selections, each incidence component must contain an even
number of cycle-columns. Theorem~\ref{thm:radius-dichotomy} identifies the
same boundary over the integers: deviations in $\{-1,1\}$ are possible
exactly under this parity condition. Proposition~\ref{prop:pinned-selection} supplies
these deviations together with the prescribed exclusions and
intersection connectivity that preserve the condition through further
splits. For even $d$, the initial one-block factorization already has an
even number of cycle-columns, and this completes the construction.

\subsection{First returns and the additional cycle}
When $m$ is even and $d$ is odd, parity also enters through permutation
signs. A valid two-colour exchange preserves the product of the two
successor signs, whereas the canonical coordinate factorization and a
Hamilton decomposition have opposite sign products. If
\[
 E=\sum_{c=1}^d\bigl(\cyc(S_c)-1\bigr)
\]
is the cycle excess, then excess one is the least value permitted in
the canonical sign class (Proposition~\ref{prop:cycle-excess}).
We construct such a factorization for odd $d\ge7$: one colour has two
cycles, and every other colour is Hamilton. Its $d+1$ cycle-columns
satisfy the incidence parity conditions. A three-colour recolouring
removes the additional cycle after all coordinate multiplicities have
been split.

To preserve this recolouring, we retain the first-return permutations
on its support $U$. For a successor $S$ and a permutation $r$ supported
on $U$, source reconnection gives
\[
 \cyc(Sr)=h_U(S)+\cyc\bigl(\Ret_U(S)r\bigr),
\]
where $h_U(S)$ counts the cycles disjoint from $U$. The relative lift
concentrates its nonzero voltages in one open interval between
consecutive marked vertices. It lengthens that interval while preserving
the first-return permutation, and hence preserves the effect of the
recolouring. The enlarged interval contains every new point off the
zero section, providing the positions required by the next split.
Thus the same marked construction passes through all later dimensions;
its final recolouring has
\[
 |U_m|=m^3-m^2-4m+7
\]
sources, independently of $d$ (Corollary~\ref{cor:support}).
Dimensions three and five are supplied by explicit constructions.

\par\medskip
\noindent\begin{minipage}{\textwidth}
The cases of Theorem~\ref{thm:main} are organized as follows.
\begin{center}
\small
\renewcommand{\arraystretch}{1.12}
\begin{tabular}{@{}p{0.27\textwidth}p{0.43\textwidth}p{0.24\textwidth}@{}}
\toprule
Parameters & Construction & Conclusion\\
\midrule
Odd $m\ge3$, $d\ge2$ & Integer selection and coordinate splitting &
Theorem~\ref{thm:oddmod}\\[3pt]
Even $m\ge4$, even $d\ge2$ & Incidence parity and coordinate splitting &
Theorem~\ref{thm:even-dim}\\[3pt]
Even $m\ge4$, $d=3$ & Anchored cyclic-star construction &
Proposition~\ref{prop:anchor}\\[3pt]
Even $m\ge4$, $d=5$ & Transversal cycle joining &
Theorem~\ref{thm:d5}\\[3pt]
Even $m\ge4$, odd $d\ge7$ & Relative splitting and recolouring &
Theorem~\ref{thm:oddconstruction}\\
\bottomrule
\end{tabular}
\end{center}
\end{minipage}
\par\medskip
Sections~\ref{sec:preliminaries} and~\ref{sec:relative} provide the lifting
principles. Sections~\ref{sec:consequences}--\ref{sec:extensions} develop
their consequences and further questions; the appendices contain the
return calculations, selection refinements, and formalization record.

\section{Permutation factorizations and coordinate splitting}\label{sec:preliminaries}
All permutations act on the left, and products act from right to left. For a
permutation $S$ of a finite set, $\cyc(S)$ denotes the number of cycles in its
complete cycle decomposition, including fixed points.

\subsection{Directed multitori}
For positive integers $a_1,\ldots,a_n$, let
\[
 T_m(a_1,\ldots,a_n)
\]
be the directed multigraph on $\Zm^n$ having $a_i$ separately labelled arcs from
$x$ to $x+e_i$ at each vertex. Put $d=\sum_i a_i$. A permutation factorization is a
list of permutations $S_1,\ldots,S_d$ and direction functions
$\partial_c:\Zm^n\to\{1,\ldots,n\}$ such that
\begin{equation}\label{eq:direction-quota}
 S_c(x)=x+e_{\partial_c(x)},\qquad
 \bigl|\{c:\partial_c(x)=i\}\bigr|=a_i
 \quad\text{for all }x,i.
\end{equation}
Assigning distinct labels to the parallel arcs identifies
\eqref{eq:direction-quota} with an arc partition. The multiplicity
condition supplies the outgoing arcs of each direction, and the
bijectivity of each $S_c$ supplies exactly one incoming arc of colour
$c$ at every vertex.

A colour--cycle label is a pair $(c,C)$, where $C$ is one cycle of $S_c$. We use
\begin{equation}\label{eq:orbit-set}
 \cO=\{(c,C):C\in\operatorname{Cyc}(S_c),\ 1\le c\le d\}
\end{equation}
for the full label set. A partition into blocks of size $m$ is
\emph{cycle-consistent} if all direction functions and all colour--cycle
labels are constant on each block. These labels refer to the factorization before lifting; a recolouring
acts through a permutation of its source set.

\subsection{Return to a height layer}
The return map on a single height layer determines the cycles of a
layered permutation. We use the following form of the criterion in
\cite[Theorem~3.2]{ParkVtwo}.

\begin{lemma}[Layer return]\label{lem:layer-return}
Let $X=\dot\bigcup_{t\in\Zm}X_t$ be finite, and suppose
$S:X\to X$ maps each $X_t$ bijectively onto $X_{t+1}$.
Then $S$ is a permutation. Its cycles correspond bijectively to the cycles
of $S^m|_{X_0}$, and their lengths are multiplied by $m$.
Consequently $S$ is one cycle if and only if $S^m|_{X_0}$ is one cycle.
\end{lemma}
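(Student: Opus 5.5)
The plan is to establish the three assertions of Lemma~\ref{lem:layer-return} in order: $S$ is a permutation, cycles of $S$ correspond to cycles of the first-return map $R=S^m|_{X_0}$ with lengths multiplied by $m$, and hence the Hamiltonicity criterion.

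\emph{Bijectivity.} Since the $X_t$ partition $X$ and each restriction $S|_{X_t}\colon X_t\to X_{t+1}$ is a bijection, $S$ is injective: two points in different layers have images in different layers, and within one layer injectivity is given. An injective self-map of a finite set is a permutation.

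\emph{The return map.} Write $\tau(x)=t$ for $x\in X_t$, so that $\tau(Sx)=\tau(x)+1$ in $\Zm$. Then $S^m$ maps each $X_t$ bijectively onto itself, and $R$ is a permutation of $X_0$. For any $x\in X$, $\tau(S^kx)=\tau(x)+k$. Hence an $S$-orbit meets $X_0$; starting from $x\in X_0$, it returns to $X_0$ exactly at the multiples of $m$. Consequently the $S$-orbit of $x$ meets $X_0$ in precisely the $R$-orbit of $x$. If that $R$-orbit has length $\ell$, then $S^{m\ell}x=x$ and no smaller positive power of $S$ fixes $x$. Indeed, $S^kx=x$ forces $m\mid k$, and then $R^{k/m}x=x$ forces $\ell\mid k/m$. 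So the $S$-cycle through $x$ has length $m\ell$.

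\emph{The correspondence.} The map sending an $S$-cycle $Z$ to $Z\cap X_0$ is therefore well defined: every cycle meets $X_0$, and the intersection is an $R$-cycle. It is injective, since distinct cycles are disjoint, and surjective, since any $R$-cycle through $x$ arises from the $S$-cycle through $x$. The length statement was shown above. The final equivalence is the special case in which the number of cycles is one.

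The only point needing care, and the closest thing to an obstacle, is the claim that an $S$-orbit starting in $X_0$ returns to $X_0$ only at multiples of $m$. This follows directly from the layer index increasing by one modulo $m$ at each step.
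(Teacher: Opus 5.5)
Your proof is correct and follows the same route as the paper: the layer bijections make $S$ a permutation, orbits advance one layer per step and so meet $X_0$ exactly every $m$ steps, and $Z\mapsto Z\cap X_0$ gives the correspondence of cycles with lengths multiplied by $m$. You spell out the details that the paper's three-sentence proof leaves implicit, namely the divisibility argument for the cycle length $m\ell$ and why the correspondence is injective and surjective.
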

\begin{proof}
The layer bijections give a bijection of $X$. Every orbit visits the layers
in their cyclic order and returns to $X_0$ exactly every $m$ steps.
Intersection with $X_0$ and expansion along those $m$ steps give the stated
bijection of cycles and the length formula.
\end{proof}

For a multitorus, take $X_t=\{x:x_1+\cdots+x_n=t\}$. Every positive
coordinate step increases the height by one, so a Hamilton decomposition
is obtained by satisfying the direction multiplicities at every source,
choosing bijective layer maps, and making each colour return cyclic.
This is the form used in the low-dimensional constructions.

\subsection{Cyclic lifts}
Let $S$ be a permutation of $X$ and let $\delta:X\to\Zm$. Define
\[
 \widehat S(x,t)=(Sx,t+\delta(x))\qquad(x\in X,\ t\in\Zm).
\]
The total voltage on a cycle $C$ of $S$ is
$K_C=\sum_{x\in C}\delta(x)\in\Zm$.

The number and lengths of the lifted cycles are determined by the
order of $K_C$ in $\Zm$; the unit case appears in
\cite[Lemma~8.1]{ParkVtwo}.

\begin{lemma}[Cycle lifting]\label{lem:lift}
The permutation $\widehat S$ has, over a cycle $C$ of length $L$, exactly
$\gcd(m,K_C)$ cycles, each of length $mL/\gcd(m,K_C)$. In particular, a unit total
voltage lifts $C$ to a single cycle on $C\times\Zm$.
\end{lemma}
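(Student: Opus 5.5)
The plan is to reduce the lifted permutation to a layered map and apply Lemma~\ref{lem:layer-return}. Fix a cycle $C=(x_0,x_1,\ldots,x_{L-1})$ of $S$, with $Sx_j=x_{j+1}$ and indices taken modulo $L$. The set $C\times\Zm$ is invariant under $\widehat S$, so it suffices to study the restriction of $\widehat S$ there. Iterating $L$ times from $(x_0,t)$ gives
\[
 \widehat S^{\,L}(x_0,t)=\Bigl(x_0,\ t+\sum_{j=0}^{L-1}\delta(x_j)\Bigr)=(x_0,t+K_C),
\]
so the first-return map of $\widehat S$ to the fibre $\{x_0\}\times\Zm$ is translation by $K_C$ on $\Zm$.

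To connect this with the lemma, put $X_j=\{x_j\}\times\Zm$ for $j\in\Z/L\Z$. Then $\widehat S$ maps $X_j$ bijectively onto $X_{j+1}$, since on the second coordinate it is a translation. Lemma~\ref{lem:layer-return}, applied with $L$ layers in place of $m$, shows that the cycles of $\widehat S$ on $C\times\Zm$ correspond bijectively to the cycles of $\widehat S^{\,L}|_{X_0}$, with lengths multiplied by $L$. The lemma is stated with index set $\Zm$, but its proof uses only the cyclic order of the layers, so the change of modulus is harmless.

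The remaining step is the classical count of orbits of $t\mapsto t+K$ on $\Zm$. These orbits are the cosets of the subgroup generated by $K$, which is $g\Zm$ with $g=\gcd(m,K)$. This subgroup has order $m/g$, so there are $g$ orbits, each of length $m/g$. Multiplying by $L$ gives $\gcd(m,K_C)$ cycles of length $mL/\gcd(m,K_C)$. Here $\gcd(m,K_C)$ is well defined for $K_C\in\Zm$ by choosing any integer representative, and $\gcd(m,0)=m$ covers the zero-voltage case.

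The unit case follows at once, since $\gcd(m,K_C)=1$ gives a single cycle of length $mL=|C\times\Zm|$. No step is a real obstacle. The only care needed is the reindexing of layers by $\Z/L\Z$ rather than $\Zm$, and this could be avoided entirely by arguing directly that every orbit meets $X_0$ every $L$ steps.
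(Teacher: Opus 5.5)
Your proof is correct and follows the paper's argument: one traversal of $C$ translates the fibre coordinate by $K_C$, and the orbits of $t\mapsto t+K_C$ on $\Zm$ have size $m/\gcd(m,K_C)$. The only difference is cosmetic. You obtain bijectivity and the cycle correspondence from Lemma~\ref{lem:layer-return} with $L$ layers indexed by $\Z/L\Z$, whereas the paper simply writes down the inverse $(y,s)\mapsto(S^{-1}y,s-\delta(S^{-1}y))$.
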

\begin{proof}
The inverse is $(y,s)\mapsto(S^{-1}y,s-\delta(S^{-1}y))$. After one traversal of
$C$, the fibre coordinate is translated by $K_C$. Its orbits have cardinality
$m/\gcd(m,K_C)$, giving the assertion.
\end{proof}

\begin{lemma}[Coordinate splitting]\label{lem:physical}
Suppose $S_c$ factorize $T_m(a_1,\ldots,a_n)$, and fix a direction $i$ and an
integer $b$ with $0<b<a_i$. Choose $\delta_c(x)\in\{0,1\}$ such that
\begin{equation}\label{eq:physical-delta}
 \delta_c(x)=0\ \text{if }\partial_c(x)\ne i,
 \qquad \sum_{c=1}^d\delta_c(x)=b.
\end{equation}
Then the lifts $\widehat S_c(x,t)=(S_cx,t+\delta_c(x))$ factorize
\[
 T_m(a_1,\ldots,a_{i-1},a_i-b,b,a_{i+1},\ldots,a_n).
\]
\end{lemma}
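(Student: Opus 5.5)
The plan is to identify $\Zm^n\times\Zm$ with $\Zm^{n+1}$ by placing the new fibre coordinate $t$ as a new direction $i'$ right after direction $i$. We then check \eqref{eq:direction-quota} for the lifted permutations with new direction functions $\widehat\partial_c$. The candidate change of coordinates keeps the step in direction $i$ when $\delta_c(x)=0$ and replaces it by the new direction when $\delta_c(x)=1$. This must be done so that the lift $(x,t)\mapsto(S_cx,t+\delta_c(x))$ becomes a positive unit step.

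Concretely, take the linear bijection $\Phi:\Zm^n\times\Zm\to\Zm^{n+1}$ given by
\[
\Phi(x,t)=(x_1,\ldots,x_{i-1},\,x_i-t,\,t,\,x_{i+1},\ldots,x_n).
\]
The images of the old and new coordinates then behave as follows.
\begin{itemize}
\item If $\partial_c(x)=j\ne i$, then $\delta_c(x)=0$ by \eqref{eq:physical-delta}, so $\Phi\widehat S_c\Phi^{-1}$ adds the standard vector of the coordinate corresponding to $j$.
\item If $\partial_c(x)=i$ and $\delta_c(x)=0$, it adds $e_i$ in the new coordinates, since $x_i$ increases while $t$ is fixed.
\item If $\partial_c(x)=i$ and $\delta_c(x)=1$, then $x_i-t$ is unchanged and $t$ increases by one, so it adds the new unit vector $e_{i+1}$.
\end{itemize}
Thus every conjugated $\widehat S_c$ has the form $y\mapsto y+e_{\widehat\partial_c(y)}$. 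Here $\widehat\partial_c(\Phi(x,t))$ depends only on $x$, which is legitimate because $\Phi$ is a bijection.

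Next I verify bijectivity. Each $\widehat S_c$ is a permutation by the inverse formula already written in the proof of Lemma~\ref{lem:lift}, so conjugation by $\Phi$ preserves this.

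Finally I count multiplicities at each source $y=\Phi(x,t)$. Directions $j\ne i$ still receive $a_j$ colours. Direction $i$ splits: exactly $b$ colours have $\partial_c(x)=i$ and $\delta_c(x)=1$, because $\sum_c\delta_c(x)=b$ and only colours in direction $i$ can have $\delta_c=1$. These $b$ colours go to the new direction, and the remaining $a_i-b$ stay in direction $i$. This matches the displayed multiplicity list, so the lifts factorize that multitorus.

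The only delicate point is choosing the shear $x_i-t$ rather than the naive product coordinates; without it a $\delta=1$ step would be a diagonal move. Once $\Phi$ is fixed, everything is routine checking. The condition $0<b<a_i$ is used only to make both new multiplicities positive, as the definition of $T_m$ requires.
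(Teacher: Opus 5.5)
Your proposal is correct and follows essentially the same route as the paper. The paper uses the same sheared identification $(x,t)\mapsto(x_1,\ldots,x_{i-1},x_i-t,t,x_{i+1},\ldots,x_n)$, the same three-case check that $\delta_c=0$ and $\delta_c=1$ steps become the two child directions, bijectivity from Lemma~\ref{lem:lift}, and \eqref{eq:physical-delta} for the multiplicities $a_i-b$ and $b$.
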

\begin{proof}
Identify the coordinates after splitting by
\begin{equation}\label{eq:sum-sheet}
 (x,t)\longmapsto
 (x_1,\ldots,x_{i-1},x_i-t,t,x_{i+1},\ldots,x_n).
\end{equation}
An old $i$-arc with $\delta_c=0$ increases the first child coordinate; one with
$\delta_c=1$ increases the second. Every other old arc remains a positive
coordinate arc. Equation~\eqref{eq:physical-delta} gives the two new
multiplicities, and Lemma~\ref{lem:lift} gives bijectivity. The lifted factors
therefore use the same colours and partition the new positive coordinate arcs.
\end{proof}

\subsection{The square torus}
A pair of opposite unit returns gives the elementary square construction
\cite[Theorem~4.1]{ParkVtwo}.

\begin{proposition}\label{prop:square}
For every $m\ge2$, $D_2(m)$ has a Hamilton decomposition.
\end{proposition}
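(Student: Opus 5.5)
We obtain $D_2(m)$ from the one-dimensional multitorus $T_m(2)$ by a single application of Lemma~\ref{lem:physical}, with $i=1$ and $b=1$. The voltages are chosen so that the two colour totals are the opposite units $1$ and $-1$. Lemma~\ref{lem:lift} then turns each single cycle downstairs into a Hamilton cycle upstairs.

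\emph{Starting factorization.} Take $n=1$, $a_1=2$, $d=2$. The multitorus $T_m(2)$ has vertex set $\Zm$ and two labelled arcs $x\to x+1$ at each vertex. Put
\[
 S_1=S_2=(x\mapsto x+1),\qquad \partial_1\equiv\partial_2\equiv 1 .
\]
This satisfies \eqref{eq:direction-quota}, and each $S_c$ is a single $m$-cycle on $\Zm$.

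\emph{Voltages.} Define $\delta_1(0)=1$ and $\delta_1(x)=0$ for $x\ne0$, and set $\delta_2=1-\delta_1$. Then $\delta_1(x)+\delta_2(x)=1=b$ at every source. Since every arc has direction $1=i$, the exclusion condition in \eqref{eq:physical-delta} is vacuous. Lemma~\ref{lem:physical} therefore shows that the lifts $\widehat S_1,\widehat S_2$ factorize $T_m(1,1)=D_2(m)$, via the identification \eqref{eq:sum-sheet}.

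\emph{Cycle count.} The unique cycle of $S_1$ has total voltage $K=1$. The unique cycle of $S_2$ has total voltage $K=m-1\equiv-1 \pmod m$. Both are units in $\Zm$ for every $m\ge2$; when $m=2$ they coincide, since $-1\equiv 1$. By Lemma~\ref{lem:lift}, each $\widehat S_c$ is a single cycle of length $m^2$ on $\Zm\times\Zm$. Hence the two colour classes are arc-disjoint directed Hamilton cycles covering all arcs of $D_2(m)$.

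Concretely, colour $1$ takes the $e_2$-step exactly on the sheet $x_1+x_2\equiv0$ and the $e_1$-step elsewhere, and colour $2$ does the reverse. There is no real obstacle here. The only point to check is that the sum-sheet identification \eqref{eq:sum-sheet} sends $\delta_c=1$ arcs to $e_2$ and $\delta_c=0$ arcs to $e_1$, and this is exactly the content of Lemma~\ref{lem:physical}.
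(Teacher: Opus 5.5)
Your proof is correct and is the paper's construction up to swapping the two coordinates: one colour takes one direction exactly on the sheet $x_1+x_2=0$ and the other direction elsewhere, the second colour does the reverse, and the two returns are the opposite units $\pm1$. The paper checks cyclicity directly with Lemma~\ref{lem:layer-return} in coordinates $(s,x)$, whereas you obtain the same construction as one coordinate split of $T_m(2)$ via Lemmas~\ref{lem:physical} and~\ref{lem:lift}; this is equivalent, since the base cycle of your lift is exactly the height cycle.
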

\begin{proof}
Put $s=x+y$. Let $F_0(x,y)$ use direction $e_1$ when $s=0$ and $e_2$
otherwise, and let $F_1(x,y)$ use the other direction.
At every source the two directions are used exactly once.
In coordinates $(s,x)$ the maps are
\[
 F_0(s,x)=(s+1,x+\one_{\{s=0\}}),\qquad
 F_1(s,x)=(s+1,x+1-\one_{\{s=0\}}).
\]
They are bijective between successive layers. After $m$ steps their
returns translate $x$ by $1$ and $-1$, respectively, both units modulo
$m$. Lemma~\ref{lem:layer-return} makes each a cycle of length $m^2$.
\end{proof}

\section{A short proof for odd moduli}\label{sec:odd}\label{sec:selection}
The coordinate split requires the following incidence selection.
Let $\cO$ be a finite column set. For each member $B$ of an indexed family
$\cB$, let $A_B\subseteq\cO$ have size $a_B\ge2$. There are $m$ identical copies,
or rows, of $A_B$, indexed by $t\in\Zm$. Set $b_B=\lfloor a_B/2\rfloor$.
The incidence graph has vertex classes $\cB$ and $\cO$, with an edge $B\lambda$
when $\lambda\in A_B$. All columns are included; a column absent from every
support is an isolated vertex.

\subsection{A three-row selection over the integers}
An incidence-tree construction, following~\cite{ParkShort}, confines
all column deviations to a fixed set of nonzero integers. Put
\[
 \mathcal P=\{-2,-1,1,2\}.
\]
For three subsets $J_B^0,J_B^1,J_B^*$ of $A_B$, each of size $b_B$, define
\begin{equation}\label{eq:integer-charge}
 D_\lambda=\sum_{B\in\cB}
 \bigl(\one_{\{\lambda\in J_B^0\}}+
       \one_{\{\lambda\in J_B^1\}}-2\one_{\{\lambda\in J_B^*\}}\bigr).
\end{equation}
We regard each $D_\lambda$ as an integer.

\begin{theorem}[Three-row integer selection]\label{thm:integer-selector}
Suppose that every column belongs to at least one support. There are
three subsets $J_B^0,J_B^1,J_B^*\subseteq A_B$ of size
$\lfloor |A_B|/2\rfloor$ for every block $B$ such that
$D_\lambda\in\mathcal P$ for every column $\lambda$.
The bound $|D_\lambda|\le2$ is best possible if zero charges are forbidden.
\end{theorem}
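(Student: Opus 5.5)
The plan is to treat each block as a local charge vector and solve the problem by a rooted-tree induction on a spanning forest of the incidence graph.

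\emph{Realizable block charges.} For a block $B$, write $f_B=\one_{J_B^0}+\one_{J_B^1}-2\,\one_{J_B^*}$, so that $D_\lambda=\sum_B f_B(\lambda)$ and $\sum_\lambda f_B(\lambda)=2b_B-2b_B=0$. An element lying in all three sets contributes $1+1-2=0$. Such \emph{neutral} elements can therefore absorb any unused slots. Some small patterns are realized as follows:
\begin{itemize}
\item $(1,-1)$ on $\{u,v\}$: take $u\in J^0$ and $v\in J^1\cap J^*$;
\item $(2,-2)$: take $u\in J^0\cap J^1$ and $v\in J^*$;
\item $(1,1,-2)$: take $u\in J^0$, $v\in J^1$ and $w\in J^*$.
\end{itemize}
The first step is a lemma. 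It says that every integer vector $f$ on $A_B$ with values in $\{-2,\dots,2\}$, total $0$, and support confined to a designated set of at most $|A_B|$ elements is of the form $f_B$ for sets of size $b_B$. I would prove it by decomposing $f$ into the elementary patterns above. Each pattern uses at most one slot of each of $J^0,J^1,J^*$ per element it charges, and the remaining slots are padded with neutral elements or elements left in no set. This slot count is the first place I expect real work; if the general lemma fails for odd $|A_B|$, I would restrict to the patterns actually produced below.

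\emph{Tree induction.} In each component of the incidence graph choose a spanning tree rooted at a column. Every non-root column $\lambda$ then has a parent block, and every block has a parent column. A column of $A_B$ that is not a tree-neighbour of $B$ receives $f_B=0$.

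Process the blocks bottom-up. When $B$ is processed, each child column $\lambda$ already has a fixed sum $s_\lambda$ from its own child blocks. The invariant will be $s_\lambda\in\{-1,0,1\}$. The admissible values $e$ with $s_\lambda+e\in\mathcal P$ are:
\begin{itemize}
\item $e\in\{1,-2\}$ if $s_\lambda=1$;
\item $e\in\{-1,2\}$ if $s_\lambda=-1$;
\item $e\in\{\pm1,\pm2\}$ if $s_\lambda=0$.
\end{itemize}
In every case there is a negative option $n_\lambda$ and a positive option $n_\lambda+3$.

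Start with every child at its negative option and flip children to the positive option one at a time. This moves the total over the children by steps of $3$, from a value $\le-1$ to a value $\ge1$. Hence some intermediate total $\tau$ lies in $\{-1,0,1\}$, and the parent column receives $r_B=-\tau$, so that $f_B$ sums to zero. The realizability lemma applies to $f_B$, since its support is the children plus the parent, at most $|A_B|$ elements.

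The invariant at the parent needs one more adjustment. Flipping the order in which children change sign, or using the spare options $\pm1$ versus $\pm2$ of children with $s_\lambda=0$ (shifts by $1$), lets us choose $r_B$ with prescribed sign. This lets a column alternate the signs of its child contributions, which keeps $s\in\{-1,0,1\}$.

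\emph{Root and sharpness.} The root column has no parent, so its total is its $s$. I expect the main obstacle here: the root must not end with $s=0$, while at most one child block may contribute $\pm2$ to it. My first attempt is to give one child block of the root the target window $\{-2,-1,1,2\}$ instead of $\{-1,0,1\}$. If the step-$3$ sequence lands on $0$, use a leaf-like child with $s=0$ (one exists whenever the block has a child whose subtree ends at a single block, e.g.\ a deepest child) to shift by $\pm1$. Failing that, I would choose the root to be a column lying in exactly one block, or handle a one-block component directly.

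For sharpness, take a single block with $|A_B|=3$. Charges in $\{-1,1\}$ on three columns cannot sum to $0$, so some $|D_\lambda|\ge2$ whenever zero charges are excluded.
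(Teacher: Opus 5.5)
There is a genuine gap, and it lies in the realizability lemma that the rest of your argument relies on. The lemma is false. A column can be negatively charged only if it lies in $J_B^*$, and a member of $J_B^*$ has charge $0$ only if it lies in all three sets. Hence every realizable $f_B$ satisfies
\[
 n_-\le b_B\le n_-+n_0,
\]
where $n_-$ and $n_0$ count the negative and zero entries of $f_B$ on $A_B$; together with total zero, this condition is also sufficient. For example, $(2,-1,-1)$ on a three-column block is not realizable.

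Your fallback of restricting to ``the patterns actually produced'' does not help, because step-$3$ flipping controls only the total and not $n_-$ or $n_0$. Suppose $A_B=\{p,c_1,c_2\}$ with parent $p$ and $s_{c_1}=s_{c_2}=-1$. This happens when each $c_i$ has one child block $\{c_i,w_i\}$ whose leaf $w_i$ was charged $+1$. Then your options $\{-1,2\}$ force $f_B=(-1,2,-1)$ on $(p,c_1,c_2)$, up to swapping $c_1,c_2$. This needs two elements of $J_B^*$, whereas $b_B=1$.

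Larger blocks violate both sides of the inequality.
\begin{itemize}
\item A block consisting of its parent and six children with $s_\lambda=-1$ gets $\tau=0$ and four negative entries, against $b_B=3$.
\item A block consisting of its parent and five children with $s_\lambda=0$, using the options $(-2,1)$, gets children $(1,1,1,-2,-2)$ and parent $+1$. One slot of $J_B^*$ then has no zero column to fill it. The options $(-1,2)$ instead give four negative entries.
\end{itemize}

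Two further steps are unsupported. First, the sign of $r_B$ cannot in general be prescribed. If all $k$ children have $s_\lambda=1$, the children's total is $\equiv k\pmod 3$ for every flip order, so $\tau$ is forced. Second, the root case is left as a list of attempts.

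The paper never asks a block to realize a general vector. It uses only two local patterns:
\begin{itemize}
\item the pairing pattern on a set $T\subseteq A_B$, with pairs charged $(1,-1)$, at most one triple charged $(1,1,-2)$, and identical fillers on all three rows, whose only requirement is $\lceil a_B/2\rceil\ge\lceil |T|/2\rceil$;
\item the two-column transfers $u(\varepsilon_q-\varepsilon_p)$ with $u\in\mathcal P$.
\end{itemize}
It then works top-down on a minimal tree through all columns of a component, in which no block is a leaf. One block at the root applies the pairing pattern to the root together with its children. Every other block with at least two children applies it to its children alone, so it contributes $0$ to its parent. At each column $y$, the charge $r\in\mathcal P$ received from its parent is written as $u_1+\cdots+u_t+v$ with all terms in $\mathcal P$, using $1=2+(-1)$, $-1=(-2)+1$, $2=1+1$, $-2=(-1)+(-1)$. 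The $i$-th single-child block below $y$ then performs the transfer $u_i(\varepsilon_{z_i}-\varepsilon_y)$.

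Passing a palette value downward, rather than collecting charges upward, is the idea your proposal is missing. Each block realizes a pattern whose quotas are checked once, and the root charge is nonzero automatically.
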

\begin{proof}
We first give three local constructions on a support $A$ of size $a$,
with row quota $b=\lfloor a/2\rfloor$.

A constant choice in all three rows has charge zero. Next, for any
$T\subseteq A$ of size at least two, pair its elements, leaving one triple
when $|T|$ is odd. On a pair $(p,q)$, choose $p$ in row $0$ and $q$ in
rows $1,*$, giving charges $(1,-1)$. On a triple $(p,q,r)$, choose one
element in each of rows $0,1,*$, giving $(1,1,-2)$. Each row uses
$\lfloor |T|/2\rfloor$ elements of $T$. Fill each row with the same
$b-\lfloor |T|/2\rfloor$ elements of $A\setminus T$; this is possible since
\[
 (a-|T|)-\bigl(b-\lfloor |T|/2\rfloor\bigr)
 =\lceil a/2\rceil-\lceil |T|/2\rceil\ge0.
\]
Thus all columns of $T$ have charges in $\mathcal P$ and all other
columns have charge zero.

Finally, for distinct $p,q\in A$ and $u\in\mathcal P$, we can realize
$u(\varepsilon_q-\varepsilon_p)$. For $u=1$, choose $(q,p,p)$ in rows
$(0,1,*)$; for $u=2$, choose $(q,q,p)$. Reverse $p,q$ for negative $u$,
and fill every row with the same $b-1$ other columns.

Work in one incidence component. Choose a minimal connected subgraph
containing all its column vertices. It is a tree, and no block vertex is
a leaf. Root it at a column $\rho$. Every block $B$ has a parent column
$p_B$ and a nonempty set $C_B$ of child columns. Choose one block $B_0$
adjacent to $\rho$. At $B_0$, apply the construction with
$T=\{\rho\}\cup C_{B_0}$; at every other block with at least two
children, apply it with $T=C_B$. Use the zero construction outside the tree.
Leave the remaining blocks, each with one child column, unassigned.

Process the column vertices from the root outwards. At a column $y$,
its parent block has supplied a charge $r\in\mathcal P$; at the root this
is supplied by $B_0$. All other already assigned child blocks contribute
zero at $y$. If $t$ child blocks of $y$ remain unassigned, each with
one child column, choose
$u_1,\ldots,u_t,v\in\mathcal P$ with
\[
 r=u_1+\cdots+u_t+v.
\]
Each element of $\mathcal P$ is a sum of two elements of $\mathcal P$, by
\begin{equation}\label{eq:palette-splitting}
 1=2+(-1),\qquad -1=(-2)+1,\qquad
 2=1+1,\qquad -2=(-1)+(-1).
\end{equation}
At the block with sole child $z_i$, use the transfer
$u_i(\varepsilon_{z_i}-\varepsilon_y)$. The final charge at $y$ is $v$,
and each $z_i$ receives $u_i\in\mathcal P$, so the construction
continues by induction down the tree. Entries outside the indicated
tree columns contribute zero. Thus the additional edges of the original
incidence graph leave every final deviation in $\mathcal P$.

For sharpness, take a single three-column block. Its three integer
charges sum to zero, because all three row quotas agree. Three values
from $\{-1,1\}$ cannot sum to zero.
\end{proof}

\begin{corollary}[Four-valued odd-modulus selection]\label{cor:odd-selector}
For every odd $m\ge3$, choose $J_{B,0}=J_B^0$, $J_{B,1}=J_B^1$, and
$J_{B,t}=J_B^*$ for $2\le t<m$. Every row has quota $b_B$, and every column
total is a unit modulo $m$, in fact one of $\pm1,\pm2$ modulo $m$.
\end{corollary}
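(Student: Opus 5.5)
We take the three subsets $J_B^0,J_B^1,J_B^*$ supplied by Theorem~\ref{thm:integer-selector} and place them in rows as prescribed: $J_B^0$ in row $0$, $J_B^1$ in row $1$, and $J_B^*$ in each of the $m-2$ rows $t=2,\ldots,m-1$. There are two things to check: each row meets the quota, and each column total is a unit.

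The row quota is immediate. Each of the three subsets has size $b_B=\lfloor |A_B|/2\rfloor$, so every row $J_{B,t}$ has exactly $b_B$ elements of $A_B$.

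For the column totals, fix a column $\lambda$ and count its selections over all blocks and rows:
\[
 N_\lambda=\sum_{B}\Bigl(\one_{\{\lambda\in J_B^0\}}+\one_{\{\lambda\in J_B^1\}}
   +(m-2)\one_{\{\lambda\in J_B^*\}}\Bigr).
\]
Since $m-2\equiv-2\pmod m$, this gives $N_\lambda\equiv D_\lambda\pmod m$, where $D_\lambda$ is the charge in \eqref{eq:integer-charge}. By Theorem~\ref{thm:integer-selector}, $D_\lambda\in\{-2,-1,1,2\}$. Because $m$ is odd, $\gcd(1,m)=\gcd(2,m)=1$, so each of these four values is a unit modulo $m$.

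For $m=3$ there is only one starred row, and the same congruence applies. The four residues $\pm1,\pm2$ are all nonzero and distinct once $m\ge5$. When $m=3$ they collapse to $\{1,2\}$, but they remain units.

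The one hypothesis that needs attention is that the theorem requires every column to lie in some support. In the intended application every colour--cycle label appears at some block, so this holds. Otherwise the corollary should be read as applying to columns covered by the supports; an isolated column has total $0$ and cannot be a unit. This bookkeeping is the only step that needs care; the rest is the congruence $m-2\equiv-2$.
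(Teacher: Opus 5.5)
Your proof is correct and is essentially the paper's argument. The paper writes the column total as $m\sum_B \one_{\{\lambda\in J_B^*\}}+D_\lambda$, which is your congruence $m-2\equiv-2 \pmod m$, and then uses that $1$ and $2$ are units for odd $m$. Your caveat that every column must be covered is the hypothesis inherited from Theorem~\ref{thm:integer-selector}; the paper checks it in the application (Theorem~\ref{thm:oddmod}).
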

\begin{proof}
The total at column $\lambda$ is
\begin{equation}\label{eq:integer-to-modular}
 \sum_{B,t}\one_{\{\lambda\in J_{B,t}\}}
 =m\sum_B\one_{\{\lambda\in J_B^*\}}+D_\lambda.
\end{equation}
Since both $1$ and $2$ are units at odd modulus, the integer
construction applies to every odd $m$.
\end{proof}

\begin{remark}[Row sizes and residue constraints]\label{rem:non-saturation}
The selection theorem finds unit column totals subject to fixed row
sizes. For example, three quota-one rows on a three-column support have
nonnegative column counts summing to three. The unit profile $(1,1,1)$
is realized by choosing a different column on each row, whereas the
zero-sum residue profile $(2,2,2)$ modulo three would require at least
six selections. The row sizes thus distinguish realizable unit profiles
from the larger set allowed by the total congruence.
\end{remark}

\subsection{Repeated coordinate splitting}\label{sec:direct}
In the one-coordinate multitorus $T_m(d)$, each colour uses a labelled
copy of the positive $m$-cycle, and the whole vertex set forms one
cycle-consistent block. Each split adds one coordinate and decreases
$\sum_i(a_i-1)$ by one.

\begin{theorem}[Odd moduli]\label{thm:oddmod}
For every odd $m\ge3$ and every $d\ge2$, $D_d(m)$ has a Hamilton decomposition.
\end{theorem}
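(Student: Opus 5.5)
We argue by induction along a sequence of coordinate splits. The invariant is that the current multitorus carries a permutation factorization together with a cycle-consistent partition of its vertex set. Start from $T_m(d)$: each colour $c$ uses its labelled copy of the positive $m$-cycle, so $S_c$ is a single $m$-cycle, and the whole vertex set $\Zm$ is one cycle-consistent block on which every $\partial_c$ is constant. Whenever some multiplicity satisfies $a_i\ge2$, we split it into $a_i-b$ and $b$ with $b=\lfloor a_i/2\rfloor$ using Lemma~\ref{lem:physical}. Each split lowers $\sum_i(a_i-1)$ by one, so after $d-1$ splits we reach $T_m(1,\ldots,1)=D_d(m)$. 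The invariant to carry through the induction is stronger than a bare factorization. We maintain the following.
\begin{enumerate}
\item Every $S_c$ is a single cycle, so $\cO$ has exactly $d$ columns $(c,C_c)$.
\item The vertex set is partitioned into cycle-consistent blocks of size $m$.
\end{enumerate}
At the end, (i) is precisely a Hamilton decomposition.

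For the split of direction $i$, the voltages are set up as follows. For each block $B$, let $A_B$ be the set of colours $c$ with $\partial_c\equiv i$ on $B$. This set has size $a_i\ge2$, and its elements are distinct columns because each colour has a single cycle. The $m$ vertices of $B$ serve as the $m$ rows. Blocks with $\partial_c\ne i$ for every colour need no selection. Every column lies in some support, since each colour takes direction $i$ somewhere: on its unique cycle it visits all vertices, and direction $i$ is used at every vertex. Apply Theorem~\ref{thm:integer-selector} and Corollary~\ref{cor:odd-selector}, indexing the rows of $B$ by its $m$ vertices in any bijection with $\Zm$. At the vertex of $B$ playing row $t$, set $\delta_c=1$ exactly for $c\in J_{B,t}$. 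This gives $b=\lfloor a_i/2\rfloor$ ones per vertex, all supported on colours with $\partial_c=i$, so \eqref{eq:physical-delta} holds. The total voltage on the unique cycle of $S_c$ is the column total, which is one of $\pm1,\pm2$ modulo $m$ and hence a unit for odd $m$. Lemma~\ref{lem:lift} then makes each $\widehat S_c$ a single cycle, re-establishing (i).

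The main point to check is re-establishing (ii), because the selection just made distinguishes vertices inside a block, so the old blocks are no longer cycle-consistent for the new directions. The plan is to take as new blocks the fibres $\{(x,t):t\in\Zm\}$ over single old vertices $x$. On such a fibre the new direction of colour $c$ depends only on $\partial_c(x)$ and $\delta_c(x)$, not on $t$, by the identification \eqref{eq:sum-sheet}. The colour--cycle labels are trivially constant, since each colour is one cycle. Each fibre has size $m$, so the invariant persists. Cycle-consistency then needs only that direction functions are constant on blocks, because labels become trivial once every colour is Hamilton. With that observation the induction closes, and after $d-1$ splits we obtain the Hamilton decomposition of $D_d(m)$ for all odd $m\ge3$ and $d\ge2$. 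Theorem~\ref{thm:integer-selector} requires $a_B\ge2$, which holds because we split only when $a_i\ge2$.
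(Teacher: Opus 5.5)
Your argument is the paper's proof. It uses the same invariant (a Hamilton factorization with cycle-consistent blocks of size $m$), the same use of Theorem~\ref{thm:integer-selector} through Corollary~\ref{cor:odd-selector} to obtain column totals $\pm1,\pm2$, and the same re-blocking by the fibres over old sources.

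One justification needs repair. You say each colour visits every vertex and direction $i$ is used at every vertex. That does not show colour $c$ itself ever takes direction $i$, because the $a_i$ arcs of direction $i$ at a vertex may all carry other colours. The correct reason, as in the paper, is this: if $\partial_c(x)\ne i$ for every $x$, the $i$-th coordinate is invariant under $S_c$, so $S_c$ cannot be a single cycle on $\Zm^n$.

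The point is not cosmetic. A colour that never uses direction $i$ would receive $\delta_c\equiv0$ and hence total voltage $0$. By Lemma~\ref{lem:lift} its lift would then split into $m$ cycles.

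Your remark about blocks in which no colour uses direction $i$ is vacuous, since every source has exactly $a_i$ such colours.
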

\begin{proof}
More generally, suppose a multitorus has a Hamilton factorization with
cycle-consistent blocks of size $m$. Fix a direction $i$ of multiplicity
$a\ge2$. In each block, take the support of colours using direction $i$.
Every colour occurs in such a support: a Hamilton cycle cannot avoid a
coordinate direction, since it would keep that coordinate constant.
Corollary~\ref{cor:odd-selector} selects $\lfloor a/2\rfloor$ colours at
every source and gives every colour unit total voltage. Use its binary
indicators as $\delta_c$ in Lemma~\ref{lem:physical}. The split is a
factorization, and Lemma~\ref{lem:lift} makes every lifted factor Hamilton.

The new fibres over individual old sources are blocks of size $m$.
Their direction assignments are independent of the new fibre label;
they lie in a single cycle of every lifted colour. Thus the same
hypothesis holds after the split. Starting from $T_m(d)$, repeat exactly
$d-1$ times to obtain $T_m(1,\ldots,1)=D_d(m)$.
\end{proof}

Repeated splitting therefore gives every odd-modulus decomposition
from the one-coordinate factorization. At even modulus, the same
operation leads to the incidence parity condition considered next.

\section{Incidence parity at even modulus}\label{sec:even-selection}
Retain the incidence system and half quotas of Section~\ref{sec:odd}.
At even modulus, unit column totals are odd, whereas each block contributes
an even total. The condition must therefore be checked separately in
each incidence component, with all columns included.

A family of nonempty subsets is \emph{intersection-connected} if its
intersection graph is connected. We require this property separately within each
block, for the selected row supports and for their complements, whenever the
corresponding sets have size at least two.

\subsection{Selection with exclusions and connectivity}

\begin{proposition}[Connected selection with prescribed exclusions]\label{prop:pinned-selection}
Let $m\ge4$ be even, and let $\cO_\mathrm{dist}\subseteq\cO$ be a set of
distinguished columns satisfying
$|A_B\cap\cO_\mathrm{dist}|\le1$ for every $B$. The following are equivalent.
\begin{enumerate}
\item Each component of the incidence graph contains an even number of column
vertices.
\item One can choose $J_{B,t}\subseteq A_B$, with $|J_{B,t}|=b_B$, so that every
column total is $1$ or $-1$ modulo $m$, no distinguished column belongs to $J_{B,0}$,
and, for each $B$, both families
$\{J_{B,t}:t\in\Zm\}$ and $\{A_B\setminus J_{B,t}:t\in\Zm\}$ are
intersection-connected whenever their members have size at least two.
\end{enumerate}
\end{proposition}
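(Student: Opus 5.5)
The implication (ii)$\Rightarrow$(i) is a parity count. Fix an incidence component $K$ and sum the column totals over its column vertices. Every support $A_B$ with $B$ in $K$ lies inside $K$, so this sum equals $\sum_{B\in K}\sum_{t\in\Zm}|J_{B,t}|=m\sum_{B\in K}b_B$. Since $m$ is even, the sum is even. On the other hand, each column total is congruent to $\pm1$ modulo the even number $m$, so each total is odd. Isolated columns receive total $0$, which is not $\pm1$ modulo $m\ge4$, so (ii) already excludes them. Hence the number of columns in $K$ is even.

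For (i)$\Rightarrow$(ii) I would adapt the proof of Theorem~\ref{thm:integer-selector} to deviations in $\{-1,1\}$ instead of $\mathcal P$. The plan is to use four rows with roles $0,1,2,*$: rows $0$ and $1$ carry the exchanges, row $*$ is repeated on the remaining $m-2\ge2$ rows, and row $2$ is spare. Write the integer charge as $D_\lambda=\sum_B(\one_{\lambda\in J_B^0}+\one_{\lambda\in J_B^1}-2\cdot\one_{\lambda\in J_B^*})$, as in the odd case. Then the column total is $m\cdot(\text{count in }J^*)+D_\lambda$, and $D_\lambda=\pm1$ gives a total of $\pm1$ modulo $m$.

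The local moves are these. A pair $(p,q)$ chosen as $p$ in row $0$ and $q$ in rows $1,*$ gives charges $(1,-1)$. A transfer $\pm(\varepsilon_q-\varepsilon_p)$ with $u=\pm1$ is realized as $(q,p,p)$ in rows $(0,1,*)$. Exactly as before, each row is filled with common extra columns.

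The global step runs in each component. Take a minimal tree containing all column vertices of the component and root it at a column $\rho$. Process columns from the root outwards, as in the earlier proof, but now use the splitting $r=u_1+\dots+u_t+v$ with all terms in $\{\pm1\}$. This splitting fails for $t=1$: the sum of two terms from $\{\pm1\}$ is even, while $r$ is odd. So parity must be handled differently, by matching. Pair up the column vertices of each tree so that each pair is joined by a path. This is possible because the component has an even number of columns: orient the tree and take the standard $T$-join, whose edges are those with an odd number of columns below them. Each tree edge is then used by a parity-determined number of paths. Along the $T$-join, each block vertex meeting it has an even number of incident $T$-join edges. Assign at each such block one transfer $\pm(\varepsilon_q-\varepsilon_p)$ for each pair of its $T$-join neighbours, and superpose them. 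The superposition must stay within $\{\pm1\}$ at each column. Summing $\pm1$ transfers over the $T$-join, each column has odd $T$-join degree, and the signs can be chosen alternately so that the net deviation is $\pm1$. This is the step I expect to be the main obstacle: several transfers land in one block, and the row quotas force combining pairs within one block, which uses the pair construction on a set $T$ of even size.

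The side conditions come next. For the exclusions, each block meets at most one distinguished column $\delta$, so in each local move I would orient the pair so that $\delta$ never lies in row $0$. This is done by swapping the roles of $p$ and $q$, which flips the sign, and compensating by flipping the sign choice at the root of that subtree. The filler columns in row $0$ are chosen outside $\{\delta\}$. This is possible unless $b_B$ forces otherwise, and I would check the small cases $a_B=2,3$ by hand. For intersection connectivity, I would use the spare rows: the $m-2$ copies of row $*$, together with row $2$, can be chosen as "bridging" sets that each share $b_B-1$ elements with both row $0$ and row $1$. They are arranged so that rows $0,1,2,*$ form a connected intersection graph, and likewise for the complements, with $D_\lambda$ unchanged modulo $m$ because the bridging changes are balanced in pairs of rows. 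When $b_B=1$ or $a_B-b_B=1$ the condition is vacuous for that family.
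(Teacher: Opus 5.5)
Your necessity argument matches the paper's. So does the core of your sufficiency argument: a parity join on a spanning tree with odd degree exactly at the columns, local pairs at each block, an orientation of the resulting odd-degree column multigraph with divergence $\pm1$ at every column, and realization of an oriented pair by one endpoint on a single row and the other on the remaining $m-1$ rows. You only assert the orientation step; the paper obtains it by joining one new vertex to every vertex of each component and orienting an Euler circuit.

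The gap is in the two side conditions, which are exactly what this proposition adds to Corollary~\ref{cor:selection-criterion}. Your exclusion device---reversing a pair so that the distinguished column is never the row-$0$ endpoint---forces orientations. These can contradict balance at the distinguished column itself, and no compensation elsewhere in the tree repairs that. Take columns $\delta,x_1,x_2,x_3$, blocks with $A_{B_i}=\{\delta,x_i\}$ for $i=1,2,3$, and $\cO_\mathrm{dist}=\{\delta\}$. Condition (i) holds, but your rule places $\delta$ on the rows-$1,*$ side in all three blocks. Then $D_\delta=-3$, and the total at $\delta$ is $3(m-1)\equiv-3\not\equiv\pm1\pmod m$ for every $m\ge6$.

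The missing observation is that the exceptional row of an oriented pair can be any row without changing its residues $\pm1$. So keep the balanced orientation and choose the row instead. Use row $0$ when the distinguished column is the tail, since it then lies only on the other $m-1$ rows. Use row $1$ when it is the head; in your notation, head on row $1$ and tail on rows $0,*$.

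The same freedom supplies connectivity, which your proposal does not actually construct. If every exceptional choice is on row $0$ and no column is selected on every row, then row $0$ consists of heads and every other row of tails. The selected family is then disconnected whenever $b_B\ge2$. In that situation your bridging rows cannot exist for $b_B\ge3$: a set of size $b_B$ sharing $b_B-1$ elements with each of two disjoint $b_B$-sets requires $b_B\le2$. Moreover, modifying the $m-2$ copies of row $*$ alters the totals in a way you do not control.

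The paper sends two pairs to exceptional rows $0$ and $1$, with the distinguished pair taking the row it needs, and every further pair to one of these two rows. Row $2$, which in your accounting is just a copy of row $*$, then contains the tail of every pair. Row $0$ meets it in the tail of the pair exceptional at row $1$, row $1$ meets it in the tail of the pair exceptional at row $0$, and all remaining rows coincide with row $2$. The heads do the same for the complementary family. A filler column selected on every row, or unselected on every row, connects the corresponding family directly.
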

\begin{proof}
For necessity, sum the selected entries in an incidence component. Each block
contributes $mb_B$, an even number, while every column contributes an odd number.

For sufficiency, we first choose and orient pairs of columns, then realize
each oriented pair by a row selection. The choice of exceptional rows will
provide the two connectivity conclusions.

\emph{Parity and orientation.}
Take a spanning tree in each component of the incidence graph. Select its
edges so that column vertices have odd selected degree and block vertices
have even selected degree. To construct this selection, remove leaves
successively, selecting a leaf's parent edge exactly when its remaining
parity requirement is odd, and updating the requirement at its parent.
The last requirement is zero because the component has an even number of
column vertices.
Pair the selected incidences at each block. Since each column occurs at most
once in that block, these are disjoint local pairs. Across all blocks, they
define a multigraph on $\cO$ in which every vertex has odd degree; retain the
block label on each edge.

For each connected component of this multigraph, introduce one new vertex
and join it once to every vertex of that component. The component has an
even number of vertices by the handshaking lemma, so the new vertex and all
old vertices have even degree. Orient an Euler circuit of the enlarged
component and remove the new edges. At every column, the remaining indegree
minus outdegree is $+1$ or $-1$. This orients all local pairs while retaining
their block labels.

\emph{Row quotas and column totals.}
In a block with $a$ columns, write $s$ for the number of local pairs and put
$b=\lfloor a/2\rfloor$. For a directed pair $u\to v$, choose an exceptional
row $\eta$: select $v$ on row $\eta$ and $u$ on each of the other $m-1$ rows.
The pair contributes exactly one selected entry to every row and contributes
$-1,+1$ modulo $m$ at $u,v$, respectively. Thus the column totals of all
pairs equal the oriented divergences modulo $m$.
Choose a further $b-s$ unpaired columns on every row. If the distinguished
column is unpaired, exclude it from this fixed choice. There are enough
columns: for $a=2b$, an unpaired distinguished column gives $s\le b-1$ and
\[
 a-2s-1\ge b-s;
\]
for $a=2b+1$, the same inequality follows from $s\le b$. A paired
distinguished column is already outside the unpaired set. The resulting
rows have size $b$, and every fixed selected column contributes zero modulo
$m$.

\emph{Exclusions and connectivity.}
If the distinguished column is the first endpoint of a pair, put that
pair's exceptional row at $0$; if it is the second endpoint, put the
exceptional row at $1$. When $s\ge2$, assign a second pair to the other of
rows $0,1$, and any remaining pairs to the first of these rows. With no
distinguished pair, use row $0$ for a sole pair; for $s\ge2$, assign the
first two pairs to rows $0,1$ and the rest to row $0$. If $s=0$, all rows
are constant. The distinguished column is excluded from row $0$ in each case.

Row $2$ contains the first endpoint of every pair. A fixed selected column
connects all selected supports. If there is no such column and $b\ge2$,
then $s=b\ge2$. Each exceptional row shares an endpoint with row $2$
through a pair whose exceptional row is the other of $0,1$. All other rows
agree with row $2$, proving intersection connectivity.
For complements, a fixed unselected column again connects all rows.
Otherwise every unpaired column was selected, so the complementary size
is $s$; if this size is at least two, the same argument applies to the
second endpoints on row $2$. Both row families therefore have the required
connectivity, and the column totals are the divergences $\pm1$ modulo $m$.
\end{proof}

\begin{corollary}[Parity criterion for unit column sums]\label{cor:selection-criterion}
Assume every column is covered, all supports have size at least two, and
all row quotas are half quotas. For $m\ge3$, a unit-total selection exists
if and only if either $m$ is odd or every incidence component has an even
number of columns.
\end{corollary}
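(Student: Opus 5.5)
The corollary should follow directly from the two selection results already proved, split according to the parity of $m$. Here a ``unit-total selection'' means a choice of $J_{B,t}\subseteq A_B$ with $|J_{B,t}|=b_B=\lfloor a_B/2\rfloor$ for every $B$ and $t\in\Zm$, such that every column total $\sum_{B,t}\one_{\{\lambda\in J_{B,t}\}}$ is a unit modulo $m$. The hypotheses match the setting of Section~\ref{sec:odd}: every column is covered, and all supports have size at least two.

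\emph{Odd $m$.} Here a unit-total selection always exists, so the criterion holds trivially. Since every column belongs to a support, Theorem~\ref{thm:integer-selector} applies. Corollary~\ref{cor:odd-selector} then uses $J_B^0$ on row $0$, $J_B^1$ on row $1$, and $J_B^*$ on the remaining rows. By \eqref{eq:integer-to-modular}, each column total is congruent to $D_\lambda\in\{\pm1,\pm2\}$ modulo $m$. Both $1$ and $2$ are units when $m$ is odd. No parity condition is needed in this case.

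\emph{Even $m$, necessity.} Fix an incidence component and sum all its selected entries. Each block contributes $m b_B$ entries, which is even. Each column in the component must have a unit total modulo the even number $m$, hence an odd total. The two counts agree, so the component has an even number of columns. This is the necessity half of Proposition~\ref{prop:pinned-selection}, and the argument works for every even $m$, including $m=2$.

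\emph{Even $m$, sufficiency.} For even $m\ge4$, apply Proposition~\ref{prop:pinned-selection} with $\cO_\mathrm{dist}=\emptyset$. The exclusion hypothesis then holds vacuously, and the implication (i)$\Rightarrow$(ii) gives column totals $\pm1$ modulo $m$, which are units. The connectivity conclusions are simply discarded.

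The one point that needs checking is the range of even $m$. The corollary assumes $m\ge3$, so every even $m$ in its scope satisfies $m\ge4$ and the proposition applies. If $m=2$ were allowed, the oriented-pair construction (one row of $v$ and $m-1$ rows of $u$) would still work, since it only needs totals $\pm1\equiv1$. So even that case would cause no difficulty, but it is not required. I do not expect a genuine obstacle: the corollary amounts to combining the two previous results according to the parity of $m$.
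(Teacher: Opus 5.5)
Your proposal is correct and follows essentially the paper's own proof. For odd $m$ you invoke Corollary~\ref{cor:odd-selector}. For even $m$ you obtain necessity by summing the odd column totals against the even block contributions $mb_B$ in each component, and sufficiency from Proposition~\ref{prop:pinned-selection} with no distinguished columns, using that $m\ge3$ forces $m\ge4$.
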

\begin{proof}
Odd $m$ is Corollary~\ref{cor:odd-selector}. At even $m$, necessity follows
by summing the odd column totals in each component, and sufficiency is
Proposition~\ref{prop:pinned-selection} with no distinguished columns.
\end{proof}

\begin{theorem}[Sharp nonzero-deviation dichotomy]\label{thm:radius-dichotomy}
Assume that $\cO\ne\varnothing$, every column is covered, and every
support has size at least two. Among all three half-quota row profiles
for which every integer deviation $D_\lambda$ is nonzero, the minimum
of $\max_\lambda|D_\lambda|$ equals
\[
 \begin{cases}
 1,&\text{if every incidence component has an even number of columns},\\
 2,&\text{otherwise}.
 \end{cases}
\]
\end{theorem}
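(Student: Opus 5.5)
The proof has four parts: an upper bound of $2$ in general, a parity obstruction to all deviations being $\pm1$, an even-case construction with deviations $\pm1$, and a check that $0$ is never achieved. Theorem~\ref{thm:integer-selector} already gives deviations in $\mathcal P=\{-2,-1,1,2\}$ for every covered incidence system, so the minimum is always at most $2$. Since zero deviations are forbidden, the minimum is at least $1$. It therefore suffices to show that deviations all in $\{-1,1\}$ are achievable exactly when every incidence component has an even number of columns.

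\emph{Necessity.} Fix an incidence component and sum $D_\lambda$ over its columns. Each block $B$ lies entirely in one component. Its contribution is $|J_B^0|+|J_B^1|-2|J_B^*|=b_B+b_B-2b_B=0$, because all three row quotas are equal. Hence the column deviations in a component sum to zero. If every $D_\lambda\in\{-1,1\}$, then zero is a sum of an odd number of odd integers whenever the component has an odd number of columns, which is impossible. So the odd case forces $\max|D_\lambda|\ge2$, and the value $2$ is attained by Theorem~\ref{thm:integer-selector}.

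\emph{Sufficiency.} Here I would reuse the parity-and-orientation stage of Proposition~\ref{prop:pinned-selection}, with no distinguished columns. It produces, in each block, disjoint oriented local pairs $u\to v$ such that every column has net divergence $\pm1$ over all pairs. Every column is covered and, by the even-component hypothesis, has odd selected degree, so no column is left without a pair. The realization step is replaced by the integer three-row gadget from the proof of Theorem~\ref{thm:integer-selector}. For a pair oriented $u\to v$, choose $v$ in row $0$ and $u$ in rows $1$ and $*$; this gives charges $+1$ at $v$ and $-1$ at $u$. Each pair contributes exactly one element to every row. The rows are then filled with the same $b_B-s_B$ unpaired columns, which is possible since $2s_B\le a_B$. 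These common fillers contribute zero to every deviation.

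The step that needs care is confirming that, over the integers, each $D_\lambda$ equals the oriented divergence of $\lambda$ exactly, not merely modulo $m$. This holds because each pair gadget's integer charge is precisely $\varepsilon_v-\varepsilon_u$, and the contributions are additive over blocks and pairs. Since each such divergence is $\pm1$, the maximum is $1$ in the even case, and by necessity $2$ in the odd case.
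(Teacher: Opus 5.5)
Your proof is correct and follows the paper's own argument essentially step for step. The steps match: the zero-sum obstruction within each incidence component; the oriented local pairs from the parity-and-orientation stage of Proposition~\ref{prop:pinned-selection}, realized by the three-row pattern with exact integer charge $\varepsilon_v-\varepsilon_u$ and common fillers of deviation zero; and Theorem~\ref{thm:integer-selector} for the general upper bound of two.
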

\begin{proof}
In each incidence component the sum of the deviations is zero, since
each block contributes $b_B+b_B-2b_B=0$. An odd number of values from
$\{-1,1\}$ cannot have sum zero, proving the obstruction to radius one.
If all components are even, use the directed pairs in the proof of
Proposition~\ref{prop:pinned-selection}. Put a pair's exceptional choice on row $0$
or $1$ and its ordinary choice on the third profile. Its exact integer
contribution is $-1$ at the tail and $+1$ at the head. Common fillers
have deviation zero, and the oriented divergence at every column is
$\pm1$. The three profiles meet their half quotas. Finally,
Theorem~\ref{thm:integer-selector} always gives radius at most two.
These bounds and the parity obstruction prove both cases.
\end{proof}

This integer dichotomy identifies the parity constraint underlying the
modular selection. The prescribed exclusions and intersection
connectivity in Proposition~\ref{prop:pinned-selection} supply the further
conditions needed for iteration.

\begin{remark}\label{rem:selector-limits}
The three hypotheses control different features of the selection. For $m=2$,
four columns and two quota-two rows with odd column totals force two disjoint
row supports: parity alone does not ensure intersection connectivity. A support
of size two with both columns distinguished leaves no entry available on the
excluded row. For general row quotas, beyond the half-quota setting of
Proposition~\ref{prop:pinned-selection}, eight columns in one quota-one block at
$m=4$ require at least eight selected entries, whereas the four rows
supply only four. These are obstructions to connectivity, prescribed exclusions, and row
capacity, respectively; see Question~\ref{ques:quotas}.
\end{remark}

\begin{lemma}[Inheritance of incidence parity]\label{lem:inherit}
Suppose a cycle-consistent block factorization has an even number of
colour--cycle labels in each incidence component for every direction of multiplicity at least two. Apply Proposition~\ref{prop:pinned-selection} to one such direction, with a column
for every colour--cycle label. Every old cycle lifts to one cycle. The new
fibre blocks are cycle-consistent, and both children of multiplicity at least two again satisfy
the even-component condition. Unsplit directions retain their column-component
partitions.
\end{lemma}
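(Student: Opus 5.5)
The proof checks three things in turn: the cycle lifting, the shape of the new blocks, and the incidence graphs of the children. The key point is that the lifted colour--cycle labels correspond bijectively to the old ones, so each new incidence graph can be compared column by column with the old one.

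\emph{Lifting and new blocks.} Fix the split direction $i$ of multiplicity $a\ge2$. For each old block $B$ (of size $m$), take $A_B$ to be the set of labels $(c,C)$ of colours using direction $i$ on $B$; these are well defined by cycle-consistency. Index the $m$ rows by the $m$ sources of $B$. Proposition~\ref{prop:pinned-selection} (with no distinguished columns needed) applies because every incidence component has an even number of columns. It gives sets $J_{B,t}$ of size $b=\lfloor a/2\rfloor$ whose column totals are $\pm1$ modulo $m$. Take $\delta_c(x)=1$ exactly when the label of $c$ at $x$ lies in the selected set for $x$. Lemma~\ref{lem:physical} makes the lifts a factorization of the split multitorus. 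Lemma~\ref{lem:lift} shows that every old cycle $C$ of $S_c$ has unit total voltage, so it lifts to a single cycle. Hence the map $(c,C)\mapsto(c,\widehat C)$ is a bijection of labels. The fibres $\{x\}\times\Zm$ have constant directions and lie in one lifted cycle of each colour, so they are cycle-consistent blocks.

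\emph{Selected child.} The child receiving the selected arcs has multiplicity $b$; assume $b\ge2$. Its new blocks are the fibres over the sources $x\in B$, and the fibre over $x$ has support $J_{B,x}$ under the label bijection. Every column of $A_B$ has total $\pm1\not\equiv0$, so it lies in some $J_{B,x}$, and therefore $\bigcup_x J_{B,x}=A_B$. Since $|J_{B,x}|=b\ge2$, Proposition~\ref{prop:pinned-selection} makes $\{J_{B,x}\}$ intersection-connected. Consequently all columns of $A_B$ lie in one component of the new incidence graph. Conversely, every new edge joins columns already lying in a common old $A_B$. The new column components therefore coincide exactly with the old ones, and evenness is inherited.

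\emph{Unselected child and unsplit directions.} The other child has multiplicity $a-b\ge2$, with fibre supports $A_B\setminus J_{B,x}$. A column is unselected on $m$ minus its total rows, a number that is nonzero modulo $m$ because $m\ge4$ and the total is $\pm1$. So these complements also cover $A_B$, and the complementary connectivity clause of Proposition~\ref{prop:pinned-selection} gives the same conclusion. Every column is covered in the old graph, since an isolated column would form an odd component. Hence no new isolated columns appear, and the component partitions agree. For an unsplit direction $j$, each fibre over $x\in B$ carries the whole old support of $j$ on $B$, so the blocks and components are unchanged.

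The main care point is the covering argument: it uses that the totals are nonzero and that $m-(\pm1)\not\equiv0$. It also uses that the multiplicity threshold $\ge2$ for the children matches the size-$\ge2$ hypothesis in the connectivity clauses of Proposition~\ref{prop:pinned-selection}.
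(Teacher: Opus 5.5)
You handle the lifting step and the unsplit directions correctly. The argument for the two children, however, has a genuine gap.

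\textbf{Where the coverage argument fails.} The column total in Proposition~\ref{prop:pinned-selection} is $\sum_{B,t}\one_{\{\lambda\in J_{B,t}\}}$, summed over \emph{every} block whose support contains $\lambda$. Your count of unselected rows is likewise $m\cdot|\{B:\lambda\in A_B\}|$ minus this total. A nonzero total shows only that $\lambda$ is selected in \emph{some} block, so $\bigcup_x J_{B,x}=A_B$ does not follow.

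It usually fails. In a block where the parity join selects no incidence ($s=0$), all $m$ rows are the same $b$ fixed columns. The selected supports then miss $a-b$ columns of $A_B$, and the complements miss the other $b$. Consequently the child components need not coincide with the old ones.

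For example, take $a=4$ and supports $\{p_1,\dots,p_4\}$, $\{q_1,\dots,q_4\}$, $\{p_1,p_2,q_1,q_2\}$, forming one eight-column component. Use the oriented pairs $p_1p_2,p_3p_4$ in the first block and $q_1q_2,q_3q_4$ in the second, and take constant rows $\{p_1,p_2\}$ in the third. This is a valid output of the proposition: all totals are $\pm1$, and both connectivity clauses hold. Yet in each child no new block contains both a $p$-column and a $q$-column, so the component splits into two. Your observation that new components refine old ones is correct. But a refinement of an even component need not have even pieces.

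\textbf{The missing idea.} You need a parity count rather than a coverage argument; this is how the paper proceeds.
\begin{enumerate}
\item Use intersection connectivity only to see that the $m$ fibre blocks over one old block $B$ lie in a single child component. Thus every child component $K$ has $mb_K$ block vertices, each of degree $a'$.
\item A column's degree in the selected child is its selected total, which is $\pm1\pmod m$. In the complementary child it is its eligible total minus the selected total, and the eligible total is a multiple of $m$ by cycle-consistency.
\item Since $m$ is even, every column has odd degree. Counting the edges of $K$ from both sides gives $a'mb_K\equiv|\cO_K|\pmod 2$, so $|\cO_K|$ is even.
\end{enumerate}
This argument does not need the component partition to be preserved, which is not true in general.
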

\begin{proof}
The column totals are units, so Lemma~\ref{lem:lift} lifts every old cycle $C$
to one cycle on $C\times\Zm$. Directions and cycle labels are therefore
constant on each new fibre block over an old source.

Fix a child direction of multiplicity $a'\ge2$. An old block $B$ has $m$
sources, one for each row $t$, and gives $m$ new fibre-block vertices.
Their supports are $J_{B,t}$ in the selected child and $A_B\setminus J_{B,t}$
in the complementary child. If two such supports intersect, their new block
vertices are joined through a common cycle column. Intersection connectivity
therefore places all $m$ vertices from $B$ in one child component.
Consequently, the block vertices of every child component $K$ are a union
of complete sets of $m$ copies. Write their number as $m b_K$.

Each cycle column has odd degree in the child graph. In the selected child,
this degree is its selected total; in the complementary child, it is its
eligible total minus its selected total. The eligible total is a multiple
of $m$ by cycle-consistency, whereas the selected total is $\pm1$ modulo $m$.
Writing $\cO_K$ for the columns in $K$, double counting gives
\[
 a'm b_K=\sum_{\lambda\in\cO_K}\deg(\lambda)
       \equiv |\cO_K|\pmod2.
\]
The left side is even, so $|\cO_K|$ is even. Both selected and complementary counts are
nonzero modulo $m$, so every old cycle meets both child directions.

For an unsplit direction the new blocks over the sources of an old block have
identical supports. Replacing each block vertex by these copies preserves
the partition of column vertices into components.
\end{proof}

\subsection{All even dimensions}
Start from $T_m(d)$ with each colour using one labelled copy of its
positive $m$-cycle. As in the odd construction, the initial vertex set is
one cycle-consistent block, and every split reduces
$\sum_i(a_i-1)$ by one.

\begin{theorem}[Even-dimensional specialization]\label{thm:even-dim}
For every even $m\ge4$ and every even $d\ge2$, $D_d(m)$ has a Hamilton
decomposition.
\end{theorem}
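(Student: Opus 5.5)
We start from $T_m(d)$ and split repeatedly, following the proof of Theorem~\ref{thm:oddmod}, with Proposition~\ref{prop:pinned-selection} in place of Corollary~\ref{cor:odd-selector} and Lemma~\ref{lem:inherit} to carry the hypothesis from one split to the next. Throughout, we maintain the following invariant. The current multitorus $T_m(a_1,\ldots,a_n)$ has a factorization in which every colour is a single (Hamilton) cycle. The blocks of size $m$ are cycle-consistent. For every direction $i$ with $a_i\ge2$, each component of the incidence graph for direction $i$ has an even number of colour--cycle columns. Here the blocks are the cycle-consistent blocks, the support $A_B$ is the set of colours using direction $i$ on $B$, and there is one column per colour.

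\emph{Base case.} In $T_m(d)$ each colour is one positive $m$-cycle, and the whole vertex set is one cycle-consistent block $B$ with $A_B=\{1,\ldots,d\}$. The incidence graph for the single direction is therefore connected and has $d$ columns. Since $d$ is even, the invariant holds. When $d=2$ one split already suffices; alternatively Proposition~\ref{prop:square} applies directly.

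\emph{Inductive step.} Choose a direction $i$ with $a_i\ge2$. Every colour is Hamilton, so it uses direction $i$ somewhere, and hence every column is covered. Each support has size $a_i\ge2$, and each row quota is $b=\lfloor a_i/2\rfloor$. We apply Proposition~\ref{prop:pinned-selection} with no distinguished columns; its condition (i) is exactly the invariant. It produces sets $J_{B,t}$, and we use their indicators as $\delta_c$ in Lemma~\ref{lem:physical}, with $b=\lfloor a_i/2\rfloor$ and $0<b<a_i$. This yields a factorization of the split multitorus. Each column total is $\pm1$ modulo $m$, a unit, so Lemma~\ref{lem:lift} lifts every colour to a single cycle. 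Lemma~\ref{lem:inherit} then gives three things:
\begin{itemize}
\item the new fibre blocks are cycle-consistent;
\item both children of multiplicity at least two satisfy the even-component condition;
\item unsplit directions keep their component partitions, and hence their parity.
\end{itemize}
In the unsplit directions the column set is unchanged, because every colour is still a single cycle, so those columns are the same colours. The invariant is therefore restored.

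Each split lowers $\sum_i(a_i-1)$ by one. After $d-1$ splits we reach $T_m(1,\ldots,1)=D_d(m)$, and every colour is then a Hamilton cycle.

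The main point to verify is that the hypotheses of Lemma~\ref{lem:inherit} hold at every stage, including the unsplit directions. For those directions the block supports over a fixed old block are simply copied, so component membership and the count of columns are unchanged. I expect this bookkeeping to be the only delicate step. The substantive work has already been done in Proposition~\ref{prop:pinned-selection} and Lemma~\ref{lem:inherit}; in particular, intersection connectivity is what keeps each old block's $m$ copies inside a single child component.
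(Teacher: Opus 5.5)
Your proposal is correct and follows the paper's proof essentially verbatim. Like the paper, you start from the one-block factorization of $T_m(d)$ with its even number $d$ of cycle columns, apply Proposition~\ref{prop:pinned-selection} with no distinguished columns at each split, and use Lemma~\ref{lem:lift} and Lemma~\ref{lem:inherit} to carry Hamiltonicity, cycle-consistency, and componentwise evenness through the $d-1$ splits, while also spelling out bookkeeping the paper leaves implicit (supports of size $a_i\ge2$, coverage of every column, unchanged column sets in unsplit directions).
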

\begin{proof}
In the one-block factorization of $T_m(d)$, the incidence component has
the even number $d$ of cycle columns. Use
Proposition~\ref{prop:pinned-selection} with no distinguished columns for each
split. Unit totals preserve Hamiltonicity, and
Lemma~\ref{lem:inherit} preserves cycle-consistency and evenness of every
incidence component for each direction that still needs splitting.
After $d-1$ splits all multiplicities are one.
\end{proof}

The remaining case is even modulus in odd dimension. Its odd number
of Hamilton factors would give an odd incidence component at the
one-coordinate starting point. We resolve this parity constraint by
allowing one additional cycle and preserving a recolouring through
the subsequent lifts.

\section{Relative cyclic lifts and recolourings}\label{sec:relative}
On cycles meeting the recolouring support, first-return permutations
determine the effect of reconnection. A relative lift preserves these
permutations while lengthening the intervening paths. The interval
containing the nonzero voltages expands to contain all new nonzero fibre
points, so it remains available for the next split.

\subsection{First returns and source recolourings}
Let $U$ be a subset of the domain of a permutation $S$. On every cycle meeting
$U$, let $\alpha=\Ret_U(S)$ send a marked vertex to the next marked vertex. Write
$\ell(u)$ for the positive number of steps from $u$ to $\alpha(u)$, and let
$h_U(S)$ count the cycles of $S$ disjoint from $U$. When $U=\varnothing$, the
return permutation is the empty permutation and has zero cycles. The
\emph{open gap} from $u$ to $\alpha(u)$ is the set of intermediate vertices;
its two marked endpoints are excluded.

\begin{lemma}[Source reconnection]\label{lem:surgery}
If $r$ permutes $U$ and fixes its complement, then
\begin{equation}\label{eq:surgery}
 \Ret_U(Sr)=\alpha r,\qquad
 \cyc(Sr)=h_U(S)+\cyc(\alpha r).
\end{equation}
The new return time departing from $u$ is $\ell(r(u))$. In particular, a cycle
$D$ of $\alpha r$ corresponds to a cycle of $Sr$ of length
\begin{equation}\label{eq:weighted-surgery}
 L_D=\sum_{u\in D}\ell(r(u)).
\end{equation}
\end{lemma}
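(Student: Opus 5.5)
The plan is to follow the orbit of $Sr$ from a marked vertex and compare it with the orbit of $S$. Since $r$ is supported on $U$, we have $Sr(x)=S(x)$ for $x\notin U$. Thus the only change is at the marked sources: from $u\in U$ the map $Sr$ first jumps to $r(u)$ (in the sense that $Sr(u)=S(r(u))$), and then follows $S$.

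First, I would compute the return permutation of $Sr$. Starting at $u\in U$, the first step gives $S(r(u))$. If $\ell(r(u))=1$, this is already $\alpha(r(u))\in U$. Otherwise $S(r(u))$ lies in the open gap after $r(u)$. Every point of that gap lies outside $U$, so $Sr$ agrees with $S$ there. Hence the $Sr$-orbit traverses exactly the open gap from $r(u)$ to $\alpha(r(u))$ and reaches $\alpha(r(u))$ after $\ell(r(u))$ steps in total, with no earlier visit to $U$. This gives $\Ret_U(Sr)=\alpha r$ and return time $\ell(r(u))$ from $u$ at the same time.

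Next, I would count cycles. The cycles of $Sr$ fall into two kinds, those disjoint from $U$ and those meeting $U$.
\begin{itemize}
\item A cycle of $S$ disjoint from $U$ never uses a point of $U$, so it is unchanged under $Sr$. Conversely, a cycle of $Sr$ avoiding $U$ only uses points where $Sr=S$, so it is a cycle of $S$ disjoint from $U$. These cycles therefore number $h_U(S)$.
\item Every cycle of $Sr$ meeting $U$ is the union of its marked points and the gaps between them. Intersecting with $U$ gives a bijection onto the cycles of $\Ret_U(Sr)=\alpha r$, which is the usual first-return correspondence.
\end{itemize}
Adding the two counts yields $\cyc(Sr)=h_U(S)+\cyc(\alpha r)$. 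The length of a cycle $D$ of $\alpha r$ is the sum of its return times, which is $\sum_{u\in D}\ell(r(u))$.

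The only point needing care is that the open gaps of $S$ partition the complement of $U$ on cycles meeting $U$. Each gap is then used exactly once by $Sr$, as the gap following $r(u)$ for the unique $u=r^{-1}$ of its left endpoint. This ensures no vertex is lost or doubled, so the length formula and the cycle bijection are consistent.
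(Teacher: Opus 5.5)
Your proposal is correct and follows essentially the same route as the paper: the first step from $u$ goes to $S(r(u))$, the unchanged open gap after $r(u)$ is then traversed, and these gaps together with the untouched $U$-avoiding cycles account for every vertex. Your explicit checks, that a $U$-avoiding cycle of $Sr$ must already be a cycle of $S$ and that each open gap is used exactly once, are details the paper leaves implicit.
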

\begin{proof}
The first edge from $u$ goes to $S(r(u))$. Until the next marked vertex, all
following sources are outside $U$, so the old path following $r(u)$ is traversed
unchanged. These paths account for every vertex of each old cycle meeting $U$.
The cycles disjoint from $U$ remain untouched. Summing the lengths of the
concatenated paths gives~\eqref{eq:weighted-surgery}.
\end{proof}

\begin{proposition}[Complete boundary invariant for source reconnection]\label{prop:boundary-complete}
Let $S$ and $T$ be permutations of finite sets containing the same marked
set $U$. Extend each $r\in\Sym(U)$ by the identity on the appropriate
complement. The following are equivalent:
\begin{enumerate}
\item $\cyc(Sr)=\cyc(Tr)$ for every $r\in\Sym(U)$;
\item $\Ret_U(S)=\Ret_U(T)$ and $h_U(S)=h_U(T)$.
\end{enumerate}
The ambient sets and the intervening path lengths need not be equal.
\end{proposition}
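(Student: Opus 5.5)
The direction (ii)$\Rightarrow$(i) follows at once from Lemma~\ref{lem:surgery}. Applied to $S$ and to $T$, it gives
\[
 \cyc(Sr)=h_U(S)+\cyc(\Ret_U(S)\,r),\qquad \cyc(Tr)=h_U(T)+\cyc(\Ret_U(T)\,r).
\]
Under (ii) the two right sides agree term by term for every $r\in\Sym(U)$. Only the return permutation and the count of untouched cycles appear in these formulas, so the ambient sets and gap lengths play no role. This also explains the final sentence of the proposition.

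For (i)$\Rightarrow$(ii), write $\alpha=\Ret_U(S)$, $\beta=\Ret_U(T)$, $h=h_U(S)$ and $h'=h_U(T)$. By Lemma~\ref{lem:surgery}, hypothesis (i) becomes
\[
 h+\cyc(\alpha r)=h'+\cyc(\beta r)\quad\text{for all } r\in\Sym(U).
\]
Two test permutations should then suffice.
\begin{enumerate}
\item Take $r=\alpha^{-1}$. Then $\cyc(\alpha r)=|U|$, which is the maximum possible value of $\cyc$ on $\Sym(U)$, so $h+|U|=h'+\cyc(\beta\alpha^{-1})\le h'+|U|$ and hence $h\le h'$.
\item Take $r=\beta^{-1}$. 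The symmetric argument gives $h'\le h$.
\end{enumerate}
Therefore $h=h'$, and then $\cyc(\beta\alpha^{-1})=|U|$. A permutation of $U$ with $|U|$ cycles is the identity, so $\beta=\alpha$.

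The only care needed is with degenerate cases. When $U=\varnothing$, the return permutations are both empty with zero cycles, and (i) at the empty $r$ gives $h=h'$ directly. When $U\neq\varnothing$, $\Ret_U$ is a genuine permutation of $U$, because every marked vertex lies on a cycle meeting $U$. No step looks like a real obstacle. The main point is recognizing that the maximal cycle count $|U|$ is attained exactly at $r=\alpha^{-1}$, which lets a single numerical identity recover both invariants.
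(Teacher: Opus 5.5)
Your proof is correct and essentially the paper's: (ii)$\Rightarrow$(i) by Lemma~\ref{lem:surgery}, and for the converse the paper equates the maxima $h_U+|U|$ of the two functions, which is what your two evaluations at $\alpha^{-1}$ and $\beta^{-1}$ establish. Both arguments then conclude from $\cyc(\beta\alpha^{-1})=|U|$ that $\beta=\alpha$.
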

\begin{proof}
Put $\alpha=\Ret_U(S)$ and $\beta=\Ret_U(T)$.
Lemma~\ref{lem:surgery} proves (ii)$\Rightarrow$(i).
Conversely, the maximum of $\cyc(Sr)$ over all $r$ is $h_U(S)+|U|$,
attained at $r=\alpha^{-1}$. Equality of the two functions therefore gives
$h_U(S)=h_U(T)$. Evaluating at $\alpha^{-1}$ gives
$\cyc(\beta\alpha^{-1})=|U|$, whence $\beta=\alpha$.
The same argument includes the empty marked set.
\end{proof}

The pair $(\alpha,\ell)$ is the \emph{marked return data}: $\alpha$
determines how the return paths are joined, and $\ell$ determines the
resulting lengths. Cycles disjoint from $U$ remain unchanged. For a
recolouring of a prescribed digraph, the available outgoing arcs also
restrict the source permutations; Proposition~\ref{prop:transport}
preserves these restrictions under lifting.

Fix a set $\cA$ of distinguished colours. A \emph{valid recolouring supported on $U$}
consists of permutations $S'_c$ obtained, at each $x\in U$, by permuting the
outgoing arcs of the colours in $\cA$. All remaining colours and all sources
outside $U$ are fixed. Thus there are $\sigma_x\in\Sym(\cA)$ with
\begin{equation}\label{eq:recolour}
 S'_c(x)=S_{\sigma_x(c)}(x),\qquad r_c=S_c^{-1}S'_c.
\end{equation}
Validity includes the bijectivity of every new colour. It follows that each
$r_c$ is a permutation supported on $U$.

\begin{proposition}[Two-colour sign obstruction]\label{prop:sign}
A valid exchange of two colours preserves the product of their permutation
signs. Consequently, for even $m$ and odd $d\ge3$, no sequence of such exchanges
transforms the canonical coordinate factorization of $D_d(m)$ into a Hamilton
decomposition.
\end{proposition}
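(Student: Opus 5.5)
The plan is to compute the effect of a two-colour exchange on the two successor permutations, and then compare signs. Let the exchanged colours be $a,b$, supported on $U$. At each $x\in U$ the permutation $\sigma_x\in\Sym(\{a,b\})$ is either the identity or the transposition. Let $W\subseteq U$ be the set of sources where it is the transposition. Then $S'_a$ agrees with $S_b$ on $W$ and with $S_a$ off $W$, and symmetrically for $S'_b$. I would write
\[
 S'_a=S_a r_a,\qquad S'_b=S_b r_b,
\]
with $r_a=S_a^{-1}S'_a$ and $r_b=S_b^{-1}S'_b$, both supported on $W$. Validity means $S'_a,S'_b$ are bijections, so $r_a,r_b$ are permutations.

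The key observation is the identity $S_a r_a = S_b r_b'$, where $r_b'$ is suitably defined. A cleaner route is to compare the two images directly. Since $S'_a$ and $S_a$ are both bijections that agree off $W$, they satisfy $S'_a(W)=S_a(W)$. Likewise $S'_a(W)=S_b(W)$. Hence $S_a(W)=S_b(W)=:Y$, and I put
\[
 \pi=S_b|_W\circ(S_a|_W)^{-1},
\]
a permutation of $Y$. Then $S'_a=\pi' S_a$, where $\pi'$ extends $\pi$ by the identity, because for $x\in W$ we have $S'_a(x)=S_b(x)=\pi(S_a(x))$. Symmetrically, $S'_b=\pi'^{-1}S_b$. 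It follows that
\[
 \sgn(S'_a)\sgn(S'_b)=\sgn(\pi)\sgn(\pi)^{-1}\sgn(S_a)\sgn(S_b)=\sgn(S_a)\sgn(S_b).
\]
This proves the first assertion. The only point needing care is showing $S_a(W)=S_b(W)$, which is exactly where validity (bijectivity) is used. All other colours are untouched by the exchange, so the product of all $d$ signs is also invariant under any sequence of exchanges.

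For the consequence, I would compute the sign product on both sides. In the canonical factorization, colour $c$ is $x\mapsto x+e_c$ on $\Zm^d$, which consists of $m^{d-1}$ cycles of length $m$. With $m$ even each $m$-cycle is odd, so the sign is $(-1)^{m^{d-1}}=+1$, since $m^{d-1}$ is even. The product over all colours is therefore $+1$.

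In a Hamilton decomposition, each colour is a single cycle of length $m^d$, which is even, so each colour has sign $-1$. The product is $(-1)^d=-1$ for odd $d$.

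Since the full sign product is invariant under every valid two-colour exchange, no sequence of exchanges connects the canonical factorization to a Hamilton decomposition. I expect no real obstacle here. The main care is in justifying the image equality $S_a(W)=S_b(W)$ and in checking the parity of $m^{d-1}$, which requires $d\ge2$.
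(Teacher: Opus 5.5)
Your proof is correct and follows the paper's argument. The paper writes the new factors as $Pq$ and $Qq^{-1}$ with $q=P^{-1}Q$ on the exchange set, while you multiply on the left by $\pi=S_bS_a^{-1}$ on the common image $Y$; this is the same permutation conjugated by $S_a$, and your sign computations for the canonical and Hamilton factorizations match the paper's. Restricting to the set $W$ where the transposition actually occurs and deriving $S_a(W)=S_b(W)$ from bijectivity makes explicit a step the paper only asserts.
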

\begin{proof}
For two factors $P,Q$, a valid exchange on $U$ has $P(U)=Q(U)$. Put
$q=P^{-1}Q$ on $U$ and extend it by the identity elsewhere. The new factors are
$Pq,Qq^{-1}$, whose sign product equals that of $P,Q$.
A coordinate translation on $\Zm^d$ has $m^{d-1}$ cycles of length $m$, so its
sign is $(-1)^{(m-1)m^{d-1}}=1$ for every even $m$ and every $d\ge2$.
A Hamilton permutation on the even number $m^d$ of vertices has sign $-1$.
For odd $d$, the canonical sign product is therefore $+1$ and the Hamilton
sign product is $-1$.
\end{proof}

\begin{proposition}[Cycle excess and sign]\label{prop:cycle-excess}
Let $S_1,\ldots,S_d$ be permutations of an $N$-element set, and define their
\emph{cycle excess} by
\[
 E=\sum_{c=1}^d\bigl(\cyc(S_c)-1\bigr).
\]
Then
\begin{equation}\label{eq:cycle-excess-sign}
 \prod_{c=1}^d\sgn(S_c)=(-1)^{d(N-1)}(-1)^E.
\end{equation}
For even $m$ and odd $d\ge3$, a factorization of $D_d(m)$ in the sign class of
the canonical coordinate factorization therefore has odd cycle excess. The
least nonnegative excess allowed by this parity condition is $1$.
\end{proposition}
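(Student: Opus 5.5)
The identity \eqref{eq:cycle-excess-sign} follows from the sign of a single permutation in terms of its cycle count; the two consequences then follow by direct substitution.

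\emph{Step 1: the identity.} For a permutation $S$ of an $N$-element set with cycle lengths $L_1,\dots,L_k$, where $k=\cyc(S)$, we have
\[
 \sgn(S)=\prod_j(-1)^{L_j-1}=(-1)^{N-\cyc(S)},
\]
since $\sum_j L_j=N$. Taking the product over $c=1,\dots,d$ gives exponent
\[
 \sum_c\bigl(N-\cyc(S_c)\bigr)=dN-\sum_c\cyc(S_c)=d(N-1)-E,
\]
and since $(-1)^{-E}=(-1)^{E}$, this is exactly \eqref{eq:cycle-excess-sign}.

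\emph{Step 2: parity of the excess.} Take $N=m^d$ with $m$ even. Then $N$ is even, so $d(N-1)$ has the parity of $d$, which is odd. From the proof of Proposition~\ref{prop:sign}, each coordinate translation has sign $+1$ when $m$ is even, so the canonical sign product is $+1$. Any factorization in that sign class therefore satisfies $(-1)\cdot(-1)^E=+1$, so $E$ is odd. As a consistency check, the canonical factorization itself has $E=d(m^{d-1}-1)$; this is odd because $m^{d-1}-1$ is odd and $d$ is odd.

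\emph{Step 3: minimality.} Every $\cyc(S_c)\ge1$, so $E\ge0$. The least odd nonnegative integer is $1$, so $1$ is the least excess the parity condition permits. The statement claims only that $1$ is the least value allowed by parity, not that it is attained. Attainment, for $d\ge7$, is the subject of the later construction.

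None of these steps presents a real obstacle. The only points needing care are tracking the parity of $d(N-1)$ when $N$ is even and $d$ is odd, and reading ``least nonnegative excess allowed'' as a statement about parity alone rather than about existence.
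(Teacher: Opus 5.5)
Your proposal is correct and follows essentially the same route as the paper: you derive the identity from $\sgn(S)=(-1)^{N-\cyc(S)}$, use the canonical sign product $+1$ together with $(-1)^{d(m^d-1)}=-1$ to conclude that $E$ is odd, and note that $1$ is the least odd nonnegative value. Your consistency check $E_{\mathrm{can}}=d(m^{d-1}-1)$ is a small addition that confirms the canonical sign product through the identity itself.
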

\begin{proof}
A permutation with $k$ cycles on $N$ vertices has sign $(-1)^{N-k}$.
Multiplication over the colours gives~\eqref{eq:cycle-excess-sign}. In the torus
case the canonical sign product is $+1$, whereas $(-1)^{d(m^d-1)}=-1$.
Thus $E$ is odd and nonnegative. The value $1$ is the least value allowed by
this parity condition.
\end{proof}

\begin{corollary}[Preservation of cycle counts]\label{cor:inventory}
If every old colour-cycle has unit total voltage, a cyclic coordinate
lift preserves the number of cycles in each colour and the total cycle
excess $E$.
\end{corollary}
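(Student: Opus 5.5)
The statement follows from Lemma~\ref{lem:lift}, applied cycle by cycle to each colour. Throughout, fix a colour $c$ and consider its successor $S_c$ on the old vertex set $X$, together with the lifted successor $\widehat S_c(x,t)=(S_cx,t+\delta_c(x))$ on $X\times\Zm$.

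\emph{Step 1: cycle counts.} The cycles of $\widehat S_c$ are partitioned according to the old cycle of $S_c$ over which they lie, because the projection $(x,t)\mapsto x$ intertwines $\widehat S_c$ with $S_c$. Over an old cycle $C$, Lemma~\ref{lem:lift} gives exactly $\gcd(m,K_C)$ lifted cycles. By hypothesis $K_C$ is a unit modulo $m$, so $\gcd(m,K_C)=1$. Hence each old cycle contributes exactly one lifted cycle, namely $C\times\Zm$, and we get
\[
 \cyc(\widehat S_c)=\cyc(S_c)\qquad\text{for every colour }c.
\]

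\emph{Step 2: the excess.} The number of colours $d$ is unchanged by a coordinate lift: Lemma~\ref{lem:physical} retains the same $d$ colours. The excess $E=\sum_c(\cyc(S_c)-1)$ depends only on $d$ and the individual cycle counts, so it is unchanged as well.

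As a consistency check with Proposition~\ref{prop:cycle-excess}, the vertex count changes from $N$ to $mN$. For even $m$ the product $d(mN-1)$ is odd exactly when $d$ is odd, so the sign class is compatible with the unchanged excess. This check is not needed for the proof.

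There is no real obstacle here. The only point to state explicitly is that the projection sorts lifted cycles by their old cycles, so that the counts in Lemma~\ref{lem:lift} add up correctly over all cycles of $S_c$.
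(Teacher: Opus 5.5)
Your proof is correct and takes the same approach as the paper. Lemma~\ref{lem:lift} with a unit total voltage gives exactly one lifted cycle over each old cycle, and the lift keeps the same $d$ colours, so every $\cyc(S_c)$ and therefore $E$ is unchanged; your closing sign-class check is harmless but not needed.
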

\begin{proof}
Lemma~\ref{lem:lift} replaces each old cycle by exactly one lifted cycle;
the number of colours is unchanged.
\end{proof}

\subsection{First returns to the zero section}
For an additive lift, partial voltages determine when the marked
vertices are visited on the zero section. Computing these times gives
both the marked order and the increments in return lengths.

\begin{proposition}[Zero-section order]\label{prop:phase-order}
Let $m\ge2$, let $S$ be a cycle of length $L$, and write $x_s=S^s x_0$ for $0\le s<L$. Let $U=\{x_{s_0},\ldots,x_{s_{r-1}}\}$, where
$0=s_0<s_1<\cdots<s_{r-1}<L$. For a lift
$\widehat S(x,t)=(Sx,t+\delta(x))$ over $\Zm$, suppose
$K=\sum_{s=0}^{L-1}\delta(x_s)$ is a unit. Put
\[
 P_j=\sum_{s=0}^{s_j-1}\delta(x_s),\qquad
 \kappa_j=[-K^{-1}P_j]_m\in\{0,\ldots,m-1\},
\]
where $[\cdot]_m$ denotes the least nonnegative representative.
The unique visit to $(x_{s_j},0)$ in the first $mL$ steps from $(x_0,0)$
occurs at
\begin{equation}\label{eq:zero-visit-time}
 T_j=s_j+L\kappa_j.
\end{equation}
The first-return order on $U\times\{0\}$ equals that on $U$ if and only if
\begin{equation}\label{eq:phase-order}
 0=\kappa_0\le\kappa_1\le\cdots\le\kappa_{r-1}.
\end{equation}
In that case, writing $\ell_j$ for the old return times, the new times are
$\ell_j+q_jL$, where
\begin{equation}\label{eq:phase-budget}
 q_j=\kappa_{j+1}-\kappa_j\quad(0\le j<r-1),\qquad
 q_{r-1}=m-1-\kappa_{r-1}.
\end{equation}
In particular $q_j\ge0$ and $\sum_jq_j=m-1$.
\end{proposition}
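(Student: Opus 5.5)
The plan is to compute the lifted trajectory of $(x_0,0)$ directly and then read off the first-return data on $U\times\{0\}$. Since $K$ is a unit, Lemma~\ref{lem:lift} shows that $\widehat S$ is a single cycle of length $mL$ on $C\times\Zm$, and the fibre over each $x_s$ is visited exactly $m$ times in the first $mL$ steps. Write $n=s+kL$ with $0\le s<L$ and $0\le k<m$. After $n$ steps the point is $(x_s,\,kK+\sum_{s'<s}\delta(x_{s'}))$, because each full traversal of $C$ adds $K$. For $s=s_j$ the height is $kK+P_j$. It vanishes exactly when $k\equiv -K^{-1}P_j\pmod m$, and the unique such $k$ in $\{0,\ldots,m-1\}$ is $\kappa_j$. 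This proves \eqref{eq:zero-visit-time}. We also get $\kappa_0=0$, since $P_0=0$.

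For the order statement, I would first observe that the visits $T_j$ are all distinct and lie in $[0,mL)$. The cyclic first-return order of $\widehat S$ on $U\times\{0\}$ starting from $(x_0,0)$ is therefore the order of the integers $T_0,\ldots,T_{r-1}$. The old order on $U$ is $x_{s_0}\to x_{s_1}\to\cdots\to x_{s_{r-1}}\to x_{s_0}$. The new order agrees with it exactly when $T_0<T_1<\cdots<T_{r-1}$, because the cycle through $(x_0,0)$ starts at $T_0=0$ and agreement of cyclic orders from a common starting point means agreement of the linear orders. It remains to show that $T_j<T_{j+1}$ is equivalent to $\kappa_j\le\kappa_{j+1}$. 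Since $0\le s_j<s_{j+1}<L$, we have
\[
 T_{j+1}-T_j=(s_{j+1}-s_j)+L(\kappa_{j+1}-\kappa_j),
\]
with $0<s_{j+1}-s_j<L$. This difference is positive if $\kappa_{j+1}\ge\kappa_j$ and negative if $\kappa_{j+1}\le\kappa_j-1$. That gives \eqref{eq:phase-order}.

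Under \eqref{eq:phase-order}, the new return time from $(x_{s_j},0)$ is $T_{j+1}-T_j=\ell_j+(\kappa_{j+1}-\kappa_j)L$ for $j<r-1$, where $\ell_j=s_{j+1}-s_j$. The last return, from $(x_{s_{r-1}},0)$ back to $(x_0,0)$, occurs at time $mL$, since the lifted cycle has length $mL$. That return time is
\[
 mL-T_{r-1}=(L-s_{r-1})+L(m-1-\kappa_{r-1})=\ell_{r-1}+q_{r-1}L,
\]
which gives \eqref{eq:phase-budget}. The $q_j$ are nonnegative because of the monotonicity and because $\kappa_{r-1}\le m-1$. Their sum telescopes to $m-1-\kappa_0=m-1$, which also matches the total length $\sum_j\ell_j+(m-1)L=mL$.

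The only delicate point is the translation between cyclic and linear orders. We must check that the comparison only ever needs consecutive $T_j$, and that no visit to a marked zero-section point falls in the last window before time $mL$. Both follow from the uniqueness of each visit within one period and from $T_0=0$.
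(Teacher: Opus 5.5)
Your proposal is correct and follows essentially the same route as the paper. Both compute the fibre label $P_j+kK$ at time $s_j+kL$ and identify $\kappa_j$ as its unique zero. Both then order the visit times, which the paper phrases as lexicographic sorting of $(\kappa_j,s_j)$ and you phrase as the sign of $T_{j+1}-T_j$. Finally, both subtract successive times, using $mL$ as the final return.
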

\begin{proof}
At time $s_j+kL$ the fibre label is $P_j+kK$. Exactly one
$k\in\{0,\ldots,m-1\}$ makes it zero, namely $\kappa_j$.
The quantities $s_j$ lie in $[0,L)$, so sorting the times $T_j$ is the
same as sorting the pairs $(\kappa_j,s_j)$ lexicographically. This proves
the order criterion. Subtract successive times, using $mL$ as the final
return time to $(x_0,0)$, to obtain~\eqref{eq:phase-budget}.
\end{proof}

Appendix~\ref{app:capacity} uses the same calculation to prescribe
return times. For the repeated splitting argument, we concentrate the
additional turns in one interval.

\subsection{Lifting over a single return interval}

\begin{theorem}[Lifting over a single return interval]\label{thm:onegap}
Let $S$ be a cycle of length $L$, let $\varnothing\ne U\subseteq V(S)$, and let
$\alpha=\Ret_U(S)$. Fix one gap from $u_*$ to $\alpha(u_*)$. Let $F$ be a set of
size $q$ with distinguished element $0$. Choose permutations $g_x\in\Sym(F)$
that are the identity outside the open distinguished gap; in particular,
they fix every fibre label at both marked endpoints. Suppose the ordered product of the $g_x$ along that gap is a $q$-cycle.
Then
\[
 \widehat S(x,t)=(Sx,g_x(t))
\]
is a cycle of length $qL$. On $U\times\{0\}$, naturally identified with $U$,
its first return is $\alpha$, and its return times are
\begin{equation}\label{eq:onegap-roof}
 \widehat\ell(u)=\ell(u)+(q-1)L\one_{\{u=u_*\}}.
\end{equation}
Every point with nonzero fibre label lies in the open distinguished lifted gap.
\end{theorem}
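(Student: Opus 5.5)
The plan is to reduce everything to Lemma~\ref{lem:layer-return}-style bookkeeping by following fibre labels along one traversal of $S$. Label the cycle $x_0=u_*,x_1=Sx_0,\ldots,x_{L-1}$, so that the distinguished gap is $x_1,\ldots,x_{\ell_*-1}$ with $\ell_*=\ell(u_*)$ and $x_{\ell_*}=\alpha(u_*)$. Because $g_x$ is the identity outside the open gap, one full traversal of $S$ starting at $(x_0,t)$ applies only the ordered product $g=g_{x_{\ell_*-1}}\cdots g_{x_1}$ to the fibre label. Thus $\widehat S^{L}(x_0,t)=(x_0,g(t))$. This is the permutation analogue of the total voltage in Lemma~\ref{lem:lift}.

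\textbf{Step 1 (one cycle).} The map $\widehat S$ is a bijection, since each $g_x$ is. Every orbit visits the fibre over $x_0$, and its return map there is $g$. Hence the cycles of $\widehat S$ correspond to the cycles of $g$ on $F$, with lengths multiplied by $L$, exactly as in Lemma~\ref{lem:layer-return} with $\{x_0\}\times F$ as the section. Since $g$ is a $q$-cycle, $\widehat S$ is a single cycle of length $qL$.

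\textbf{Step 2 (first returns).} Start at $(u,0)$ with $u\in U$.
\begin{itemize}
\item If $u\ne u_*$, the path to $\alpha(u)$ crosses only identity fibre maps. The first marked vertex reached is $(\alpha(u),0)$ after $\ell(u)$ steps, and no marked point is passed in between because the old gap contains no marked vertex.
\item If $u=u_*$, the label evolves through the gap and reaches $(\alpha(u_*),g(0))$ after $\ell_*$ steps. From there the path runs through identity maps around the rest of the cycle and passes marked vertices with nonzero label, which are not in $U\times\{0\}$. It re-enters the gap with label $g(0)$, exits with label $g^2(0)$, and so on.
\end{itemize}
The first time the label at the gap exit equals $0$ is after $q$ passes, since $g$ is a $q$-cycle. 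At that point the walk is at $(\alpha(u_*),0)$ after $\ell_*+(q-1)L$ steps. Before that it never hits any $(v,0)$ with $v\in U$: outside the gap the label is constant at some $g^k(0)\neq0$, and inside the gap there are no marked vertices. This gives $\Ret=\alpha$ and~\eqref{eq:onegap-roof}.

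\textbf{Step 3 (nonzero labels).} A point $(x,t)$ with $t\neq0$ and $x$ outside the open gap is reached from $(x_0,0)$ only through one of the gap passes with nonzero exit label. Then every vertex from $\alpha(u_*)$ through $u_*$ carries a nonzero label, so these points lie in the lifted open gap as defined by the return structure on $U\times\{0\}$. Here the lifted gap from $(u_*,0)$ to $(\alpha(u_*),0)$ consists of all intermediate points of the long return path. Points in the old open gap with any label are intermediate on that path as well. Hence every nonzero-label point is in the open distinguished lifted gap.

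\textbf{Main obstacle.} The delicate point is Step 2's claim that no marked point with label $0$ is visited inside the long return. Inside the gap the labels are arbitrary, but that region holds no old marked vertices. At the endpoints the label is fixed by the hypothesis that the $g_x$ fix labels at marked endpoints. One must check carefully that the label at $x_{\ell_*}$ after the $k$-th pass is $g^k(0)$, and that this is nonzero for $1\le k<q$.
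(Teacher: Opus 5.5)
Your proof is correct and follows the paper's argument. You track the fibre label through the monodromy $g$ of the distinguished gap, conclude that the lift is a single $qL$-cycle because $g$ is a $q$-cycle, and observe that the return from $(u_*,0)$ makes $q$ passes, with every intermediate marked visit carrying one of the nonzero labels $g(0),\ldots,g^{q-1}(0)$. The only difference is cosmetic: you locate the nonzero-label points directly along the long return path, whereas the paper deduces this from the fact that the ordinary lifted gaps contain only zero-labelled points.
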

\begin{proof}
Write $H$ for the ordered product of the fibre maps along the distinguished gap.
An ordinary gap has identity fibre transport and connects its zero-labelled
endpoints directly. Starting from $(u_*,0)$, the first passage through the
distinguished gap reaches its endpoint with label $H(0)$. Each further complete
old cycle applies $H$ once more. The labels
\[
 H(0),H^2(0),\ldots,H^{q-1}(0),H^q(0)=0
\]
are nonzero until the last one. Outside the open distinguished gap, all marked
vertices therefore have nonzero labels during these intermediate traversals.
The first return to a zero-labelled marked vertex therefore adds
$q-1$ complete traversals of the old cycle, proving~\eqref{eq:onegap-roof}.

The monodromy $H$ is cyclic, so the lift has one orbit. Equivalently, the stated
return times sum to $qL$ and exhaust the lifted vertex set. All ordinary lifted
gaps consist only of zero-labelled vertices. Every point with a different label
therefore belongs to the remaining open gap. The argument uses the ordered
product of the maps $g_x$ and applies to noncommuting fibre permutations.
\end{proof}

\begin{proposition}[The return-time identity]\label{prop:return-budget}
Let $S$ be a cycle of length $L$, let $\varnothing\ne U\subseteq V(S)$, and put
$\alpha=\Ret_U(S)$. Suppose a lift
$\widehat S(x,t)=(Sx,g_x(t))$ over a $q$-element fibre is a single $qL$-cycle and
has first return $\alpha$ on $U\times\{0\}$. Then there are nonnegative integers
$k_u$, $u\in U$, such that
\begin{equation}\label{eq:return-budget}
 \widehat\ell(u)=\ell(u)+k_uL,
 \qquad \sum_{u\in U}k_u=q-1.
\end{equation}
\end{proposition}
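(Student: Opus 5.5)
The argument is a projection onto the base cycle. Write $\pi(x,t)=x$; then $\pi\circ\widehat S=S\circ\pi$. Fix $u\in U$ and follow the lifted orbit from $(u,0)$ to its first return $(\alpha(u),0)$ to $U\times\{0\}$. The plan has three steps:
\begin{enumerate}
\item show that $\widehat\ell(u)\equiv\ell(u)\pmod L$ with $\widehat\ell(u)\ge\ell(u)$;
\item show that the return times sum to $qL$;
\item subtract to obtain $\sum_u k_u=q-1$.
\end{enumerate}

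\emph{Step (i).} The projected path is a walk along $S$ starting at $u$. After $n$ steps it sits at $S^n u$. The return lands on $(\alpha(u),0)$, so $S^{\widehat\ell(u)}u=\alpha(u)=S^{\ell(u)}u$. Since $S$ is a cycle of length $L$, this gives $\widehat\ell(u)\equiv\ell(u)\pmod L$.

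For the inequality, note that the first time the projection reaches $\alpha(u)$ is step $\ell(u)$, and every later visit occurs at $\ell(u)+kL$ with $k\ge0$. Hence $\widehat\ell(u)=\ell(u)+k_uL$ with $k_u\ge0$ an integer. No positive time shorter than $\ell(u)$ is possible: by definition of $\alpha$, the projection meets no marked vertex strictly between $u$ and $\alpha(u)$.

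\emph{Step (ii).} The lift $\widehat S$ is a single cycle of length $qL$, and it meets $U\times\{0\}$ in exactly $|U|\ge1$ points. The first-return map on $U\times\{0\}$ is $\alpha$, which traverses these points cyclically along this single orbit. The return times of a single cycle to a nonempty subset partition one full traversal, so
\[
\sum_{u\in U}\widehat\ell(u)=qL.
\]
The same reasoning in the base gives $\sum_u\ell(u)=L$, because $U$ is nonempty and lies on the single cycle $S$.

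\emph{Step (iii).} Subtracting the two identities gives $L\sum_u k_u=(q-1)L$, hence $\sum_u k_u=q-1$.

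The only point needing care is Step (ii). It uses two facts: the first-return map of a single cycle on a marked set traverses that set as one cycle in the orbit order, and the consecutive gaps then tile the full period. Both are immediate from Lemma~\ref{lem:surgery}-type bookkeeping, so I expect no real obstacle.
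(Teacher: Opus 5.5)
Your proof is correct and follows the paper's own argument: projection to the base cycle gives $\widehat\ell(u)\equiv\ell(u)\pmod L$ with $\widehat\ell(u)\ge\ell(u)$. Then, since first-return paths partition both the lifted $qL$-cycle and the base $L$-cycle, subtracting $\sum_u\ell(u)=L$ from $\sum_u\widehat\ell(u)=qL$ gives $\sum_u k_u=q-1$.
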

\begin{proof}
Projection to the base cycle shows that any return from $(u,0)$ to
$(\alpha(u),0)$ has length $\ell(u)$ modulo $L$ and is at least $\ell(u)$.
This gives the first identity with $k_u\ge0$. The first-return paths partition
the lifted cycle, so their lengths sum to $qL$, whereas
$\sum_{u\in U}\ell(u)=L$. Subtracting proves the second identity.
\end{proof}

The single-interval lift assigns all $q-1$ additional traversals of the
base cycle to the return departing from $u_*$ (Figure~\ref{fig:onegap}).
Every nonzero fibre point lies in this enlarged interval, which provides
the positions for the next split.

\begin{figure}[htbp]
\centering
\[
\begin{array}{c@{\quad}c@{\quad}c}
 u_* & \xrightarrow{\quad\ell(u_*)\quad} & \alpha(u_*)\\[-1pt]
 \big\downarrow & & \big\downarrow\\[-1pt]
 (u_*,0) & \xrightarrow{\quad\ell(u_*)+(q-1)L\quad} & (\alpha(u_*),0)
\end{array}
\]
\vspace{-5pt}
\[
 \widehat\ell(u)=\ell(u)\qquad(u\in U\setminus\{u_*\}).
\]
\caption{First-return paths under the single-interval lift. Horizontal
arrows are labelled by their lengths; vertical arrows place the marked
vertices on the zero section. The marked cyclic order is preserved.}
\label{fig:onegap}
\end{figure}

\begin{corollary}[Additive return-preserving lifting]\label{cor:additive}
In Theorem~\ref{thm:onegap}, take $F=\Zm$ and $g_x(t)=t+\delta(x)$. It suffices
that all nonzero voltages lie in the open distinguished gap and that
$\sum_x\delta(x)$ is a unit modulo $m$. In the binary case, any number of selected sources is allowed, provided
they lie in this interval and their total is a unit modulo $m$.
\end{corollary}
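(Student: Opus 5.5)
This is a direct specialization of Theorem~\ref{thm:onegap}, with $F=\Zm$ and distinguished element $0$. I will check its hypotheses for the translations $g_x(t)=t+\delta(x)$ and then read off the conclusions.

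\emph{Fibre maps are the identity outside the open gap.} Each $g_x$ is a translation of $\Zm$, hence a permutation of $F$. By assumption $\delta(x)=0$ whenever $x$ lies outside the open distinguished gap. For such $x$, including the two marked endpoints $u_*$ and $\alpha(u_*)$, we get $g_x=\id$. This is exactly the identity requirement of Theorem~\ref{thm:onegap}.

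\emph{The ordered product is an $m$-cycle.} Translations commute, so the ordered product of the $g_x$ along the gap is translation by $K=\sum_{x\in\text{gap}}\delta(x)$. Since every nonzero voltage lies in the gap, $K$ equals the total $\sum_x\delta(x)$ over the whole cycle, which is a unit by hypothesis. Translation by a unit $K$ on $\Zm$ is a single $m$-cycle, because the orbit of $0$ is the subgroup generated by $K$, and this is all of $\Zm$.

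With both hypotheses verified, Theorem~\ref{thm:onegap} gives all the conclusions:
\begin{itemize}
\item $\widehat S$ is one cycle of length $mL$;
\item its first return on $U\times\{0\}$ is $\alpha$;
\item its return times are given by~\eqref{eq:onegap-roof} with $q=m$;
\item every point with nonzero fibre label lies in the open lifted gap.
\end{itemize}
Single-cycledness can also be cross-checked against Lemma~\ref{lem:lift}, since $\gcd(m,K)=1$.

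\emph{The binary case.} Here $\delta(x)\in\{0,1\}$, so the total voltage is simply the number of selected sources, reduced modulo $m$. The condition therefore reads: all selected sources lie in the open gap, and their count is a unit modulo $m$. No constraint on their positions inside the gap is needed, because commutativity makes their order irrelevant.

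I expect no real obstacle. The only point needing care is that the two marked endpoints are excluded from the open gap, so they must carry zero voltage. This is exactly what makes them fixed by their fibre maps, as Theorem~\ref{thm:onegap} requires.
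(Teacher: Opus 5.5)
Your proposal is correct and follows the paper's proof, which consists of the single observation that translation by a unit is an $m$-cycle on $\Zm$. You additionally spell out the hypothesis checks that the paper leaves implicit: the fibre maps are the identity outside the open gap, including at the marked endpoints, and commutativity reduces the ordered product to translation by the total voltage.
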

\begin{proof}
Translation by a unit is an $m$-cycle on $\Zm$.
\end{proof}

\begin{example}[A cyclic lift with changed marked order]\label{ex:spread}\label{ex:unit-not-order}
Let $S=(0\,1\,\cdots\,11)$,
$U=\{0,4,8\}$, and $m=4$. Give sources $1,5,9,10,11$ voltage one and all
others voltage zero. The total is $K=1$ modulo four, but the marked
phases are $(0,3,2)$. Formula~\eqref{eq:zero-visit-time} gives visit times
$(0,40,32)$, so the zero-section order is $(0,8,4)$ instead of $(0,4,8)$.
\end{example}

\begin{proposition}[Lifting a recolouring]\label{prop:transport}
Let $S_c$ be a permutation factorization with distinguished colour set $\cA$ and marked
set $U$. On each old distinguished cycle meeting $U$, assume that every nonzero
voltage lies in the open interior of one $U$-gap and that the total voltage is a
unit. Give every other old cycle unit total as well. Then every valid
distinguished-colour recolouring supported on $U$ transports to its single copy
$U\times\{0\}$ in the lift, with the same number of cycles in each colour.
When the lift satisfies~\eqref{eq:physical-delta}, the transported recolouring
uses only the existing labelled positive coordinate arcs.
\end{proposition}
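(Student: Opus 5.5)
We first fix what the transported recolouring is. Let $\widehat S_c$ be the lifted colours. On $U\times\{0\}$ put $\widehat\sigma_{(x,0)}=\sigma_x$, and set
\[
 \widehat S'_c(x,0)=\widehat S_{\sigma_x(c)}(x,0)
\]
for $x\in U$. All other sources and colours are left unchanged, and $\widehat r_c=\widehat S_c^{-1}\widehat S'_c$. The proof then has two parts: validity, meaning each $\widehat S'_c$ is a bijection using existing arcs, and the cycle count, computed from Lemma~\ref{lem:surgery} together with Proposition~\ref{prop:boundary-complete}.

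\emph{Validity.} Every $x\in U$ lies on a distinguished cycle meeting $U$. By hypothesis, nonzero voltages sit only in open gap interiors, so $\delta_c(x)=0$ for all $c\in\cA$. Hence
\[
 \widehat S_c(x,0)=(S_cx,0)\qquad(x\in U,\ c\in\cA).
\]
Under the identification of $U\times\{0\}$ with $U$, the lifted colours therefore act on these sources exactly as the old ones, and
\[
 \widehat S'_c(x,0)=(S_{\sigma_x(c)}x,0)=(S'_cx,0).
\]
Bijectivity of $\widehat S'_c$ follows because $\widehat r_c$ is $r_c$ transported to $U\times\{0\}$: it permutes $(U,0)$ and its image $(S_c U,0)$ matches $(S'_c U,0)$. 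For the arc claim, under~\eqref{eq:physical-delta} the arc from $(x,0)$ of old direction $\partial_{\sigma_x(c)}(x)$ with $\delta=0$ is the labelled arc of the corresponding child, or the unchanged direction. The permutation only reassigns the labelled outgoing arcs at $(x,0)$ among the colours in $\cA$, so no new arcs are used.

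\emph{Cycle count.} Fix $c\in\cA$ and apply Lemma~\ref{lem:surgery} to $\widehat S_c$ with marked set $U_c'=\operatorname{supp}$-containing set $U\times\{0\}$. The cycles of $\widehat S_c$ disjoint from $U\times\{0\}$ are of two kinds:
\begin{itemize}
 \item lifts of old $c$-cycles disjoint from $U$, each lifted to one cycle by the unit totals and Lemma~\ref{lem:lift};
 \item nothing else, since every lift of a cycle meeting $U$ is a single cycle that contains $U\times\{0\}$.
\end{itemize}
Hence $h_{U\times0}(\widehat S_c)=h_U(S_c)$.

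For each old cycle meeting $U$, Corollary~\ref{cor:additive} (Theorem~\ref{thm:onegap}) gives
\[
 \Ret_{U\times0}(\widehat S_c)=\Ret_U(S_c)
\]
under the identification. Different old cycles are handled independently, and their returns are disjoint. Lemma~\ref{lem:surgery} then gives
\[
 \cyc(\widehat S_c\widehat r_c)=h+\cyc(\alpha r_c)=\cyc(S_cr_c).
\]
Colours outside $\cA$ are unchanged and lift cycle-for-cycle by Corollary~\ref{cor:inventory}.

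The step that needs the most care is the identification $\Ret_{U\times0}(\widehat S_c)=\Ret_U(S_c)$. It requires the marked endpoints of the distinguished gap to carry zero voltage and the open gap to contain all the nonzero voltages, which is exactly the hypothesis of Theorem~\ref{thm:onegap}. Once that identification holds, the rest is bookkeeping.
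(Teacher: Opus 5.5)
Your proposal is correct and follows essentially the same route as the paper. Vanishing distinguished voltages on $U$ make the lifted relative source permutation equal to $r_c$ on $U\times\{0\}$, which gives bijectivity and arc-validity; Theorem~\ref{thm:onegap} preserves $\Ret_U$ on the marked cycles, unit totals preserve the unmarked term $h_U$, and Lemma~\ref{lem:surgery} then equates the colourwise cycle counts, with the only blemishes being expository (the garbled description of the marked set $U\times\{0\}$ and the unused appeal to Proposition~\ref{prop:boundary-complete}).
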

\begin{proof}
Write the recolouring as in~\eqref{eq:recolour}. Every distinguished voltage vanishes on
$U$. At $(x,0)$ the new colour-$c$ head is
\[
 (S_{\sigma_x(c)}x,0)=(S_cr_cx,0),
\]
which is the old colour-$c$ head from $(r_cx,0)$. The lifted relative source
permutation is $r_c$ on the selected copy of $U$. Hence the new lifted factors
are bijective, and the recolouring permutes the existing arcs at each source.

Theorem~\ref{thm:onegap} preserves the first return on every old distinguished cycle
meeting $U$. The recoloured first return is consequently $\alpha_cr_c$ both before
and after the lift. Every old cycle disjoint from $U$ lifts to one cycle, which is unaffected by
the transported recolouring. Lemma~\ref{lem:surgery}, including its unmarked-cycle term, proves the
claim. Colours outside the distinguished set are unchanged.
\end{proof}

\begin{proposition}[Lengths after transported recolouring]\label{prop:transported-lengths}
Under the hypotheses of Proposition~\ref{prop:transport}, fix a distinguished
colour and suppress its subscript. Let $\alpha=\Ret_U(S)$, and let $r$ be its
relative source permutation. For every old cycle $C$ meeting $U$, write
$u_C$ for the initial marked point of the distinguished return interval.
If $D$ is a cycle of $\alpha r$, the corresponding recoloured cycle has lengths
\begin{equation}\label{eq:transported-lengths}
 \begin{aligned}
 L_D&=\sum_{u\in D}\ell(r(u)),\\
 \widehat L_D&=L_D+(m-1)
       \sum_{\substack{C\cap U\ne\varnothing\\u_C\in r(D)}}|C|.
 \end{aligned}
\end{equation}
\end{proposition}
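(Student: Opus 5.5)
The plan is to apply Lemma~\ref{lem:surgery} twice, once to the old factor and once to its lift, and then compare the resulting return-time functions.

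\emph{Old lengths.} Before the lift, take the marked set $U$ with first return $\alpha$ and return times $\ell$. The recoloured factor is $Sr$, with $r$ supported on $U$. By Lemma~\ref{lem:surgery}, each cycle $D$ of $\alpha r$ gives a cycle of $Sr$ whose length is $\sum_{u\in D}\ell(r(u))$. This is the first line of \eqref{eq:transported-lengths}.

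\emph{Lifted return data.} After the lift, work with the marked set $U\times\{0\}$, identified with $U$. Proposition~\ref{prop:transport} says the transported relative source permutation is again $r$ on this copy of $U$. Now apply Theorem~\ref{thm:onegap} (via Corollary~\ref{cor:additive}, with $F=\Zm$ and $q=m$) to each old cycle $C$ meeting $U$. Its hypotheses hold: all nonzero voltages lie in the open interior of one $U$-gap, and the total voltage is a unit. The theorem gives two things. First, the lifted first return is still $\alpha$. Second, the lifted return times are
\[
 \widehat\ell(u)=\ell(u)+(m-1)|C|\,\one_{\{u=u_C\}}\qquad(u\in C\cap U),
\]
where $u_C$ is the initial point of the distinguished gap. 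Each marked $u$ lies on exactly one old cycle $C_u$, and the length $L$ in the theorem is $|C_u|$. So the formula holds globally on $U$ with $C=C_u$.

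\emph{Lifted lengths.} Apply Lemma~\ref{lem:surgery} to the lifted factor, with first return $\alpha$ and relative permutation $r$. Since $\Ret=\alpha r$ is unchanged, the cycle $D$ persists, and its length is
\[
 \widehat L_D=\sum_{u\in D}\widehat\ell(r(u))=L_D+(m-1)\sum_{u\in D}|C_{r(u)}|\,\one_{\{r(u)=u_{C_{r(u)}}\}} .
\]
Reindex by $v=r(u)$, which runs bijectively over $r(D)$. Each old cycle $C$ meeting $U$ has exactly one point $u_C$. Hence the sum becomes $\sum_{C:\,u_C\in r(D)}|C|$, which is the second line of \eqref{eq:transported-lengths}.

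The main point needing care is the identification of marked sets: the transported recolouring must act on $U\times\{0\}$ through the same $r$, and the lifted first return there must still be $\alpha$. Both are supplied by Proposition~\ref{prop:transport} and Theorem~\ref{thm:onegap}. The only other bookkeeping check is that distinct old cycles contribute their distinguished points $u_C$ without overlap.
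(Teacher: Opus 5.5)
Your proposal is correct and follows essentially the same route as the paper: it applies the source-reconnection length formula \eqref{eq:weighted-surgery} before and after the lift. Between the two, it uses \eqref{eq:onegap-roof} with $q=m$ to see that only the return departing from $u_C$ grows, by $(m-1)|C|$, and then sums over $r(D)$.
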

\begin{proof}
Apply~\eqref{eq:weighted-surgery} before and after the lift. By
\eqref{eq:onegap-roof}, the only changed return time on $C$ is that departing
from $u_C$, which increases by $(m-1)|C|$. Summing over $r(D)$ gives the formula.
\end{proof}

Formula~\eqref{eq:transported-lengths} assigns each added traversal of
an old cycle to the new cycle containing its distinguished return path.
For example, take a $6$-cycle with marks $0,2,4$, unit voltage at source
$1$, and source transposition $(0\,2)$. Over $\mathbb Z_4$, the two
recoloured cycle lengths change from $2,4$ to $20,4$: all $18$ additional
steps enter the first cycle. For a Hamilton recolouring, every return
path belongs to the single spanning cycle.

\subsection{Admissible marked factorizations}
The induction preserves both the incidence conditions for selection and
the interval condition for first returns. In the following definition,
the fibre coordinate is that of the most recent split in
\eqref{eq:sum-sheet}.

\begin{definition}\label{def:collar}
An \emph{admissible marked factorization} consists of a factorization of
$T_m(a_1,\ldots,a_n)$, a set of distinguished colours $\cA$, a marked source set $U$, and an
identification $X=Y\times\Zm$ satisfying the following conditions.
\begin{enumerate}
\item Each fibre $\{y\}\times\Zm$ is cycle-consistent.
\item At each source, each coordinate direction has at most one distinguished colour.
\item The set $U$ is contained in $Y\times\{0\}$. In every distinguished cycle meeting
$U$, all vertices with nonzero last fibre coordinate lie in one open $U$-gap.
\item For every direction with multiplicity at least two, every component of the
block--cycle incidence graph on the full set~\eqref{eq:orbit-set} has an even
number of cycle-column vertices.
\end{enumerate}
A valid recolouring of the distinguished colours supported on $U$ may also be
specified. The construction of the initial marked factorization determines its
cycle counts; the theorem below preserves them under every subsequent split.
\end{definition}

\begin{theorem}[Relative splitting theorem]\label{thm:closure}
Let $m\ge4$ be even. In an admissible marked factorization, any direction of multiplicity
$a\ge2$ can be split into $\lceil a/2\rceil$ and $\lfloor a/2\rfloor$ so that the
result is again an admissible marked factorization. The marked set becomes $U\times\{0\}$,
and every valid distinguished recolouring on $U$ retains its colourwise cycle count.
If the multiplicity sum is $d$ and there are $n$ coordinates, exactly
$d-n$ further splits yield $(1,\ldots,1)$. An initial Hamilton recolouring
remains Hamilton after these splits.
\end{theorem}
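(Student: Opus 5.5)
We apply Proposition~\ref{prop:pinned-selection} once, to the chosen direction $i$ of multiplicity $a\ge2$, and check that each admissibility condition passes to the lift. The voltages must satisfy three requirements at once: the unit-total and connectivity conditions of the selection, the single-gap condition of Theorem~\ref{thm:onegap}, and a property making the new marked set $U\times\{0\}$ sit on the new zero section.

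\emph{Setup and selection.} The columns are all colour--cycle labels. The blocks are the old fibres $\{y\}\times\Zm$ at which direction $i$ is used, and the support $A_B$ is the set of colours using $i$ on that fibre. This is well defined by condition~(i), and each block has $m$ rows indexed by the old fibre label $t$. The rows inside a block should carry the selection $J_{B,t}$ on the source $(y,t)$, so the row index is the old last coordinate. Condition~(iv) gives the even-component hypothesis. By condition~(ii), each support contains at most one distinguished colour; we declare that colour distinguished, so $|A_B\cap\cO_\mathrm{dist}|\le1$. Proposition~\ref{prop:pinned-selection} then gives $\delta_c\in\{0,1\}$ with quota $\lfloor a/2\rfloor$ at each source and total $\pm1$ modulo $m$ on every cycle. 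No distinguished colour is selected on row $t=0$.

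\emph{Where distinguished voltages live.} By condition~(iii), $U\subseteq Y\times\{0\}$. Because distinguished colours are excluded on row~$0$, every source of $U$, and every row-$0$ source, has zero distinguished voltage. Condition~(iii) also says that on a distinguished cycle meeting $U$, every vertex with nonzero old fibre coordinate lies in one open $U$-gap. So the nonzero distinguished voltages on such a cycle all lie in that gap, and the cycle total is a unit. Corollary~\ref{cor:additive} and Proposition~\ref{prop:transport} then apply. The first return on $U\times\{0\}$ (new fibre label $0$) is preserved, and every valid recolouring on $U$ keeps its colourwise cycle counts, because Lemma~\ref{lem:surgery}, Corollary~\ref{cor:inventory} and the unit totals on unmarked cycles handle the $h_U$ term.

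\emph{Re-verifying (i)--(iv) for the new identification.} Take $X'=X\times\Zm$, with the new fibre coordinate the lift label, identified through~\eqref{eq:sum-sheet}. The new base is $Y'=X$.
\begin{enumerate}
\item Condition~(i) follows from Lemma~\ref{lem:inherit}.
\item For condition~(ii), the child directions receive the selected and unselected distinguished colours, and the other directions are unchanged. At most one distinguished colour used direction $i$ at the source, so each child gets at most one.
\item For condition~(iii), the new marked set is $U\times\{0\}\subseteq Y'\times\{0\}$. By Theorem~\ref{thm:onegap}, every lifted point with nonzero new label lies in the open distinguished lifted gap. A distinguished cycle disjoint from $U$ stays disjoint, so there is nothing to check there.
\item Condition~(iv) follows from Lemma~\ref{lem:inherit}. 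For the split children this is its even-component conclusion. For unsplit directions, the blocks now sit over the new fibres $\{x\}\times\Zm$; they are copies of old supports, so the component partitions, and hence the parities, are retained.
\end{enumerate}

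Each split increases $n$ by one and preserves $d$, so the count of $d-n$ splits follows. The final statement is the case of a recolouring whose cycle count is $1$ in every colour.

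\textbf{Main obstacle.} The delicate point is matching the marked set to the zero section. The old fibres must be the selection blocks, with rows indexed by the old label, so that the row-$0$ exclusion forces distinguished voltage to vanish on $U$ and on all of $Y\times\{0\}$. We also need that the new identification~\eqref{eq:sum-sheet} really makes the lift label the last fibre coordinate. If the exclusion only applied at $U$, zero voltages at other row-$0$ sources would still be harmless, because only the single-gap condition on nonzero voltages is needed.
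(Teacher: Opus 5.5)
Your proposal is correct and follows essentially the same route as the paper. It applies Proposition~\ref{prop:pinned-selection} once, with the current fibres as blocks and the old fibre label as row index, uses the chain $\delta_c(v)=1\Rightarrow t\ne0\Rightarrow v\in G_C$ to feed Theorem~\ref{thm:onegap} and Proposition~\ref{prop:transport}, takes conditions (i), (ii) and (iv) from Lemma~\ref{lem:inherit}, and gets the split count from $\sum_i(a_i-1)=d-n$.

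The only slip is your closing aside. Excluding distinguished selections only at $U$ would not suffice, because condition~(iii) does not say where the row-$0$ sources lie. A distinguished voltage at a row-$0$ source in a different $U$-gap would therefore break the single-gap hypothesis. The full row-$0$ exclusion that you actually use in the body of the argument is what is needed.
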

\begin{proof}
For the direction being split, take one column for every colour--cycle label
and one block for every current fibre. Mark the columns of the distinguished colours.
By Definition~\ref{def:collar}(ii), each support contains at most one distinguished
column. Proposition~\ref{prop:pinned-selection} gives exact row quotas and unit cycle
totals, while excluding every distinguished choice from the old zero row. Lemma~\ref{lem:physical}
realizes this selection by positive coordinate arcs.

Fix a distinguished cycle $C$ meeting $U$, and let $G_C$ be the open gap
in Definition~\ref{def:collar}(iii). Write a source $v\in C$ as $v=(y,t)$ in
the current fibre coordinates. The exclusion in
Proposition~\ref{prop:pinned-selection} and admissibility give, in order,
\[
 \delta_c(v)=1\quad\Longrightarrow\quad t\ne0
             \quad\Longrightarrow\quad v\in G_C.
\]
Thus all nonzero voltages on $C$ lie in the same open gap and vanish at its
marked endpoints. Their total is a unit, so Theorem~\ref{thm:onegap}
preserves the first return on $U\times\{0\}$. It also places every vertex
with nonzero \emph{new} fibre coordinate in one enlarged open gap. This is
exactly condition~(iii) for the next application of the selection theorem.

Proposition~\ref{prop:transport} now transports every valid distinguished
recolouring. Lemma~\ref{lem:inherit} preserves fibre consistency and
componentwise evenness, while each child inherits at most one distinguished
colour from its parent. The marked set lies on the new zero section.
All four conditions of Definition~\ref{def:collar} therefore hold after the split.

Finally, the nonnegative integer
\[
 \sum_{i=1}^n(a_i-1)=d-n
\]
decreases by one at each split. It vanishes exactly when every multiplicity is
one. The marked first-return permutations and the relative source permutations
are therefore unchanged throughout the refinement. The return times evolve
according to~\eqref{eq:onegap-roof}.
\end{proof}

We apply Theorem~\ref{thm:closure} with three distinguished colours
whose factors contain four cycles in total. Together with an even
number of auxiliary Hamilton factors, they give an even number of
incidence columns. Their recolouring remains supported on a single
copy of the original marked set throughout the refinement.

\section{A three-dimensional initial factorization}\label{sec:seed}
The initial factorization combines three distinguished colours with an
even number of auxiliary Hamilton factors. Throughout
Sections~\ref{sec:seed}--\ref{sec:entry},
$m\ge4$ is even, $p\ge2$, and $d=2p+3$. Three colours $F_0,F_1,F_2$ will be distinguished;
the other $2p$ colours are denoted by $A_i,B_i$ for $0\le i<p$. The geometric
coordinates are $(x,y,w)\in\Zm^3$, and $h=x+y+w$ is represented in
$\{0,\ldots,m-1\}$ when an inequality involving $h$ is used.

\subsection{Three distinguished colours}
Define the directions of $F_0,F_1,F_2$ by Table~\ref{tab:nearcore}.
\begin{table}[htbp]
\centering
\caption{The three-colour initial factorization.}\label{tab:nearcore}
\begin{tabular}{lccc}
\toprule
Condition & $F_0$ & $F_1$ & $F_2$\\
\midrule
$h=0$ & $e_w$ & $e_y$ & $e_x$\\
$h=1$, $w\in\{0,1\}$ & $e_x$ & $e_w$ & $e_y$\\
$h=1$, $w\notin\{0,1\}$ & $e_y$ & $e_w$ & $e_x$\\
$h=2$, $w=0$ & $e_y$ & $e_x$ & $e_w$\\
All other vertices & $e_x$ & $e_y$ & $e_w$\\
\bottomrule
\end{tabular}
\end{table}

\begin{lemma}\label{lem:nearcore}
Table~\ref{tab:nearcore} is a factorization of $D_3(m)$. The factors $F_0,F_1$
are Hamilton, while $F_2$ has exactly two cycles, each of length $m^3/2$.
Their vertex sets are distinguished by
\begin{equation}\label{eq:defect-label}
 \ep(x,y,w)=w-\max(h-2,0)\pmod2.
\end{equation}
For each fixed $(h,w)$, the $m$ vertices obtained by varying $x$ form a
cycle-consistent block.
\end{lemma}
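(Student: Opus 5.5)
The plan is to check the claims via Lemma~\ref{lem:layer-return}, working on the height layers $X_h=\{h=x+y+w\}$ with coordinates $(w,x)\in\Zm^2$ on each layer, so that $y=h-w-x$ is determined. In these coordinates a step $e_x$ acts as $(w,x)\mapsto(w,x+1)$, a step $e_y$ acts as $(w,x)\mapsto(w,x)$, and a step $e_w$ acts as $(w,x)\mapsto(w+1,x)$. The observation that drives everything is that every condition in Table~\ref{tab:nearcore} depends only on $(h,w)$.

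\emph{Factorization and bijectivity.} First I check that every row of Table~\ref{tab:nearcore} is a permutation of $\{e_x,e_y,e_w\}$, so each direction is used exactly once at each source; this is \eqref{eq:direction-quota} with all $a_i=1$. Next, because the rule depends only on $(h,w)$, the layer map of each colour from $X_h$ to $X_{h+1}$ has the form $(w,x)\mapsto(w+\one_{\{\text{dir}=e_w\}},\,x+\one_{\{\text{dir}=e_x\}})$. This map is bijective as soon as the induced map on $w$ is bijective. I then verify that, in every layer, each colour uses $e_w$ either for all $w$ or for no $w$:
\begin{itemize}
\item $F_0$ uses $e_w$ exactly on $h=0$;
\item $F_1$ uses $e_w$ exactly on $h=1$;
\item $F_2$ uses $e_w$ on all of $h=2$ (the row $w=0$ from the fourth line, and $w\ne0$ from the last line) and on all of $h\ge3$.
\end{itemize}
Hence every $S_c$ is a permutation.

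\emph{Return maps.} Next I compose the $m$ layer maps starting from $X_0$. Each return map $R_c$ is a skew product
\[
(w,x)\mapsto\bigl(w+k_c,\;x+\phi_c(w)\bigr),
\]
where $k_c$ is the number of layers in which colour $c$ uses $e_w$. This gives $k_{F_0}=k_{F_1}=1$ and $k_{F_2}=m-2$. The cocycle $\phi_c(w)$ counts the $e_x$ steps taken along the way, and each such step is read off from the table at the current value of $w$.
\begin{itemize}
\item For $F_0$ and $F_1$, $R_c$ is a skew product over a full $m$-cycle on $w$. Therefore $R_c$ is one cycle exactly when $\sum_{w}\phi_c(w)$ is a unit modulo $m$, by the same argument as in Lemma~\ref{lem:lift}. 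I expect these totals to come out as $\pm1$ after the bookkeeping; for example, $F_0$ gains one $e_x$ step at $h=1$ only when $w\in\{0,1\}$ and loses one at $h=2$ when $w=0$.
\item For $F_2$, the base map $w\mapsto w-2$ has two orbits, the parity classes of $w$, because $m$ is even. Each orbit has length $m/2$, and I need the $x$-cocycle summed over each orbit to be a unit. Lemma~\ref{lem:layer-return} then yields exactly two cycles, each of length $m\cdot m\cdot(m/2)=m^3/2$.
\end{itemize}

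\emph{The label $\ep$ and the blocks.} To identify the two cycles of $F_2$, I check that $\ep$ in \eqref{eq:defect-label} is invariant under $F_2$. Along each step, $w$ increases exactly when $\max(h-2,0)$ increases, namely at $h\ge2$. The wrap from $h=m-1$ to $h=0$ lowers $\max(h-2,0)$ by $m-3$, which is odd, while $F_2$ uses $e_x$ at $h=m-1\ge3$ … I must recheck this transition against the table. On layer $0$, $\ep=w\pmod 2$, which matches the two $w$-parity orbits of the return map.

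Finally, the directions depend only on $(h,w)$, and $\ep$ also depends only on $(h,w)$. So for fixed $(h,w)$ the $m$ vertices obtained by varying $x$ have constant directions and lie in a single cycle of each colour. For $F_0$ and $F_1$ this is automatic; for $F_2$ it follows because the cycle label is $\ep$. This gives cycle-consistency of these blocks.

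The main obstacle is the exact cocycle bookkeeping for $F_2$: getting the unit totals on both parity orbits, and confirming that $\ep$ is preserved across the special rows $h=0,1,2$ and across the wrap-around.
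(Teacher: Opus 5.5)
Your route is sound and differs from the paper's. As written, though, it is a plan: the unit totals are only announced, and the wrap-around check for $\ep$ rests on a misreading of Table~\ref{tab:nearcore}. Both points can be completed as follows.

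Starting from $(w,x)$ on layer $0$, the numbers of $e_x$ steps in one return are:
\begin{itemize}
\item $\phi_{F_0}(w)=m-2$ for $w\ne0$ and $\phi_{F_0}(0)=m-1$, with total $m(m-2)+1\equiv1$;
\item $\phi_{F_1}(w)=\one_{\{w=-1\}}$, with total $1$;
\item $\phi_{F_2}(w)=1+\one_{\{w\notin\{0,1\}\}}$, whose sum over each parity class of $w$ is $1+2(m/2-1)=m-1\equiv-1$.
\end{itemize}

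At the wrap, $h=m-1\ge3$ falls under the last row of the table, so $F_2$ uses $e_w$ there, not $e_x$. Hence $\ep$ changes by $1+(m-3)=m-2$, which is even. Your sentence that $w$ increases exactly when $\max(h-2,0)$ increases is false at this step. What holds is agreement modulo $2$, and that uses that $m$ is even. With $e_x$ at $h=m-1$ the change would be the odd number $m-3$ and the invariance would fail, so this check is essential rather than cosmetic.

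The paper does not compute returns in $D_3(m)$ directly. It first factorizes $T_m(2,1)$ in coordinates $(a,w)$ with $a=x+y$. There every layer map is a translation, so the returns translate $w$ by $1,1,m-2$ at once, and $\ep$ is checked in two coordinates. It then splits $a$ with $y$ as the new fibre coordinate. The $e_y$ sources are chosen so that the voltage totals are $m-1$ on $F_0$, $m^2-m-1$ on $F_1$, and exactly one on each $F_2$ cycle (at $h=1$, $w\in\{0,1\}$). It concludes with Lemmas~\ref{lem:lift} and~\ref{lem:physical}.

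Your $x$-cocycle is the complementary count. Over the $m/2$ returns covering one parity class there are $m^2/2$ steps, of which $(m-2)m/2$ use $e_w$ and one uses $e_y$, leaving $m-1$ in $e_x$.

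What each route buys: the paper's version explains where the table comes from and fits it into the splitting framework used for the later initial factorizations. Yours is self-contained, needing only Lemma~\ref{lem:layer-return} and the skew-product criterion. It also avoids checking that the identification~\eqref{eq:sum-sheet} reproduces the table row by row.
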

\begin{proof}
Start on two coordinates $(a,w)$ with height $h=a+w$ and multiplicities $(2,1)$.
Let $F_0$ use $e_w$ only at height $0$, let $F_1$ use it only at height $1$, and
let $F_2$ use it at all other heights. All remaining steps use $e_a$. Each layer
map is a translation. The $m$-step returns on a height layer translate $w$ by
$1,1,m-2$, respectively. Thus the first two colours are Hamilton and the third
has two cycles. The function~\eqref{eq:defect-label} is constant along the third
colour and distinguishes its two cycles: away from the height wrap this follows
by direct substitution, and across the height wrap the change is even, being $m-2$.

Split $a=x+y$, using $y$ as the new fibre coordinate. Among the two colours using
$a$, select $F_1$ at $h=0$; select $F_2$ at $h=1,w=0,1$; select $F_0$ at the other
points of $h=1$ and at $h=2,w=0$; and select $F_1$ elsewhere. The total on $F_0$
is $m-1$, and that on $F_1$ is $m^2-m-1$. Both are $-1$ modulo $m$. At $h=1$ the
two $F_2$ cycles are distinguished by the parity of $w$, so each receives exactly
one selected source. Lemma~\ref{lem:lift} proves the asserted lifted cycle lengths.
Lemma~\ref{lem:physical} gives exactly Table~\ref{tab:nearcore}.

The new fibres have fixed $(a,w)$, equivalently fixed $(h,w)$. Each lies in one
lifted cycle of every colour and has constant directions.
\end{proof}

\subsection{The auxiliary Hamilton factors}
Put $k=\lfloor p/2\rfloor$. On two coordinates $(a,w)$, specify the colours
using $e_w$ at height $h$ as follows:
\begin{equation}\label{eq:shell-W}
 W_0=\{B_0,A_1,\ldots,A_{p-1}\},\quad
 W_1=\{A_0,B_1,\ldots,B_{p-1}\},\quad
 W_h=\{A_0,\ldots,A_{p-1}\}\ (h\ge2).
\end{equation}
The complement $V_h$ in the set of auxiliary colours has size $p$ and uses $e_a$.
Every $A_i$ uses $e_w$ on $m-1$ layers and every $B_i$ on one layer. Hence all
$2p$ factors are Hamilton on this two-coordinate multitorus.

To split $a=x+y$, choose $k$ colours from $V_h$ at each pair $(h,w)$. Use the
ordered disjoint pair lists in Table~\ref{tab:shellpairs}; the list is empty at
other heights. Empty index ranges are omitted. For each ordered pair $(u,v)$, the first and second endpoints are
$u$ and $v$, respectively. In pair number $j$,
starting with number $0$, choose its second endpoint at $w=1$ if $j=1$, and at
$w=0$ otherwise; choose its first endpoint on all other rows. Complete the selection to $k$ colours with the first unpaired columns in
the order
$A_0,\ldots,A_{p-1},B_0,\ldots,B_{p-1}$. The selected colours use $e_y$ and the
remaining members of $V_h$ use $e_x$.

\begin{table}[htbp]
\centering
\caption{Ordered pair lists for the set of auxiliary colours.}\label{tab:shellpairs}
\begin{tabular}{cl}
\toprule
Height and parity & Pair list\\
\midrule
$h=0$ & $(A_0,B_1)$\\
$h=1$, $p$ even & $(B_0,A_1),(A_2,A_3),\ldots,(A_{p-2},A_{p-1})$\\
$h=2$, $p$ even & $(B_2,B_3),(B_4,B_5),\ldots,(B_{p-2},B_{p-1})$\\
$h=1$, $p$ odd & $(A_1,A_2),(A_3,A_4),\ldots,(A_{p-2},A_{p-1})$\\
$h=2$, $p$ odd & $(B_0,B_2),(B_3,B_4),\ldots,(B_{p-2},B_{p-1})$\\
\bottomrule
\end{tabular}
\end{table}

\begin{lemma}\label{lem:shell}
This defines a Hamilton factorization of $T_m(p-k,k,p)$. Its direction functions
depend only on $(h,w)$.
\end{lemma}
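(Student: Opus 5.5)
The plan mirrors the proof of Lemma~\ref{lem:nearcore}: establish the two-coordinate factorization, then check that the prescribed selection is a valid split with unit totals.

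\emph{Step 1: the two-coordinate stage.} On $(a,w)$ with multiplicities $(p,p)$, the sets $W_h$ in \eqref{eq:shell-W} have size $p$ at every height, so each source uses $e_w$ exactly $p$ times and $e_a$ exactly $p$ times. Each colour's direction depends only on $h$, so each layer map $X_h\to X_{h+1}$ is a translation. The $m$-step return on a height layer translates $w$ by the number of heights at which the colour uses $e_w$. This number is $m-1$ for each $A_i$ and $1$ for each $B_i$. Both are units, so Lemma~\ref{lem:layer-return} makes every auxiliary factor Hamilton on $T_m(p,p)$. The blocks of fixed $(h,w)$, i.e.\ the $m$ values of $a$ in a layer of $X_h$, are cycle-consistent, since each of these factors is a single cycle.

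\emph{Step 2: the split.} I apply Lemma~\ref{lem:physical} to direction $a$ with $b=k=\lfloor p/2\rfloor$, using the new coordinate $y$. The first task is to check that the row quota is met. At each $(h,w)$, every pair in the list contributes exactly one selected endpoint, and the lists have at most $k$ pairs: for example, at $h=1$ with $p$ even there are $1+(p-2)/2=k$ pairs. The fill of unpaired columns from $V_h$ is then possible provided enough unpaired members of $V_h$ remain. I also have to confirm that every paired colour actually lies in $V_h$. For example, $A_0,B_1\in V_0$; $B_0$ and $A_1,\ldots,A_{p-1}$ lie in $V_1$; and $B_i\in V_2$. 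This is a routine inspection of Table~\ref{tab:shellpairs} against~\eqref{eq:shell-W}, case by case in the parity of $p$. The resulting direction functions depend only on $(h,w)$, because the selection does.

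\emph{Step 3: unit totals.} This is the main obstacle. For each colour $c$, I compute the total $K_c=\sum\delta_c$ over the single old cycle, i.e.\ over all $m^2$ sources of $T_m(p,p)$. A fixed filler colour at a height $h$ contributes the full fibre of $m$ sources, which is $0$ modulo $m$. A pair endpoint chosen on all rows except one contributes $m-1\equiv-1$, and the endpoint chosen on a single row contributes $1$. So $K_c$ is the signed number of pairs in which $c$ occurs, counted with sign $-1$ as a first endpoint and $+1$ as a second endpoint, and I must show this is $\pm1$ for every colour.

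From the table, every auxiliary colour occurs in exactly one pair across the heights $0,1,2$: $A_0,B_1$ at $h=0$, the $A$'s and $B_0$ at $h=1$, and the remaining $B$'s at $h=2$. The one exception is $p$ odd, where $A_0$ appears only at $h=0$ and $B_1$ also only there; I will check this directly. In every case each colour appears exactly once, with sign $\pm1$, giving a unit modulo $m$.

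Two further points need care. First, the filler columns must really be constant across rows. Second, the filler must not coincide with a pair endpoint. Fillers are taken from the unpaired columns at the given $(h,w)$, so this holds. Then Lemma~\ref{lem:lift} makes every lifted factor Hamilton, and Lemma~\ref{lem:physical} identifies the result as a factorization of $T_m(p-k,k,p)$, with direction functions depending only on $(h,w)$.
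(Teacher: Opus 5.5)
Your proposal is correct and is essentially the paper's own argument: each ordered pair contributes exactly one selected endpoint on every row of a height layer, so the quota $k$ is met with fillers constant in $w$. Since every auxiliary colour lies in exactly one pair (receiving $1$ or $m-1$) while fillers contribute multiples of $m$, all totals are units and Lemmas~\ref{lem:lift} and~\ref{lem:physical} finish; two slips do not affect this. First, for odd $p$ it is $B_0$ that moves to the $h=2$ list (paired with $B_2$), not $A_0$ or $B_1$. Second, in Step~1 a fixed $(h,w)$ is a single source of $T_m(p,p)$, and the rows of the split are the $m$ values of $w$ within $X_h$, exactly as your Step~3 count uses them.
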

\begin{proof}
The pair lists are disjoint within each layer, and enough unpaired columns
remain to complete each row selection. Every auxiliary colour appears in exactly
one pair in Table~\ref{tab:shellpairs}. That pair contributes either $1$ or $m-1$
to its second-split voltage. Columns selected on every row contribute
multiples of $m$. Thus every
old Hamilton factor has unit total voltage and lifts to one $m^3$-cycle. The row counts
are $p-k,k,p$, as required.
\end{proof}

Superpose Table~\ref{tab:nearcore} and the auxiliary factors, using distinct
parallel-arc labels. We obtain a factorization of
\begin{equation}\label{eq:seed-widths}
 T_m(p-k+1,k+1,p+1)
\end{equation}
with $d-1$ Hamilton factors and one factor consisting of two cycles. Its full cycle-column
set is
\begin{equation}\label{eq:seed-columns}
 \cO=\{F_0,F_1,F_{2,0},F_{2,1}\}
       \cup\{A_i,B_i:0\le i<p\},\qquad |\cO|=2p+4.
\end{equation}
Here $F_{2,j}$ labels the cycle of colour $F_2$ with value $j$
in~\eqref{eq:defect-label}. At every source, exactly one distinguished
colour uses each coordinate direction.

\subsection{Incidence geometry of the initial factorization}
\begin{lemma}\label{lem:seed-incidence}
For $p\ge4$, the full block--cycle incidence graph of each of the three
directions in~\eqref{eq:seed-widths} is connected. In particular it has an even
number of column vertices, and every old cycle meets every direction.
\end{lemma}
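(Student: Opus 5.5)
We will show connectivity directly for each of the three directions $x$, $y$, $w$ of~\eqref{eq:seed-widths}. Everything else then follows. The column set~\eqref{eq:seed-columns} has the even size $2p+4$. Connectivity with no isolated column means every column lies in some support, which is exactly the statement that every old cycle meets every direction. The blocks are the fibres from Lemma~\ref{lem:nearcore}, indexed by $(h,w)$. The support of block $(h,w)$ in a given direction is the set of colour--cycle labels whose colour uses that direction at height $h$ and layer $w$. For $F_2$ the label is $F_{2,\ep}$, with $\ep$ given by~\eqref{eq:defect-label}. Both the distinguished and the auxiliary direction functions depend only on $(h,w)$, so these supports are read off directly from Table~\ref{tab:nearcore}, from~\eqref{eq:shell-W}, and from the selection rule of Table~\ref{tab:shellpairs}.

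\emph{Direction $w$ (multiplicity $p+1$).} At heights $h\ge3$ the $w$-support is $\{F_{2,\ep},A_0,\ldots,A_{p-1}\}$. Varying $w$ changes the parity of $\ep$, so both $F_{2,0}$ and $F_{2,1}$ are joined to every $A_i$. At $h=0$ the support contains $F_0$, $B_0$ and $A_1$. At $h=1$ it contains $F_1$, $A_0$, and every $B_i$ with $i\ge1$. Every column is therefore attached to the hub formed by the $A_i$.

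\emph{Directions $x$ and $y$.} At generic heights $h\ge3$ (recall $m\ge4$) the supports are constant in $w$. In direction $x$ the support is $F_0$ together with the first unpaired fill columns, namely the $A$'s in order followed by $B_0,\ldots$. In direction $y$ it is $F_1$ together with the same $k$ selected colours. The remaining columns enter through the special heights $0,1,2$. Among the $B_i$, the distinguished $F_2$ cycles enter at $h=0$, $1$, $2$ via $e_x$ and $e_y$, and both parities $\ep$ occur there. For each such height we check that its support meets the generic hub, and that it contains the columns not yet reached. Here the pair lists are essential: each pair $(u,v)$ places $u$ and $v$ in different rows, one in the $y$-selection and the other in the complementary $x$-support. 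As a result, each pair endpoint lies in blocks of both directions, and those blocks share the fill column $A_0$ (or $A_1$ when $A_0$ is paired). The hypothesis $p\ge4$ ensures $k\ge2$. The fill column then survives at every special height, and both children of every block contain an $A$-column that also lies in the generic support.

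\textbf{Main obstacle.} The delicate part is the case analysis at $h=1,2$ for directions $x$ and $y$ with $p$ odd versus even. There a column such as $B_0$ or $A_0$ may be consumed by a pair instead of appearing as a filler. The plan is to tabulate, for $p$ even and for $p$ odd separately, the $y$-selection and $x$-complement at each $(h,w)$ with $h\le2$ and $w\in\{0,1,\text{other}\}$. From this table we exhibit, for every column, an explicit path through blocks to $A_0$ or $A_1$ in each direction. Parity of $F_2$ is handled by varying $w$ inside a single height.
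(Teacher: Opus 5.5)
Your argument for direction $w$ is correct. The argument for $x$ and $y$, however, has a genuine gap: it rests on a wrong description of the supports and on an ``$A$-hub'' that does not exist.

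At heights $h\ge3$ every $A_i$ belongs to $W_h$ and uses $e_w$, so $V_h=\{B_0,\dots,B_{p-1}\}$. With an empty pair list, the fill gives the $y$-support $\{F_1,B_0,\dots,B_{k-1}\}$ and the $x$-support $\{F_0,B_k,\dots,B_{p-1}\}$. These contain no $A$-column, and they are disjoint rather than sharing ``the same $k$ selected colours''.

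No $A$-filler survives at the special heights either:
at $h=0$, $A_0$ is the first endpoint of the pair $(A_0,B_1)$ and the fill is $B_2,\dots,B_k$;
at $h=1$, and at $h=2$ for odd $p$, the pairs already exhaust the quota $k$;
at $h=2$ for even $p$, the fill is $B_0$.
Your remark that each pair endpoint occurs in blocks of both directions does not help, since the $x$- and $y$-incidence graphs are separate graphs.

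The essential point you miss is that the auxiliary columns alone are \emph{disconnected} in one of the two directions.
For even $p$, the auxiliary $x$-graph has two components: $\{B_0,A_1,\dots,A_{p-1}\}$ (these $A_i$ use $x$ only at $h=1$) and $\{A_0,B_1,\dots,B_{p-1}\}$.
For odd $p$, the auxiliary $y$-graph has two components: $\{A_1,\dots,A_{p-1}\}$ (again only at $h=1$) and $\{A_0,B_0,\dots,B_{p-1}\}$.
Hence no routing of every column to $A_0$ or $A_1$ through auxiliary columns can exist. Your planned tabulation would have to discover a bridge that the proposal does not provide.

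The bridge is the distinguished Hamilton colour $F_0$, which is a single column. It uses $x$ at $(h,w)=(1,0)$ and at height $3$, and it uses $y$ at $(1,2)$ and $(2,0)$. In each case it therefore lies in blocks of both components.

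The paper organizes the argument as follows.
Since $k,p-k\ge2$ (this is where $p\ge4$ enters), the selected and complementary row families at one height are intersection-connected. So the columns used in a fixed direction at one height form a connected set.
Listing these height-unions gives the two auxiliary components above, and $F_0$ joins them.
Finally, $F_1,F_{2,0},F_{2,1}$ attach because every distinguished cycle uses every direction: both labels of $F_2$ occur in $x$ at $h=0$, in $y$ at $(1,0),(1,1)$, and in $w$ at $h=2$.
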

\begin{proof}
We first compute the auxiliary components. At a fixed height, the selected and
complementary row families in Table~\ref{tab:shellpairs} are intersection-connected, because
their sizes $k,p-k$ are at least two. A column common to every row connects
the family; otherwise the first two exceptional rows each intersect an
ordinary row. Consequently,
the union of all columns occurring in a direction at one height is connected.

To compute these unions, write
$A_{[r,s]}=\{A_j:r\le j\le s\}$ and similarly for $B$, with empty intervals
understood. At $h=0$, the $y$-union is
$\{A_0\}\cup B_{[1,k]}$, and the $x$-union is
$\{A_0,B_1\}\cup B_{[k+1,p-1]}$. At $h\ge3$, the two unions are
$B_{[0,k-1]}$ for $y$ and $B_{[k,p-1]}$ for $x$.
For the exceptional middle heights they are:
\begin{center}
\begin{tabular}{cll}
\toprule
& $y$-union & $x$-union\\
\midrule
$h=1$, $p$ even & $\{B_0\}\cup A_{[1,p-1]}$ & $\{B_0\}\cup A_{[1,p-1]}$\\
$h=2$, $p$ even & $\{B_0\}\cup B_{[2,p-1]}$ & $B_{[1,p-1]}$\\
$h=1$, $p$ odd & $A_{[1,p-1]}$ & $\{B_0\}\cup A_{[1,p-1]}$\\
$h=2$, $p$ odd & $\{B_0\}\cup B_{[2,p-1]}$ & $B_{[0,p-1]}$\\
\bottomrule
\end{tabular}
\end{center}
Thus the auxiliary $x$ graph is connected when $p$ is odd. When $p$ is even its
components are
\[
 \{B_0,A_1,\ldots,A_{p-1}\},\qquad
 \{A_0,B_1,\ldots,B_{p-1}\}.
\]
The distinguished colour $F_0$ joins them, using $x$ at $(h,w)=(1,0)$ and also at
height $3$. The auxiliary $y$ graph is connected for even $p$. For odd $p$ its
components are $A_{[1,p-1]}$ and $\{A_0\}\cup B_{[0,p-1]}$; $F_0$ joins them
through its $y$-steps at $(h,w)=(1,2)$ and $(2,0)$. These rows exist already at
$m=4$. The auxiliary $w$ graph is connected because the three sets
in~\eqref{eq:shell-W} overlap through $A_0$ and $A_1$.

Each distinguished cycle uses each direction. This is immediate for the two Hamilton
distinguished colours from the table. For $F_2$, both parity labels occur among the
$x$-rows at $h=0$, the $y$-rows at $h=1,w=0,1$, and the $w$-rows at $h=2$.
They therefore attach to the connected graphs just obtained. The full column
count is $2p+4$, which is even.
\end{proof}

\begin{lemma}[Initial factorizations in dimensions seven and nine]\label{lem:small-seed}
For $p=2,3$, the same three-colour factorization and auxiliary pair table
give~\eqref{eq:seed-widths}, with multiplicities $(2,2,3)$ and $(3,2,4)$,
respectively. Every component of each direction's full cycle-incidence graph
contains an even number of cycle-column vertices.
\end{lemma}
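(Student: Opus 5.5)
For $p=2,3$ we have $k=\lfloor p/2\rfloor=1$. Substituting into \eqref{eq:seed-widths} gives $(p-k+1,k+1,p+1)=(2,2,3)$ for $p=2$ and $(3,2,4)$ for $p=3$, which is the multiplicity assertion. Lemmas~\ref{lem:nearcore} and~\ref{lem:shell} were proved without assuming $p\ge4$, so the superposition is again a factorization. The direction functions depend only on $(h,w)$, and the blocks are the fibres with fixed $(h,w)$. The full column set \eqref{eq:seed-columns} has $2p+4\in\{8,10\}$ elements. My goal is to show that each of the three incidence graphs is connected, which gives evenness immediately. If some graph turns out to be disconnected, I will instead count the columns in each component.

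The argument of Lemma~\ref{lem:seed-incidence} breaks down here. With $k=1$ the selected row family at each height consists of singletons, so intersection connectivity is unavailable. I will therefore compute the incidences directly. For each direction and each pair $(h,w)$ with $h\in\{0,1,2\}$ or $h\ge3$, I list the support. It consists of the auxiliary colours using that direction, taken from \eqref{eq:shell-W} and Table~\ref{tab:shellpairs}, together with the single distinguished colour using it, taken from Table~\ref{tab:nearcore}. The $F_2$ column is resolved to $F_{2,0}$ or $F_{2,1}$ according to $\ep$ in \eqref{eq:defect-label}.

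In the $y$-direction every support has size $k+1=2$, so the incidence graph reduces to a graph on columns whose edges are the blocks. For example, when $p=2$ at $h=0$ the pair $(A_0,B_1)$ gives the $y$-supports $\{B_1,F_1\}$ at $w=0$ and $\{A_0,F_1\}$ otherwise. At $h=1$ the pair $(B_0,A_1)$, together with $F_2$ for $w\in\{0,1\}$ and $F_0$ for other $w$, links $B_0$ and $A_1$ to $F_{2,\ep}$ and $F_0$. At $h=2$ and $h\ge3$ the supports are $B_0$ with $F_0$ or $F_1$ according to $w$. I will trace these edges, using $m\ge4$ to guarantee that the rows $w=0,1,2$ exist, until all columns lie in one component.

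The $x$- and $w$-directions have larger supports, namely $p-k+1$ and $p+1$. These are handled in the same way: the auxiliary unions at each height are computed as in the table of Lemma~\ref{lem:seed-incidence}, and the distinguished colours then join the resulting pieces. Whenever a distinguished colour uses a direction in a block, it places the corresponding $F$ column in that support. The $w$-graph should connect through $A_0$ and $A_1$, as in the earlier lemma. I must also check that $F_{2,0}$ and $F_{2,1}$ both appear in each direction. The rows recorded in Lemma~\ref{lem:seed-incidence}, namely the $x$-rows at $h=0$, the $y$-rows at $h=1,w\in\{0,1\}$, and the $w$-rows at $h=2$, already show this, independently of $p$.

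The main obstacle is the case $p=3$ in the $y$-direction. There the pair list at $h=2$, namely $(B_0,B_2)$ with the unpaired fill, and the list at $h=1$ may leave $A_1$ or $A_2$ attached only through a distinguished colour. I would first look for an edge through $F_0$ at $(h,w)=(1,2)$ or $(2,0)$, exactly as in the odd-$p$ case of Lemma~\ref{lem:seed-incidence}. If connectivity still fails, I would fall back to verifying evenness component by component, which is all that the statement requires.
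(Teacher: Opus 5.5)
You use the same method as the paper: enumerate the supports block by block. Your multiplicity check, your reuse of Lemmas~\ref{lem:nearcore} and~\ref{lem:shell}, and your handling of the $x$- and $w$-directions are sound. One caveat on $x$: for $p=2$ the auxiliary $x$-rows are singletons too, so you must trace edges there rather than quote the unions of Lemma~\ref{lem:seed-incidence}; the $x$ graph does still come out connected.

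\textbf{The gap: the $y$-direction is not connected.} Your primary plan is to trace $y$-edges ``until all columns lie in one component.'' This fails for \emph{both} $p=2$ and $p=3$, not just possibly for $p=3$. So the fallback you defer to is the whole content of the lemma, and it is left undone.

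The structural reason is as follows. By Table~\ref{tab:nearcore}, $F_2$ uses $e_y$ only at $h=1$, $w\in\{0,1\}$, where $\ep=w$. Hence $F_{2,0}$ occurs in the single $y$-block $(h,w)=(1,0)$. The auxiliary colour sharing that block is the second endpoint of the height-one pair ($A_1$ for $p=2$, $A_2$ for $p=3$), and it uses $y$ in no other block. So $\{F_{2,0},A_1\}$, respectively $\{F_{2,0},A_2\}$, is a closed component. No edge through $F_0$ at $(1,2)$ or $(2,0)$ can repair this. Your phrase that the height-one pair ``links $B_0$ and $A_1$ to $F_{2,\ep}$ and $F_0$'' hides exactly this: $A_1$ meets only $F_{2,0}$.

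\textbf{What the component count gives.} The $y$-supports all have size two, so each block is an edge between two columns.

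For $p=2$ the edges are:
\begin{itemize}
\item $F_1B_1$ at $(0,0)$ and $F_1A_0$ at $(0,w\ne0)$;
\item $F_{2,0}A_1$ at $(1,0)$ and $F_{2,1}B_0$ at $(1,1)$;
\item $F_0B_0$ at $(1,w\ge2)$ and at $(2,0)$;
\item $F_1B_0$ at all remaining blocks.
\end{itemize}
The components are $\{F_0,F_1,F_{2,1},A_0,B_0,B_1\}$ and $\{F_{2,0},A_1\}$.

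For $p=3$ the edges are:
\begin{itemize}
\item $F_1B_1$ and $F_1A_0$ at height $0$;
\item $F_{2,0}A_2$ at $(1,0)$, $F_{2,1}A_1$ at $(1,1)$, and $F_0A_1$ at $(1,w\ge2)$;
\item $F_0B_2$ at $(2,0)$;
\item $F_1B_0$ elsewhere.
\end{itemize}
The components are $\{F_0,F_{2,1},A_1,B_2\}$, $\{F_1,A_0,B_0,B_1\}$ and $\{F_{2,0},A_2\}$.

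Every component has even size, which is what the statement asserts and what the paper's proof records. Your argument needs to establish these explicit components rather than aim for connectivity in $y$.
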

\begin{proof}
The proof of auxiliary Hamiltonicity uses only the unique pair of each colour,
and therefore also applies here. The $x$ and $w$ cycle-incidence graphs are connected.
For $p=2$, the $y$ components are
\[
 \{F_0,F_1,F_{2,1},A_0,B_0,B_1\},\qquad \{F_{2,0},A_1\};
\]
for $p=3$ they are
\[
 \{F_0,F_{2,1},A_1,B_2\},\quad
 \{F_1,A_0,B_0,B_1\},\quad \{F_{2,0},A_2\}.
\]
Substitution in the pair table gives the following auxiliary selections.
The unique auxiliary colour using $y$ for $p=2$ is $A_0$ at $h=0$
(replaced by $B_1$ at $w=0$), $B_0$ at $h=1$ (replaced by $A_1$ at $w=0$),
and $B_0$ at $h\ge2$. For $p=3$ it is $A_0,A_1,B_0$ at $h=0,1,2$, replaced at
$w=0$ by $B_1,A_2,B_2$, respectively, and is $B_0$ at $h\ge3$.
All other auxiliary colours outside $W_h$ use $x$. The displayed $y$ components
now follow by the two possible labels of $F_2$. For $x$, at $p=2$ the two
auxiliary pairs $\{B_0,A_1\}$ and $\{A_0,B_1\}$ are joined by $F_0$ at heights
$1$ and $3$. At $p=3$ the height-$2$ and generic supports connect all $B$ colours,
height $0$ attaches $A_0$, and height $1$ attaches $A_1,A_2$ through $B_0$.
The three sets $W_h$ connect the $w$ graph. Every distinguished cycle uses each direction,
so all its labels attach. Larger $m$ repeat the same supports and the two defect
parities; all indicated height and row types already exist at $m=4$.
\end{proof}

\section{An anchored Hamilton recolouring in dimension three}\label{sec:anchored}
We construct three Hamilton factors that agree with
Table~\ref{tab:nearcore} at four prescribed vertices. These agreements leave one
available source on each of the four initial distinguished cycles for the first
lift. The replacement permutes only the three distinguished labelled arcs and
therefore leaves the auxiliary Hamilton factors unchanged.

\subsection{A permutation-valued layer}
Directions and colours in this subsection are numbered $0,1,2$. Put
\[
 g_0=(0,0),\qquad g_1=(1,0),\qquad g_2=(0,1),
 \qquad R(a,b)=(-a-b,a).
\]
On the three-element colour set define
\[
 r(c)=c+1\pmod3,\qquad \tau_z(c)=2z-c\pmod3.
\]
Thus $R^3=\id$ and $r^3=\id$. The punctured lines
\[
 \{R^z(t,-t):t\in\Zm\setminus\{0\}\},\qquad z=0,1,2,
\]
are pairwise disjoint. They are the punctured diagonal, vertical, and horizontal
lines, respectively. Define a permutation-valued row on $\Zm^2$ by
\begin{equation}\label{eq:star-row}
 \sigma^*\bigl(R^z(t,-t)\bigr)=
 \begin{cases}
 r^{-1},&t=1,\\
 r,&t=-1,\\
 \tau_z,&t\notin\{0,1,-1\},
 \end{cases}
 \qquad
 \sigma^*(q)=\id\quad\text{at all other }q.
\end{equation}

\begin{lemma}[Cyclic-star layer]\label{lem:star}
Put the row~\eqref{eq:star-row} on one height layer of $D_3(m)$, identifying that
layer with the last two coordinates. On the other layers use constant rows
$r^s$, with the following numbers of occurrences:
\begin{equation}\label{eq:star-counts}
\begin{array}{c|ccc}
 & \id&r&r^{-1}\\\hline
3\nmid m&0&1&m-2\\
3\mid m&m-5&1&3.
\end{array}
\end{equation}
Then the three colours are Hamilton. The constant rows may be placed in any
order. The special row may be translated inside its layer, and the distinguished
height may be chosen arbitrarily.
\end{lemma}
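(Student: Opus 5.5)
The approach is to reduce everything to one return map on a single height layer and then analyse that map as a perturbed translation.

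\emph{Reduction to a layer return.} The $m$ vertices with fixed $(y,w)$ are ordered by height $h=x+y+w$, so each height layer $X_h$ is identified with $\Zm^2$ through $(y,w)$. A step in direction $e_x$, $e_y$, $e_w$ then moves the projected point by $g_0$, $g_1$, $g_2$, respectively. A row $\sigma$ on a layer means that colour $c$ leaves $q$ in direction $\sigma(q)(c)$. Since every $\sigma(q)$ is a permutation of the three colours, each direction is used exactly once at every source. By Lemma~\ref{lem:layer-return}, it then suffices to check two things:
\begin{enumerate}
\item each layer map $\phi_c(q)=q+g_{\sigma(q)(c)}$ is a bijection;
\item for each colour, the composite of the $m$ layer maps, read as a return on one layer, is a single $m^2$-cycle.
\end{enumerate}

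\emph{Constant rows and the freedom clauses.} A constant row $r^s$ gives colour $c$ the translation $q\mapsto q+g_{c+s}$. These translations commute, so the order of the constant rows is irrelevant. Their total for colour $c$ is a vector $v_c$ read off from \eqref{eq:star-counts}. Using $g_0+g_1+g_2=(1,1)$, I would simplify $v_c$ in the two cases $3\nmid m$ and $3\mid m$. The return of colour $c$ is then $\Phi_c=\tau_{v_c}\circ\phi_c^*$, up to conjugation by a translation, where $\phi_c^*$ is the special-row map and $\tau_{v_c}$ is translation by $v_c$. Cycle type is invariant under conjugation. Translating the special row inside its layer conjugates $\Phi_c$ by a translation, and moving the distinguished height cyclically rotates the composite. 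Both therefore preserve Hamiltonicity, which disposes of the last two clauses of Lemma~\ref{lem:star}.

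\emph{Bijectivity of the special row.} Off the three punctured lines, $\phi_c^*$ is translation by $g_c$. On the line $R^z(t,-t)$, colour $c$ instead moves by $g_{r^{\mp1}(c)}$ (for $t=\pm1$) or by $g_{\tau_z(c)}$. I would check injectivity by showing that the perturbed images land exactly on the translates of the line points vacated by the generic translation. The rotation $R$ permutes the three lines and acts compatibly on the displacement differences $g_i-g_j$. This should let me treat $z=0$ and transport the result by $R$ and $r$.

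\emph{The single-cycle claim (main obstacle).} $\Phi_c$ is a translation by $w_c=v_c+g_c$ everywhere except on the punctured lines. For these counts I expect $w_c$ to be a vector whose orbits are long lines, transverse to the line where the defect concentrates for colour $c$. I would then use the first-return idea of Lemma~\ref{lem:surgery} as follows:
\begin{itemize}
\item take as marked set a transversal, namely a line meeting every $w_c$-orbit once;
\item compute the induced first-return permutation on it, which is a translation of $\Zm$ corrected by the jumps contributed at the points with $t=\pm1$ and by the reflections $\tau_z$;
\item show this induced map is an $m$-cycle with total return time $m^2$.
\end{itemize}
The case split $3\mid m$ versus $3\nmid m$ enters here, through whether $w_c$ or the corrected shift is a unit. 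That is where the $\id$-row padding in \eqref{eq:star-counts} is used. I expect this explicit return computation, done for one colour and transported to the others by the $(R,r)$-symmetry, to be the hard step.
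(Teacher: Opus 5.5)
Your reductions are correct and match the paper: the constant rows are commuting translations, translating the special row conjugates the return by a translation commuting with $\tr_{v_c}$, moving the special height rotates the composite, and the symmetry $P_{c+1}R=RP_c$ with $v_{c+1}=Rv_c$ reduces everything to colour $0$. Note that this symmetry moves colours and lines together, so for colour $0$ you still have to handle all three lines at once. Direct substitution shows that $P_0(q)=q+g_{\sigma^*(q)(0)}-g_0$ is the identity off the single $2m$-cycle $(C_+,X_1,\ldots,X_{m-1},C_-,Y_1,\ldots,Y_{m-1})$. This cycle runs through both punctured axes and the two diagonal points $C_\pm=\pm(1,-1)$, and it gives bijectivity at once.

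The genuine gap is the single-cycle claim for $\tr_{v_0}P_0$. That claim is the entire content of the lemma, and you leave it as an expectation. The route you sketch also does not fit the structure. The defect is not concentrated on one line. A transversal line of the $v_0$-orbits does not contain the support of $P_0$, so Lemma~\ref{lem:surgery} does not present the return to that line as a corrected translation. For $v_0=(1,-2)$, the horizontal axis meets each orbit with even invariant $b+2a$ up to twice and misses the odd ones, so those passages would have to be tracked separately.

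The paper's argument (Appendix~\ref{app:star}) instead takes the support $Q$ of $P_0$ itself as the marked set. Every orbit of $\tr_{v_0}$ has length $m$ and meets $Q$. By source reconnection it therefore suffices that $\Phi=\nu P_0$ is one $2m$-cycle, where $\nu$ is the first return of the translation to $Q$. A second first return, to the $m-1$ points $X_s$, gives $\Psi=\theta\beta$. Here $\beta$ is a shift modulo $m-1$, not a translation of $\Zm$, and $\theta$ is a correction on a few labels. One proves $\Psi$ cyclic and checks that the return times sum to $2m$.

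Your account of where $3\mid m$ enters is also inaccurate. Both $v_0=(1,-2)$ and $v_0=(1,3)$ have first coordinate $1$, so all translation orbits have length $m$ in every case. What decides cyclicity is the cyclic order of the corrected points along $\beta$. For $v_0=(1,-2)$ and $m=2k$, one gets $\beta(s)=s+k+1$ modulo $2k-1$ and $\theta=(1\ k\ k+1)$:
\begin{itemize}
\item At $m=4$ the shift is not a unit ($\beta=\id$), yet $\Psi=(1\,2\,3)$ is cyclic.
\item At $m=6$ the shift is a unit and $\beta$ is a $5$-cycle, yet $\Psi=(1\,5)(2\,3)(4)$.
\end{itemize}
This failure is why the counts for $3\mid m$ change $v_0$ to $(1,3)$. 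That return needs a longer correction $\theta$, analysed in the appendix, together with a separate check at $m=6$. Without such an explicit return computation, the proposal does not establish the lemma.
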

\begin{proof}
Write $\tr_v$ for translation by $v$ on $\Zm^2$. The special layer map of colour
$c$ is $q\mapsto q+g_{\sigma^*(q)(c)}$. Subtracting the usual translation $g_c$
gives the permutation
\[
 P_c(q)=q+g_{\sigma^*(q)(c)}-g_c.
\]
For $c=0$, substitution in~\eqref{eq:star-row} shows that $P_0$ fixes every point
outside the following cycle of length $2m$:
\begin{equation}\label{eq:star-support}
 \bigl(C_+,X_1,X_2,\ldots,X_{m-1},C_-,Y_1,Y_2,\ldots,Y_{m-1}\bigr),
\end{equation}
where
\[
 C_+=(1,-1),\quad C_-=(-1,1),\quad X_s=(s,0),\quad Y_s=(0,s).
\]
The rows satisfy
$\sigma^*(Rq)(c+1)=\sigma^*(q)(c)+1$, and
$Rg_j=g_{j+1}-g_1$, with indices modulo three. Hence
\begin{equation}\label{eq:star-equivariance}
 P_{c+1}R=RP_c.
\end{equation}
In particular, all three special layer maps are bijective. The other layer maps
are translations, so the entire direction assignment is a factorization.

Take the height return starting immediately before the special layer. It is
$\tr_{v_c}P_c$, where
\begin{equation}\label{eq:star-voltage}
\begin{array}{c|ccc}
 &v_0&v_1&v_2\\\hline
3\nmid m&(1,-2)&(1,1)&(-2,1)\\
3\mid m&(1,3)&(-4,1)&(3,-4).
\end{array}
\end{equation}
These vectors include the translation $g_c$ from the special layer itself.
They satisfy $v_{c+1}=Rv_c$, so~\eqref{eq:star-equivariance} reduces the proof to
colour zero. Appendix~\ref{app:star} proves directly that $\tr_{v_0}P_0$ is a
single $m^2$-cycle in both cases, including $m=4$ and $m=6$. The height return
then gives a cycle of length $m^3$ for each colour.

The ordinary layer translations commute, which proves order-independence of the
constant rows. Translating the special layer conjugates $P_c$ by a translation
commuting with $\tr_{v_c}$. Changing the distinguished height changes only the
starting section of the cyclic sequence of layer maps.
\end{proof}

\subsection{Agreement at four prescribed vertices}
Set
\begin{equation}\label{eq:anchors}
 Q_m=\{(0,1,0),(1,1,0),(0,0,0),(0,m-2,3)\},
\end{equation}
and let $\pi=(1\ 2)$ permute the three colours. We describe the replacement
factorization $H_0,H_1,H_2$ in the physical coordinates $(x,y,w)$.
At height $h=1$ put
\[
 \sigma_{(x,y,w)}=\sigma^*(y,w-3).
\]
At every other height put $\sigma_{(x,y,w)}=r^{s_h}$, where, if $3\nmid m$,
\begin{equation}\label{eq:constant-rows-one}
 s_2=1,\qquad s_h=2\quad(h\notin\{1,2\}),
\end{equation}
and, if $3\mid m$,
\begin{equation}\label{eq:constant-rows-two}
 s_2=1,\qquad s_0=s_3=s_4=2,\qquad s_h=0\quad(5\le h<m).
\end{equation}
The height $1$ is always treated by the special row. Define
\begin{equation}\label{eq:terminal-core}
 H_c(x,y,w)=(x,y,w)+e_{\sigma_{(x,y,w)}(\pi(c))},
 \qquad(e_0,e_1,e_2)=(e_x,e_y,e_w).
\end{equation}

\begin{proposition}[Anchored three-colour decomposition]\label{prop:anchor}
For every even $m\ge4$, the factors~\eqref{eq:terminal-core} form a Hamilton
decomposition of $D_3(m)$. Their direction functions agree with those of
$F_0,F_1,F_2$ at all four points of $Q_m$.
\end{proposition}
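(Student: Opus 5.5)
The plan is to reduce Hamiltonicity to Lemma~\ref{lem:star} and then check the four agreements directly.

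\emph{Hamiltonicity.} The colour permutation $\pi$ only relabels the factors. The family $\{H_c\}$ is therefore the family of maps $(x,y,w)\mapsto (x,y,w)+e_{\sigma_{(x,y,w)}(c)}$ with its colours reindexed. It suffices to show that the unpermuted family consists of three Hamilton cycles.

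On a height layer, the pair $(y,w)$ determines $x$, so we identify the layer with the last two coordinates. Under this projection $e_x,e_y,e_w$ become $g_0=(0,0)$, $g_1=(1,0)$ and $g_2=(0,1)$, exactly as in Lemma~\ref{lem:star}. At height $1$ the row is $\sigma^*(y,w-3)$, which is the special row translated inside its layer; Lemma~\ref{lem:star} permits this translation and an arbitrary choice of distinguished height.

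It remains to match the constant rows with the table~\eqref{eq:star-counts}.
\begin{itemize}
\item If $3\nmid m$, then~\eqref{eq:constant-rows-one} gives one row $r$ (at $h=2$) and $m-2$ rows $r^2=r^{-1}$.
\item If $3\mid m$, then~\eqref{eq:constant-rows-two} gives one $r$, three $r^{-1}$ (at $h=0,3,4$), and $m-5$ identities (at $5\le h<m$).
\end{itemize}
Both agree with~\eqref{eq:star-counts}, and the order of the constant rows is irrelevant. Lemma~\ref{lem:star} then makes all three colours Hamilton, and the positive arcs are partitioned because each $\sigma$ is a permutation.

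\emph{Agreement on $Q_m$.} First record the three punctured lines explicitly. Since $R(a,b)=(-a-b,a)$, the lines $R^z(t,-t)$ for $t\ne0$ are:
\begin{itemize}
\item $(t,-t)$ for $z=0$;
\item $(0,t)$ for $z=1$;
\item $(-t,0)$ for $z=2$.
\end{itemize}
Then evaluate $H_c$ at each anchor, using $H_c=e_{\sigma(\pi(c))}$, and compare with Table~\ref{tab:nearcore}.

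\begin{itemize}
\item At $(0,1,0)$: here $h=1$ and $w=0$, and the row is $\sigma^*(1,-3)$. The point $(1,-3)$ lies on none of the lines, since $-3\not\equiv-1$ and $m\ge4$. So $\sigma=\id$, and $(H_0,H_1,H_2)=(e_x,e_w,e_y)$, which matches the row $h=1$, $w\in\{0,1\}$.
\item At $(1,1,0)$: here $h=2$, $w=0$ and $\sigma=r$. This gives $(e_{r(0)},e_{r(2)},e_{r(1)})=(e_y,e_x,e_w)$, which matches.
\item At $(0,0,0)$: here $h=0$ and $\sigma=r^{-1}$. This gives $(e_w,e_y,e_x)$, which matches.
\item At $(0,m-2,3)$: here $h=1$ and $w=3$, and the row is $\sigma^*(-2,0)$. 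This is the horizontal line ($z=2$) with $t=2\notin\{0,\pm1\}$, valid because $m\ge4$. So $\sigma=\tau_2$, with $\tau_2(c)=1-c$. This gives $(H_0,H_1,H_2)=(e_{\tau_2(0)},e_{\tau_2(2)},e_{\tau_2(1)})=(e_y,e_w,e_x)$, which matches the row $h=1$, $w\notin\{0,1\}$.
\end{itemize}

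The substantive input is the single-cycle property of $\tr_{v_0}P_0$, which is deferred to Appendix~\ref{app:star}. Given that lemma, the main risk here is bookkeeping: the sign conventions in $\sigma^*(y,w-3)$ and the small-$m$ coincidences at $m=4$ must be checked against~\eqref{eq:star-row}.
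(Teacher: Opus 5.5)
Your proposal is correct and follows the paper's proof. You match the constant rows in \eqref{eq:constant-rows-one}--\eqref{eq:constant-rows-two} with the counts in \eqref{eq:star-counts}, use Lemma~\ref{lem:star} (translated special row at height~$1$, relabelling by $\pi$) for Hamiltonicity, and then evaluate the four rows at $Q_m$ exactly as in Table~\ref{tab:anchors}. For $(1,-3)$, it is cleaner to say, as the paper does, that neither coordinate is zero and the coordinate sum $-2$ is nonzero, since your ``$-3\not\equiv-1$'' only rules out the diagonal line.
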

\begin{proof}
Equations~\eqref{eq:constant-rows-one} and~\eqref{eq:constant-rows-two} have
exactly the layer counts in~\eqref{eq:star-counts}. Lemma~\ref{lem:star}, followed
by the colour permutation $\pi$, proves Hamiltonicity.

Table~\ref{tab:anchors} records the four agreements. At the first point,
$(y,w-3)=(1,-3)$ lies off the three punctured lines: neither coordinate is zero
and their sum is $-2\ne0$ in $\Zm$. At the last point,
$(y,w-3)=(-2,0)=R^2(2,-2)$, and $2\notin\{0,1,-1\}$ for every even $m\ge4$.
It therefore uses the generic row $\tau_2$. The other two points lie on
constant layers. The resulting rows are exactly those of
Table~\ref{tab:nearcore}.
\end{proof}

\begin{table}[htbp]
\centering
\caption{Agreement between the initial and Hamilton factorizations. Entries are
coordinate indices $0,1,2$.}\label{tab:anchors}
\begin{tabular}{cccc}
\toprule
Source & Height & Row before $\pi$ & Final direction row\\
\midrule
$(0,1,0)$ & $1$ & $\id$ & $(0,2,1)$\\
$(1,1,0)$ & $2$ & $r$ & $(1,0,2)$\\
$(0,0,0)$ & $0$ & $r^{-1}$ & $(2,1,0)$\\
$(0,m-2,3)$ & $1$ & $\tau_2$ & $(1,2,0)$\\
\bottomrule
\end{tabular}
\end{table}

Let $U_m$ be the sources where the full three-colour direction row changes. Since both
old and new three-colour factorizations use exactly one copy of each coordinate arc,
the replacement is a valid simultaneous three-colour recolouring supported on
$U_m$, and $U_m\cap Q_m=\varnothing$. In the superposition with the auxiliary
factors, the auxiliary labelled arcs remain unchanged.

\begin{proposition}\label{prop:patch-size}
The number of sources in $U_m$ is
\begin{equation}\label{eq:patch-size}
 |U_m|=m^3-m^2-4m+7.
\end{equation}
\end{proposition}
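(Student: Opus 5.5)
The plan is to count the complement. Let $U_m^c$ be the set of sources at which the full three-colour direction row of $H_0,H_1,H_2$ equals that of $F_0,F_1,F_2$. We show $|U_m^c|=m^2+4m-7$, which is equivalent to~\eqref{eq:patch-size}.

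We record each row as the map colour $\mapsto$ direction index. By~\eqref{eq:terminal-core}, the $H$-row at a source is $\sigma\circ\pi$. The $F$-rows of Table~\ref{tab:nearcore} are:
\begin{itemize}
\item $(2,1,0)$ at $h=0$;
\item $(0,2,1)$ at $h=1$, $w\in\{0,1\}$;
\item $(1,2,0)$ at $h=1$, $w\notin\{0,1\}$;
\item $(1,0,2)$ at $h=2$, $w=0$;
\item the identity elsewhere.
\end{itemize}
A sign comparison prunes most cases. Since $\pi$ is a transposition, $\sigma\pi$ is odd when $\sigma\in\{\id,r,r^{-1}\}$ and even when $\sigma=\tau_z$. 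We then compare layer by layer. Each height layer has $m^2$ sources, parametrized by $(y,w)$.

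\emph{Constant layers.} At $h=0$ we have $s_0=2$ in both cases of~\eqref{eq:constant-rows-one} and~\eqref{eq:constant-rows-two}. The row $r^{-1}\pi$ is $(2,1,0)$, as Table~\ref{tab:anchors} confirms at $(0,0,0)$. So all $m^2$ sources of height $0$ agree.

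At $h=2$, $s_2=1$ and $r\pi=(1,0,2)$. This agrees exactly on the $m$ sources with $w=0$ and disagrees elsewhere, because the identity is even.

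At $h\ge3$ the $F$-row is the even identity while $r^{s_h}\pi$ is odd. Hence every source of height $\ge3$ lies in $U_m$.

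\emph{Special layer $h=1$.} Here $\sigma=\sigma^*(y,w-3)$. We use the explicit punctured lines:
\begin{itemize}
\item the diagonal $\{(t,-t)\}$;
\item the vertical $R(t,-t)=(0,t)$;
\item the horizontal $R^2(t,-t)=(-t,0)$.
\end{itemize}

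For $w\in\{0,1\}$, agreement requires $\sigma\pi=(0,2,1)=\id\circ\pi$, that is, $\sigma^*=\id$. So $(y,w-3)$ must lie off all three punctured lines. The lines meet the set $\{w-3\in\{-3,-2\}\}$ in two diagonal points ($t=3,2$) and two vertical points. They meet it in no horizontal point, since $-2,-3\ne0$ for even $m\ge4$. This gives $2m-4$ agreements.

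For $w\notin\{0,1\}$, agreement requires the even row $(1,2,0)$. So $\sigma^*=\tau_z$ for some $z$, and by Table~\ref{tab:anchors} this holds only for $z=2$; one must check that $\tau_0\pi$ and $\tau_1\pi$ differ from $(1,2,0)$. This restricts to the punctured horizontal line $(-t,0)$ with $t\notin\{0,\pm1\}$. There $w=3\notin\{0,1\}$, giving $m-3$ agreements.

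Summing, $m^2+m+(2m-4)+(m-3)=m^2+4m-7$. Subtracting from $m^3$ gives~\eqref{eq:patch-size}.

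The main obstacle is the height-$1$ bookkeeping. One must verify that the three $\tau_z\pi$ are distinct, so that only $z=2$ matches. One must also confirm, for every even $m\ge4$, which points of $\{w-3\in\{-3,-2\}\}$ lie on which punctured line: the potential coincidences $t=\pm1$ and $-3\equiv0$ must be excluded. Finally, one must check that the three lines remain disjoint, so that no source is counted twice.
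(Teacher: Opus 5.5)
Your proposal is correct and matches the paper's proof: count agreements layer by layer, getting $m^2$ at height $0$, $m$ at height $2$ (where $w=0$), $2(m-2)$ at height $1$ with $w\in\{0,1\}$, and $m-3$ on the generic horizontal branch $w=3$, for a total of $m^2+4m-7$ agreements. Your sign comparison ($\sigma\pi$ is odd for $\sigma\in\{\id,r,r^{-1}\}$ and even for $\sigma=\tau_z$) is a tidy way to certify the disagreements that the paper checks by inspecting the rows directly.
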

\begin{proof}
At height $0$ the two three-colour assignments agree everywhere, giving $m^2$ sources.
At height $2$ they agree exactly when $w=0$, giving $m$ more. At height $1$, for
each of $w=0,1$, the two intersections with the punctured star are the only
disagreements, giving $2(m-2)$ agreements. On $w=3$ the generic horizontal-star
rows give $m-3$ agreements. At every remaining height-$1$ source the rows differ: the replacement row
is $\pi$ outside the star, a different reflected row at a turn, and a
different row on each generic nonhorizontal branch. At the remaining heights the
constant rows are different from the initial three-colour row. Thus the total agreement
count is $m^2+m+2(m-2)+(m-3)=m^2+4m-7$.
\end{proof}

\section{Initial marked factorizations}\label{sec:entry}
A first coordinate split turns the three-dimensional factorization into
an admissible marked factorization. Its distinguished voltages are
supported on $Q_m$, where the Hamilton recolouring agrees with the
original factors. Choosing one voltage on each distinguished cycle
creates the return intervals required by Theorem~\ref{thm:closure}.

\subsection{A matched first split}
Consider blocks of $m$ repeated rows, each with exactly one distinguished cycle label
and $a-1$ auxiliary colours, and let $b=\lfloor a/2\rfloor$. For every distinguished
cycle choose a different eligible block, one row of that block, and an auxiliary
\emph{partner} present there. Assume that the partners are distinct. In a chosen block, one of the selected entries is the distinguished colour
on its designated row and the partner on the other $m-1$ rows. Delete the partner from the residual support of that
block and reduce its residual quota by one. Other distinguished entries are excluded from the selection.
Thus the auxiliary selection problem remaining in a block is
\[
\begin{array}{c|cc}
 &\text{residual support size}&\text{residual row quota}\\\hline
 \text{unchosen block}&a-1&b\\
 \text{chosen block}&a-2&b-1.
\end{array}
\]
In a chosen block, relabel the designated row as $0$ and write $R_t$ for its
residual auxiliary selection. If $c$ is the distinguished colour and $q$
its partner, the full rows are $\{c\}\cup R_0$ and $\{q\}\cup R_t$ for $t\ne0$.
This prescribed pair contributes one to $c$ and $m-1$ to $q$; the
remaining selections must contribute residue zero to partners and
$\pm1$ to unmatched auxiliary colours.

\begin{lemma}[Matched selection]\label{lem:matched}
If every component of the residual block--auxiliary incidence graph contains an
even number of auxiliary colours that are not partners, then there is a row
selection giving integer total one to every distinguished cycle and total
$\pm1$ modulo $m$ to every auxiliary colour. It respects all row quotas.
Within each block, the selected and complementary row families can be made
intersection-connected whenever their members have size at least two.
\end{lemma}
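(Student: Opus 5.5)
The plan is to reduce the residual auxiliary problem to the proof of Proposition~\ref{prop:pinned-selection}, with one modification: partners must receive residue zero rather than $\pm1$. The distinguished cycles are already settled. Each is selected exactly on one designated row of one chosen block and excluded everywhere else, so its integer total is $1$. Each partner $q$ has already received $m-1$ from its prescribed pair. It also belongs to other blocks' residual supports, so I treat partners as columns of the residual incidence graph whose required divergence is $0$. The unmatched auxiliary colours require divergence $\pm1$.

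\emph{Parity and orientation.} In each component of the residual incidence graph, I take a spanning tree. I select a set of tree edges in which every unmatched auxiliary column has odd degree, every partner column has even degree, and every block vertex has even degree. The leaf-removal argument of Proposition~\ref{prop:pinned-selection} produces this selection. The final requirement vanishes exactly because the number of odd-demand columns, the non-partner auxiliaries, is even by hypothesis. Pairing the selected incidences at each block again gives disjoint local pairs, since each column occurs once per block. This defines a multigraph on the auxiliary columns in which partners have even degree and the others odd degree. Adding one apex vertex per component, joined to the odd-degree vertices only, and orienting an Euler circuit gives divergence $\pm1$ at unmatched colours and $0$ at partners.

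\emph{Realization and quotas.} I realize each oriented pair $u\to v$ exactly as before, with $v$ on one exceptional row and $u$ on the other $m-1$ rows. This contributes $-1$ at $u$ and $+1$ at $v$ modulo $m$, and one entry per row. I then fill with constant unpaired columns. The counting must now use the residual data. An unchosen block has support $a-1$ and quota $b$; there $2s\le a-1$ gives $a-1-2s\ge b-s$, which holds since $a-1\ge 2b-1$ requires care when $a=2b$, $s=b$. A chosen block has support $a-2$ and quota $b-1$, where $a-2-2s\ge b-1-s$ follows similarly. For the tight case in an unchosen block with $a=2b$, I note that $s\le\lfloor(a-1)/2\rfloor=b-1$, so the count works out. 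Partners then end with total $m-1+0\equiv-1$, so every auxiliary colour receives $\pm1$ modulo $m$. No distinguished column appears outside its designated row. This is consistent with the block rows being $\{c\}\cup R_0$ and $\{q\}\cup R_t$.

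\emph{Connectivity.} I assign exceptional rows as in Proposition~\ref{prop:pinned-selection}: the first two pairs go to rows $0$ and $1$, and the rest to row $0$. For a chosen block, I must check connectivity of the full rows, including $c$ and $q$. Rows $t\ne0$ all contain $q$, so they are mutually connected. Row $0$ must meet one of them, through a constant filler, through a pair endpoint shared with row $2$, or, if $R_0$ is empty, vacuously, since then the sizes are at most one. The complements are handled symmetrically: $q$ lies in the complement of row $0$ and $c$ in the complements of the other rows, so the same residual arguments apply. I expect this case analysis for chosen blocks with very small residual quota ($b-1\le1$) to be the main obstacle. There I would exploit the freedom in choosing which pair takes row $1$, and in placing the designated row relative to the pair rows, to force a shared element.
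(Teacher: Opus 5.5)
Your parity join (with even demand at partners), the hub edges attached only at odd-degree vertices, the pair realization and the quota count all agree with the paper's proof. The count holds in general: $2s\le a-1$ gives $s\le\lceil a/2\rceil-1=a-1-b$, and in a chosen block $2s\le a-2$ gives $s\le b-1$.

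The gap is the connectivity of chosen blocks, which is the only nontrivial content of the lemma's last sentence. The assignment you actually state fails there. Following Proposition~\ref{prop:pinned-selection}, you put a sole residual pair's exceptional row on row $0$, which is also the designated row. Take $b=2$ (so $a\in\{4,5\}$ and the residual quota is $1$) with exactly one residual pair $u\to v$, hence no filler. The designated row is $\{c,v\}$ and every other row is $\{q,u\}$, so the selected family is disconnected. For $a=4$ the complements are $\{q,u\}$ and $\{c,v\}$, so that family is disconnected too. Your three cases (a constant filler, an endpoint shared with row $2$, or $R_0=\varnothing$) miss exactly this configuration, because sharing an endpoint with row $2$ needs a second pair. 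Nothing in the parity join excludes it, and the lemma is applied with $a=4$ when $p=5$. Deferring this case to ``the freedom in placing the designated row'' leaves the claim unproved.

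The missing idea is to keep the designated row away from every exceptional row, so that it coincides with an ordinary row up to exchanging $c$ and $q$. After cyclically relabelling so that the designated row is $0$, the paper puts the first two residual pairs' exceptional rows on rows $1,2$ and all further pairs on row $1$. Row $3$ exists because $m\ge4$, and it is ordinary, so its residual selection equals $R_0$. Write $A^{\mathrm{res}}$ for the residual support, of size $a-2$. The designated row $\{c\}\cup R_0$ and row $3$, which is $\{q\}\cup R_0$, share the $b-1\ge1$ entries of $R_0$ whenever rows have size at least two. Their complements $\{q\}\cup(A^{\mathrm{res}}\setminus R_0)$ and $\{c\}\cup(A^{\mathrm{res}}\setminus R_0)$ share $a-b-1\ge1$ entries whenever complements have size at least two. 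Every other row contains $q$ and every other complement contains $c$, so both families are connected with no case analysis. The divergences are unchanged, since they do not depend on which row is exceptional. In unchosen blocks the complement family is automatically connected, because the excluded distinguished colour lies in every complement.
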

\begin{proof}
In the residual incidence graph, prescribe odd degree at unmatched auxiliary colours and even
degree at partners and block vertices. The component hypothesis gives a parity join
on a spanning forest. Pair at blocks and orient the resulting auxiliary
multigraph as in Proposition~\ref{prop:pinned-selection}. Adding hub edges only at the
odd-degree vertices gives divergence zero at partners and $\pm1$ at unmatched auxiliary colours.
The oriented-pair row patterns therefore contribute zero modulo $m$ at partners and
$\pm1$ at unmatched auxiliary colours. The reserved slot contributes $m-1$ at each partner and one at
each distinguished cycle.

The residual quotas have enough room for the pair construction. An
unanchored residual block has support size $a-1$ and quota $b$; an anchored
one has size $a-2$ and quota $b-1$. These are nearest-half quotas. With $s$
disjoint pairs, $s$ is at most the smaller of the quota and its complement.
Complete the selection with unpaired auxiliary colours, chosen on every row.

In an unanchored block, assign the first two residual pairs, when present,
to exceptional rows $0,1$, and all remaining pairs to row $0$. In an anchored block,
cyclically relabel the rows so that the distinguished selection occurs on
row $0$. Assign the first two residual pairs, when present, to rows $1,2$,
and all remaining pairs to row $1$. Row $3$ has the same residual selection as the distinguished
row. This uses $m\ge4$.

In an unanchored block, the selected supports are connected through a fixed
selected column or the two exceptional rows; all complementary supports
contain the distinguished colour. In an anchored block, every selected
support except the distinguished row contains the partner. If the selected
supports have size at least two, the distinguished row shares its other
$b-1$ entries with row
$3$, joining the family. For the complements, every row except the
distinguished row contains the distinguished colour. The distinguished row
shares its other $a-b-1$ entries with row $3$ when the complementary supports
have size at least two. Hence both row families are intersection-connected.
\end{proof}

\subsection{The residual components for the initial factorization}
Apply Lemma~\ref{lem:matched} to the $x$-direction of~\eqref{eq:seed-widths}.
The blocks are indexed by $(h,w)$ and their row index is $x$. In the order
$F_0,F_1,F_{2,0},F_{2,1}$, use the four source vertices in~\eqref{eq:anchors}.
Their block indices are $(1,0),(2,0),(0,0),(1,3)$ and their row indices are
$0,1,0,0$. For $p\ge4$, choose partners by Table~\ref{tab:mates}; the cases
$p=2,3$ are treated in Lemmas~\ref{lem:entry-seven} and~\ref{lem:entry-nine}.

\begin{table}[htbp]
\centering
\caption{Auxiliary partners for the first $x$-split.}\label{tab:mates}
\begin{tabular}{ccccc}
\toprule
& $F_0$ & $F_1$ & $F_{2,0}$ & $F_{2,1}$\\
\midrule
$p\ge4$ even & $B_0$ & $B_1$ & $A_0$ & $A_1$\\
$p\ge5$ odd & $B_0$ & $B_1$ & $A_0$ & $A_2$\\
\bottomrule
\end{tabular}
\end{table}

\begin{lemma}\label{lem:entry-components}
Assume $p\ge4$. All partners in Table~\ref{tab:mates} are eligible and distinct. After deleting the
specified partner incidence in each of the four blocks, the residual auxiliary
components are
\begin{equation}\label{eq:residual-even}
 \{A_0\},\quad\{B_0\},\quad
 \{A_1,\ldots,A_{p-1}\},\quad\{B_1,\ldots,B_{p-1}\}
 \quad(p\text{ even}),
\end{equation}
and
\begin{equation}\label{eq:residual-odd}
 \{A_0\},\quad\{\text{all other auxiliary colours}\}
 \quad(p\text{ odd}).
\end{equation}
Every component contains an even number of unmatched auxiliary colours.
\end{lemma}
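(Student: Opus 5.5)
The plan is a direct computation from the height-by-height $x$-unions already recorded in the proof of Lemma~\ref{lem:seed-incidence}. Removing a partner affects only the one block in which it is matched. The residual graph is obtained from the auxiliary $x$ graph by deleting four incidences, one in each of the blocks $(1,0),(2,0),(0,0),(1,3)$; the distinguished columns are removed from every block.

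\emph{Eligibility and distinctness.} First I check eligibility: each partner must use $e_x$ at its block, that is, lie in $V_h$ and be unselected for $y$ at that $(h,w)$. I read this off Table~\ref{tab:shellpairs} and the $w=0$ and $w=1$ rules.
\begin{itemize}
\item At $(h,w)=(1,0)$ we need $B_0\in V_1$, and $B_0$ is not the selected endpoint. For $p$ even its pair $(B_0,A_1)$ has $j=0$, so the second endpoint $A_1$ is chosen at $w=0$; hence $B_0$ uses $x$. For $p$ odd, $B_0$ is unpaired at $h=1$ and lies beyond the first $k$ fillers $A_0,\dots$, or rather those fillers are taken from unpaired $A$'s first.
\item At $(2,0)$ we need $B_1$. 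It is unpaired at $h=2$ in both parity cases and is not among the first fillers.
\item At $(0,0)$ we need $A_0$. The pair $(A_0,B_1)$ has $j=0$, so $B_1$ is selected at $w=0$ and $A_0$ uses $x$.
\item At $(1,3)$ we need $A_1$ for $p$ even or $A_2$ for $p$ odd. At $w=3$ first endpoints are chosen, so the second endpoints $A_1$ (in $(B_0,A_1)$) and $A_2$ (in $(A_1,A_2)$) use $x$.
\end{itemize}
Distinctness is immediate from the table.

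\emph{Residual components.} Since a single block is changed only at one row, the residual support of a block is its $x$-support with the partner deleted from that block vertex. The $m$ copies of the block remain joined through the other rows, which still contain the partner. I therefore recompute the unions at each height, with the partner deleted only at the chosen $w$ and only from the designated block.

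The main point is that the auxiliary $x$ graph for $p$ even already had the two components $\{B_0\}\cup A_{[1,p-1]}$ and $\{A_0\}\cup B_{[1,p-1]}$. Once the distinguished colours are absent, the only links attaching $B_0$ to the $A$'s are the height-$1$ supports, and the only links attaching $A_0$ to the $B$'s are the height-$0$ supports. I expect the key work to be showing that after deleting $B_0$ at $(1,0)$ and $A_0$ at $(0,0)$, these two colours become isolated. This requires checking every $w$, since at $w\ne0$ the colour $B_0$ still uses $x$ at height $1$ in other blocks. So isolation must come from the fact that the blocks, viewed as vertices, are separate for each $(h,w)$: a block containing $B_0$ but nothing else would make $B_0$ isolated only if the block support is a singleton.

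This is the step I expect to be the main obstacle. I would first try to prove that the $x$-direction residual graph is read blockwise, so that $B_0$'s remaining incidences lie only in blocks whose other auxiliary columns are absent from $V_h$. If that fails, I would fall back to enumerating the supports at heights $0,1,2,\ge3$ for $w=0,1,\text{other}$. The odd case is similar: $A_0$ loses its only bridge at $(0,0)$, while $B_0$ stays attached through the height-$2$ pair $(B_0,B_2)$.

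\emph{Parity count.} In the even case the unmatched colours are $A_2,\dots,A_{p-1}$ and $B_2,\dots,B_{p-1}$, each $p-2$ in number and hence even, while the isolated components $\{A_0\}$ and $\{B_0\}$ contain only partners, giving zero unmatched colours. In the odd case the large component has unmatched colours $2p-4$ minus nothing further, which is even, and $\{A_0\}$ contains only a partner.
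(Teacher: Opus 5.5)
Your eligibility and distinctness checks and your final parity count are correct. For odd $p$, the reason $B_0$ uses $x$ at $(1,0)$ is simply that the $k$ pairs at $h=1$ already fill the quota, so no fillers are taken and the unique unpaired column $B_0$ uses $x$ on every row.

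\textbf{The gap.} The substance of the lemma, determining the residual components, is never carried out. You call it the main obstacle and leave only a plan with a fallback. Moreover, the premise that makes it look like an obstacle is false for even $p$: there $B_0$ has no $x$-incidence other than $(1,0)$.
\begin{itemize}
\item At $h=1$ it is the first endpoint of pair number $0$, hence $y$-selected for every $w\ne0$.
\item At $h=2$ it is the unique filler, $y$-selected on every row.
\item At $h\ge3$ it is among the fillers $B_0,\dots,B_{k-1}$.
\item $B_0\notin V_0$.
\end{itemize}
Likewise, $A_0$ lies in $V_h$ only for $h=0$ and uses $x$ there only at $w=0$. So isolation is immediate.

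The mechanism you propose, remaining incidences in singleton-support blocks, cannot work. Every undeleted block has $p-k\ge2$ auxiliary $x$-columns, so any further incidence of $B_0$ would join it to another colour. Your claim that $B_0$ uses $x$ at height $1$ for $w\ne0$ holds only for odd $p$, where $B_0$ is indeed not isolated. Also, in the residual graph the partner is removed from the block vertex outright. There are no ``copies'' of the block still containing it through other rows; if there were, $A_0$ and $B_0$ would not be isolated.

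\textbf{The missing connectivity half.} You also omit the connectivity argument, where the deletions genuinely matter.

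For even $p$ you must check the following.
\begin{itemize}
\item After removing $A_1$ from $(1,3)$, the generic row $w=2$ (which exists already at $m=4$) still links the odd-indexed $A$'s.
\item The row $(1,1)$ attaches $A_2$, and the residual $(1,0)$ support attaches the $A_j$ with even $4\le j\le p-2$ through $A_3$.
\item For $p=4$, deleting $B_1$ from $(2,0)$ leaves the singleton $\{B_2\}$, so $B_2$ joins the other $B$'s only through the height-$3$ support $\{B_2,B_3\}$.
\item No residual block meets both $\{A_1,\dots,A_{p-1}\}$ and $\{B_1,\dots,B_{p-1}\}$.
\end{itemize}

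For odd $p$ you must show that the residual $(2,0)$ support still contains $B_0$ and $B_4$. This links $B_0$, which is joined to all of $A_1,\dots,A_{p-1}$ at height $1$, to the $B$'s, which are connected through the generic, $w=1$ and residual $w=0$ rows at height $2$.

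This row-by-row enumeration is exactly the paper's proof. Without it, \eqref{eq:residual-even} and \eqref{eq:residual-odd}, and hence the parity conclusion, are not established.
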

\begin{proof}
The pair table and the four three-colour rows show that the partners are
eligible. We compute the components after the four prescribed deletions.

First let $p=2k$, $k\ge2$. At height $1$, the generic $x$ support contains
$A_1,A_3,\ldots,A_{2k-1}$. The $w=1$ support attaches $A_2$ through $A_1$.
At $w=0$, deletion of $B_0$ leaves a support containing $A_3$ and all even-indexed
$A$ colours from $A_4$ onwards. Thus all $A_1,\ldots,A_{p-1}$ remain connected.
The generic row $w=2$ remains after deleting $A_1$ at $w=3$, including when
$m=4$, and preserves these connections.

At height $2$, the generic $x$ support contains $B_1,B_3,B_5,\ldots$.
The $w=1$ row attaches $B_4$ when it exists. At $w=0$, after deleting $B_1$, the
remaining support contains $B_2$ and $B_5$, together with the even-indexed $B$
colours from $B_6$ onwards. If $k\ge3$, this connects all $B_1,\ldots,B_{p-1}$.
If $k=2$, the height-$3$ support $\{B_2,B_3\}$ supplies the missing connection.
The height-$0$ rows attach only to this same $B$ component. The only $x$
incidences of $A_0$ and $B_0$ were precisely their deleted ones, so they are
isolated. No row joins the two displayed large components. This
proves~\eqref{eq:residual-even}.

Now let $p=2k+1$, $k\ge2$. The generic height-$1$ $x$ support consists of $B_0$
and the even-indexed $A$ colours. The $w=1$ row attaches $A_3$, and the $w=0$
row, after deleting $B_0$, contains $A_4$ and the other odd-indexed $A$ colours,
including $A_1$. Thus every $A_1,\ldots,A_{p-1}$ joins $B_0$.
The generic $w=2$ row preserves these connections after the deletion of
$A_2$ at $w=3$.
At height $2$, the $w=0$ row after deletion of $B_1$ contains both $B_0$ and
$B_4$. The generic row contains $B_1$ and all positive even-indexed $B$ colours;
the $w=1$ row supplies $B_3$, and the $w=0$ row supplies the remaining odd
indices. Hence all $B$ colours join the same component. The sole incidence of
$A_0$ has been deleted, proving~\eqref{eq:residual-odd}.

In the even case the isolated vertices are partners, and each large component has
$p-2$ unmatched auxiliary colours, an even number. In the odd case the large component has $2p-4$
unmatched auxiliary colours, again even.
\end{proof}

\begin{proposition}[Initial admissible marked factorization]\label{prop:entry}
For every even $m\ge4$ and $p\ge4$, one balanced $x$-split of the initial factorization can be
performed so that the result is an admissible marked factorization with distinguished colour set
$\{F_0,F_1,F_2\}$ and marked set $U_m\times\{0\}$. The recolouring from
Section~\ref{sec:anchored}, transported to this set, makes every colour Hamilton.
\end{proposition}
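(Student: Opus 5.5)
The plan is to perform the $x$-split of~\eqref{eq:seed-widths} with the matched selection of Lemma~\ref{lem:matched}, using the four anchor sources of $Q_m$ as the designated distinguished entries. Then we check the four conditions of Definition~\ref{def:collar} for the lifted factorization, with the new $y$-type fibre coordinate as the last coordinate. Finally, Proposition~\ref{prop:transport} transfers the Hamilton recolouring of Proposition~\ref{prop:anchor}.

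\textbf{Selection.} Take one block for each $(h,w)$, indexed by its row $x$. By Lemma~\ref{lem:nearcore} and Lemma~\ref{lem:shell} the blocks are cycle-consistent, and at every source exactly one distinguished colour uses $e_x$. For each distinguished cycle label $F_0,F_1,F_{2,0},F_{2,1}$, the chosen block and row are those of the corresponding point of $Q_m$. The four blocks $(1,0),(2,0),(0,0),(1,3)$ are distinct, and the partners come from Table~\ref{tab:mates}. We must check that each point of $Q_m$ uses $e_x$ in its distinguished colour. Table~\ref{tab:anchors} gives rows $(0,2,1),(1,0,2),(2,1,0),(1,2,0)$, so $F_0,F_1,F_2,F_2$ respectively use direction $0$. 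The label $\ep$ at $(0,0,0)$ is $0$ and at $(0,m-2,3)$ is $1$, so $F_{2,0},F_{2,1}$ are matched correctly. Lemma~\ref{lem:entry-components} supplies the parity hypothesis of Lemma~\ref{lem:matched}. Every distinguished cycle therefore receives integer total exactly one, placed at its anchor, and every auxiliary colour receives a unit. Lemma~\ref{lem:physical} realizes the split, and Lemma~\ref{lem:lift} lifts every old cycle to one cycle.

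\textbf{Admissibility.} Condition~(i) and the inheritance of even components, condition~(iv), follow as in Lemma~\ref{lem:inherit}. That argument uses only unit totals and the intersection connectivity that Lemma~\ref{lem:matched} also provides. The one subtlety is that the full column set now includes the distinguished labels, and their totals are $1$, which is still odd. The old seed components are even by Lemma~\ref{lem:seed-incidence}. Condition~(ii) holds because each child direction receives at most one distinguished colour at each source, since exactly one distinguished colour uses $e_x$ there. For condition~(iii), set $U=U_m$ on the old torus, with $U_m\cap Q_m=\varnothing$. Each distinguished old cycle carries a single nonzero voltage, at its anchor $q\notin U_m$, and $q$ lies in the open $U_m$-gap containing it. Applying Theorem~\ref{thm:onegap} (equivalently Corollary~\ref{cor:additive}) to that gap preserves the first return $\Ret_{U_m}$ and places all nonzero fibre points in one open lifted gap. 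The marked set becomes $U_m\times\{0\}$.

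\textbf{Recolouring.} The three-colour replacement is a valid recolouring supported on $U_m$ (Section~\ref{sec:anchored}), and the auxiliary factors are untouched. Proposition~\ref{prop:transport} transports it to $U_m\times\{0\}$ with the same colourwise cycle counts. Before lifting, Proposition~\ref{prop:anchor} makes each of $F_0,F_1,F_2$ a single cycle, while the auxiliary factors are Hamilton. After transport every colour is therefore Hamilton.

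The main obstacle is the interaction between the anchoring and the gap condition. We need that $U_m$ meets every distinguished cycle, so that the gap notion applies; this holds since $|U_m|$ is large and each $F_2$ cycle meets the changed heights. We also need that the transported recolouring uses arcs of the correct child direction at $(x,0)$. This holds because the voltages vanish on $U_m$, so the source rows there are unchanged.
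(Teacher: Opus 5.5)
Your proposal is correct and follows the paper's proof essentially step for step. It uses the matched selection of Lemma~\ref{lem:matched} anchored at $Q_m$ with the parity input of Lemma~\ref{lem:entry-components}, evenness via the argument of Lemma~\ref{lem:inherit}, the single-gap lift of Theorem~\ref{thm:onegap}, and transport by Proposition~\ref{prop:transport}. One remark: you do not need $U_m$ to meet every distinguished cycle, since Definition~\ref{def:collar}(iii) constrains only cycles meeting $U$; in any case your layer argument shows it (every cycle visits height~$3$, and every source there lies in $U_m$), whereas the size of $|U_m|$ alone would not.
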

\begin{proof}
Lemma~\ref{lem:entry-components} verifies the component hypothesis of
Lemma~\ref{lem:matched}. Thus each distinguished cycle receives total voltage one,
at its designated point of $Q_m$, and every auxiliary cycle receives a unit
voltage. The selected row sets give a coordinate split by
Lemma~\ref{lem:physical}.

The initial factorization's other directions have even full incidence by
Lemma~\ref{lem:seed-incidence}; they retain it. The full intersection connectivity established in
Lemma~\ref{lem:matched} proves even incidence in each new child of multiplicity at least two by the proof of Lemma~\ref{lem:inherit}. A child of multiplicity one requires no further split; the connectivity
condition applies to children of larger multiplicity.

Every distinguished selected source is outside $U_m$. On each distinguished cycle meeting
$U_m$, its sole nonzero voltage therefore lies in the interior of one gap.
Theorem~\ref{thm:onegap} places every nonzero new fibre point in one enlarged
gap. The marked set lies on the zero section, and its relative recolouring is
valid and retains its cycle counts by Proposition~\ref{prop:transport}.
Cycle-consistency and the condition of at most one distinguished colour per direction follow from the coordinate
lift. These are all the conditions in Definition~\ref{def:collar}.

On the initial factorization, replacing the three distinguished factors by $H_0,H_1,H_2$ leaves the
auxiliary Hamilton factors unchanged and makes every colour Hamilton. The
transported recolouring has the same number of cycles in every colour.
\end{proof}

\subsection{Initial constructions in dimensions seven and nine}
For $p\ge4$, the residual components were computed in
Lemma~\ref{lem:entry-components}. For $p=2,3$, the following direct selections
give the first split. In particular, the partners for $p=3$ are chosen
together with the additional adjustment on $B_1,B_2$ in
Lemma~\ref{lem:entry-nine}; the generic odd-$p$ row of
Table~\ref{tab:mates} applies only to $p\ge5$.
The distinguished sources are the four points of $Q_m$,
in the order $F_0,F_1,F_{2,0},F_{2,1}$.

\begin{lemma}[Initial marked factorization in dimension seven]\label{lem:entry-seven}
For $p=2$ assign the four anchor blocks the partners
$(B_0,B_1,A_0,A_1)$. Split the $x$ direction with selected multiplicity one. In each
anchor block select the distinguished colour at its anchor and its partner at all other
sources. In every other block select its unique auxiliary colour using $x$.
This supplies the admissible marked factorization and Hamilton recolouring of
Proposition~\ref{prop:entry} in dimension seven.
\end{lemma}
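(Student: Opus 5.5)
The plan is to verify directly the conditions of Definition~\ref{def:collar} for this explicit selection, following the proof of Proposition~\ref{prop:entry} but without Lemma~\ref{lem:matched}. For $p=2$ we have $d=7$ and the initial factorization is $T_m(2,2,3)$ by Lemma~\ref{lem:small-seed}. The $x$ direction has multiplicity $2$, so a balanced split has selected quota one and yields children of multiplicity one. In particular no intersection-connectivity or even-incidence condition is needed for the $x$ children. At each $x$-block $(h,w)$ the support consists of one distinguished colour and exactly one auxiliary colour using $x$. The first step is to confirm from Tables~\ref{tab:nearcore} and~\ref{tab:shellpairs}, specialized at $p=2,k=1$, that:
\begin{itemize}
\item at the anchor blocks $(1,0),(2,0),(0,0),(1,3)$ the auxiliary $x$-colours are $B_0,B_1,A_0,A_1$ respectively;
\item the distinguished $x$-colours at the anchor vertices are $F_0,F_1,F_{2,0},F_{2,1}$.
\end{itemize}
The latter uses \eqref{eq:defect-label}: $\ep(0,0,0)=0$ and $\ep(0,m-2,3)=1$. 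Then each row selects exactly one colour, so Lemma~\ref{lem:physical} applies.

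Next we compute voltages. Each distinguished cycle receives voltage exactly at its anchor source, so its total is $1$. Every other distinguished source has $\delta=0$; in particular the distinguished sources in $U_m$ are unselected, since $U_m\cap Q_m=\varnothing$. For an auxiliary colour $X$ we have three contributions:
\begin{itemize}
\item each non-anchor block where $X$ uses $x$ contributes $m$, which is $0$ modulo $m$;
\item each anchor block where $X$ is the partner contributes $m-1$;
\item an anchor block where $X$ uses $x$ but is not the partner would contribute $m$.
\end{itemize}
Since the partners $B_0,B_1,A_0,A_1$ are distinct and exhaust the four auxiliary colours, every auxiliary total is $-1$, a unit. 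By Lemma~\ref{lem:lift} all cycles lift to single cycles, and cycle-consistency of the new fibres follows as in Lemma~\ref{lem:inherit}.

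For condition (iv) of Definition~\ref{def:collar}, only the $y$ direction (multiplicity $2$) and the $w$ direction (multiplicity $3$) still need splitting. These are unsplit directions, so by the last statement of Lemma~\ref{lem:inherit} their column-component partitions are retained from Lemma~\ref{lem:small-seed}. That lemma gives even components for $y$ (sizes $6$ and $2$) and connectivity for $w$ with $2p+4=8$ columns. Condition (ii) holds because Table~\ref{tab:nearcore} uses each direction exactly once among the distinguished colours, and children inherit this. For condition (iii), each distinguished cycle has a single nonzero voltage at a point of $Q_m$ outside $U_m$, hence inside an open $U_m$-gap. Theorem~\ref{thm:onegap} with Corollary~\ref{cor:additive} then puts every nonzero new fibre point into one enlarged gap. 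The marked set $U_m\times\{0\}$ lies on the zero section. Finally, Proposition~\ref{prop:transport} transports the recolouring to $H_0,H_1,H_2$ from Proposition~\ref{prop:anchor}, which is valid and Hamilton on the unlifted factorization, with colourwise cycle counts preserved. So every colour becomes Hamilton.

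The main obstacle is the table bookkeeping in the first step: checking that the unique auxiliary $x$-colour at each anchor block is precisely the listed partner. This needs care at $(h,w)=(1,0)$ and $(0,0)$, where the pair rule replaces the generic choice at $w=0$. Lemma~\ref{lem:small-seed} records that at $w=0$ the $y$-colour is $B_1$ at $h=0$ and $A_1$ at $h=1$. Hence the $x$-colour is $A_0$ at $(0,0)$, $B_0$ at $(1,0)$, and $B_1$ at $(2,0)$ and $(1,3)$. That last value is a problem: the partner listed for $(1,3)$ is $A_1$, so we must recheck that $A_1$ uses $x$ at $(1,3)$. At $h=1$ we have $V_1=\{B_0,A_1\}$, and for $w\ne0$ the selected $y$-colour is $B_0$, so the $x$-colour is $A_1$, as required. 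With the $x$-colours at $(h,w)=(2,0)$ and $(1,3)$ confirmed in this way, all four partners are eligible, and the remaining steps are formal.
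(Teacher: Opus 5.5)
Your proposal is correct and follows essentially the same route as the paper. Both arguments check that each partner is eligible in its anchor block, and get voltage exactly one on each distinguished cycle and $-1$ on each auxiliary colour; both then note that the $x$ children have multiplicity one while the unsplit $y,w$ directions keep the even components of Lemma~\ref{lem:small-seed}, and apply the single-interval lift and Proposition~\ref{prop:transport} as in Proposition~\ref{prop:entry}.

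The only blemish is expository: delete the interim claim that the $x$-colour at $(h,w)=(1,3)$ is $B_1$, since your own recheck correctly gives $A_1$ there (at $h=1$, $w\ne0$ the $y$-colour is $B_0$).
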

\begin{proof}
Each partner is the unique auxiliary colour using $x$ in its anchor block. Every distinguished
cycle receives total voltage exactly one, and every auxiliary colour receives residue
$-1$: all other contributions are multiples of $m$. Both $x$ children have multiplicity
one. The untouched $y,w$ directions retain the componentwise evenness from
Lemma~\ref{lem:small-seed}. The sole distinguished voltage on each cycle is outside
$U_m$, so the single-interval lift creates an admissible marked factorization and transports the anchored
Hamilton recolouring exactly as in Proposition~\ref{prop:entry}.
\end{proof}

\begin{lemma}[Initial marked factorization in dimension nine]\label{lem:entry-nine}
For $p=3$ use the partners $(A_1,B_0,A_0,A_2)$ and split $x$ with selected
multiplicity one.
Use the same rule in the four anchor blocks. In the separate block
$(h,w)=(0,1)$ select $B_2$ at $x=0$ and $B_1$ elsewhere. In every other
nonanchor block choose a fixed auxiliary colour using $x$. This supplies the admissible marked factorization
and Hamilton recolouring of Proposition~\ref{prop:entry} in dimension nine.
\end{lemma}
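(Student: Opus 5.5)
The plan is to mirror the proof of Lemma~\ref{lem:entry-seven}. After that, the only genuinely new ingredient is a check that every auxiliary colour receives a unit total.

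\emph{Setting.} For $p=3$ the multiplicities in~\eqref{eq:seed-widths} are $(3,2,4)$. The $x$-direction therefore has multiplicity $3$, and splitting it with selected multiplicity one gives children of multiplicities $2$ and $1$.

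\emph{Validity of the selection.} First I check that the prescribed row selections are legitimate. Each partner, and $B_1,B_2$ in block $(0,1)$, must be an auxiliary colour that uses $x$ in the indicated block; this is read off from the explicit $p=3$ supports listed in the proof of Lemma~\ref{lem:small-seed}. Here $V_h$ has three members at each $(h,w)$: one uses $y$ and the others use $x$.

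\emph{Column totals.} Next I compute the totals, using cycle-consistency: a colour selected on every row of a block contributes a multiple of $m$.
\begin{itemize}
\item Each distinguished cycle is selected once, at its anchor, so its total is $1$. Each cycle has exactly one anchor in $Q_m$, since the anchors were assigned to $F_0,F_1,F_{2,0},F_{2,1}$ according to the defect label~\eqref{eq:defect-label}.
\item Each partner $A_1,B_0,A_0,A_2$ receives $m-1\equiv-1$.
\item In block $(0,1)$, $B_2$ receives $1$ and $B_1$ receives $m-1$.
\item The colours $A_0,A_1,A_2,B_0,B_1,B_2$ are then all accounted for. Every remaining contribution is a fixed-colour block and contributes $0$ modulo $m$.
\end{itemize}
The adjustment in block $(0,1)$ is exactly what gives $B_1$ and $B_2$ nonzero totals. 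All totals are therefore units, so by Lemma~\ref{lem:lift} every old cycle lifts to one cycle, and Lemma~\ref{lem:physical} realizes the split.

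\emph{Admissibility.} Then I verify Definition~\ref{def:collar}.
\begin{itemize}
\item Conditions (i) and (ii) are inherited from the lift, exactly as in Proposition~\ref{prop:entry}.
\item For (iii), the sole distinguished voltage on each distinguished cycle sits at a point of $Q_m$, which is disjoint from $U_m$. Theorem~\ref{thm:onegap} then confines all nonzero new fibre points to one open gap.
\item For (iv), the untouched $y$ and $w$ directions keep their even components, by Lemma~\ref{lem:small-seed} and the unsplit clause of Lemma~\ref{lem:inherit}.
\end{itemize}

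\emph{Main obstacle: the new child of multiplicity two.} The unselected $x$-child has multiplicity $2$ and needs condition (iv) afresh. I would argue as in the proof of Lemma~\ref{lem:inherit}. In every block the selected supports have size one and the complements have size two, and each complement row must share a column with another row.
\begin{itemize}
\item In anchor blocks, the rows other than the anchor row contain the distinguished colour in the complement. The anchor row's complement contains the partner together with the fixed third colour, which is also in every other row. So the rows intersect.
\item In block $(0,1)$, the third $x$-colour is common to all rows.
\item Fixed-choice blocks have identical rows.
\end{itemize}
Each child component is therefore a union of full $m$-tuples of block copies, and the double-counting parity argument of Lemma~\ref{lem:inherit} shows that every component has an even number of columns. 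The only case needing care is an anchor block whose $x$-support has size $3$ but which contains the distinguished colour; there the third colour must be auxiliary, and I would confirm this from the pair table.

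\emph{Conclusion.} Finally, Proposition~\ref{prop:transport} transports the anchored recolouring of Proposition~\ref{prop:anchor} to $U_m\times\{0\}$ and preserves its cycle counts. This gives the Hamilton recolouring required in dimension nine.
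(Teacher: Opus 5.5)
Your proposal is correct and follows essentially the same route as the paper. It uses the same unit-total bookkeeping (the four partners and $B_1$ at $-1$, $B_2$ at $+1$, each distinguished cycle at exactly one), the same complementary intersection-connectivity check in anchor blocks via the common auxiliary colour $q$, and parity inheritance plus the single-interval argument at the four anchors outside $U_m$. The case you flag as needing care is automatic: exactly one distinguished colour uses $x$ at every source, so every $x$-support consists of that colour together with two auxiliary colours.
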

\begin{proof}
The partners are present by the explicit table for the initial factorization at $p=3$. They each receive an
$m-1$ contribution. The extra block has exactly two auxiliary colours using $x$, namely $B_1,B_2$,
and supplies residues $-1,+1$. All other contributions are multiples of $m$.
Thus the four distinguished totals are one, and the auxiliary residues, in order
$A_0,A_1,A_2,B_0,B_1,B_2$, are $(-1,-1,-1,-1,-1,+1)$.

The selected child has multiplicity one. Complementary rows of any nonanchor block
share their old distinguished colour. In an anchor block the two complementary patterns
are $\{\text{partner},q\}$ and $\{\text{distinguished},q\}$, with $q$ the other auxiliary
colour, and hence intersect. Full complementary intersection connectivity follows. Incidence
parity is inherited, and the single-interval argument applies at the four sources
outside $U_m$.
\end{proof}

\subsection{Completion in every odd dimension at least seven}
\begin{theorem}\label{thm:oddconstruction}
For every even $m\ge4$ and every odd $d\ge7$, $D_d(m)$ has a Hamilton decomposition.
\end{theorem}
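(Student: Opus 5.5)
Write $d=2p+3$ with $p\ge2$. The proof assembles the pieces already in place: an initial factorization, one marked first split, the relative splitting theorem, and a final recolouring.

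\emph{Initial factorization.} Superpose the three distinguished colours of Table~\ref{tab:nearcore} with the $2p$ auxiliary Hamilton factors of Lemma~\ref{lem:shell}. This gives the factorization of $T_m(p-k+1,k+1,p+1)$ in~\eqref{eq:seed-widths}. It has $d-1$ Hamilton colours and one colour, $F_2$, with two cycles. The full cycle-column set~\eqref{eq:seed-columns} has $2p+4$ elements.

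\emph{First split and marked factorization.} For $p\ge4$, apply Proposition~\ref{prop:entry}; its component hypothesis is supplied by Lemmas~\ref{lem:seed-incidence} and~\ref{lem:entry-components}. For $p=2$ and $p=3$, use Lemmas~\ref{lem:entry-seven} and~\ref{lem:entry-nine} instead, whose evenness input comes from Lemma~\ref{lem:small-seed}. In every case the $x$-split yields an admissible marked factorization in the sense of Definition~\ref{def:collar}, with:
\begin{itemize}
\item distinguished set $\{F_0,F_1,F_2\}$;
\item marked set $U_m\times\{0\}$;
\item a valid recolouring of the three distinguished colours supported on this set, namely the transport of $H_0,H_1,H_2$ from Proposition~\ref{prop:anchor}.
\end{itemize}
By Proposition~\ref{prop:transport}, after the transported recolouring every colour is Hamilton. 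The auxiliary factors are untouched by the recolouring and remain Hamilton, since they received unit voltages.

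\emph{Remaining splits.} Let the current factorization have $n$ coordinates and multiplicity sum $d$. Apply Theorem~\ref{thm:closure} exactly $d-n$ more times, always splitting some direction of multiplicity at least two; this uses $m\ge4$ even. Each application:
\begin{itemize}
\item preserves admissibility;
\item keeps the marked set on the zero section as a single copy of $U_m$;
\item preserves the colourwise cycle count of the transported recolouring.
\end{itemize}
By Corollary~\ref{cor:inventory}, the unrecoloured factorization keeps cycle excess one throughout. The process terminates at $T_m(1,\ldots,1)=D_d(m)$. There the transported recolouring permutes only existing labelled positive coordinate arcs at each source, because each lift satisfies~\eqref{eq:physical-delta}. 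It therefore gives an arc partition of $D_d(m)$ into $d$ factors, each a single cycle, which is a Hamilton decomposition.

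\emph{Main obstacle.} The delicate point is checking that the output of the first split genuinely meets every clause of Definition~\ref{def:collar}. In particular, condition~(iii) requires each distinguished cycle meeting $U_m$ to carry its unique nonzero voltage at a point of $Q_m$ outside $U_m$. Condition~(iv) requires componentwise evenness in both $x$-children as well as in the untouched $y$ and $w$ directions. These are exactly what the preceding lemmas establish, so the proof reduces to verifying that their hypotheses line up for every $p\ge2$: the case split at $p\ge4$ versus $p=2,3$, and the fact that all the needed rows exist already at $m=4$. Once that is confirmed, the rest is the bookkeeping count $d-n$ together with the final invocation of Lemma~\ref{lem:surgery}.
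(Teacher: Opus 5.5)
Your proposal is correct and follows essentially the same route as the paper. You build the initial factorization of $T_m(p-\lfloor p/2\rfloor+1,\lfloor p/2\rfloor+1,p+1)$, perform the matched first $x$-split via Proposition~\ref{prop:entry} (or Lemmas~\ref{lem:entry-seven} and~\ref{lem:entry-nine} for $p=2,3$) to get an admissible marked factorization in four coordinates, apply Theorem~\ref{thm:closure} for the remaining $d-4$ splits, and finish with the transported anchored recolouring on the zero-section copy of $U_m$; the only differences are cosmetic, such as writing the split count as $d-n$ and citing Corollary~\ref{cor:inventory} explicitly.
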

\begin{proof}
Write $d=2p+3$, where $p\ge2$. Construct the initial factorization
$T_m(p-\lfloor p/2\rfloor+1,\lfloor p/2\rfloor+1,p+1)$ and the anchored
Hamilton recolouring of Section~\ref{sec:anchored}. At $p=2,3$ use
Lemmas~\ref{lem:small-seed}, \ref{lem:entry-seven}, and \ref{lem:entry-nine};
at $p\ge4$ use Proposition~\ref{prop:entry}. In every case one split gives
an admissible marked factorization in four coordinates.

Apply Theorem~\ref{thm:closure} for exactly $d-4$ further balanced splits.
All final multiplicities are one. Each old cycle lifts to a single cycle,
so the final factorization still has $d-1$ Hamilton factors and one factor
consisting of two cycles. Apply the anchored recolouring on the iterated
zero-section copy of $U_m$. It gives the same cycle counts as the initial
Hamilton recolouring: one cycle in each colour.
The source directions still partition the positive basis arcs, and each factor
has length $m^d$. This proves the claim.
\end{proof}

\begin{corollary}\label{cor:support}
In the construction of Theorem~\ref{thm:oddconstruction}, the final recolouring changes only
three colours and exactly $m^3-m^2-4m+7$ sources. This number is independent of
$d$. Each source has one image in the zero section of every subsequent lift.
\end{corollary}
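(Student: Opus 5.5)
The corollary is a bookkeeping consequence of the construction, so I would trace the marked set and the recolouring through the proof of Theorem~\ref{thm:oddconstruction}. Before any lift, the recolouring is the replacement of $F_0,F_1,F_2$ by $H_0,H_1,H_2$ from Section~\ref{sec:anchored}. It permutes only the three distinguished labelled arcs at each source, and the auxiliary factors $A_i,B_i$ are untouched. Its support is by definition $U_m$, the set of sources where the three-colour direction row changes. Proposition~\ref{prop:patch-size} gives $|U_m|=m^3-m^2-4m+7$, and this count depends only on $m$.

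Next I would follow the marked set through the splits. The first, matched $x$-split (Proposition~\ref{prop:entry}, or Lemmas~\ref{lem:entry-seven} and~\ref{lem:entry-nine} when $p=2,3$) gives an admissible marked factorization with marked set $U_m\times\{0\}$. Each of the $d-4$ further applications of Theorem~\ref{thm:closure} replaces the marked set $U$ by $U\times\{0\}$. By induction, the final marked set is the iterated zero-section copy of $U_m$. It is in bijection with $U_m$, and each original source has exactly one image in the zero section of every subsequent lift.

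It remains to check that the recolouring really moves nothing outside this copy. I would use Proposition~\ref{prop:transport}: it transports the relative source permutations $r_c$, and hence the per-source colour permutations $\sigma_x$, from $U$ to $U\times\{0\}$ unchanged. At each marked source, the transported $\sigma_x$ is the same nontrivial permutation of the three distinguished colours. So the change is genuine at every point of the copy and trivial everywhere else. Only the colours $\cA=\{F_0,F_1,F_2\}$ are affected.

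The main point to get right is that "changes a source" must mean that the outgoing direction row differs, not merely that $\sigma_x$ is nontrivial abstractly. This follows because admissibility condition (ii) keeps the three distinguished colours on distinct coordinate directions at every source. Hence a nontrivial $\sigma_x$ changes the directions actually used, and the exact count $m^3-m^2-4m+7$ survives every lift.
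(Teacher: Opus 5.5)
Your proposal is correct and follows the paper's argument. The count comes from Proposition~\ref{prop:patch-size}. Proposition~\ref{prop:transport}, applied through Theorem~\ref{thm:closure}, carries exactly one zero-section copy of $U_m$ through each lift with the same $\sigma_x$. Definition~\ref{def:collar}(ii) keeps the distinguished colours on distinct directions, so every changed row stays changed. The paper's proof compresses this into three sentences; you additionally spell out the induction on the marked set and why a nontrivial $\sigma_x$ changes the direction row.
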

\begin{proof}
The initial support is given by Proposition~\ref{prop:patch-size}.
Proposition~\ref{prop:transport} takes one copy of this support at each lift.
Distinct distinguished directions remain distinct under coordinate
splitting, so each changed row remains changed after lifting.
\end{proof}

\medskip
\noindent\textbf{Example: the construction of $D_7(4)$.}
Take $p=2$. The initial factorization of $T_4(2,2,3)$ has six Hamilton
factors of length $64$ and one factor with two cycles of length $32$.
The four prescribed agreement vertices are
\[
 Q_4=\{(0,1,0),(1,1,0),(0,0,0),(0,2,3)\}.
\]
Lemma~\ref{lem:entry-seven} assigns the partners $(B_0,B_1,A_0,A_1)$ and splits
$x$ first. Every distinguished cycle receives its unique selected source
in $Q_4$, disjoint from $U_4$. The first split creates the marked-gap
condition; the remaining splits preserve it. One choice of multiplicities is
\[
\begin{aligned}
 (2,2,3)&\longrightarrow(1,1,2,3)
          \longrightarrow(1,1,1,1,3)\\
        &\longrightarrow(1,1,1,1,2,1)
          \longrightarrow(1,1,1,1,1,1,1).
\end{aligned}
\]
Here we split $x$, then $y$, then $w$, and finally the multiplicity-two child
of $w$. Each old cycle lifts to one cycle, so before recolouring there are
still eight cycles: six of length $4^7$ and two of length $4^7/2$.
Under the successive coordinate identifications~\eqref{eq:sum-sheet}, the
copy of the recolouring support is
\[
 \{(x,0,y,0,w,0,0):(x,y,w)\in U_4\},\qquad |U_4|=39.
\]
The same three-colour recolouring as in dimension three acts at these
39 sources. Since its first-return permutations are unchanged, the four
distinguished cycles become three Hamilton cycles. The four auxiliary Hamilton factors remain fixed, giving seven
Hamilton cycles. Thus all four coordinate splits preserve the
reconnection carried by these 39 sources.

\section{A five-dimensional construction by transversal cycle joining}\label{sec:d5}
The preceding three-coordinate construction requires a positive
auxiliary multiplicity in every direction. Two auxiliary colours cannot
supply these three multiplicities, so the five-dimensional case uses
transversal cycle joining.

Five affine-section modifications organize the return cycles as cosets
of subgroups of $\Zm^4$. For $m\ge6$, two colours become Hamilton;
each remaining colour has $m$ cycles, all meeting its final planar
support in two points. Their first returns are involutions, reducing
the last three joining problems to permutations on $2m$ points.
At $m=4$, a four-row initial assignment uses the same affine sections
and planar rule, with first returns computed on eight points.

\subsection{Transversals under a preceding composition}
We use additive notation for finite abelian groups. A transversal in the current
layer pulls back through the preceding composition to a transversal of the old
return cycles. The induced action on the quotient determines which cycles are
joined.

\begin{lemma}[Transversal cycle joining under conjugation]\label{lem:chronological}
Let $X=H\oplus W$ be a finite abelian group, let $e\in W$ have order $m$, and let
$S$ be a permutation whose cycles are exactly the cosets of $H$. Suppose that a
permutation $P$ satisfies
\begin{equation}\label{eq:prefix-quotient}
 P(x)+H=x+b+H\qquad(x\in X)
\end{equation}
for a fixed $b\in X$. On an affine coset $C=z+W$, define
\[
 J(y)=\begin{cases}y+e,&y\in C,\\y,&y\notin C.\end{cases}
 \qquad r=P^{-1}JP.
\]
Then the cycles of $Sr$ are exactly the cosets of $H+\langle e\rangle$.
In particular, each new cycle joins exactly $m$ old cycles and has length $m|H|$.
\end{lemma}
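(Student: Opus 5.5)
The plan is to reduce everything to the source reconnection formula of Lemma~\ref{lem:surgery}, with marked set $U=P^{-1}(C)$. First I check that $r$ is a permutation supported on $U$. Since $e\in W$, we have $C+e=C$, so $J$ permutes $C$ and fixes its complement. Hence $r=P^{-1}JP$ permutes $P^{-1}(C)=U$ and fixes every point outside $U$, which is exactly the setting of Lemma~\ref{lem:surgery}.

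Next I locate $U$ relative to the old cycles, which are the cosets of $H$.
\begin{itemize}
\item Because $X=H\oplus W$, the coset $C=z+W$ meets every coset of $H$ in exactly one point, so $C$ is a transversal of $X/H$.
\item By~\eqref{eq:prefix-quotient}, $P$ induces on $X/H$ the translation by $b+H$, which is a bijection. Therefore $P^{-1}(C)=U$ is again a transversal of $X/H$.
\item So every old cycle $x+H$ contains exactly one marked point. The first-return permutation $\alpha=\Ret_U(S)$ is then the identity, with $\ell(u)=|H|$, and $h_U(S)=0$.
\end{itemize}
By Lemma~\ref{lem:surgery}, the cycles of $Sr$ correspond to the cycles of $\alpha r=r$ on $U$. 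The cycle attached to an $r$-orbit $D$ is the union of the old cycles $u+H$ for $u\in D$, and its length is $|D|\,|H|$ by~\eqref{eq:weighted-surgery}.

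The main step is to compute the action of $r$ on $U$, using the identification $U\cong X/H$. For $u\in U$ we have $P(u)\in C$, so
\[
 r(u)=P^{-1}\bigl(P(u)+e\bigr).
\]
Applying~\eqref{eq:prefix-quotient} twice, once in the form $P^{-1}(y)+H=y-b+H$, gives
\[
 r(u)+H=P(u)+e-b+H=u+b+e-b+H=u+e+H.
\]
Thus, under $U\cong X/H$, the permutation $r$ is translation by $e+H$. Since $e\in W$ has order $m$ and $W\cap H=0$, the class $e+H$ also has order $m$ in $X/H$. Consequently every $r$-orbit has size exactly $m$, and the orbit of $u$ consists of the classes $u+ke+H$ for $0\le k<m$.

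The corresponding cycle of $Sr$ is therefore
\[
 \bigcup_{k}\,(u+ke+H)=u+H+\langle e\rangle .
\]
It joins exactly $m$ old cycles and has length $m|H|$, as claimed. The only delicate point is the quotient bookkeeping for $P^{-1}$. This is where hypothesis~\eqref{eq:prefix-quotient}, with a \emph{constant} shift $b$, is essential: a shift depending on $x$ would not cancel and would break the translation structure of $r$.
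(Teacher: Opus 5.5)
Your proposal is correct and follows the paper's argument. Both take $P^{-1}(C)$ as a transversal of the $H$-cosets, so that $\Ret_U(S)$ is the identity; both derive $r(u)+H=u+e+H$ on it from \eqref{eq:prefix-quotient} and then conclude by source reconnection (Lemma~\ref{lem:surgery}), using $\langle e\rangle\cap H=\{0\}$ to get orbits of size exactly $m$, with your version making the quotient bookkeeping for $P^{-1}$ and the order of $e+H$ somewhat more explicit.
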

\begin{proof}
The set $C$ meets each $H$-coset once. Equation~\eqref{eq:prefix-quotient} implies
that $P$ sends every $H$-coset bijectively to a translate of that coset. Thus
$A=P^{-1}C$ also meets every old $S$-cycle exactly once. The first return of $S$
on $A$ is therefore the identity.

Since $e\in W$, the map $J$ is a permutation supported on $C$. Each of its cycles
on $C$ is an $e$-line of length $m$. Its $m$ points belong to different $H$-cosets:
$\langle e\rangle\cap H=\{0\}$. Under $P^{-1}$ such a line gives $m$ marked
representatives of the $H$-cosets inside one coset of $H+\langle e\rangle$.
Indeed, \eqref{eq:prefix-quotient} gives $r(x)+H=x+e+H$ on $A$.
Source reconnection now has return permutation $r$ on $A$. Each of its $m$-cycles
concatenates the complete old $H$-cycles through its representatives. Every old cycle is represented, proving the claimed vertex sets and lengths.
\end{proof}

To apply the lemma to a layer direction assignment, let $P$ be the composition of the maps before
the layer being changed. A change from the old layer $L$ to $LJ$ changes the full
return from $R$ to
\begin{equation}\label{eq:chronological-identity}
 R'=R(P^{-1}JP).
\end{equation}
The relevant transversal is consequently $P^{-1}C$. If all earlier corrections
have displacements in $H$, their preceding composition has the form
$P(x)=x+b+h_x$, with $h_x\in H$. Its action on $X/H$ is translation by $b$,
which is precisely~\eqref{eq:prefix-quotient}; the displacement $h_x$ within a
coset may depend on $x$.

\begin{remark}[Quotient conjugacy]\label{rem:quotient-conjugacy}
More generally, suppose $P$ permutes the $H$-cosets and induces a permutation
$\overline P$ of $X/H$. Then $P^{-1}C$ is still a transversal. On the labels of
the old cycles, the restriction of the relative permutation to this transversal induces
\[
 \overline P^{-1}\,\tr_{\overline e}\,\overline P,
\]
where $\tr_{\overline e}$ is translation by the class of $e$.
Source reconnection joins the old cycles in each orbit of this quotient permutation.
Every new cycle still joins $m$ old cycles; when $\overline P$ is a translation,
the joined vertex sets are exactly the cosets of $H+\langle e\rangle$.
Thus the quotient permutation determines the joining order, and a
translation on the quotient preserves the description by cosets.
\end{remark}

\subsection{Layer assignments and affine sections}
In the next two subsections, let $m\ge6$ be even. Use height-section coordinates
$X=\Zm^4$, with standard vectors $u_0,u_1,u_2,u_3$ and $u_4=0$; put
$\alpha_{ab}=u_b-u_a$. The height-coordinate representation for the physical torus is
\[
 (x,h)\longmapsto
 (x_0,x_1,x_2,x_3,h-x_0-x_1-x_2-x_3).
\]
A step in these coordinates $(x,h)\mapsto(x+u_j,h+1)$ is precisely a positive step in
physical direction $j$, including $j=4$.
The initial direction of colour $c\in\{0,\ldots,4\}$ at height $t$ is
\begin{equation}\label{eq:d5-neutral}
 \rho_t(c)=\begin{cases}c+t\pmod5,&0\le t\le4,\\c,&5\le t<m.\end{cases}
\end{equation}
The corresponding return to the initial height layer is translation by
\[
 \lambda_c=\sum_{j=0}^3u_j-5u_c\quad\hbox{in }\Zm^4.
\]
Every $\lambda_c$ is primitive over $\Z$, so the initial return cycles are the
cosets of the cyclic module it generates, each of length $m$.

In this section, permutations displayed as direction cycles act on
the \emph{direction symbols}, with the colour labels fixed. Thus, at
a source $x$ in the indicated affine section, a direction cycle $\pi$
replaces the current direction $d_{c,t}(x)$ of colour $c$ by
$\pi(d_{c,t}(x))$. For example, $(0\ 2\ 1)$ replaces directions
$0,2,1$ by $2,1,0$, respectively. Each section condition is evaluated
at the source before taking the step. The changes in
Table~\ref{tab:d5-cylinders} are applied in the displayed order;
changes at the same height have disjoint direction symbols and commute.
Write $d^{\mathrm{pre}}_{c,t}(x)$ for the resulting direction after
these five changes and before the final planar recolouring.
\begin{table}[htbp]
\centering\small
\caption{The five intermediate five-dimensional modifications.}\label{tab:d5-cylinders}
\begin{tabular}{cccl}
\toprule
$P$&height&direction cycle&affine section $C_P$\\\midrule
0&0&$(0\ 2\ 1)$&$x_0+x_1+x_2+x_3=0$\\
1&0&$(3\ 4)$&$x_2=1$\\
2&1&$(1\ 0\ 4)$&$x_2=x_3=0$\\
3&1&$(2\ 3)$&$x_1=0,\quad x_0+x_2+x_3=-1$\\
4&2&$(2\ 4)$&$x_0=x_1=x_3=0$\\\bottomrule
\end{tabular}
\end{table}

The direction modules of these affine sections, with their ordered bases, are
\begin{equation}\label{eq:d5-W}
\begin{aligned}
 W_0&=\langle\alpha_{01},\alpha_{02},\alpha_{03}\rangle,&
 W_1&=\langle\alpha_{01},\alpha_{03},\alpha_{34}\rangle,\\
 W_2&=\langle\alpha_{01},\alpha_{04}\rangle,&
 W_3&=\langle\alpha_{02},\alpha_{23}\rangle,\\
 W_4&=\langle\alpha_{24}\rangle.
\end{aligned}
\end{equation}
Each $C_P$ is an affine coset of $W_P$. Affected colours and their displacements
are listed in Table~\ref{tab:d5-integral}. In particular every displacement lies
in $W_P$. After subtracting its initial translation, each modified layer map is
translation by that displacement on $C_P$ and the identity elsewhere. Hence it
is bijective. At a common height the two sets of direction symbols are disjoint, so the
direction assignment at each source is a permutation, including where the
affine sections overlap.

For each colour, consider the modifications in increasing height order.
Before each modification, let $H^-$ have the ordered basis consisting of
$\lambda_c$ followed by that colour's earlier displacements. Define $Q_{c,P}=[H^-\ W_P]$ using
\eqref{eq:d5-W}. The following table gives the determinant over $\Z$ and the
coordinates $v$ such that $Q_{c,P}^{-1}e_{c,P}=(0,v)$.
\begin{table}[htbp]
\centering\small
\caption{Integral complement identities for the successive five-dimensional cycle joinings.
The $W_P$-coordinate vectors are primitive.}\label{tab:d5-integral}
\begin{tabular}{cccrcl}
\toprule
$c$&$P$&$\operatorname{rank}H^-$&$\det Q_{c,P}$&$e_{c,P}$&$v$\\\midrule
0&0&1&$-1$&$\alpha_{02}$&$(0,1,0)$\\
1&0&1&$-1$&$\alpha_{10}$&$(-1,0,0)$\\
2&0&1&$-1$&$\alpha_{21}$&$(1,-1,0)$\\
3&1&1&$-1$&$\alpha_{34}$&$(0,0,1)$\\
4&1&1&$-1$&$\alpha_{43}$&$(0,0,-1)$\\
0&2&2&$-1$&$\alpha_{10}$&$(-1,0)$\\
3&2&2&$-1$&$\alpha_{41}$&$(1,-1)$\\
4&2&2&$1$&$\alpha_{04}$&$(0,1)$\\
1&3&2&$1$&$\alpha_{23}$&$(0,1)$\\
2&3&2&$-1$&$\alpha_{32}$&$(0,-1)$\\
0&4&3&$1$&$\alpha_{24}$&$(1)$\\
2&4&3&$-1$&$\alpha_{42}$&$(-1)$\\\bottomrule
\end{tabular}
\end{table}
The displayed integer matrices are unimodular. Their inverse matrices therefore
have integer entries, and reduction modulo every $m$ gives
$X=H^-\oplus W_P$. The primitive integer coordinate vector $v$ extends to a basis of
$\Z^{\operatorname{rank}W_P}$. Consequently, $e_{c,P}$ has order $m$, and
adjoining it to $H^-$ increases the rank of the free direct summand by one. These integral identities hold after reduction modulo every modulus.

\begin{proposition}[Intermediate five-dimensional factors]\label{prop:d5-preterminal}
Before the final modification, colours $0,2$ have one height-layer return cycle of
length $m^4$. Colours $1,3,4$ each have $m$ cycles of length $m^3$, respectively
the cosets of $\ker f_c$, where
\begin{equation}\label{eq:d5-characters}
 f_1=(2,2,3,3),\qquad f_3=(-1,0,1,0),\qquad f_4=(0,-1,1,0).
\end{equation}
These coset descriptions hold also on the section immediately before height $2$.
\end{proposition}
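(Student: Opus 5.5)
We will track, for each colour separately, the height-layer return through the five modifications of Table~\ref{tab:d5-cylinders}, in increasing height order, applying Lemma~\ref{lem:chronological} at every step. The base case is the neutral assignment~\eqref{eq:d5-neutral}. There the return of colour $c$ is translation by $\lambda_c$, so its cycles are exactly the cosets of $H_c^{(0)}=\langle\lambda_c\rangle$.

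For the inductive step, suppose colour $c$ is affected by modification $P$, and its current cycles are the cosets of $H^-$, the module spanned by $\lambda_c$ and the earlier displacements $e_{c,P'}$. Changing the layer at the height of $P$ has the form $L\mapsto LJ$. Here $J$ translates by $e_{c,P}$ on $C_P$ and is the identity elsewhere, since the modified layer map differs from the neutral one only by this displacement on $C_P$. By~\eqref{eq:chronological-identity} the return becomes $R'=R\,(P^{-1}JP)$, where $P$ is the composition of the layer maps at the heights before the changed one.

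All earlier corrections of colour $c$ have displacements in $H^-$. The remaining layer maps are translations, so $P(x)=x+b+h_x$ with $h_x\in H^-$, and~\eqref{eq:prefix-quotient} holds. The integral identities of Table~\ref{tab:d5-integral} give $X=H^-\oplus W_P$ modulo every $m$. They also show that $e_{c,P}\in W_P$ has order $m$ and that $H^-+\langle e_{c,P}\rangle$ is again a free direct summand. Lemma~\ref{lem:chronological}, with $W=W_P$, $e=e_{c,P}$ and $C=C_P$, then makes the new cycles the cosets of $H^-+\langle e_{c,P}\rangle$, of rank one larger.

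We must also handle two modifications at the same height acting on one colour, since both live on the same layer. Changes at a common height use disjoint direction symbols, so for a given colour at most one of them moves it at each source. From the table, the colours and heights are: height $0$ gives colours $0,1,2$ (from $P=0$) and colours $3,4$ (from $P=1$); height $1$ gives colours $0,3,4$ (from $P=2$) and colours $1,2$ (from $P=3$); height $2$ gives colours $0,2$ (from $P=4$). So each colour is changed at most once per height. The numbers of joinings are then:
\begin{itemize}
\item colour $0$: three joinings ($P=0,2,4$), rank $4$;
\item colour $2$: three joinings ($P=0,3,4$), rank $4$;
\item colour $1$: two joinings ($P=0,3$), rank $3$;
\item colour $3$: two joinings ($P=1,2$), rank $3$;
\item colour $4$: two joinings ($P=1,2$), rank $3$.
\end{itemize}
Rank $4$ means the module is all of $X=\Zm^4$, so colours $0,2$ have one cycle of length $m^4$. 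The modules for colours $1,3,4$ are rank-$3$ direct summands, so each has $m$ cosets of size $m^3$.

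It remains to identify these summands with $\ker f_c$. For colour $1$ the summand is $\langle\lambda_1,\alpha_{10},\alpha_{23}\rangle$. We check that $f_1$ annihilates each generator modulo $m$ (using $u_4=0$) and that $f_1$ is surjective onto $\Zm$; a rank-$3$ summand contained in the kernel of a surjective character has the same size as that kernel, so it equals it. The same check applies to colour $3$ with $\langle\lambda_3,\alpha_{34},\alpha_{41}\rangle$ and $f_3$, and to colour $4$ with $\langle\lambda_4,\alpha_{43},\alpha_{04}\rangle$ and $f_4$. For instance, $f_3(\lambda_3)=f_3(1,1,1,-4)=-1+1=0$.

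For the final assertion, move the base section to the layer just before height $2$. This conjugates each return by the composition of the layers at heights $\ge 2$. Colours $1,3,4$ are unaffected at height $2$, so for them this composition consists of translations together with corrections whose displacements lie in the module. Such a conjugation maps each coset to a translate of itself, so the coset description persists. The main obstacle is the bookkeeping that each $P$ really satisfies the prefix hypothesis, namely that the earlier displacements of colour $c$ lie in $H^-$ at the relevant base section. To secure this I would fix the base section just before the changed height, and apply Remark~\ref{rem:quotient-conjugacy} wherever the quotient action must be confirmed to be a translation.
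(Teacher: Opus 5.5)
Your proposal is correct and follows the paper's proof essentially step by step: a per-colour induction over Table~\ref{tab:d5-integral} using Lemma~\ref{lem:chronological}, with prefix composition $x\mapsto x+b+h_x$ and $h_x\in H^-$, then rank counting, then identifying the rank-three summands with $\ker f_c$ by annihilation, surjectivity and cardinality. The only difference is cosmetic and occurs in the last assertion. There you conjugate by the layers at heights $\ge2$, which for colours $1,3,4$ form a single translation, whereas the paper conjugates by the preceding composition up to height $2$; both conjugations preserve the coset family.
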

\begin{proof}
Induct over Table~\ref{tab:d5-integral} for each colour. Before each
modification, the preceding composition has displacement equal to a fixed
initial translation plus a sum of earlier colour displacements. Its quotient modulo
$H^-$ is consequently a fixed translation. Lemma~\ref{lem:chronological},
\eqref{eq:chronological-identity}, and the integer complement table show that the
return cycles after each modification are the cosets of the enlarged
module.

For colours $0,2$ the resulting module has rank four and is $X$. The other three
ordered generating sets are
\[
 (\lambda_1,\alpha_{10},\alpha_{23}),\quad
 (\lambda_3,\alpha_{34},\alpha_{41}),\quad
 (\lambda_4,\alpha_{43},\alpha_{04}).
\]
They are free direct summands of size $m^3$ and are annihilated by the respective
characters in \eqref{eq:d5-characters}. Each character is surjective: for $f_1$
one may use $u_2-u_0$, and the other two have a coefficient $1$ or $-1$.
Their kernels also have size $m^3$, proving equality. Finally, the preceding composition up to
height $2$ is a translation on each of these quotient groups. Conjugating the
return to that section therefore preserves the family of cosets.
\end{proof}

\subsection{A planar recolouring and its first returns}
On $\Zm^2$ put $a_0=(1,0)$, $a_1=(0,1)$, $a_2=(0,0)$. A word
$b_0b_1b_2$ denotes the permutation $i\mapsto b_i$ of $\{0,1,2\}$;
thus $012$ is the identity and $021$ exchanges $1$ and $2$.
Define $\omega_m(q)$ by the word $012$ except at the points in
Table~\ref{tab:d5-omega}.
The rule is defined already for $m=4$; empty ranges are omitted.
\begin{table}[htbp]
\centering\small
\caption{The final planar direction row.}\label{tab:d5-omega}
\begin{tabular}{cl@{\qquad}cl}
\toprule
$q$&$\omega_m(q)$&$q$&$\omega_m(q)$\\\midrule
$(1,0)$&$021$&$(3,0)$&$102$\\
$(1,1)$&$120$&$(2,1)$&$201$\\
$(0,2)$&$201$&$(2,2)$&$120$\\
$(t,2),\ 3\le t<m$&$210$&$(0,3)$&$120$\\
$(1,3)$&$201$&$(1,t),\ 4\le t<m$&$021$\\
$(m+3-t,t),\ 4\le t<m$&$102$&&\\\bottomrule
\end{tabular}
\end{table}

For $i=0,1,2$ define
\begin{equation}\label{eq:d5-localj}
 j_i(q)=q-a_i+a_{\omega_m(q)(i)},\qquad
 B_i=\{q:\omega_m(q)(i)\ne i\}.
\end{equation}
The relevant quotient functionals on the plane are
$\ell_0(a,b)=a$, $\ell_1(a,b)=b$, and $\ell_2(a,b)=a+b$.

The following cyclic lists give the local permutations and their first returns. Put
\[
 A=(0,2),\quad B=(0,3),\quad C=(2,1),\quad
 X_t=(t,2),\quad Y_t=(1,t),\quad Z_t=(t,3-t),
\]
where coordinates are read modulo $m$. In lists below, a range is taken in the
indicated order and is omitted if empty. Substitution in
\eqref{eq:d5-localj} gives the following full cycles; outside them $j_i$ is the
identity:
\begin{align}
 j_0={}&(A, X_{m-1},\ldots,X_2,Y_3,B,Z_{m-1},\ldots,Z_3,C,Y_1),\label{eq:d5-j0}\\
 j_1={}&(A,Y_1,Y_0,Y_{m-1},\ldots,Y_3,X_2,C,Z_3,\ldots,Z_{m-1},B),\label{eq:d5-j1}\\
 j_2={}&(A,B,Y_3,\ldots,Y_{m-1},Y_0,Y_1,C,X_2,\ldots,X_{m-1}).\label{eq:d5-j2}
\end{align}
In particular these are permutations, and $|B_i|=2m$.
Each $\ell_i$-fibre meets $B_i$ twice. The permutations swapping those two points
are precisely
\begin{align}
 N_0={}&(A\ B)(Y_1\ Y_3)(X_2\ C)
          \prod_{t=3}^{m-1}(X_t\ Z_t),\label{eq:d5-N0}\\
 N_1={}&(Y_0\ Z_3)(Y_1\ C)(A\ X_2)(Y_3\ B)
          \prod_{t=4}^{m-1}(Y_t\ Z_{m+3-t}),\label{eq:d5-N1}\\
 N_2={}&(A\ Y_1)(B\ C)(Y_0\ X_{m-1})
          \prod_{t=2}^{m-2}(X_t\ Y_{t+1}).\label{eq:d5-N2}
\end{align}
All transpositions within each product are disjoint.

\begin{lemma}[The three final first-return cycles]\label{lem:d5-endpoint}
For every even $m\ge4$, each $N_i j_i$ is a single cycle on $B_i$.
\end{lemma}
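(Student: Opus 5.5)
The plan is a direct composition of the explicit cyclic lists \eqref{eq:d5-j0}--\eqref{eq:d5-j2} with the involutions \eqref{eq:d5-N0}--\eqref{eq:d5-N2}. For each $i$, I will follow the orbit of the single point $A$ under $N_ij_i$ (apply $j_i$ first, then $N_i$, since products act right to left). I will show that the orbit visits all $2m$ points of $B_i$ before returning. Because the lists are given uniformly in $m$, with finitely many exceptional points $A,B,C,Y_0,Y_1,Y_3,X_2,X_{m-1},Z_3$ and three families $X_t$, $Y_t$, $Z_t$ indexed by a range of $t$, the orbit should consist of a fixed finite head and tail joined by one or two arithmetic runs through those families. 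The bulk of the argument is therefore to verify, for each $i$, a short table of transitions of the form $X_t\mapsto Z_{t'}$ or $Y_t\mapsto X_{t-1}$ valid for $t$ in a generic range, together with the handful of boundary transitions.

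Concretely, for $i=0$ the list \eqref{eq:d5-j0} gives $j_0(X_t)=X_{t-1}$ for $3\le t\le m-1$, $j_0(Z_t)=Z_{t-1}$ for $4\le t\le m-1$, and $j_0(X_2)=Y_3$. The involution $N_0$ swaps $X_t$ and $Z_t$. So a point $Z_t$ goes to $Z_{t-1}$ and then to $X_{t-1}$, and $X_t$ goes to $X_{t-1}$ and then to $Z_{t-1}$. The generic run therefore alternates between the $X$ and $Z$ families while decreasing $t$ by one at each step. I will record where the run enters, namely from $A\mapsto X_{m-1}\mapsto Z_{m-1}$ and from $B\mapsto Z_{m-1}$, and where it exits near $t=3,2$. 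The key consequence is parity: the run starting at $Z_{m-1}$ covers $Z_{m-1},X_{m-2},Z_{m-3},\dots$, which is one parity class, and the complementary class must be picked up by a second pass entered through another exceptional point. I will then check that the exceptional points chain the two passes and close up at $A$. Since $m$ is even, the endpoints $t=3$ versus $t=2$ of each pass are determined, and this is where evenness of $m$ enters.

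The same method applies to $i=1$, using the shift $Y_t\mapsto Y_{t-1}$ together with $N_1$ pairing $Y_t$ with $Z_{m+3-t}$, and the increasing run $Z_3,\dots,Z_{m-1}$ in $j_1$. It also applies to $i=2$, using $Y_t\mapsto Y_{t+1}$ and $X_t\mapsto X_{t+1}$ in $j_2$, together with $N_2$ pairing $X_t$ with $Y_{t+1}$. In case $2$ the composite looks like $X_t\mapsto X_{t+1}\mapsto Y_{t+2}$ and $Y_t\mapsto Y_{t+1}\mapsto X_t$, so again the run steps through a parity class, and the remaining points are linked through $A,B,C,Y_0,Y_1$.

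The main obstacle I expect is the bookkeeping at the boundary of the runs and the small cases. When $m=4$ several ranges are empty, for example $Z_3,\dots,Z_{m-1}$ reduces to $Z_3$ and $\prod_{t=4}^{m-1}$ is empty, and points such as $Z_3=(m+3-3,3)$ or $X_{m-1}$ may coincide with named exceptional points. For $m=4$ I will therefore compute the three composites on the eight points directly. For $m\ge6$ I will verify the generic chain. As a sanity check, a single cycle of length $2m$ has sign $-1$, which is consistent with $j_i$ being a $2m$-cycle (sign $-1$) and $N_i$ being a product of $m$ disjoint transpositions (sign $+1$ since $m$ is even). This shows no parity obstruction arises, though it does not prove the claim.
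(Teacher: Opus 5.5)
Your proposal is correct and is essentially the paper's proof. The paper also composes each $j_i$ with $N_i$ directly, applying $j_i$ first, and writes out the complete $2m$-point cyclic lists. Those lists are exactly the two alternating parity-class runs joined through $A,B,C,Y_0,Y_1,Y_3,X_2$ that you describe; the paper covers $m=4$ by the empty-range convention instead of a separate check.

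There are two small slips. Under $N_0j_0$ the point $B$ goes to $X_{m-1}$; $Z_{m-1}$ is only its $j_0$-image, so the second $i=0$ pass enters at $X_{m-1}$. Also, $Z_3=(3,0)$, not $(m,3)$.
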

\begin{proof}
For an indexed tuple $(V_t,W_t)_{t=r,r+2,\ldots,s}$, concatenate the indicated
pairs. Directly composing \eqref{eq:d5-j0}--\eqref{eq:d5-j2} with
\eqref{eq:d5-N0}--\eqref{eq:d5-N2} gives the following complete cyclic lists:
\begin{align}
 N_0j_0={}&\bigl(A,
 (Z_t,X_{t-1})_{t=m-1,m-3,\ldots,3},Y_1,B,\notag\\[-2pt]
 &\hspace{27mm}(X_t,Z_{t-1})_{t=m-1,m-3,\ldots,5},X_3,C,Y_3\bigr),\label{eq:d5-end0}\\
 N_1j_1={}&\bigl(A,C,Y_0,
 (Z_t,Y_{m+2-t})_{t=4,6,\ldots,m-2},B,X_2,Y_1,Z_3,\notag\\[-2pt]
 &\hspace{27mm}(Y_{m+4-t},Z_t)_{t=5,7,\ldots,m-1},Y_3\bigr),\label{eq:d5-end1}\\
 N_2j_2={}&\bigl(A,C,(Y_t,X_t)_{t=3,5,\ldots,m-1},Y_1,B,X_2,\notag\\[-2pt]
 &\hspace{27mm}(Y_t,X_t)_{t=4,6,\ldots,m-2},Y_0\bigr).\label{eq:d5-end2}
\end{align}
For example, in the first list $A$ maps to $Z_{m-1}$, and
$Z_t\mapsto X_{t-1}$ for odd $t\ge3$. For odd $t\ge5$ one has
$X_{t-1}\mapsto Z_{t-2}$; the end $X_2$ maps to $Y_1$ and then to $B$.
The other half is checked by the even-indexed transpositions. The same
substitution verifies both indexed ranges and their endpoints in the other two
lists. Each list contains exactly once the $2m$ distinct points in the
corresponding support, proving cyclicity. The empty-range convention also gives
these identities at $m=4$.
\end{proof}

Place this rule on the final plane
\[
 \Theta=\{(b,-a-b,0,a):a,b\in\Zm\},\qquad
 \phi(a,b)=(b,-a-b,0,a),
\]
at height $2$. On $x=\phi(q)$ permute the direction symbols
$(d_0,d_1,d_2)=(3,0,1)$ by $d_i\mapsto d_{\omega_m(q)(i)}$; act identically off
$\Theta$. These directions are used by colours $(1,3,4)$ when $m\ge6$.
The pair $(2\ 4)$ already in height $2$ is disjoint from this triple.
Here $i=0,1,2$ indexes the listed direction symbols; the corresponding
colour labels are $1,3,4$, respectively. The row remains a permutation
of the direction symbols, and the relative map for the corresponding
colour on the plane is $j_i$, since
\begin{equation}\label{eq:d5-head}
 \phi(j_i(q))+u_{d_i}=\phi(q)+u_{d_{\omega_m(q)(i)}}.
\end{equation}
The explicit cycles above prove that this layer is bijective.

\begin{theorem}[The five-dimensional construction for even moduli at least six]\label{thm:d5large}
For every even $m\ge6$, the complete direction assignment just defined is a Hamilton
decomposition of $D_5(m)$.
\end{theorem}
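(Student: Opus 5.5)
The plan has three steps: check that the full assignment is a factorization, take the preterminal cycle structure from Proposition~\ref{prop:d5-preterminal}, and show that the planar recolouring at height $2$ merges the $m$ cycles of each of colours $1,3,4$ into one.

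\emph{Factorization.} At each height the direction row at every source must be a permutation of the five direction symbols. Each layer map of each colour must also be bijective. Heights other than $0,1,2$ keep the translations of \eqref{eq:d5-neutral}. The five modifications of Table~\ref{tab:d5-cylinders} were already shown to be bijective and to commute at a common height. At height $2$ the planar rule acts only on the symbols $(3,0,1)$, which are disjoint from the pair $(2\ 4)$. Off $\Theta$ it is the identity. On $\Theta$, \eqref{eq:d5-head} identifies the relative map of colour $1,3,4$ with $j_i$ transported by $\phi$. By \eqref{eq:d5-j0}--\eqref{eq:d5-j2} these are permutations of $\Zm^2$, so the layer is bijective. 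Every positive arc is used exactly once, and Lemma~\ref{lem:layer-return} reduces Hamiltonicity to showing that each height-layer return is one $m^4$-cycle.

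\emph{Colours $0$ and $2$.} These colours are untouched by the final modification. One should check that $m\ge6$ ensures that symbols $3,0,1$ at height $2$ on $\Theta$ are exactly the directions of colours $1,3,4$ there, so colours $0,2$ do not see the planar change. Proposition~\ref{prop:d5-preterminal} then makes them Hamilton.

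\emph{Colours $1,3,4$.} Use the return on the section immediately before height $2$. By the last sentence of Proposition~\ref{prop:d5-preterminal}, the return of colour $c$ there is a permutation $R_c$ whose $m$ cycles are the cosets of $\ker f_c$. By \eqref{eq:chronological-identity}, the planar change turns this into $R_c r_c$, where $r_c$ is supported on the section and agrees on $\phi(\Zm^2)$ with $\phi j_i\phi^{-1}$. Lemma~\ref{lem:surgery} then gives $\cyc(R_c r_c)=h+\cyc(\alpha r_c)$, where $h$ counts cycles missing the support and $\alpha$ is the first return of $R_c$ to the support.

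The key computation is $f_c\circ\phi$ on $\Zm^2$. For $\phi(a,b)=(b,-a-b,0,a)$ one gets $f_1\phi=2b-2a-2b+3a=a$, $f_3\phi=-b$, and $f_4\phi=a+b$. Up to sign these are $\ell_0,\ell_1,\ell_2$, and the cosets of $\ker f_c$ restrict on $\Theta$ to the $\ell_i$-fibres. Each fibre meets $B_i$ in exactly two points, so every old cycle meets the support, and $h=0$.

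It remains to identify $\alpha$ on the support with $N_i$. Consider an old cycle $R_c$ on a $\ker f_c$-coset meeting $B_i$ in two points; its first return on those two points must swap them. This is valid provided no other support points lie on that coset, which holds because the support of $r_c$ is $\phi(B_i)$. Hence $\alpha=N_i$, and $\cyc(R_cr_c)=\cyc(N_ij_i)=1$ by Lemma~\ref{lem:d5-endpoint}.

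The main obstacle is this identification. One must make sure the return section and the height-$2$ layer are aligned: the preceding composition within height $2$, namely the $(2\ 4)$ change, fixes symbols $3,0,1$ and does not act on $\Theta$ for colours $1,3,4$, so conjugation is trivial. One must also match $\ell_i$ against $f_c\phi$, including the sign of $f_3$, to the correct index $i$. I would verify this by computing $f_c\phi$ as above and checking that the colour-to-index map $(1,3,4)\mapsto(0,1,2)$ agrees with the ordering of the listed direction symbols $(d_0,d_1,d_2)=(3,0,1)$ at height $2$ for $m\ge6$.
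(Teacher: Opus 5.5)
Your proposal is correct and follows the paper's route. You restrict $f_1,f_3,f_4$ to $\Theta$, so each of the $m$ old return cycles meets $\phi(B_i)$ in exactly two points; this makes the first return $N_i$ with no unmarked cycles, and Lemma~\ref{lem:surgery} together with Lemma~\ref{lem:d5-endpoint}, applied to the return starting just before height $2$, gives one $m^4$-cycle. Your extra checks, namely that $(2\ 4)$ is disjoint from the symbols $3,0,1$ and that colours $(1,3,4)$ match indices $(0,1,2)$, are the verifications the paper's short proof leaves implicit.
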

\begin{proof}
The direction assignments partition the arcs at each source, and every
colour maps each height layer bijectively onto the next. The final
three-colour recolouring leaves the Hamilton returns of colours $0,2$
unchanged. On $\Theta$ the three residual characters restrict to
\[
 f_1(\phi(a,b))=a,\qquad
 f_3(\phi(a,b))=-b,\qquad f_4(\phi(a,b))=a+b.
\]
Thus each of the $m$ old return cycles of a residual colour contains exactly two
points of its final support. Its first return on that support is exactly the
appropriate $N_i$, independently of the lengths of the intervening paths.
Every old cycle meets the support.

Start the height return immediately before height $2$. By
\eqref{eq:d5-head}, the new return is the old return composed on the right with
$j_i$, extended to the identity off the plane. Lemma~\ref{lem:surgery} and
Lemma~\ref{lem:d5-endpoint} show that it has one cycle of length $m^4$.
Taking account of the height coordinate gives a Hamilton cycle of length $m^5$
for every colour.
\end{proof}

\subsection{The construction at modulus four}
The initial assignment~\eqref{eq:d5-neutral} uses the five heights
$0,\ldots,4$. At $m=4$ we instead prescribe four rows, keeping the same
affine sections and planar rule. This changes the intermediate cycle
characters and the colours affected by the last recolouring.
Replace~\eqref{eq:d5-neutral} by
\begin{equation}\label{eq:d5four-schedule}
\begin{array}{c|ccccc}
 t\backslash c&0&1&2&3&4\\\hline
 0&0&1&2&3&4\\
 1&1&3&4&2&0\\
 2&4&2&3&0&1\\
 3&3&4&0&1&2.
\end{array}
\end{equation}
Every row is a permutation. The initial return vectors, in colour order, are
\[
 \mu_0=(1,1,0,1),\quad\mu_1=(0,1,1,1),\quad\mu_2=(1,0,1,1),\quad
 \mu_3=(1,1,1,1),\quad\mu_4=(1,1,1,0).
\]
Apply the transversal cycle-joining lemma over $\mathbb Z_4$ with $H^-$ generated by $\mu_c$
and earlier displacements. Table~\ref{tab:d5four-cert} gives the complement
data; the listed $v$ satisfy $Q^{-1}e=(0,v)$ modulo $4$. The determinants are
odd, hence units modulo $4$, and each $v$ contains a unit coordinate. In
particular, the determinants $\pm1,\pm3$ give invertible complements over
$\mathbb Z_4$, as required by the cycle-joining lemma.
\begin{table}[htbp]
\centering\small
\caption{The modulus-four complement identities.}\label{tab:d5four-cert}
\begin{tabular}{cccrcl}
\toprule
$c$&$P$&$\operatorname{rank}H^-$&$\det Q$&$e$&$v\pmod4$\\\midrule
0&0&1&3&$\alpha_{02}$&$(0,1,0)$\\
1&0&1&3&$\alpha_{10}$&$(3,0,0)$\\
2&0&1&3&$\alpha_{21}$&$(1,3,0)$\\
3&1&1&$-1$&$\alpha_{34}$&$(0,0,1)$\\
4&1&1&$-1$&$\alpha_{43}$&$(0,0,3)$\\
0&2&2&$-1$&$\alpha_{10}$&$(3,0)$\\
2&2&2&1&$\alpha_{41}$&$(1,3)$\\
4&2&2&1&$\alpha_{04}$&$(0,1)$\\
1&3&2&$-3$&$\alpha_{32}$&$(0,3)$\\
3&3&2&1&$\alpha_{23}$&$(0,1)$\\
0&4&3&1&$\alpha_{42}$&$(3)$\\
1&4&3&1&$\alpha_{24}$&$(1)$\\\bottomrule
\end{tabular}
\end{table}
Consequently colours $0,1$ have one intermediate $256$-cycle; colours $2,3,4$
have four $64$-cycles each. On the section before height $2$, their cycles are
respectively the cosets of the kernels of
\[
 g_2(x)=x_0-x_3,\qquad g_3(x)=x_0-x_1,\qquad g_4(x)=-x_1+x_2.
\]
This follows from the same successive cycle-joining argument and the final free rank-three
modules; the displayed surjective characters annihilate their generators.

At modulus four, the final recolouring acts on colours $(2,3,4)$; the
corresponding colours for larger even moduli were $(1,3,4)$.
Write $U_c$ for the intermediate height-layer return beginning immediately before height
$2$. The restrictions on the final plane are
\[
 g_2\phi(a,b)=b-a,\quad g_3\phi(a,b)=a+2b,\quad g_4\phi(a,b)=a+b.
\]
Use the notation $A,B,C,X_t,Y_t,Z_t$ from the preceding subsection, with $m=4$.
The local maps $j_i$ still have the cycles
\eqref{eq:d5-j0}--\eqref{eq:d5-j2}; only the old first returns change.
Their permutations on the eight-point supports are
\begin{align}
 \widetilde N_0&=(A\ Y_3)(B\ C\ X_3)(Y_1\ X_2)(Z_3),\label{eq:d5four-N0}\\
 \widetilde N_1&=(A\ C)(B\ X_2)(Y_1\ Y_3\ Z_3)(Y_0),\label{eq:d5four-N1}\\
 \widetilde N_2&=(A\ Y_1)(B\ C)(Y_0\ X_3)(Y_3\ X_2).\label{eq:d5four-N2}
\end{align}
Singletons represent fixed points. The quotient characters determine
the pairs and singletons; the three-point intersection for each of
colours $2,3$ has its cyclic order determined by the four identities
\begin{equation}\label{eq:d5four-iterate}
\begin{array}{c|c|c|c}
 c&q&k&U_c^k(\phi(q))\\\hline
 2&B&38&\phi(C)\\
 2&B&59&\phi(X_3)\\
 3&Y_1&30&\phi(Y_3)\\
 3&Y_1&47&\phi(Z_3).
\end{array}
\end{equation}
Starting at $x_0=\phi(q)$, iterate
$x_{s+1}=x_s+u_{d^{\mathrm{pre}}_{c,(2+s)\bmod4}(x_s)}$ for $4k$ steps.
This uses the four specified layer maps before the final recolouring.
The old return cycles have length $64$, and the displayed marked points
occur in the order $0<38<59<64$ or $0<30<47<64$, fixing the orientation
of each three-point return. The supplementary calculations in
Appendix~\ref{app:computation} reproduce these iterates and the printed
return permutations.

Direct composition with the local $j_i$ gives
\begin{align}
 \widetilde N_0j_0&=(A,B,Z_3,X_3,Y_1,Y_3,C,X_2),\notag\\
 \widetilde N_1j_1&=(A,Y_3,B,C,Y_1,Y_0,Z_3,X_2),\label{eq:d5four-end}\\
 \widetilde N_2j_2&=(A,C,Y_3,X_3,Y_1,B,X_2,Y_0).\notag
\end{align}
Every old cycle meets the relevant support: their intersection cardinalities
are $1,2,2,3$ for each of colours $2,3$, and $2,2,2,2$ for colour $4$.
Source reconnection therefore joins all four old cycles for each residual colour.

\begin{theorem}[The five-dimensional decomposition]\label{thm:d5}
For every even $m\ge4$, $D_5(m)$ has a Hamilton decomposition.
\end{theorem}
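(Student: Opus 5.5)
The plan splits at the modulus. For $m\ge6$ the claim is exactly Theorem~\ref{thm:d5large}, so only $m=4$ needs an argument. There I would reuse the same skeleton with the schedule \eqref{eq:d5four-schedule} in place of \eqref{eq:d5-neutral}.

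\emph{Step 1: the modified layers are a factorization.} Each row of \eqref{eq:d5four-schedule} is a permutation of the directions. The five section modifications of Table~\ref{tab:d5-cylinders} and the planar row $\omega_4$ act on direction symbols. At each height they act on disjoint symbol sets, and each modified layer map is a translation by an element of $W_P$ on $C_P$ and the identity elsewhere. So every layer map stays bijective and every source uses each direction once, exactly as argued before Table~\ref{tab:d5-integral}. For the planar layer, the relevant symbols at height $2$ are those used by colours $2,3,4$. The identity \eqref{eq:d5-head} shows the relative maps are the $j_i$, whose explicit cycles \eqref{eq:d5-j0}--\eqref{eq:d5-j2} make that layer bijective.

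\emph{Step 2: intermediate cycle structure.} I would induct over Table~\ref{tab:d5four-cert} colour by colour, as in Proposition~\ref{prop:d5-preterminal}. At each step the preceding composition acts on $X/H^-$ as a translation, since all earlier displacements lie in $H^-$. Because $\det Q$ is odd, $Q$ is invertible over $\mathbb Z_4$, so $X=H^-\oplus W_P$. Because $v$ has a unit coordinate, $e$ has order $4$ and generates a new free summand. Lemma~\ref{lem:chronological} then shows the return cycles become the cosets of $H^-+\langle e\rangle$. The outcome is that colours $0,1$ are Hamilton on the section. Colours $2,3,4$ each have four cycles, the cosets of $\ker g_c$; this follows by checking that $g_c$ is surjective and kills the three generators, which pins down the rank-three module by cardinality. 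By the quotient-translation remark, the coset description persists on the section just before height $2$.

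\emph{Step 3: final joining.} The planar recolouring does not touch colours $0,1$. For $c=2,3,4$, restricting $g_c\circ\phi$ to the plane decides which support points lie on the same old cycle. The cyclic order within each three-point intersection comes from the iterates \eqref{eq:d5four-iterate}. Together these give the first returns $\widetilde N_i$ in \eqref{eq:d5four-N0}--\eqref{eq:d5four-N2}. Every old cycle meets its support, so the unmarked term $h_U$ vanishes. Lemma~\ref{lem:surgery} then gives $\cyc=\cyc(\widetilde N_ij_i)=1$ by \eqref{eq:d5four-end}, which is a $256$-cycle on the section. Finally, Lemma~\ref{lem:layer-return} lifts each colour to a Hamilton cycle of length $4^5$.

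The main obstacle is Step 3's reliance on the finite computations: the orientation of the three-point returns, and the fact that the intermediate cosets are exactly as claimed at $m=4$, where $4$ is not coprime to the determinants $\pm3$ only if those were even. I would certify these by the supplementary computation referenced in Appendix~\ref{app:computation}, checking that the four iterates land at the stated times and that the composed lists are single $8$-cycles.
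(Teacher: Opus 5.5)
Your proposal is correct and follows the paper's proof. For $m\ge6$ it invokes Theorem~\ref{thm:d5large}; at $m=4$ it uses the four-row schedule \eqref{eq:d5four-schedule} with the complement data of Table~\ref{tab:d5four-cert}, the characters $g_c$ and the iterates \eqref{eq:d5four-iterate} for $\widetilde N_i$, and source reconnection with the single $8$-cycles \eqref{eq:d5four-end}. (Your closing remark about coprimality is garbled but harmless: the determinants $\pm1,\pm3$ are odd, hence units modulo $4$.)
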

\begin{proof}
For $m\ge6$ use Theorem~\ref{thm:d5large}. At $m=4$, all layer maps of
\eqref{eq:d5four-schedule} with the prescribed modifications are bijective by the
support-invariance and planar-permutation arguments already given. The rows are
permutations of the direction symbols. Colours $0,1$ are intermediate Hamilton factors. For the remaining three,
\eqref{eq:d5four-end} and the fact that every old cycle meets the support
give one $256$-cycle
on the height section. Including height gives five $1024$-cycles, partitioning
all positive arcs of $D_5(4)$.
\end{proof}

\subsection{Completion of the proof}
\begin{theorem}[Even moduli]\label{thm:even}
For every even $m\ge4$ and every $d\ge2$, $D_d(m)$ has a Hamilton decomposition.
\end{theorem}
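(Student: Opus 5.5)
The proof will be a case analysis on the parity of $d$ and its size, assembling results already established. Fix an even $m\ge4$ and $d\ge2$.

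If $d$ is even, Theorem~\ref{thm:even-dim} gives the decomposition directly. This includes the square case $d=2$, which is also covered by Proposition~\ref{prop:square}.

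If $d$ is odd, there are three subcases.
\begin{enumerate}
\item For $d=3$, Proposition~\ref{prop:anchor} gives the factors $H_0,H_1,H_2$. These form a Hamilton decomposition of $D_3(m)$ for every even $m\ge4$; the anchoring at $Q_m$ is not needed here.
\item For $d=5$, Theorem~\ref{thm:d5} covers every even $m\ge4$: Theorem~\ref{thm:d5large} handles $m\ge6$, and the modulus-four construction handles $m=4$.
\item For odd $d\ge7$, Theorem~\ref{thm:oddconstruction} applies. Writing $d=2p+3$ with $p\ge2$, the initial factorization together with Proposition~\ref{prop:entry} or Lemmas~\ref{lem:entry-seven}--\ref{lem:entry-nine} gives an admissible marked factorization. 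Theorem~\ref{thm:closure} then carries it through the remaining splits, and the transported recolouring finishes the argument.
\end{enumerate}

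These cases exhaust all $d\ge2$: even $d$, and odd $d\in\{3,5\}\cup\{7,9,\ldots\}$. Since $m\ge4$ is exactly the hypothesis of each cited result, the theorem follows. Combined with Theorem~\ref{thm:oddmod} for odd $m\ge3$, this yields Theorem~\ref{thm:main}.

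This step contains no real obstacle; all the difficulty lies in the cited results. The only point to check is that the hypotheses match exactly. Each cited statement assumes even $m\ge4$, and $d=1$ is excluded by hypothesis. No separate treatment of $m=4$ is needed beyond what is already contained in Theorem~\ref{thm:d5} and in the $m=4$ verification of Lemma~\ref{lem:star} used by Proposition~\ref{prop:anchor}.
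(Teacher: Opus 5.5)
Your proposal is correct and follows the paper's proof exactly. It uses the same case split: even $d$ by Theorem~\ref{thm:even-dim}, $d=3$ by Proposition~\ref{prop:anchor}, $d=5$ by Theorem~\ref{thm:d5}, and odd $d\ge7$ by Theorem~\ref{thm:oddconstruction}, with each cited hypothesis on even $m\ge4$ matching.
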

\begin{proof}
Even dimensions follow from Theorem~\ref{thm:even-dim}. Odd dimensions at
least seven follow from Theorem~\ref{thm:oddconstruction}. Dimension three
is Proposition~\ref{prop:anchor}, and dimension five is Theorem~\ref{thm:d5}.
These cases exhaust $d\ge2$.
\end{proof}

\begin{proof}[Proof of Theorem~\ref{thm:main}]
For odd $m$, use Theorem~\ref{thm:oddmod}; for even $m\ge4$, use
Theorem~\ref{thm:even}. Both proofs are contained in this paper.
\end{proof}

\section{Consequences of the all-moduli theorem}\label{sec:consequences}
We apply Theorem~\ref{thm:main} to products of spanning cycles,
changes of generating basis, and spanning paths obtained by cutting Hamilton
cycles. Throughout this section, $m\ge3$ and $d\ge2$ unless otherwise stated.

\subsection{Cartesian products and powers}
The Cartesian product $G_1\square\cdots\square G_d$ has vertex set
$\prod_i V(G_i)$; an arc changes exactly one coordinate along an arc of the
corresponding factor. Parallel arcs, when present, retain their labels.

\begin{proposition}[Universal transfer principle]\label{prop:universal-transfer}
Fix $n\ge2$ and $d\ge2$. The following statements are equivalent.
\begin{enumerate}
\item $D_d(n)$ has a Hamilton decomposition.
\item For every $r\ge1$ and every collection of loopless $n$-vertex
      digraphs $G_1,\ldots,G_d$, each decomposable into $r$ directed
      Hamilton cycles, the product $G_1\square\cdots\square G_d$
      decomposes into $dr$ directed Hamilton cycles.
\end{enumerate}
Parallel arcs, if present, are separately labelled.
\end{proposition}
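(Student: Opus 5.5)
The plan is to prove both implications directly; (ii)$\Rightarrow$(i) is immediate. Take $r=1$ and every $G_i=\vec C_n$, the positively oriented directed $n$-cycle. It is loopless for $n\ge2$ and is itself one Hamilton cycle. Its $d$-fold product is exactly $D_d(n)$, so (ii) yields a decomposition into $d$ Hamilton cycles.

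For (i)$\Rightarrow$(ii), fix decompositions $G_i=H_{i,1}\cup\cdots\cup H_{i,r}$ into directed Hamilton cycles, with parallel arcs labelled. For each $i$ and $j$, choose a bijection $\phi_{i,j}:\Zm\to V(G_i)$, where $m=n$, such that $H_{i,j}$ consists exactly of the arcs $\phi_{i,j}(t)\to\phi_{i,j}(t+1)$. This is possible since $H_{i,j}$ is a spanning directed cycle of length $n$. For fixed $j$, set
\[
\Phi_j=\phi_{1,j}\times\cdots\times\phi_{d,j}:\Zm^d\to\prod_i V(G_i).
\]
This is a bijection of vertex sets. It maps the arc $x\to x+e_i$ of $D_d(n)$ to the product arc that moves coordinate $i$ along an arc of $H_{i,j}$. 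Thus $\Phi_j$ is an isomorphism from $D_d(n)$ onto the spanning subdigraph $P_j$ whose arcs are the product arcs in direction $i$ lying over $H_{i,j}$, for all $i$.

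Next I check that $P_1,\ldots,P_r$ partition the arc set of the product. Every arc of $G_1\square\cdots\square G_d$ changes exactly one coordinate $i$ along a labelled arc of $G_i$. That labelled arc lies in exactly one $H_{i,j}$, so the product arc lies in exactly one $P_j$. This is the step where looplessness and separate labelling matter: each product arc has a well-defined coordinate and a unique label. Applying (i) and transporting the decomposition through $\Phi_j$ splits each $P_j$ into $d$ arc-disjoint Hamilton cycles of the product, since $\Phi_j$ is a bijection on all $n^d$ vertices. Collecting over $j$ gives $dr$ Hamilton cycles partitioning all arcs.

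There is no real obstacle. The only point needing care is the bookkeeping of parallel arcs, so that the identification of the $P_j$ as isomorphic copies of $D_d(n)$ is a genuine arc partition; the labelling convention stated in the proposition handles this. The case $n=2$ is covered because $\vec C_2$ has two arcs with distinct endpoints and is still loopless.
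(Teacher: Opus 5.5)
Your proposal is correct and follows the paper's proof. The paper also takes $r=1$ and $G_i=\vec C_n$ for (ii)$\Rightarrow$(i). For (i)$\Rightarrow$(ii) it forms the spanning subdigraphs $C_{1,j}\square\cdots\square C_{d,j}$, identifies each with $D_d(n)$ via the cyclic order in each coordinate, notes that they partition the product arcs, and applies (i) to each one; your maps $\phi_{i,j}$ and $\Phi_j$ simply make that isomorphism explicit.
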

\begin{proof}
For (i)$\Rightarrow$(ii), write the decomposition of $G_i$ as
$C_{i,1},\ldots,C_{i,r}$. For each $j$, the spanning subdigraph
\[
 H_j=C_{1,j}\square\cdots\square C_{d,j}
\]
is isomorphic to $D_d(n)$ via the cyclic order in each coordinate.
Every product arc changes a unique coordinate along an arc belonging
to a unique factor $C_{i,j}$, so the $H_j$ partition all product arcs.
Apply (i) separately to each $H_j$. For (ii)$\Rightarrow$(i), take
$r=1$ and $G_i=\vec C_n$ for every $i$.
\end{proof}

Corollary~\ref{cor:cartesian-products} now follows from
Theorem~\ref{thm:main}; the assertion for the first Cartesian power is
the given decomposition. In this reduction, the modulus becomes the
common order of the factors, so the all-moduli range yields the product
result for every $n\ge3$.

\begin{example}[Two distinct arithmetic boundaries]\label{ex:two-factor-boundary}
The graph $\vec C_3\square\vec C_4$ has no Hamilton cycle. Such a cycle
would use $3a$ and $4b$ arcs in the respective directions, with $a,b\ge1$
and $3a+4b=12$. Reduction modulo three forces $3\mid b$, which is
incompatible with $a\ge1$.
In contrast, $\vec C_3\square\vec C_6$ is Hamiltonian but has no Hamilton
decomposition. By the Trotter--Erd\H{o}s criterion one cycle exists when
$a+b=\gcd(3,6)$ and $\gcd(a,3)=\gcd(b,6)=1$; $(a,b)=(2,1)$ works.
Keating's decomposition criterion would require instead
$\gcd(ab,18)=1$. The only positive pairs summing to three are $(1,2)$
and $(2,1)$, and neither works. See~\cite[Theorems~2.1 and~2.3]{DMM22}
for these classical criteria. The first example exhibits the role of
factor orders, and the second distinguishes the arithmetic of
Hamiltonicity from that of Hamilton decomposition.
\end{example}

\begin{remark}[Chinese remainders and the generating set]\label{rem:crt-graph}
For coprime $a,b\ge2$, the Chinese remainder isomorphism identifies
$\Z_{ab}^d$ with $\Z_a^d\times\Z_b^d$, but a positive coordinate step
becomes $(e_i,e_i)$. Consequently
\[
 D_d(ab)\cong\Cay\bigl(\Z_a^d\times\Z_b^d,
                       \{(e_i,e_i):1\le i\le d\}\bigr).
\]
By contrast, $D_d(a)\square D_d(b)$ has $2d$ directions,
$(e_i,0)$ and $(0,e_i)$. Chinese remainders turn a coordinate step into
a simultaneous step in the two factors; Cartesian multiplication
permits the two steps separately. A lifting argument must therefore
respect the generating set as well as the group decomposition.
\end{remark}

\subsection{Height-stretched skew tori}
Preserving the first-return permutations also permits the number of
height layers to increase. This gives a family on nonhomocyclic abelian
groups by inserting layers whose total action is the identity.

\begin{lemma}[Inserting layers with identity return]\label{lem:neutral-height}
Let $H$ be a finite abelian group of exponent $e$, let
$\Delta=\{\delta_1,\ldots,\delta_r\}\subseteq H$, and let $q\ge2$.
If
\[
 \Gamma_q(H,\Delta)=\Cay\bigl(\Z_q\times H,
                    \{(1,\delta):\delta\in\Delta\}\bigr)
\]
has a Hamilton decomposition, then so does $\Gamma_{q+te}(H,\Delta)$
for every integer $t\ge0$. The decomposition can be chosen to have the
same colourwise first returns on the height-zero section.
\end{lemma}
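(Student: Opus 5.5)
Given a Hamilton decomposition of $\Gamma_q(H,\Delta)$, I will describe it by direction rows. Every arc raises the height coordinate in $\Z_q$ by one. So colour $c$ is encoded by a layer assignment: at each source $(s,x)$ with $s\in\Z_q$, colour $c$ uses generator $\delta_{\sigma_{s,x}(c)}$, where $\sigma_{s,x}$ is a bijection from the $r$ colours to $\Delta$. Each colour map sends layer $s$ bijectively onto layer $s+1$. By Lemma~\ref{lem:layer-return}, colour $c$ is Hamilton exactly when its height return $R_c=M_{c,q-1}\cdots M_{c,0}$ on $H$ is a single $|H|$-cycle, where $M_{c,s}$ is the layer map of colour $c$ at height $s$.

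The plan is to insert $te$ new layers, each a \emph{constant} row, and to arrange them so that every colour's total inserted action is the identity. The simplest choice gives colour $c$ the constant generator $\delta_{\pi(c)}$ at every inserted height, for a fixed bijection $\pi$; for instance, colour $c$ uses the same direction it uses at some reference source. An inserted layer map is then the translation $x\mapsto x+\delta_{\pi(c)}$. This is bijective, and distinct colours use distinct generators at each source, so the arcs are still partitioned. Over $te$ consecutive inserted layers, colour $c$ translates by $te\,\delta_{\pi(c)}=0$, since $e$ is the exponent of $H$.

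I will place the block of $te$ inserted layers between two consecutive original heights, say after height $q-1$ and before height $0$. Translations commute, so the position inside the block is irrelevant. The height-zero section of $\Gamma_{q+te}$ is then identified with that of $\Gamma_q$, and the new return of colour $c$ is $(\text{identity})\circ R_c=R_c$. Hence each return is still a single cycle, and Lemma~\ref{lem:layer-return} makes every colour a Hamilton cycle of length $(q+te)|H|$. The colourwise first returns on the height-zero section coincide with the original ones, giving the final assertion.

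The only point needing care is the identification of heights. The original heights $0,\ldots,q-1$ become heights $0,\ldots,q-1$ of $\Z_{q+te}$, and heights $q,\ldots,q+te-1$ carry the inserted rows. I must check that the wrap from height $q+te-1$ to $0$ in $\Z_{q+te}$ matches the original wrap from $q-1$ to $0$ composed with the inserted block. This holds because the Cayley digraph's arcs $(s,x)\to(s+1,x+\delta)$ impose no constraint linking height to the $H$-coordinate. I expect no real obstacle: the argument is the exponent identity $e\delta=0$ together with the return criterion. I would state that every inserted row must be a permutation row, which is automatic for constant rows.
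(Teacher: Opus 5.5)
Your proposal is correct and follows the paper's proof. You insert constant Latin rows, so colour $c$ translates by a fixed generator on each new layer. Since $e\delta=0$, the inserted block composes to the identity, the height-zero return is unchanged, and Lemma~\ref{lem:layer-return} gives Hamiltonicity. The paper appends $t$ blocks of $e$ layers in which colour $c$ uses $\delta_c$; this is your construction with $\pi=\id$.
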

\begin{proof}
The given factors specify bijections $U_{c,j}:H\to H$ on the $q$
successive height layers. At every source the differences
$U_{c,j}(x)-x$ use $\Delta$ exactly once. For each new block of $e$
layers, assign colour $c$ the constant translation $x\mapsto x+\delta_c$
on every layer. This is a Latin assignment, and its $e$-fold composition
is the identity. Appending $t$ such blocks leaves every colour return
unchanged. Lemma~\ref{lem:layer-return} completes the proof.
\end{proof}

\begin{corollary}[Height-stretched directed tori]\label{cor:skew-height}
For every $m\ge3$, $d\ge2$, and $a\ge1$, the digraph
\begin{equation}\label{eq:skew-height}
 \Cay\left(\Z_{am}\times\Z_m^{d-1},
       \{(1,0),(1,e_1),\ldots,(1,e_{d-1})\}\right)
\end{equation}
has a decomposition into $d$ directed Hamilton cycles.
Its directed girth is $am$. For $a>1$, it is not isomorphic to
$\vec C_{am}\square\vec C_m^{\square(d-1)}$.
\end{corollary}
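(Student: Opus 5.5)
The plan is to deduce Corollary~\ref{cor:skew-height} from Theorem~\ref{thm:main} via Lemma~\ref{lem:neutral-height}, after rewriting $D_d(m)$ in height-section coordinates. Map $\Zm^d\to\Zm\times\Zm^{d-1}$ by $x\mapsto(x_1+\cdots+x_d,\,x_2,\ldots,x_d)$. This is a group isomorphism. It sends $e_1$ to $(1,0)$ and $e_{i+1}$ to $(1,e_i)$. Hence
\[
 D_d(m)\cong\Gamma_m\bigl(\Z_m^{d-1},\Delta\bigr),\qquad
 \Delta=\{0,e_1,\ldots,e_{d-1}\}.
\]
By Theorem~\ref{thm:main}, $\Gamma_m(\Z_m^{d-1},\Delta)$ has a Hamilton decomposition for every $m\ge3$.

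Next apply Lemma~\ref{lem:neutral-height} with $H=\Z_m^{d-1}$, which has exponent $e=m$, and with $q=m$. For any $t\ge0$ it gives a Hamilton decomposition of $\Gamma_{m+tm}(H,\Delta)$. Taking $t=a-1$ yields the digraph~\eqref{eq:skew-height}. The height coordinate lies in $\Z_{am}$, and the generators are exactly $(1,0),(1,e_1),\ldots,(1,e_{d-1})$. This gives the decomposition into $d$ directed Hamilton cycles.

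For the girth, note that every arc raises the $\Z_{am}$-coordinate by one, so every directed cycle has length divisible by $am$. The walk using $(1,0)$ $am$ times is a closed cycle of length $am$, so the directed girth is exactly $am$. For non-isomorphism when $a>1$, I would compare directed girths. The product $\vec C_{am}\square\vec C_m^{\square(d-1)}$ contains a directed cycle of length $m$ in a factor $\vec C_m$, and $m<am$. Directed girth is an isomorphism invariant, so the two digraphs differ. As a backup argument, the underlying groups $\Z_{am}\times\Z_m^{d-1}$ coincide, so that comparison does not help; the girth comparison is the one I will use.

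The only point needing care is the coordinate change in the first step. It must be an isomorphism of Cayley digraphs carrying the standard generators exactly onto $\Delta$. The map above does this, so I expect no step to be a real obstacle. I will also check that Lemma~\ref{lem:neutral-height} permits $q=e$, and it does, since it only requires $q\ge2$.
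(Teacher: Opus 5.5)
Your proposal is correct and follows the paper's proof essentially step for step. You use the same linear change of coordinates to identify $D_d(m)$ with the case $a=1$. You then apply Lemma~\ref{lem:neutral-height} with $H=\Z_m^{d-1}$ of exponent $m$ to insert $(a-1)m$ further layers. Finally, you use the same girth argument: height forces every directed cycle to have length a multiple of $am$, while $\vec C_{am}\square\vec C_m^{\square(d-1)}$ has a directed $m$-cycle.
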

\begin{proof}
At $a=1$, the coordinate map
\[
 (x_0,x_1,\ldots,x_{d-1})\longmapsto
 (x_0+x_1+\cdots+x_{d-1},x_1,\ldots,x_{d-1})
\]
identifies $D_d(m)$ with \eqref{eq:skew-height}. Apply
Theorem~\ref{thm:main} and Lemma~\ref{lem:neutral-height}, since
$\Z_m^{d-1}$ has exponent $m$.
Every arc raises the height by one modulo $am$, so every directed cycle
has length at least $am$. Repeating $(1,0)$ gives a cycle of that length.
The ordinary unequal-side Cartesian product has directed girth $m$,
which proves the last assertion.
\end{proof}

The extension includes the even fibres $m=2^b$ with $b\ge2$.
The added layers are chosen in blocks whose return is the identity,
which preserves each colour's original return permutation. This
condition is the source of the height extension.

\subsection{Generating bases and Cayley digraphs}
Decomposing connection data into independent sets and then into
cycle-product subgraphs also occurs in the undirected setting; see the
Paley-graph proof of Alspach, Bryant and Dyer~\cite{ABD12}, in particular
its independent-set partition and basis-generated subgraphs. Here the
building block is a directed positive-basis torus over a residue ring,
so the transported factors retain the chosen generator orientations.

A basis below is a module basis over $R=\Z/m\Z$. If the connection data form a
multiset, each occurrence of a generator specifies a separately labelled arc
at every source.

\begin{corollary}[A partition into generating bases]\label{cor:basis-partition}
Let $m\ge3$, $d\ge2$, and $k\ge1$. Suppose a connection multiset $S$ on
$R^d$ has a partition
\[
 S=B_1\mathbin{\dot\cup}\cdots\mathbin{\dot\cup}B_k
\]
in which each $B_j$ is an $R$-basis of $R^d$. Then $\Cay(R^d,S)$ has a
decomposition into $kd$ directed Hamilton cycles. In particular, every choice
of a generating basis gives a Hamilton decomposition with $d$ factors.
\end{corollary}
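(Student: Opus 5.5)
The plan is to reduce each block $B_j$ to a copy of $D_d(m)$ by a module automorphism, and then apply Theorem~\ref{thm:main} to each copy separately.

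First, fix $j$ and write $B_j=\{g_{j,1},\ldots,g_{j,d}\}$, listing repeated occurrences separately. Since $B_j$ is an $R$-basis of $R^d$, the $R$-linear map $\phi_j:R^d\to R^d$ with $\phi_j(e_i)=g_{j,i}$ is an automorphism of the module. It sends the arc $x\to x+e_i$ to the arc $\phi_j(x)\to\phi_j(x)+g_{j,i}$. Hence $\phi_j$ is a digraph isomorphism from $D_d(m)=\Cay(R^d,\{e_1,\ldots,e_d\})$ onto the spanning subdigraph $\Gamma_j=\Cay(R^d,B_j)$. Here each occurrence of a generator is identified with its own labelled arc class.

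Second, observe that the arc set of $\Cay(R^d,S)$ is the disjoint union of the arc sets of $\Gamma_1,\ldots,\Gamma_k$. Each labelled arc is determined by its source and an occurrence of a generator in $S$, and that occurrence lies in exactly one $B_j$. Each $\Gamma_j$ is spanning, with vertex set $R^d$.

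Third, take a decomposition of $D_d(m)$ into $d$ directed Hamilton cycles, which Theorem~\ref{thm:main} provides because $m\ge3$ and $d\ge2$. Transport it through $\phi_j$. Isomorphisms preserve directed Hamilton cycles, since $\phi_j$ is a bijection on the vertex set $R^d$. So each $\Gamma_j$ decomposes into $d$ Hamilton cycles, and taking the union over $j$ gives $kd$ arc-disjoint Hamilton cycles covering every arc of $\Cay(R^d,S)$. The case $k=1$ is the stated special case.

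The main point needing care is the bookkeeping for multisets. If some generator $g$ occurs in $S$ several times, possibly in different blocks or even within one block, each occurrence must give its own labelled parallel class of arcs. The partition of occurrences then induces a partition of labelled arcs. A repeated generator cannot occur inside a single basis anyway, since a basis has distinct elements. No other obstacle arises, because $\phi_j$ is a genuine module automorphism: the fact that $B_j$ is a basis rather than merely a generating set is exactly what makes $\phi_j$ bijective.
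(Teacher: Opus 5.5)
Your proposal is correct and follows essentially the same route as the paper: the paper also uses the automorphism $\varphi_j(x)=\sum_i x_i b_{j,i}$ to transport the decomposition of $D_d(m)$ from Theorem~\ref{thm:main} onto $\Cay(R^d,B_j)$, then combines the $k$ decompositions because the partition of generator occurrences partitions the labelled arcs. Your multiset remark is slightly self-contradictory (you first allow a repeat within one block, then correctly note a basis cannot contain one), but this does not affect the argument.
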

\begin{proof}
Write $B_j=\{b_{j,1},\ldots,b_{j,d}\}$. The map
\[
 \varphi_j:R^d\longrightarrow R^d,\qquad
 \varphi_j(x)=\sum_{i=1}^{d}x_i b_{j,i}
\]
is an automorphism and sends $x\to x+e_i$ to
$\varphi_j(x)\to\varphi_j(x)+b_{j,i}$. It therefore transports the
decomposition of Theorem~\ref{thm:main} to a decomposition of
$\Cay(R^d,B_j)$. The partition of the generator occurrences partitions all
labelled arcs, so the $k$ decompositions combine as asserted.
\end{proof}

For instance, let $U_i\subseteq R^\times$ have the same positive cardinality
$k$ for $1\le i\le d$. Enumerate $U_i=\{u_{i,1},\ldots,u_{i,k}\}$. Each set
$\{u_{1,j}e_1,\ldots,u_{d,j}e_d\}$ is a basis, so
\[
 \Cay\bigl(R^d,\{u e_i:1\le i\le d,\ u\in U_i\}\bigr)
\]
decomposes into $kd$ directed Hamilton cycles. Taking one sign per coordinate
gives every uniform choice of coordinate orientations. Taking
$U_i=\{1,-1\}$ gives $2d$ Hamilton factors in the bidirected torus; these may
be chosen as $d$ cycles and their reversals. Forgetting the directions in the
positive decomposition gives $d$ undirected Hamilton cycles in
$C_m^{\square d}$, with the additional property that all cycles respect the
prescribed positive orientation.

For a prime-power modulus, the basis-partition hypothesis has an exact
rank formulation over the residue field.

\begin{corollary}[A prime-power rank criterion]\label{cor:prime-power-rank}
Let $m=p^a\ge3$, where $p$ is prime and $a\ge1$, and let $d\ge2$ and $k\ge1$.
Let $S$ be a multiset of $kd$ elements of $(\Z/p^a\Z)^d$, and write $\bar s$
for reduction modulo $p$. If
\begin{equation}\label{eq:rank-density}
 \bigl|\{s\in S:\bar s\in W\}\bigr|\le k\dim_{\mathbb F_p}W
 \qquad\text{for every subspace }W\le\mathbb F_p^d,
\end{equation}
where multiplicities are counted, then $\Cay((\Z/p^a\Z)^d,S)$ decomposes into
$kd$ directed Hamilton cycles. Condition~\eqref{eq:rank-density} is equivalent
to the existence of a partition of $S$ into $k$ module bases.
\end{corollary}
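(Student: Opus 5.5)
The plan is to reduce the Hamilton decomposition to Corollary~\ref{cor:basis-partition}. Once we show that \eqref{eq:rank-density} is equivalent to a partition of $S$ into $k$ module bases of $R^d$, $R=\Z/p^a\Z$, that corollary supplies the $kd$ Hamilton cycles. The equivalence will come from two ingredients. First, for a local ring, a $d$-element subset of $R^d$ is a basis exactly when its reduction modulo $p$ is a basis of $\mathbb F_p^d$. Second, Edmonds' matroid partition theorem, applied to the linear matroid of the reduced vectors over $\mathbb F_p$, with repeated elements kept as separate ground-set elements, handles the partition.

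For the first ingredient, take $d$ vectors $b_1,\ldots,b_d\in R^d$ and let $M$ be the matrix with these columns. They form an $R$-basis if and only if $M$ is invertible over $R$, which holds if and only if $\det M\in R^\times$. Since $R$ is local with maximal ideal $pR$, this happens if and only if $\det M\not\equiv0\pmod p$, that is, if and only if $\bar b_1,\ldots,\bar b_d$ is a basis of $\mathbb F_p^d$. So partitions of $S$ into $k$ module bases correspond exactly to partitions of the reduced multiset $\bar S$, of size $kd$, into $k$ bases of $\mathbb F_p^d$. Loops, meaning elements with $\bar s=0$, are ruled out by \eqref{eq:rank-density} with $W=0$.

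For the second ingredient, Edmonds' theorem says a ground set $E$ can be covered by $k$ independent sets if and only if $|A|\le k\,r(A)$ for every $A\subseteq E$. Because $|E|=kd=k\,r(E)$, any such covering consists of $k$ disjoint sets of size exactly $d$, hence $k$ bases. It is enough to check the condition on flats. For any $A$, put $W=\operatorname{span}\bar A$. Then $A\subseteq\{s:\bar s\in W\}$ and $r(A)=\dim W$, so the subspace inequality \eqref{eq:rank-density} implies Edmonds' condition. Conversely, if a partition into $k$ bases exists, each basis has at most $\dim W$ elements with reduction in $W$, and summing over the $k$ bases gives \eqref{eq:rank-density}.

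The one step needing care is the bookkeeping with multiplicities. Repeated generators must be treated as distinct parallel elements of the matroid, and \eqref{eq:rank-density} must count them with multiplicity. The only other point is the observation that equality $|E|=k\,r(E)$ forces each part of a covering to be a basis rather than merely an independent set. The Hamilton decomposition itself then follows directly from Corollary~\ref{cor:basis-partition}, since $m=p^a\ge3$.
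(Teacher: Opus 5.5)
Your proposal is correct and follows the paper's proof essentially step for step. It applies Edmonds' matroid partition theorem to the reduced vectors, with multiplicities kept as separate ground-set elements, and uses the counting argument that forces each of the $k$ independent sets to have exactly $d$ elements. It then uses the unit-determinant criterion over the local ring $\Z/p^a\Z$ to invoke Corollary~\ref{cor:basis-partition}, and proves the converse by summing $\dim W$ over the $k$ reduced bases, exactly as the paper does.
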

\begin{proof}
Regard the occurrences in $S$ as the ground set of the vector matroid
represented by the vectors $\bar s$. For every submultiset $A\subseteq S$,
put $W=\operatorname{span}_{\mathbb F_p}\{\bar s:s\in A\}$. The hypothesis gives
\[
 |A|\le\bigl|\{s\in S:\bar s\in W\}\bigr|
       \le k\dim_{\mathbb F_p}W.
\]
Edmonds' matroid partition theorem~\cite[Theorem~1]{Edmonds65} partitions $S$
into $k$ sets whose reductions are linearly independent. Each has at most $d$
elements, and their total size is $kd$, so every set has exactly $d$ elements.
The determinant of each corresponding matrix is nonzero modulo $p$, hence is
a unit modulo $p^a$. Each set is therefore a module basis, and
Corollary~\ref{cor:basis-partition} applies. Conversely, the reduction of a
module basis is a basis over $\mathbb F_p$, and contains at most $\dim W$
vectors in any subspace $W$. Summing over the $k$ bases proves
\eqref{eq:rank-density}.
\end{proof}

This criterion includes the even moduli $2^a$ with $a\ge2$. For example, on
$(\Z/4\Z)^2$ the four generators
\[
 (1,0),\ (0,1),\ (1,1),\ (1,2)
\]
partition into the two displayed consecutive pairs, both of determinant one.
Their Cayley digraph consequently decomposes into four Hamilton cycles.
For $p=2$, this applies to modules over $\Z/2^a\Z$ with $a\ge2$,
whose residue field is $\mathbb F_2$. The rank condition characterizes
partitions into module bases and thereby gives a sufficient condition
for Hamilton decomposition.

At a general composite modulus the generator occurrences must admit
\emph{one} partition which is a basis partition modulo every prime
dividing $m$. Independently finding partitions at the separate primes
is weaker, even when every individual generator is primitive.

\begin{example}[Incompatible primewise basis partitions]\label{ex:crt-bases}
In $(\Z/30\Z)^2$ take the four generators
\[
 s_1=(1,0),\quad s_2=(15,16),\quad
 s_3=(10,21),\quad s_4=(6,25).
\]
Their reductions, in this order, are
\[
\begin{array}{c|cccc}
 &s_1&s_2&s_3&s_4\\\hline
\bmod 2&e_1&e_1&e_2&e_2\\
\bmod 3&e_1&e_2&e_1&e_2\\
\bmod 5&e_1&e_2&e_2&e_1.
\end{array}
\]
Every row admits a partition into two bases of $\mathbb F_p^2$ and
satisfies the rank-density inequalities with $k=2$. Nevertheless, in
pair order $12,13,14,23,24,34$, the integer determinants are
\[
 16,\ 21,\ 25,\ 155,\ 279,\ 124,
\]
whose greatest common divisors with $30$ are respectively
$2,3,5,5,3,2$. No pair is a module basis over $\Z/30\Z$, so a common
basis partition is impossible. The full set still generates the module: $s_1=e_1$ and
$4s_2-3s_3=(30,1)=e_2$ modulo $30$. Thus this generating set lies beyond the common-basis method, even
though all its primewise reductions admit the required basis partitions.
\end{example}

\subsection{Spanning paths and the sharp arc-deletion threshold}
For a digraph $G$, let $\delta_G^-(v)$ and $\delta_G^+(v)$ denote the sets of
arcs entering and leaving a vertex $v$, respectively.

\begin{corollary}[Rooted paths and the arc-deletion threshold]\label{cor:packing-resilience}
Let $G=D_d(m)$, where $d\ge2$ and $m\ge3$. For every vertex $v$, the arcs of
$G-\delta_G^-(v)$ partition into $d$ directed Hamilton paths, all starting at
$v$. Similarly, $G-\delta_G^+(v)$ decomposes into $d$ directed Hamilton paths,
all ending at $v$.

If $F\subseteq E(G)$ and $|F|<d$, then $G-F$ contains at least $d-|F|$
pairwise arc-disjoint directed Hamilton cycles. The minimum number of arcs
whose deletion destroys all directed Hamilton cycles is exactly $d$.
\end{corollary}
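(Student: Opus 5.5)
The plan is to take a Hamilton decomposition $H_1,\ldots,H_d$ of $G=D_d(m)$ from Theorem~\ref{thm:main} and cut each cycle at a single arc. For the rooted statement, fix $v$. Because $G$ has indegree $d$ at $v$ and the $H_c$ partition all arcs, each $H_c$ contains exactly one arc entering $v$, and these $d$ arcs make up $\delta_G^-(v)$. Deleting that arc from the spanning cycle $H_c$ leaves a directed Hamilton path starting at $v$, namely the cycle read forward from $v$. The $d$ paths are arc-disjoint, and together they use every arc of $G$ except $\delta_G^-(v)$, so they partition $G-\delta_G^-(v)$. The statement for $\delta_G^+(v)$ is symmetric. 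Here each $H_c$ has exactly one arc leaving $v$, and removing it gives a Hamilton path ending at $v$.

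For the deletion bound, let $|F|<d$. Each arc of $F$ lies in exactly one $H_c$, so at most $|F|$ of the cycles meet $F$. The remaining at least $d-|F|$ cycles survive in $G-F$ and are pairwise arc-disjoint. In particular, deleting fewer than $d$ arcs leaves at least one Hamilton cycle, so the threshold is at least $d$.

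For the matching upper bound, I would delete all of $\delta_G^+(v)$, which has $d$ arcs, for any vertex $v$. Then $v$ has outdegree zero, so no directed Hamilton cycle can pass through $v$, and the threshold is exactly $d$. The only point needing care is the counting fact that every Hamilton factor contains exactly one in-arc and one out-arc at each vertex. This holds because each factor is a spanning cycle, and the factors partition the $d$ in-arcs and the $d$ out-arcs at $v$. I expect no real obstacle, since all the substance is in Theorem~\ref{thm:main}.
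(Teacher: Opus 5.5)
Your proposal is correct and follows the paper's proof essentially step for step. Cut each factor of a Hamilton decomposition at its unique arc entering (respectively leaving) $v$, note that at most $|F|$ factors meet $F$, and delete the $d$ arcs of $\delta_G^+(v)$ to show the threshold is sharp.
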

\begin{proof}
Fix a Hamilton decomposition $C_1,\ldots,C_d$ of $G$. Deleting from each $C_i$
its unique arc entering $v$ produces a Hamilton path starting at $v$. Those
$d$ deleted arcs are exactly $\delta_G^-(v)$, which proves the first partition.
Deleting instead the unique arc leaving $v$ proves the second.

Every arc of $F$ belongs to exactly one $C_i$, so at most $|F|$ of the chosen
cycles meet $F$. At least $d-|F|$ cycles survive intact. Thus at least $d$
arc deletions are needed to destroy Hamiltonicity. Deleting all $d$ arcs
leaving a single vertex destroys every directed Hamilton cycle, proving
sharpness.
\end{proof}

For the equal-side family with $m\ge3$, the corollary gives the optimal
value $d$ in the path-packing problem of Darijani, Miraftab and Witte
Morris~\cite[Question~1.2]{DMM22}, with any prescribed common initial
vertex. This value is optimal even when initial vertices are unrestricted:
writing $N=m^d$, we have $(d+1)(N-1)>dN$, whereas the digraph has $dN$ arcs.
The sharp deletion threshold concerns the total number of removed arcs.
Both the rooted-path and deletion conclusions hold, with $d$ replaced by
$r$, in every digraph decomposed into $r$ directed Hamilton cycles,
including the Cartesian and Cayley families above.

\subsection{Common transversals of cycle decompositions}
Two cycle decompositions can be compared through a set of arcs meeting every
cycle in each. Throughout this comparison the cycles are simple and
parallel arcs retain their labels.

\begin{proposition}[A minimum common cycle transversal]\label{prop:common-cuts}
Let a finite loopless digraph on $N$ vertices have a Hamilton decomposition
$\mathcal H$ into $r$ cycles and another decomposition $\mathcal F$ of the
same arc set into $c$ directed cycles. There is a set $E$ of exactly $c$
arcs which contains one arc from every member of $\mathcal F$ and meets
every member of $\mathcal H$. No smaller set can meet every cycle of
both decompositions.
After removing $E$, both decompositions consist of exactly $c$ directed
paths, allowing paths of length zero, and have the same uncoloured
incoming and outgoing boundary defects.
\end{proposition}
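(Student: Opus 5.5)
The plan is to reduce the choice of $E$ to a matching problem. Form the bipartite incidence graph with vertex classes $\mathcal F$ and $\mathcal H$. Join $C\in\mathcal F$ to $H\in\mathcal H$ when they share an arc.

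\emph{Lower bound.} The members of $\mathcal F$ are pairwise arc-disjoint, so any set meeting all of them contains at least $c$ arcs. This proves minimality once a set of size exactly $c$ is constructed.

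\emph{Construction via Hall.} I will find a matching that saturates $\mathcal H$. Let $K\subseteq\mathcal H$. The cycles in $K$ contain $|K|N$ arcs in total. Every such arc lies in exactly one member of $\mathcal F$, and every member of $\mathcal F$ is a simple directed cycle with at most $N$ arcs. Hence at least $|K|$ members of $\mathcal F$ meet $\bigcup K$, and this is Hall's condition. (Taking $K=\mathcal H$ also gives $c\ge r$.) For each matched pair $(C,H)$, put into $E$ one arc common to $C$ and $H$. For each unmatched $C\in\mathcal F$, put into $E$ an arbitrary arc of $C$. Then $E$ has exactly one arc from each member of $\mathcal F$, so $|E|=c$, and it meets every member of $\mathcal H$ through the matching. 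The only step needing care is this Hall count; it uses simplicity of the cycles of $\mathcal F$ (length at most $N$) and the fact that the Hamilton cycles have length exactly $N$.

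\emph{Paths and defects.} Removing one arc from each member of $\mathcal F$ leaves $c$ directed paths. A Hamilton cycle $H$ with $k_H\ge1$ arcs of $E$ removed splits into exactly $k_H$ paths. Two consecutive removed arcs leave a path of length zero at the vertex between them. Since $\sum_H k_H=|E|=c$, the decomposition $\mathcal H$ also leaves $c$ paths. For the boundary defects, note that each cycle has equal in- and out-degree at every vertex in both decompositions. After deleting $E$, a path therefore begins at $v$ once for each arc of $E$ entering $v$ and ends at $v$ once for each arc of $E$ leaving $v$; a length-zero path counts once in both roles. These uncoloured defects depend only on $E$, so they coincide for $\mathcal F$ and $\mathcal H$.
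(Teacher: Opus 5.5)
Your proof is correct and follows the paper's argument essentially step for step. Both use Hall's theorem on the bipartite overlap graph via the count $N|\mathcal A|\le N|N(\mathcal A)|$, then take one arc per matched intersection and an arbitrary arc on each unmatched member of $\mathcal F$; the lower bound comes from arc-disjointness of $\mathcal F$, and the boundary defects depend only on the removed arcs (your head/tail description of where the paths start and end spells out the paper's final sentence).
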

\begin{proof}
Form the bipartite overlap graph between $\mathcal H$ and $\mathcal F$,
joining two cycles when they share an arc. A subset $\mathcal A\subseteq
\mathcal H$ contains $N|\mathcal A|$ distinct arcs. Each neighbouring
cycle of $\mathcal F$ contains at most $N$ arcs, so
\[
 N|\mathcal A|\le N|N(\mathcal A)|.
\]
Hall's theorem~\cite{Hall35} provides a matching saturating $\mathcal H$.
From each matched intersection choose an arc, and choose any one arc
from each unmatched cycle of $\mathcal F$. These $c$ arcs meet every
cycle on both sides. Since the cycles of $\mathcal F$ are arc-disjoint,
$c$ is also a lower bound.
Cutting $k\ge1$ arcs of a cycle gives $k$ paths (possibly of length zero),
hence there are $c$
paths on each side. The same physical arcs were removed, so the missing
incoming and outgoing incidences agree.
\end{proof}

\begin{corollary}[Cycle excess and common transversals]\label{cor:excess-cuts}
Suppose a digraph has a decomposition into $r$ Hamilton cycles and also
an $r$-colour permutation factorization with total cycle excess $E$.
The minimum number of common cut arcs meeting every cycle in both
factorizations is exactly $r+E$.
\end{corollary}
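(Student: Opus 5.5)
This follows directly from Proposition~\ref{prop:common-cuts}, applied with $\mathcal H$ the Hamilton decomposition into $r$ cycles and $\mathcal F$ the cycles of the $r$-colour factorization. The first step is to identify the number $c$ of cycles in $\mathcal F$. Each colour $S_c$ of the factorization is a permutation of the vertex set whose cycles are directed cycles. Together, these cycles partition the arc set, because the factorization is an arc partition. Hence
\[
 c=\sum_{j=1}^r\cyc(S_j)=r+\sum_{j=1}^r\bigl(\cyc(S_j)-1\bigr)=r+E .
\]

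Two points need care before invoking the proposition. First, the cycles of each $S_j$ must be simple directed cycles of the digraph. This holds because the graph is loopless, so no fixed points occur. Parallel arcs carry separate labels, so a $2$-cycle on parallel arcs is still a legitimate simple cycle in the labelled sense used in Proposition~\ref{prop:common-cuts}. Second, $\mathcal F$ must decompose the same arc set as $\mathcal H$. This is immediate, since both are decompositions of the full labelled arc set.

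With these checks done, Proposition~\ref{prop:common-cuts} supplies a common transversal of exactly $c=r+E$ arcs. The lower bound is also taken from that proposition: the cycles of $\mathcal F$ are pairwise arc-disjoint, so any set meeting all of them has at least $c$ arcs. The minimum is therefore exactly $r+E$.

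I expect no real obstacle. The only step needing attention is the bookkeeping identity $c=r+E$ together with the observation that the cycles of the permutation factorization are the members of $\mathcal F$.
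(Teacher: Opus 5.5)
Your proposal is correct and matches the paper's proof, which notes that the factorization has $\sum_j\cyc(S_j)=r+E$ cycles and then applies Proposition~\ref{prop:common-cuts}. A small aside: a $2$-cycle of a permutation uses two antiparallel arcs rather than parallel ones, but this does not affect the argument.
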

\begin{proof}
The second factorization has $r+E$ cycles. Apply
Proposition~\ref{prop:common-cuts}.
\end{proof}

The common transversal determines equal uncoloured incoming and
outgoing boundary incidences. A recolouring additionally prescribes
which colour follows each resulting path; in the relative splitting
theorem that information is supplied by the first-return permutations.

\section{Quantitative features of the construction}\label{sec:constructive}
Coordinate splitting controls two different quantities: the number of
local direction profiles and the number of sources changed by the final
recolouring. The first admits a polynomial bound in $m,d$; the second
remains independent of $d$ because relative lifting carries the
recolouring to one copy of its original support.

\subsection{The number of local direction profiles}
At a fixed stage, group fibre blocks by their complete direction profile and the
label of the exceptional $F_2$ cycle. Every other colour is Hamilton, so its cycle label is fixed. A single
profile type may represent many blocks. In the parity join one representative block of each type
suffices: duplicate supports have the same column connectivity. Give all other
blocks a constant auxiliary-only selection; such a selection exists because
there is at most one distinguished column and $\lfloor a/2\rfloor\le a-1$.

\begin{proposition}\label{prop:types}
The construction in Theorem~\ref{thm:oddconstruction} uses at most
\begin{equation}\label{eq:type-bound}
 4m^2+3d(d-4)
\end{equation}
distinct fibre-block profile types after the first split and at every
subsequent stage. A profile consists of the direction of every colour
and the label of the exceptional $F_2$ cycle.
\end{proposition}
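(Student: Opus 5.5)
We argue by induction along the splitting sequence, bounding the number of profile types by a count that only grows with each split through a controlled additive term. The basic observation is that the direction of every colour at a lifted source $(x,t)$ is determined by the old direction of that colour at $x$ together with the binary voltage $\delta_c(x)$ chosen by the selection at row $t$. Hence a new fibre-block profile is determined by an old block profile and the row pattern chosen inside that old block. The plan is therefore to arrange every selection so that, within each old profile type, only a bounded number of distinct row patterns occur, and so that most old types receive a constant (row-independent) selection.

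First we handle the initial stage. The initial factorization of Section~\ref{sec:seed} has direction functions depending only on $(h,w)$ (Lemma~\ref{lem:nearcore} and Lemma~\ref{lem:shell}), together with the $F_2$ label, so there are at most $m^2$ block types before the first split. In the matched first split (Lemma~\ref{lem:matched} and Lemmas~\ref{lem:entry-seven}, \ref{lem:entry-nine}), each block uses at most a constant number of distinct row patterns. In the pair construction these are the exceptional rows $0,1$ (or $0,1,2$ for anchored blocks) and the common ordinary row. This gives at most $4m^2$ types after the first split, which is the first term of~\eqref{eq:type-bound}.

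For the subsequent stages, we use the reduction stated before the proposition. In each application of Proposition~\ref{prop:pinned-selection}, we run the parity join on one representative block per profile type and give all duplicate blocks a constant auxiliary-only selection. Duplicates have identical supports, so connectivity and the parity of column components are unchanged. A constant selection on a block yields a single new profile per old profile. Only the representatives can produce new types, and each representative contributes at most as many new patterns as it has oriented pairs, plus one. The number of oriented pairs over all representatives is at most the number of selected tree edges, which is bounded by the number of cycle columns, at most $d+1$.

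Summing over the $d-4$ further splits, each split adds $O(d)$ new types. The explicit accounting is that each split adds at most $3d$ types: at most $d+1$ pair endpoints, times the fixed factor coming from exceptional rows $0,1$ versus ordinary rows, with care taken over exceptional rows in representatives. This gives the total $4m^2+3d(d-4)$.

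The main obstacle is making this per-split increment precise. Specifically, we must verify that the exclusion condition and the intersection connectivity needed by Lemma~\ref{lem:inherit} can still be met when only representatives carry pairs. We must also confirm that duplicate blocks with the distinguished column can always take a constant selection avoiding it. The latter holds because $\lfloor a/2\rfloor\le a-1$, which leaves enough auxiliary columns, and the count of pair-induced new profiles is then checked directly.
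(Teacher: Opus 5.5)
Your framework is the paper's: representatives of each profile type carry the parity join, duplicates get one constant auxiliary-only selection, and a new type is an old type paired with a row pattern. The initial $4m^2$ also matches, coming from at most four row patterns in each of the $m^2$ blocks indexed by $(h,w)$. However, the per-split increment of $3d$, which is the substance of the proposition, is not established. Two steps fail.

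\begin{itemize}
\item Your claim that the selected tree edges are at most the number of cycle columns is false in general. Take a column joined to three blocks, each of which is also joined to its own leaf column. This is a valid parity join, with column degrees $3,1,1,1$ and block degrees $2,2,2$, yet it has four columns and six selected edges.
\item Your final tally, ``at most $d+1$ pair endpoints, times the fixed factor'', is an assertion rather than a count. Read literally it gives roughly $3(d+1)$, not $3d$.
\end{itemize}

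The missing step is the forest inequality for the used representatives. The selected parity-join incidences form a forest on the $d+1$ cycle columns and the $B$ representative blocks that carry pairs. Each such block has positive even selected degree, hence degree at least two. Therefore $2B\le B+(d+1)-1$, that is, $B\le d$.

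Proposition~\ref{prop:pinned-selection} places every exceptional row on row $0$ or row $1$. So a used representative has at most three row patterns, however many pairs it carries. Adding the duplicates' constant pattern, one old type produces at most four new types. Hence a split adds at most $3B\le 3d$ types, and summing over the $d-4$ further splits gives the bound.

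Your pairs-plus-one count can also be completed, but it rests on the same forest inequality. If $P$ is the total number of pairs, then $2P\le d+B\le d+P$, so $P\le d$. The increment is then at most $P+B\le 2d$.

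Finally, you should state explicitly that a representative receiving no pairs gets the same constant selection as its duplicates. Otherwise every unused old type could split into two new types. The count could then double at each split, and no bound of the form $4m^2+O(d^2)$ would follow.
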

\begin{proof}
The selected parity-join incidences form a forest on the $d+1$ cycle-column
vertices and the used representative block vertices. Every used block vertex
has positive even degree, hence degree at least two. If there are $B$ such
vertices, the forest inequality gives $2B\le B+(d+1)-1$, so $B\le d$.
A used representative block has at most three row patterns: the two
exceptional patterns and the ordinary pattern. Together with the constant pattern on other blocks of its
type, it creates at most four new types from one old type. Thus a split increases
the number of types by at most $3d$. After the first split, each of the $m^2$
initial blocks has at most four row types. There are $d-4$ further splits, proving
\eqref{eq:type-bound}.
\end{proof}

The bound concerns distinct profiles; reconstructing a successor also
requires their locations. These are specified by the split directions,
the coordinates and selections of the representative blocks, and the
assignment of the remaining blocks to profile types. Together with
\eqref{eq:sum-sheet}, this information determines the successor at a
given vertex. Bounding the cost of locating and evaluating this profile
when $m$ and the coordinates are given in binary is a further question.

\subsection{Cycle excess and localized recolouring}
For even $m$ and odd $d$, cycle excess one is the least value allowed
by the canonical sign class. The initial factorization realizes this
value together with the stronger, componentwise incidence condition.
The lower bound below compares the support needed to join these
factors with that required by the canonical coordinate factorization.

\begin{proposition}[A support lower bound]\label{prop:support-lower}
Let $S$ and $T$ be permutations of the same finite set, let $T$ be a
single cycle, and suppose $S$ has $k>1$ cycles. Then
\[
 |\{x:Sx\ne Tx\}|\ge k.
\]
The bound is attained if no restriction on the permitted successor arcs
is imposed.
\end{proposition}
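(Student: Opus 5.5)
Let $D=\{x:Sx\ne Tx\}$ and $r=S^{-1}T$. Then $r$ is a permutation with $T=Sr$ whose support is exactly $D$: indeed $r(x)=x$ iff $Tx=Sx$. Put $U=D$ and apply Lemma~\ref{lem:surgery} with this $r$ supported on $U$. It gives
\[
 1=\cyc(T)=\cyc(Sr)=h_U(S)+\cyc\bigl(\Ret_U(S)\,r\bigr).
\]
Two consequences follow.

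First, suppose some cycle of $S$ were disjoint from $U$. Then $S$ and $T$ agree on it, so that cycle is also a cycle of $T$. Since $T$ is one cycle, $S$ would equal $T$ and have one cycle, contradicting $k>1$. Hence $h_U(S)=0$ and every cycle of $S$ meets $U$. Each of the $k$ disjoint cycles contributes at least one point of $U$, so $|U|\ge k$.

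A sign-free alternative is that each point of $U$ changes at most one arc. Reconnecting a permutation by altering successors at $j$ sources changes the cycle count by at most $j$. This needs a small check, since the $r$ changes act as transpositions composed on the right and each transposition changes $\cyc$ by $\pm1$. But the support argument above is already sufficient and cleaner, so I would present that one.

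For attainment, pick one vertex $x_i$ on each cycle $C_1,\dots,C_k$ of $S$. Let $r$ be the $k$-cycle $x_1\mapsto x_2\mapsto\cdots\mapsto x_k\mapsto x_1$, fixing everything else, and put $T=Sr$. Then $T$ differs from $S$ exactly at the $k$ points $x_i$. The first-return permutation $\Ret_U(S)$ is the identity on $U=\{x_i\}$, since each cycle contains exactly one marked point. So $\cyc(T)=0+\cyc(r)=1$ by Lemma~\ref{lem:surgery}. The phrase ``no restriction on permitted successor arcs'' just means $T$ may use arbitrary successors $S(x_{i+1})$.

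The only delicate point is the identification of the support of $S^{-1}T$ with the disagreement set, together with the unmarked-cycle term. Both are immediate.
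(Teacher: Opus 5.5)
Your proof is correct. The attainment half is the paper's own argument: one marked point on each $S$-cycle, $r$ a $k$-cycle on these points, $\Ret_U(S)=\id$, and then Lemma~\ref{lem:surgery} gives $\cyc(Sr)=0+\cyc(r)=1$.

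For the lower bound you take a different route. The paper sets $W=\{x:Sx\ne Tx\}$ and writes $r=S^{-1}T$ as a product of at most $|W|-1$ transpositions. Since each transposition changes the cycle count by exactly one, it concludes $k-1\le|W|-1$. You observe instead that an $S$-cycle on which $S$ and $T$ agree is $T$-invariant. That is impossible when $T$ is a single cycle and $k>1$, so $W$ meets all $k$ cycles.

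\textbf{What your route buys.} It is more elementary; it does not actually use the surgery identity you display. It also shows that a disagreement set of the minimum size $k$ must contain exactly one point of each $S$-cycle, which is precisely the shape of the extremal example.

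\textbf{What the paper's route buys.} The transposition count is quantitative and symmetric in $S,T$. It gives $|W|\ge|\cyc(S)-\cyc(T)|+1$ for every $T\ne S$, without requiring $T$ to be one cycle. Your argument instead generalizes to: $|W|$ is at least the number of $S$-cycles that are not $T$-cycles. Neither general bound implies the other.

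\textbf{A correction to the aside you discarded.} Changing successors at $j$ sources alters $\cyc$ by at most $j-1$, not $j$. The weaker count would only yield $|W|\ge k-1$. The sharp count $j-1$ is exactly the paper's argument.
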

\begin{proof}
Put $W=\{x:Sx\ne Tx\}$ and $r=S^{-1}T$.
Then $r$ is supported on $W$ and is a product of at most $|W|-1$
transpositions. A transposition changes the number of cycles by one,
so $k-1\le |W|-1$. For equality, choose one point in each $S$-cycle
and let $r$ cyclically permute these $k$ points. Source reconnection
joins the $k$ cycles into one and changes exactly those $k$ sources.
\end{proof}

A direct Hamilton recolouring of the canonical coordinate factorization
requires at least $m^{d-1}$ changed sources in each colour. The
factorization used here has cycle excess one before lifting. Relative
lifting transports its Hamilton recolouring to the same three-dimensional
section at every stage.

\section{Further questions}\label{sec:extensions}
Theorem~\ref{thm:main} gives Hamilton decompositions at every cycle
length $m\ge3$. Extending the method calls for arc assignments that
both produce cyclic returns and preserve intervals for later lifts.
The questions below concern the size of the required recolourings and
the hypotheses under which such assignments can be repeated.

\subsection{The size of the recolouring}
Let $s(m)$ be the minimum number of sources in a three-colour Hamilton
recolouring of Table~\ref{tab:nearcore} that preserves the four agreements
at $Q_m$. The anchored construction gives
\[
 s(m)\le m^3-m^2-4m+7.
\]
The four agreements leave the sources required by the first lift
outside the recolouring support.

\begin{question}\label{ques:support}
What is the order of growth of $s(m)$ over even $m$? In particular, is
$s(m)=O(m^2)$?
\end{question}

The cubic upper bound in $m$ comes from a recolouring on one
three-dimensional section, whose copy is retained through every later
split. A smaller initial support would therefore improve the bound in
all higher odd dimensions covered by the same construction.

\subsection{Selection and iteration}
Appendix~\ref{app:coupled} characterizes one marked split by integral cut
inequalities and the gap congruences
\[
 n_{c,C,j}\equiv-K_Cq_{C,j}\pmod m,
 \qquad K_C\in\mathbb Z_m^\times,
 \qquad q_{C,j}\ge0,\quad\sum_jq_{C,j}=m-1.
\]
The half quotas and repeated supports used here ensure that suitable
choices can be made again after each split.

Degree congruences and connectivity are also central to modulo-factor
theory. Hasanvand~\cite{Hasanvand25} obtains bounded-degree factors under
edge-connectivity hypotheses; in a bipartite graph with even degrees on
one side, his results also allow exact half-degrees on that side and
prescribed residues on the other. Complementary modulo factors can
further be required to contain prescribed numbers of edge-disjoint
spanning trees~\cite{Hasanvand22TC}. Here the repeated supports permit
exact row quotas, prescribed exclusions, and intersection connectivity
within each block. The latter yields the incidence parity needed for
the next split. Extending this combination leads to the following
question.

\begin{question}\label{ques:quotas}\label{ques:iterative-selection}
Which conditions on general row quotas, forbidden entries, and marked
intervals ensure the existence of such selections while preserving the
same conditions under coordinate splitting?
\end{question}

Conditions expressed through the current incidence system would give
an induction on $\sum_i(a_i-1)$, extending the relative splitting
theorem beyond repeated supports and half quotas.

\subsection{Marked intervals and allowable voltages}
First returns determine the effect of a recolouring, while the positions
of the voltages govern subsequent lifts. In the single-interval
construction, if $G$ is the chosen open interval of a cycle $C$ and
$K=C\setminus G$, then
\[
 \widehat G=(G\times\{0\})\,\dot\cup\,
 \bigl(C\times(\mathbb Z_m\setminus\{0\})\bigr),
 \qquad \widehat K=K\times\{0\}.
\]
The complementary path remains on the zero section; the chosen
interval contains every newly introduced nonzero fibre point. For
several intervals, the integer counts also record positional information:
$m$ selections and zero selections give equal residues and different
fibre traversals.

\begin{question}\label{ques:return-times}\label{ques:resource-state}
When voltages are supported on several marked intervals, what positional
information yields a class closed under relative splitting? Can its
size be bounded in terms of the initial marked factorization?
\end{question}

\subsection{Uniform initial factorizations}
The three-coordinate initial family supplies the cases of odd $d\ge7$;
the five-dimensional case uses transversal cycle joining. A common
initial family would need a prescribed Hamilton recolouring and a
first split satisfying Theorem~\ref{thm:closure}.

\begin{question}\label{ques:entry-unification}
Is there a family of initial factorizations in a bounded number of
coordinates, including the five-colour case, with prescribed boundary
agreements and a first split satisfying Theorem~\ref{thm:closure}?
\end{question}

Directional counts constrain this problem. Every Hamilton cycle on
$\mathbb Z_m^n$ uses each direction a positive multiple of $m$ times.
Moreover, a Hamilton factorization with constant directional
multiplicities has at least $n$ factors. The auxiliary number in our
three-dimensional superposition is even, hence at least four, which
accounts for its starting dimension seven. A uniform family including five colours would therefore require a
different assignment of the auxiliary arcs among coordinate directions.

\subsection{Other generating sets and directed products}
At a prime power, Corollary~\ref{cor:prime-power-rank} reduces basis
partition to rank inequalities. At a general modulus a common partition
must work at all prime divisors, as Example~\ref{ex:crt-bases} illustrates.
Unequal-side products additionally require compatible coordinate maps
and incidence selections; the two-factor criteria of
Trotter--Erd\H{o}s and Keating~\cite{TE78,Keating85} give the first
arithmetic constraints.

\begin{question}\label{ques:other-products}
Which prescribed generating sets admit quotient or covering
constructions whose arc assignments and cyclic returns can be preserved
under repeated relative lifting? Which explicit families extend the
common-basis and height-stretched constructions?
\end{question}

The all-moduli theorem provides a Hamilton decomposition at every
common cycle length $m\ge3$. Its product consequence applies at every
common vertex order at least three to Hamilton-decomposable factors of
the same degree, while the common-basis construction supplies directed
Hamilton decompositions for the corresponding Cayley families.

The relative splitting theorem gives a further principle: for an
admissible marked factorization, it preserves the cycle-count effect
of every valid distinguished recolouring on the marked section while
retaining the intervals required for further refinement. A low-dimensional
recolouring can therefore complete the decomposition after all coordinate
multiplicities have been split. Broader incidence conditions would
extend the range of these relative constructions; smaller completing
recolourings with prescribed agreements would improve their support
bounds throughout the higher-dimensional family.

\appendix
\section{The return calculation for the cyclic-star layer}\label{app:star}
We prove the cyclicity assertion used in Lemma~\ref{lem:star}.
The calculation passes from a height-layer return to a $2m$-point section, and
then to the horizontal-axis points. The resulting first returns have explicit
cyclic orders, while their return times account for all points of each section.

Let $Q$ be the $2m$ points in~\eqref{eq:star-support}, let $P$ be that displayed
cycle, extended by the identity outside $Q$, and let $\nu$ be the first return of
a translation $\tr_v$ to $Q$. By Lemma~\ref{lem:surgery}, the first return of
$\tr_vP$ on $Q$ is
\[
 \Phi=\nu P.
\]
In both cases of~\eqref{eq:star-voltage}, every $\tr_v$-cycle meets $Q$.
For $v=(1,-2)$ the translation cycles are distinguished by $b+2a$:
$Y_1,\ldots,Y_{m-1}$ meet every nonzero value, and $X_{m/2}$ meets value zero.
For $v=(1,3)$, when $3\mid m$, use the invariant $b-3a$, the same $Y$ points,
and $X_{m/3}$. The first coordinate of $v$ is one, so every translation cycle has
length $m$. It is consequently enough to show that $\Phi$ has one cycle.

The following rule determines every entry in the return tables. For $q=(a,b)\in Q$ and $v=(1,\lambda)$, where
$\lambda=-2$ or $3$, choose the least $t\in\{1,\ldots,m\}$ for which one of
\begin{equation}\label{eq:hitting-tests}
\begin{split}
 &b+\lambda t=0,\quad a+t\ne0;\\
 &a+t=0,\quad b+\lambda t\ne0;\\
 &(a+t,b+\lambda t)=(1,-1);\\
 &(a+t,b+\lambda t)=(-1,1)
\end{split}
\end{equation}
holds in $\Zm$. Then $\nu(q)=q+tv$. To compute $\Phi$, first move to the next
point of~\eqref{eq:star-support}, and then apply this hitting rule. The tables
record its further first return to
\[
 \mathcal X=\{X_s:1\le s<m\}.
\]
The return times in the tables are measured in \emph{steps of $\Phi$ on $Q$}.
Their sums account for all $2m$ points of $Q$.

\subsection{The case \texorpdfstring{$3\nmid m$}{3 does not divide m}}
Write $m=2k$ and use $v=(1,-2)$. Substitution in~\eqref{eq:hitting-tests} gives
the following first return $\Psi$ on $\mathcal X$ and return time $\rho$:
\begin{equation}\label{eq:star-table-nondiv}
\begin{array}{c|c|c}
 s&\Psi(s)&\rho(s)\\\hline
 1\le s\le k-2&s+k+1&1\\
 s=k-1&k&1\\
 k\le s\le2k-3&s-k+2&3\\
 s=2k-2&k+1&4\\
 s=2k-1&1&3.
\end{array}
\end{equation}
Empty index ranges are omitted from the tables. For example, in
the first row $P(X_s)=X_{s+1}$ and the first translation hit is after $k$ steps,
namely $X_{s+k+1}$. For $s=k-1$, the prospective half-turn reaches the omitted
origin, so the first hit is the full return to $X_k$. For the remaining rows,
start with the vertical hit $t=m-s-1$ when $P(X_s)=X_{s+1}$, and use the two
corner tests when an axis endpoint is reached. Repeating the same rule gives
the last three rows, without an intermediate visit to $\mathcal X$.

On the labels $1,\ldots,n$, with $n=2k-1$ and residues represented by these
labels, put
\[
 \beta(s)=s+k+1\pmod n,\qquad \theta=(1\ k\ k+1).
\]
Table~\eqref{eq:star-table-nondiv} is exactly $\Psi=\theta\beta$.
Since $3\nmid k$, there are two cases. If $k\equiv2\pmod3$, then
$\gcd(n,k+1)=3$. The three points $1,k,k+1$ lie in the three different cycles of
$\beta$, and $\theta$ joins them into one cycle. If $k=3q+1$, then
$\gcd(n,k+1)=1$. On the $\beta$-cycle, the points $1,k,k+1$ occur in that order at
positions $0,2q,4q+1$. Cutting at these three positions shows that multiplying by
$\theta$ leaves one cycle. This argument applies for $q\ge1$; the boundary
$m=4$ belongs to the first case and gives $\Psi=(1\,2\,3)$ directly.

The sum of the return times in~\eqref{eq:star-table-nondiv} is $4k=2m$.
First-return paths from distinct points of $\mathcal X$ have disjoint interiors.
As $\Psi$ is one cycle, these paths exhaust $Q$ and prove that $\Phi$ is a single
$2m$-cycle. Lemma~\ref{lem:surgery} now implies that $\tr_{(1,-2)}P$ is a single
$m^2$-cycle.

\subsection{The case \texorpdfstring{$3\mid m$}{3 divides m}}
Write $m=6q$. Suppose first that $q\ge2$, and put
\[
 a=2q+1,\qquad n=6q-1=3a-4,\qquad
 \beta(s)=s+a\pmod n\quad(1\le s\le n).
\]
Define $\theta$ on $1,\ldots,a$ by
\begin{equation}\label{eq:tau-divisible}
\begin{aligned}
 \theta(1)&=a-1,&\theta(2)&=1,&\theta(3)&=a,\\
 \theta(s)&=s-2\quad(4\le s\le a-2),&&&\\[-2pt]
 \theta(a-1)&=a-2,&\theta(a)&=a-3,
\end{aligned}
\end{equation}
and fix all other labels. The images in~\eqref{eq:tau-divisible} are distinct
and exhaust $1,\ldots,a$, so $\theta$ is a permutation.

Using $v=(1,3)$ in~\eqref{eq:hitting-tests}, the first return of $\Phi$ on
$\mathcal X$ is
\begin{equation}\label{eq:star-divisible-return}
 \Psi=\theta\beta,
 \qquad
 \rho(s)=
 \begin{cases}
 1,&1\le s\le4q-1,\\
 5,&s=4q\text{ or }s=6q-2,\\
 4,&4q+1\le s\le6q-3,\\
 3,&s=6q-1.
 \end{cases}
\end{equation}
Here the horizontal hitting condition is $3t=-b$, the vertical hitting time is
$t=-a_0$ for a current point $(a_0,b)$, and the two corner conditions are the last
two lines of~\eqref{eq:hitting-tests}. Taking the least positive eligible time
after each successor in~\eqref{eq:star-support} gives the ranges in
\eqref{eq:star-divisible-return}. In particular, the two five-step returns are
separate for $q\ge2$. The same least-positive-time rule determines the boundary entries.

Since $a$ is odd,
\[
 \gcd(n,a)=\gcd(3a-4,a)=\gcd(4,a)=1,
\]
so $\beta$ is cyclic. Its first return to $I=\{1,\ldots,a\}$ is addition by four
modulo $a$: starting from $s\le a-4$ it takes three steps, and from $s>a-4$ it
takes two. Consequently the first return of $\Psi$ on $I$ is
$s\mapsto\theta(s+4)$, with residues represented in $I$. Its complete cyclic
order is
\begin{equation}\label{eq:small-star-cycle}
 1,3,\ldots,a-4,\quad
 a-3,a-1,a,\quad
 2,4,\ldots,a-5,\quad
 a-2,1.
\end{equation}
An ascending range whose upper endpoint is smaller than its lower endpoint is
empty. For $a=5$, for example, this reads $1,2,4,5,3,1$.
Substitution in~\eqref{eq:tau-divisible} verifies every transition in
\eqref{eq:small-star-cycle}; the list contains each member of $I$ exactly once.
Because $\theta$ is supported on $I$, every segment outside $I$ is an unchanged
$\beta$-segment. Thus every $\Psi$-cycle meets $I$, and $\Psi$ is one cycle on
$1,\ldots,n$.

The sum of the return times in~\eqref{eq:star-divisible-return} is
\[
 (4q-1)+10+4(2q-3)+3=12q=2m.
\]
It follows as before that $\Phi$ is one cycle on $Q$ and $\tr_{(1,3)}P$ is one
cycle on $\Zm^2$.

For the remaining case $m=6$, the same hitting rule gives, on the five $X$ labels,
\[
 \Psi=(1\,4\,3\,2\,5),\qquad
 (\rho(1),\ldots,\rho(5))=(1,1,1,5,4).
\]
The return times sum to $12=|Q|$, which completes the proof of
Lemma~\ref{lem:star} for every even $m\ge4$.

\section{Prescribed return times on one cycle}\label{app:capacity}
The zero-section formula of Proposition~\ref{prop:phase-order} also
allows prescribed increases in the return times. We first treat
arbitrary additive voltages and then the binary voltages used in
coordinate splitting, using the marked positions, base cycle length
$L$, old return times $\ell_j$, and fibre modulus $m$ of that proposition.

\begin{corollary}[Prescribing return-time increments]\label{cor:budget-realization}
In the notation above, let $q_j\ge0$ be integers with $\sum_jq_j=m-1$.
Suppose every gap with $q_j>0$ contains an unmarked vertex. There is an
additive lift with unit total, zero voltage on $U$, the original marked
order, and return times $\ell_j+q_jL$.
\end{corollary}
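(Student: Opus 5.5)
We use Proposition~\ref{prop:phase-order} in reverse. Rather than choosing the voltages first, we fix the phases $\kappa_j$ from the target increments and then build voltages that produce those phases. Put $\kappa_0=0$ and $\kappa_{j+1}=\kappa_j+q_j$ for $0\le j<r-1$. Then
\[
 0=\kappa_0\le\kappa_1\le\cdots\le\kappa_{r-1}=m-1-q_{r-1}\le m-1 .
\]
Take total voltage $K=1$. The definition $\kappa_j=[-K^{-1}P_j]_m$ then requires $P_j\equiv-\kappa_j\pmod m$. Consequently the increment of partial sums over gap $j$, namely
\[
 \sum_{s=s_j}^{s_{j+1}-1}\delta(x_s)=P_{j+1}-P_j,
\]
must be congruent to $-q_j$ modulo $m$ for $j<r-1$. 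For the last gap the condition is $K-P_{r-1}\equiv 1+\kappa_{r-1}=m-q_{r-1}\equiv -q_{r-1}$. So in every case the voltage summed over gap $j$ must be $\equiv-q_j\pmod m$.

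Next, the requirement of zero voltage on $U$. The gap from $x_{s_j}$ to $x_{s_{j+1}}$ has source set $\{x_{s_j},\dots,x_{s_{j+1}-1}\}$, and its first source is marked. All of its voltage must therefore sit on the open-gap vertices $x_{s_j+1},\dots,x_{s_{j+1}-1}$. When $q_j=0$ we set all voltages in the gap to zero, which is allowed even if the gap has no interior. When $q_j>0$ the hypothesis gives an unmarked interior vertex, and we put voltage $m-q_j$ (a residue in $\{1,\dots,m-1\}$) there and zero elsewhere.

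The total is then $\sum_j(-q_j)=-(m-1)\equiv1$, a unit. Recomputing $P_j$ gives exactly $-\kappa_j$ modulo $m$, so the phases are those prescribed. They satisfy \eqref{eq:phase-order}, so the marked order is preserved. Formula~\eqref{eq:phase-budget} then returns exactly $q_j=\kappa_{j+1}-\kappa_j$ and $q_{r-1}=m-1-\kappa_{r-1}$, that is, return times $\ell_j+q_jL$.

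The main care point is the indexing. One has to check that the sources counted in $P_{j+1}-P_j$ are exactly the ones whose interiors the hypothesis controls, and that the marked endpoint $x_{s_{j+1}}$ belongs to the next gap's sum, not this one. One also has to handle the last gap, where the wrap-around term $K$ enters, consistently with the others. Beyond this bookkeeping the argument is a direct substitution into Proposition~\ref{prop:phase-order}.
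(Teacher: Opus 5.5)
Your proposal is correct and is essentially the paper's proof: place voltage $-q_j$ (your $m-q_j$) at one unmarked source in each gap with $q_j>0$, observe that the total is $-(m-1)\equiv1$ so $\kappa_j=\sum_{i<j}q_i$ is nondecreasing in $[0,m-1]$, and apply Proposition~\ref{prop:phase-order}. Your extra bookkeeping about which sources enter $P_{j+1}-P_j$ and about the wrap-around last gap is accurate. It simply makes explicit what the paper's one-line phase computation leaves implicit.
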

\begin{proof}
In each gap with $q_j>0$, put voltage $-q_j$ at one unmarked source,
and put zero elsewhere. The total is $-(m-1)=1$ modulo $m$.
Then $\kappa_j=\sum_{i<j}q_i$, which lies in $[0,m-1]$ and is
nondecreasing. Proposition~\ref{prop:phase-order} gives the assertion.
\end{proof}

\begin{proposition}[The binary capacity criterion for one cycle]\label{prop:binary-capacity}
For each open marked gap let $A_j$ be a set of eligible sources, of size
$c_j$. Fix integers $q_j\ge0$ with $\sum_jq_j=m-1$.
There exists a binary voltage supported on $\bigcup_jA_j$ with unit total,
the original marked order, and return times $\ell_j+q_jL$ if and only if there
is a unit $K\in\Zm$ such that
\begin{equation}\label{eq:binary-capacity}
 [-Kq_j]_m\le c_j\qquad\text{for every }j.
\end{equation}
\end{proposition}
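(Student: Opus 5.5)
Everything reduces to Proposition~\ref{prop:phase-order}. That proposition says a lift with unit total $K$ keeps the marked order and has return times $\ell_j+q_jL$ if and only if the phases $\kappa_j=[-K^{-1}P_j]_m$ satisfy $\kappa_0=0$ and $\kappa_{j+1}-\kappa_j=q_j$. Equivalently, $\kappa_j=Q_j:=\sum_{i<j}q_i$ for every $j$, with the last increment automatically $m-1-Q_{r-1}=q_{r-1}$. Since $0\le Q_j\le m-1$, these are honest least representatives. So the condition becomes $P_j\equiv -KQ_j\pmod m$ for every $j$. The voltage is supported in the open gaps, and each marked source has voltage zero (this is implicit in "supported on $\bigcup A_j$", as the $A_j$ lie in open gaps). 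Hence the count $n_j$ of selected sources in gap $j$ satisfies $P_{j+1}-P_j=n_j$. The prescribed data are therefore equivalent to the gap congruences
\[
 n_j\equiv -Kq_j\pmod m\quad\text{for all }j,
\]
together with $\sum_j n_j\equiv K$. I will note that the last congruence follows from the others, since $\sum_j q_j=m-1\equiv-1$.

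For necessity, I will take the binary lift and let $K$ be its total. Each $n_j$ is a nonnegative integer at most $c_j$, and it is congruent to $-Kq_j$. So $c_j\ge n_j\ge[-Kq_j]_m$, which is \eqref{eq:binary-capacity}.

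For sufficiency, I fix a unit $K$ satisfying \eqref{eq:binary-capacity} and choose exactly $n_j=[-Kq_j]_m$ sources of $A_j$ in each gap, with voltage one there and zero elsewhere. The total is then $\sum_j n_j\equiv -K\sum_jq_j\equiv K$, a unit, so the lift is a single cycle by Lemma~\ref{lem:lift}. The partial sums give $P_j\equiv -KQ_j$, hence $\kappa_j=[K^{-1}KQ_j]_m=Q_j$. This sequence is nondecreasing and starts at $0$, so Proposition~\ref{prop:phase-order} yields the original order and the return times $\ell_j+q_jL$. The positions of the chosen sources inside a gap do not matter, because only the partial sums at marked points enter.

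I do not expect a real obstacle here. The only delicate points are bookkeeping. First, the representatives $\kappa_j$ must equal $Q_j$ exactly rather than only modulo $m$, which holds because $Q_j<m$. Second, the total $K$ of the constructed lift must coincide with the chosen unit. This is what forces $K$ rather than $K^{-1}$ to appear in \eqref{eq:binary-capacity}, and I will double-check the sign convention $\kappa_j=[-K^{-1}P_j]_m$ at this step.
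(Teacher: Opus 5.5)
Your proposal is correct and follows essentially the same route as the paper. Both arguments reduce through Proposition~\ref{prop:phase-order} to the gap congruences $n_j\equiv -Kq_j\pmod m$, taking $K$ to be the total for necessity; for sufficiency both select exactly $[-Kq_j]_m$ sources in each $A_j$ and read off $\kappa_j=\sum_{i<j}q_i$.

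Your worry about $K$ versus $K^{-1}$ is harmless. The criterion quantifies over all units, and inversion permutes the units.
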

\begin{proof}
Let $n_j$ be the selected count in gap $j$. If the prescribed return
increments hold, Proposition~\ref{prop:phase-order} gives
$n_j\equiv-Kq_j\pmod m$, including the last gap, since the total is $K$.
Thus $[-Kq_j]_m\le n_j\le c_j$.
Conversely, choose exactly $[-Kq_j]_m$ sources in each $A_j$.
Their total is $-K\sum_jq_j\equiv K\pmod m$.
The resulting phases are $\kappa_j=\sum_{i<j}q_i$; the order criterion
and the return-time formula now apply.
\end{proof}

For several colours, the selected sources must also satisfy the common
row sizes. The next appendix expresses that simultaneous choice through
cut inequalities; Theorem~\ref{thm:closure} supplies the additional
conditions for repeated splitting.

\section{Simultaneous marked binary selections}\label{app:coupled}
A simultaneous split couples the colourwise voltage conditions through
the number of selections allowed at each source. We express these
conditions as an integral flow problem with congruence constraints on
the column counts.

\subsection{Exact row quotas and prescribed column counts}
Let $X$ be a finite row set and $\Lambda$ a finite set of columns.
For each $x\in X$, let $A_x\subseteq\Lambda$ be its allowed entries and
$0\le b_x\le |A_x|$ its row quota. Put $B=\sum_xb_x$ and define
\[
 \rho(W)=\sum_{x\in X}\min\{b_x,|A_x\cap W|\}
 \qquad(W\subseteq\Lambda).
\]
Forbidden entries have already been removed from $A_x$.

\begin{lemma}[Integral cut criterion]\label{lem:quota-cuts}
There are selections $J_x\subseteq A_x$, $|J_x|=b_x$, with prescribed
nonnegative integer column totals $n_\lambda$ if and only if
\begin{equation}\label{eq:quota-cuts}
 \sum_{\lambda\in\Lambda}n_\lambda=B,
 \qquad
 \sum_{\lambda\in W}n_\lambda\le\rho(W)
 \quad\text{for every }W\subseteq\Lambda.
\end{equation}
For fixed targets $n_\lambda$, an integral max-flow computation either
constructs the selections or gives a violated cut inequality.
\end{lemma}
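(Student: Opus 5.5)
The plan is to reduce the selection problem to an integral flow (a bipartite transportation problem) and read off both the criterion and the algorithm from max-flow/min-cut together with integrality.

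\emph{The network.} Take a source $s$, one node per row $x\in X$, one node per column $\lambda\in\Lambda$, and a sink $t$. The arcs are:
\begin{itemize}
\item $s\to x$ with capacity $b_x$;
\item $x\to\lambda$ with capacity $1$ whenever $\lambda\in A_x$;
\item $\lambda\to t$ with capacity $n_\lambda$.
\end{itemize}
An integral flow of value $B$ is the same thing as the required selections. Saturating $s\to x$ means $b_x$ unit arcs leave $x$, and these give $J_x\subseteq A_x$ with $|J_x|=b_x$. Saturating $\lambda\to t$ means column $\lambda$ is chosen exactly $n_\lambda$ times.

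\emph{Necessity.} The first condition in \eqref{eq:quota-cuts} is counting: the total number of selections is $\sum_x b_x=\sum_\lambda n_\lambda$. For the second, fix $W$. The selections landing in $W$ number $\sum_{\lambda\in W}n_\lambda$. Row $x$ contributes at most $|J_x\cap W|$ of them, which is at most both $b_x$ and $|A_x\cap W|$. Summing over $x$ gives $\rho(W)$.

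\emph{Sufficiency.} Assume $\sum n_\lambda=B$. A flow of value $B$ then saturates every source arc and every sink arc, so by max-flow/min-cut it suffices to show every $s$--$t$ cut has capacity at least $B$. Take a cut whose source side contains a set $X'$ of rows and a set $\Lambda'$ of columns, and put $W=\Lambda\setminus\Lambda'$. Its capacity is
\[
 \sum_{x\notin X'}b_x+\sum_{x\in X'}|A_x\cap W|+\sum_{\lambda\in\Lambda'}n_\lambda .
\]
Minimize over $X'$ with $W$ fixed. Row $x$ contributes $\min\{b_x,|A_x\cap W|\}$, so the minimum is $\rho(W)+B-\sum_{\lambda\in W}n_\lambda$. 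This is at least $B$ exactly when the cut inequality for $W$ holds. Hence under \eqref{eq:quota-cuts} the maximum flow is $B$. Since all capacities are integers, the integral flow theorem (for instance Ford--Fulkerson) gives an integral maximum flow, and hence the selections.

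\emph{Algorithm.} Run an augmenting-path max-flow computation. If the value is $B$, the flow is the selection. Otherwise take the source side of the final residual cut. Either $\sum n_\lambda\ne B$, or the set $W$ of columns off the source side satisfies $\sum_W n_\lambda>\rho(W)$, by the same cut computation, so it is a violated cut inequality.

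The only step needing care is the cut computation: identifying $W$ with the sink-side columns and checking that optimizing rows independently yields exactly $\rho(W)$. The rest is standard.
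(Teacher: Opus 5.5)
Your proof is correct and follows the paper's argument: the same source--row--column--sink network, the same identification of $W$ with the sink-side columns, row-by-row minimization giving cut capacity $\rho(W)+\sum_{\lambda\notin W}n_\lambda$, and the integral max-flow/min-cut theorem. One small slip is in your algorithm paragraph: a flow of value $B$ is the required selection only after you have checked $\sum_\lambda n_\lambda=B$, since otherwise the column arcs need not be saturated; when $\sum_\lambda n_\lambda>B$, the cut inequality for $W=\Lambda$ already fails, because $\rho(\Lambda)=B$.
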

\begin{proof}
Necessity counts how many selected entries can lie in $W$.
For sufficiency form a network with arcs from the source to each row
of capacity $b_x$, allowed row--column arcs of capacity one, and arcs
from columns to the sink of capacities $n_\lambda$.
If $W$ is the set of columns on the sink side of a cut, minimization over
the positions of the row vertices gives cut capacity
\[
 \sum_{\lambda\notin W}n_\lambda+
 \sum_x\min\{b_x,|A_x\cap W|\}.
\]
Every cut has capacity at least $B$ by \eqref{eq:quota-cuts}.
The integral max-flow/min-cut theorem~\cite{FF56} gives a flow of value
$B$. All row and column margins are saturated, and the unit-capacity
entries of that integral flow give the selections.
\end{proof}

Singleton sets $W$ measure individual column capacities; larger sets
measure the capacity shared by several columns. For example, four
quota-one rows with
supports
\[
 \{A,B\},\quad\{C,D\},\quad\{C,D\},\quad\{C,D\}
\]
and counts $n_A=n_B=n_C=n_D=1$ satisfy the singleton inequalities and
the total-sum identity. The inequality for $W=\{A,B\}$ fails because
these two columns share a single eligible row.

\subsection{Prescribed marked returns}
Let $(S_c)$ factorize $T_m(a_1,\ldots,a_n)$, with $m\ge2$.
Fix a direction $i$ and $0<b<a_i$. Specify, for each colour, a marked
set $U_c$ and any additional forbidden sources for voltage one.
For every cycle $C$ meeting $U_c$, list its marks cyclically as
$u_{C,0},\ldots,u_{C,s_C-1}$, with old return lengths $\ell_{C,j}$.
Prescribe a unit $K_C\in\Z_m^\times$ for every colour-cycle and, on
marked cycles, integers
\[
 q_{C,j}\ge0,\qquad\sum_jq_{C,j}=m-1.
\]
The desired new return lengths are $\ell_{C,j}+q_{C,j}|C|$.

Make one column $\lambda=(c,C,j)$ for each open gap of every marked
cycle, and one column $\lambda=(c,C)$ for every unmarked cycle.
A source $x$ is eligible for that column exactly when colour $c$ uses
direction $i$ at $x$, $x$ lies in the indicated gap or unmarked cycle,
and the entry is not forbidden. In particular no marked source is
eligible for its own colour. These rules define $A_x\subseteq\Lambda$.
Take $b_x=b$ at every source. If some $|A_x|<b$, the split is infeasible.

\begin{theorem}[Simultaneous marked selection]\label{thm:marked-cuts}
A binary coordinate split with these allowed entries, unit totals $K_C$,
the original marked orders, and the prescribed return lengths exists
if and only if there are nonnegative integers $n_\lambda$ satisfying
\eqref{eq:quota-cuts} and
\begin{align}
 n_{c,C,j}&\equiv -K_Cq_{C,j}\pmod m
       &&\text{for each marked cycle and gap},\label{eq:gap-residues}\\
 n_{c,C}&\equiv K_C\pmod m
       &&\text{for each unmarked cycle}.\notag
\end{align}
Every old colour-cycle then lifts to one cycle.
\end{theorem}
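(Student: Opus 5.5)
The plan is to reduce Theorem~\ref{thm:marked-cuts} to Lemma~\ref{lem:quota-cuts} for the selection counts, and to Proposition~\ref{prop:phase-order} (through the criterion of Proposition~\ref{prop:binary-capacity}) for the return data. The central point is that once the split is fixed, the voltage $\delta_c(x)\in\{0,1\}$ of a source records whether $x$ is selected for the column containing it. Thus a binary coordinate split satisfying \eqref{eq:physical-delta}, with the forbidden entries removed, is the same thing as a family of selections $J_x\subseteq A_x$ with $|J_x|=b$. Here $\lambda\in J_x$ means that the colour of $\lambda$ has voltage one at $x$.

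I will prove the two directions on one cycle at a time.

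For necessity, suppose the split exists and set $n_\lambda$ equal to the number of selected sources in column $\lambda$. Lemma~\ref{lem:quota-cuts} then gives \eqref{eq:quota-cuts} immediately. On an unmarked cycle $C$, the total voltage is $n_{c,C}$, and this must equal the prescribed unit $K_C$ modulo $m$. On a marked cycle, marked sources are ineligible, so they carry zero voltage. The partial sums $P_j$ of Proposition~\ref{prop:phase-order} are therefore the cumulative gap counts $\sum_{i<j}n_{c,C,i}$. Preserving the marked order and producing the increments $q_{C,j}$ forces $\kappa_j=\sum_{i<j}q_{C,i}$. Differencing $\kappa_{j+1}-\kappa_j$ and using $\kappa_j\equiv -K_C^{-1}P_j$ gives $n_{c,C,j}\equiv -K_Cq_{C,j}$. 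The last gap uses the same formula, since $\sum_j n_{c,C,j}=K_C$ and $\sum_j q_{C,j}=m-1$. This is exactly the argument of Proposition~\ref{prop:binary-capacity}, applied gap by gap.

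For sufficiency, take integers $n_\lambda$ satisfying \eqref{eq:quota-cuts} and the congruences \eqref{eq:gap-residues}. Lemma~\ref{lem:quota-cuts} yields selections with exactly these column totals, and hence a coordinate split by Lemma~\ref{lem:physical}. On each marked cycle, the total is $\sum_j n_{c,C,j}\equiv -K_C(m-1)\equiv K_C$, a unit. So Lemma~\ref{lem:lift} lifts the cycle to a single cycle. The cumulative counts give $P_j\equiv -K_C\sum_{i<j}q_{C,i}$, and so $\kappa_j=\big[\sum_{i<j}q_{C,i}\big]_m=\sum_{i<j}q_{C,i}$. This equality holds because the partial sums lie in $[0,m-1]$. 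These values are nondecreasing and start at $0$, so Proposition~\ref{prop:phase-order} gives the original order and the return times $\ell_{C,j}+q_{C,j}|C|$. Unmarked cycles have total $K_C$ and lift to one cycle.

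The main obstacle is a bookkeeping point about the integer representatives. The counts $n_\lambda$ themselves may exceed $m$. What must be checked is that the order criterion \eqref{eq:phase-order} depends only on the residues $P_j$ modulo $m$, and that the $\kappa_j$ computed from those residues are the integer partial sums of the $q$'s. Both facts follow because $\sum_j q_{C,j}=m-1<m$, so no partial sum wraps around.
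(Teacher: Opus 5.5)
Your proposal is correct and follows essentially the same route as the paper. Necessity comes from counting selected entries, which gives the cut inequalities, and from reading the gap congruences off Proposition~\ref{prop:phase-order}. Sufficiency realizes the counts through Lemma~\ref{lem:quota-cuts}, applies Lemmas~\ref{lem:physical} and~\ref{lem:lift}, and observes that the phases $\kappa_{C,j}=\sum_{h<j}q_{C,h}$ lie in $[0,m-1]$ and are nondecreasing. One small slip: $\sum_j n_{c,C,j}$ equals $K_C$ only modulo $m$, but that congruence is all your last-gap step actually uses.
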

\begin{proof}
Given a split, let $n_\lambda$ count its selected entries in each column. The row quotas imply \eqref{eq:quota-cuts}.
On a marked cycle, Proposition~\ref{prop:phase-order} gives
\eqref{eq:gap-residues}, including the last gap; the unmarked congruence
is the specified total voltage.
Conversely, realize the counts using Lemma~\ref{lem:quota-cuts} and put
$\delta_c(x)=1$ at the selected entries and zero elsewhere.
Each source has exactly $b$ selections in direction $i$, so
Lemma~\ref{lem:physical} gives a physical coordinate split.
The total on a marked cycle is
$-K_C\sum_jq_{C,j}\equiv K_C\pmod m$, and unmarked totals are $K_C$
by assumption. Lemma~\ref{lem:lift} therefore preserves each old cycle.
On a marked cycle, the partial-voltage phases are
\[
 \kappa_{C,j}=\sum_{h<j}q_{C,h},
\]
which lie in $[0,m-1]$ and are nondecreasing. The phase criterion gives
precisely the prescribed order and return lengths.
\end{proof}

\begin{proposition}[Selections with fixed margins]\label{prop:count-reconfiguration}
Any two realizations of the same feasible target vector in
Lemma~\ref{lem:quota-cuts} are connected by alternating-cycle flips
using only allowed entries. Every intermediate choice has the same
row quotas and column counts.
Consequently, for fixed targets in Theorem~\ref{thm:marked-cuts}, these
flips preserve the factorization, colourwise cycle counts, marked orders,
and prescribed return times throughout the reconfiguration.
\end{proposition}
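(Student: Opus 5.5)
The plan is to reduce the statement to the classical exchange theory of integral flows, i.e.\ to the connectivity of perfect matchings (or of $b$-matchings) in a bipartite graph under alternating cycles. Model a realization as a subgraph $J$ of the bipartite graph $\Gamma$ with one vertex per row $x\in X$ and one per column $\lambda\in\Lambda$, and with an edge $x\lambda$ for every allowed entry $\lambda\in A_x$. The row quotas and column counts say exactly that $J$ has degree $b_x$ at each row and $n_\lambda$ at each column. Take two realizations $J,J'$ with the same margins and form the symmetric difference $J\triangle J'$. Colour its edges by membership in $J\setminus J'$ or $J'\setminus J$. At every vertex the two colours occur equally often, because both subgraphs have the same degree there.

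Next, decompose $J\triangle J'$ into closed alternating trails. Pair the edges of the two colours at each vertex and follow the pairing; this gives closed trails, and since $\Gamma$ is simple bipartite they split further into simple alternating cycles of even length. I will do this by repeatedly extracting a shortest closed alternating sub-walk, which is a simple cycle, while keeping the equal-colour property at every vertex. For a simple alternating cycle $Z$, replacing $J$ by $J\triangle Z$ swaps the entries of $J$ on $Z$ for those of $J'$. Every vertex of $Z$ loses one edge and gains one, so all row quotas and column counts are unchanged. Only edges of $\Gamma$ are ever used, so only allowed entries appear, and forbidden entries were removed in the definition of $A_x$. Each flip lowers $|J\triangle J'|$ by $|Z|\ge4$, so induction ends at $J'$. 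This proves the first two sentences.

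For the consequence, I will re-run the proof of Theorem~\ref{thm:marked-cuts} at each intermediate selection. That theorem's converse direction depends on the selection only through three things: the row quotas (each source has $b$ selections in direction $i$, giving a factorization by Lemma~\ref{lem:physical}); the column counts $n_\lambda$, which determine the gap residues \eqref{eq:gap-residues}; and eligibility, which places the voltages in the correct gaps and keeps them off marked sources. The unit totals $K_C$ and the phases $\kappa_{C,j}=\sum_{h<j}q_{C,h}$ are therefore fixed. Proposition~\ref{prop:phase-order} and Lemma~\ref{lem:lift} then give the same colourwise cycle counts, the same marked orders and the same return times $\ell_{C,j}+q_{C,j}|C|$ at every stage. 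The lifted permutations themselves do change from stage to stage. What is preserved is that each stage is a valid factorization with these invariants.

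The main obstacle is the cycle-extraction step. The symmetric difference is a multigraph only in the sense of the pairing, so I must check that a simple alternating cycle always exists. This follows by walking from any vertex and alternating colours, which is always possible since the colour degrees are equal, until a vertex repeats with the correct parity. Bipartiteness guarantees that the parity is correct at the first repetition.
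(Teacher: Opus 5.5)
Your proposal is correct and follows essentially the same route as the paper. The paper orients entries of the first realization from row to column and those of the second from column to row; equal margins then make the symmetric difference a balanced digraph whose directed cycles are exactly your alternating cycles, which it flips one at a time, and it deduces the consequence from the fact that all gap and cycle counts, hence phases and totals, stay fixed. Your added remark that the lifted permutations themselves change, and only the listed invariants persist, is a fair clarification of what ``preserve the factorization'' means.
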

\begin{proof}
In the symmetric difference of two realizations, orient an edge present
only in the first from its row to its column, and an edge present only
in the second from its column to its row. Equal margins make this
directed graph balanced at every vertex. Decompose it into directed
cycles. On a simple directed cycle, remove the first-kind entries and
add the second-kind entries. This preserves every margin, uses only
allowed entries, and reduces the symmetric difference. Repetition
reaches the second realization. The last statement follows because
all cycle and gap counts, hence their phases and totals, remain fixed.
\end{proof}

Alternating cycles of length greater than four can be essential.
The quota-one supports $\{A,B\},\{B,C\},\{C,A\}$ with column counts
$(1,1,1)$ have two realizations joined by a six-edge alternating cycle,
while their incidence graph has no four-cycle. Thus the permitted
incidences determine the lengths of the exchanges required to connect
selections with fixed margins.

When the sources available to different columns are disjoint and there
are no joint row constraints, the inequalities reduce to the separate
gap capacities in Proposition~\ref{prop:binary-capacity}. For fixed
column counts, integral max-flow decides their simultaneous realization.
Selecting the units $K_C$ and compatible congruent counts is the further
arithmetic problem. For iteration, the choice must also preserve the
intersection connectivity and marked-interval conditions of
Definition~\ref{def:collar}; the selector and relative lift in the main
proof achieve these conditions together.

\section{Formalization and finite verification}\label{app:computation}
Theorems~\ref{thm:even} and~\ref{thm:main} have Lean~4 formalizations in
release \texttt{0.3.0-alleven} of~\cite{TorusLean}, at source revision
\texttt{4900983c4f4b}. The development uses Lean \texttt{4.30.0-rc2}
and mathlib revision \texttt{5450b53e5ddc}. Its vertices are functions
$\mathrm{Fin}\,d\to\mathrm{ZMod}\,m$, and a decomposition assigns every
positive coordinate arc to a colour whose successor is a single cycle.
The final declarations are
\begin{center}
\small
\path{TorusEven.even_modulus_tori_all_dimensions},\\
\path{TorusAll.all_moduli_tori_all_dimensions}.
\end{center}

\subsection{Correspondence with the written proof}
Table~\ref{tab:lean-correspondence} identifies the principal declarations
in the fixed release. The selection and relative-splitting statements use
the global separation of distinguished directions in
Definition~\ref{def:collar}. Its iteration is the declaration
\path{TorusEven.Collar.RelativeCollarState.hamilton_decomposition}.
The initial factorization proof establishes the component parity
required by the induction.

\begin{table}[htbp]
\centering\small
\begin{tabular}{@{}p{0.31\textwidth}p{0.65\textwidth}@{}}
\toprule
Written result & Lean declaration\\
\midrule
Proposition~\ref{prop:pinned-selection} &
\path{TorusEven.Collar.Incidence.pinnedSelection_iff}\\[3pt]
Theorem~\ref{thm:closure} (one split) &
\path{TorusEven.Collar.RelativeCollarState.exists_split}\\[3pt]
Lemma~\ref{lem:star}, Appendix~\ref{app:star} &
\path{TorusEven.Entry.Star.return_hamilton}\\[3pt]
Theorem~\ref{thm:even-dim} &
\path{TorusEven.even_degree_collar}\\[3pt]
Theorem~\ref{thm:oddconstruction} &
\path{TorusEven.even_odd_degree}\\[3pt]
Theorem~\ref{thm:d5large} &
\path{TorusEven.d5_even_large}\\[3pt]
$D_3(m)$, even $m\ge4$ &
\path{TorusEven.d3_even} (Route-E construction)\\[3pt]
$D_5(4)$ &
\path{TorusEven.d5_even_four} (Route-E construction)\\
\bottomrule
\end{tabular}
\caption{Correspondence recorded in the fixed release. The two Route-E
constructions establish the stated existence results by different
explicit witnesses.}\label{tab:lean-correspondence}
\end{table}

The anchored cyclic-star return calculation, including the two cases of
Appendix~\ref{app:star}, is covered by the listed declaration. The additive
case of Theorem~\ref{thm:onegap} is formalized as
\path{TorusEven.Collar.oneGap}. The ordinary three-dimensional existence
result also has the separate Route-E proof used in the final assembly.
The final assembly uses the Route-E witness for $D_5(4)$.
Section~\ref{sec:d5} gives a separate four-row construction, reproduced
by the supplementary calculation below.

The all-moduli declaration combines the even theorem with
\begin{center}\small
\path{RoundComposite.Concrete.odd_modulus_tori_all_dimensions_v75}.
\end{center}
Section~\ref{sec:odd} gives a shorter written proof with an integer
selector. That selector, the permutation-valued generality of
Theorem~\ref{thm:onegap}, the exact incidence-component classifications,
and the additional boundary, allocation, and transfer results are
proved here separately from the cited formalization.

\subsection{Logical dependencies}
Table~\ref{tab:formal-audit} uses three standard axioms:
\path{propext}, \path{Classical.choice}, and \path{Quot.sound}.
Its last column counts the additional axioms generated by finite
\texttt{native\_decide} evaluations. These evaluations use the native
compiler and evaluation machinery as part of the trusted basis~\cite{LeanReference}.

\begin{table}[htbp]
\centering\small
\begin{tabular}{@{}p{0.54\textwidth}p{0.22\textwidth}r@{}}
\toprule
Formal result & Logical axioms & Native axioms\\
\midrule
$D_5(m)$, even $m\ge6$ & standard & 0\\
$D_d(m)$, even $d\ge2$, even $m\ge4$ & standard & 0\\
$D_d(m)$, odd $d\ge7$, even $m\ge4$ & standard & 0\\
Theorem~\ref{thm:even} & standard & 18\\
Theorem~\ref{thm:main} & standard & 106\\
\bottomrule
\end{tabular}
\caption{Logical dependencies recorded by \texttt{\#print axioms} in
release \texttt{0.3.0-alleven}.}\label{tab:formal-audit}
\end{table}

The 18 finite dependencies of the even theorem comprise four checks for
the Route-E witness of $D_5(4)$ and fourteen first-return checks at $m=6,8$
for the Route-E three-dimensional construction. The earlier odd proof
contributes 88 further dependencies. The anchored cyclic-star proof,
the general even-dimensional and odd-dimensional constructions, and
the five-dimensional construction for $m\ge6$ use the standard axioms alone.
The linked axiom ledger gives the declaration names, and the fixed
release contains the corresponding
\href{https://github.com/aria1th/Torus-Hamilton-Decomposition-Program/blob/4900983c4f4be61ed90b008c6063cddd6c91d4e2/evidence/lean_audit_20260911/final-control-check/FinalAudit.lean}{audit source}
and
\href{https://github.com/aria1th/Torus-Hamilton-Decomposition-Program/blob/4900983c4f4be61ed90b008c6063cddd6c91d4e2/evidence/lean_audit_20260911/final-control-check/axioms.log}{raw output}.

\subsection{Verification of the displayed finite data}
The supplementary file \path{anc/verify_finite.py} reconstructs the
printed formulas using Python~3.10 or later and its standard library.
It checks the two complement tables; the maps $j_i$, $N_i$, and the
cyclic lists in Lemma~\ref{lem:d5-endpoint}; and the first-hit calculations
in Appendix~\ref{app:star}. For the five-dimensional construction it
also checks the layer bijections, intermediate cosets, final Hamilton
returns, and the four identities in \eqref{eq:d5four-iterate}.

The command
\begin{center}\small
\texttt{python3 anc/verify\_finite.py}
\end{center}
checks the return tables and planar lists for even $4\le m\le120$,
the full five-dimensional construction for even $4\le m\le14$, and the
binary-cube enumeration cited in the introduction. Command-line options
set the two modulus ranges. The script reproduces these finite instances;
the proofs in the main text and Appendix~\ref{app:star} cover all
admissible moduli.

\section*{Acknowledgements}
I thank Joonkyung Lee for his guidance and for encouraging conversations
during the development of this work.

\section*{Generative-AI disclosure}
Generative AI contributed extensively to the construction of proofs,
finite calculations, Lean formalization, and exposition. The odd-modulus
work used GPT-5.5 Pro and GPT-5.5 Codex; the even-modulus work used
GPT-6-Pro, Claude Fable~5.1, and GPT~6 Astra (max). AI was also used to
compare research notes, develop the auxiliary return and selection
arguments, and revise the manuscript. The earlier version~\cite{ParkVtwo}
and the contribution record in~\cite{TorusLean} describe the division of
work. The author chose the research direction and proof strategy,
reviewed the mathematical development, and is solely responsible for
the results, their attribution, and the final text.

\end{document}